\documentclass[USenglish, a4paper, 10pt]{article}
\usepackage[utf8]{inputenc} 
\usepackage{babel} 
\usepackage{lmodern} 
\usepackage[T1]{fontenc} 
\usepackage{amssymb, amsmath, mathrsfs, mathabx, bbm} 
\DeclareMathAlphabet{\mathbbold}{U}{bbold}{m}{n}

\setlength{\topmargin}{5pt}
\setlength{\textheight}{655pt}
\setlength{\textwidth}{475pt}
\setlength{\headsep}{11pt}
\setlength{\parindent}{0pt}
\setlength{\oddsidemargin}{0pt}
\setlength{\parskip}{1ex plus 0.5ex minus 0.2ex}
\usepackage{titling} 
\usepackage{titlesec} 
		\titlelabel{\thetitle.\quad} 
		\titleformat*{\section}{\center\large} 
		\titleformat*{\subsection}{\sf\large} 
		\titleformat*{\subsubsection}{\sf\it} 
		
\usepackage[colorlinks=true,linkcolor=black, citecolor=Cyan,urlcolor=MidnightBlue]{hyperref} 
\usepackage[usenames,dvipsnames]{color} 

\usepackage{verbatim} 
\usepackage[all]{xy} 
\usepackage[usenames,dvipsnames]{color} 
\usepackage{enumerate} 
\usepackage{leftidx}

\numberwithin{equation}{section} 
\usepackage{array} 
\usepackage{amsthm} 
\swapnumbers 
\theoremstyle{plain}
\newtheorem{theoSec}{Theorem}[section] 

\newtheorem{lemSec}[theoSec]{Lemma}
\newtheorem{proSec}[theoSec]{Proposition}
\newtheorem{corSec}[theoSec]{Corollary}

\theoremstyle{remark}
\newtheorem{remSec}[theoSec]{Remark}
\newtheorem{remsSec}[theoSec]{Remarks}
\newtheorem{noteSec}[theoSec]{Note}

\theoremstyle{plain}
\newtheorem{theo}{Theorem}[subsection] 

\newtheorem{lem}[theo]{Lemma}
\newtheorem{pro}[theo]{Proposition}
\newtheorem{cor}[theo]{Corollary}
\newtheorem{defi}[theo]{Definition}
\theoremstyle{remark}
\newtheorem{rem}[theo]{Remark}

\newtheorem{note}[theo]{Note}

\theoremstyle{plain}
\newtheorem{theodefiSubSub}[theo]{Theorem-Definition}

\theoremstyle{remark}

\title{\textbf{The Baum-Connes property for a quantum (semi-)direct product}}
\author{\textsc{Rubén Martos}}
\date{}
\begin{document}
\maketitle
\renewcommand{\abstractname}{}
\vspace{-2.5cm}
\begin{abstract}
\textsc{Abstract}. The well known ``associativity property'' of the crossed product by a semi-direct product of discrete groups is generalized into the context of discrete \emph{quantum} groups. This decomposition allows to define an appropriate triangulated functor relating the Baum-Connes property for the quantum semi-direct product to the Baum-Connes property for the discrete quantum groups involved in the construction. The corresponding stability result for the Baum-Connes property generalizes the result \cite{Chabert} of J. Chabert for a quantum semi-direct product under torsion-freeness assumption. The $K$-amenability connexion between the discrete quantum groups involved in the construction is investigated as well as the torsion phenomena. The analogous strategy can be applied for the dual of a quantum direct product. In this case, we obtain, in addition, a connection with the \emph{K\"{u}nneth formula}, which is the quantum counterpart to the result \cite{ChabertEchterhoffOyono} of J. Chabert, S. Echterhoff and H. Oyono-Oyono. Again the $K$-amenability connexion between the discrete quantum groups involved in the construction is investigated as well as the torsion phenomena.

\bigskip
\textsc{Keywords.} Quantum groups, divisible discrete quantum subgroups, quantum semi-direct product, quantum direct product, Baum-Connes conjecture, torsion, $K$-amenability, K\"{u}nneth formula.
\end{abstract}

\section{\textsc{Introduction}}
The Baum-Connes conjecture has been formulated in 1982 by P. Baum and A. Connes. We still do not know any counter example to the original conjecture but it is known that the one with coefficients is false. For this reason we refer to the Baum-Connes conjecture with coefficients as the \emph{Baum-Connes property}. The goal of the conjecture was to understand the link between two operator $K$-groups of different nature that would establish a strong connexion between geometry and topology in a more abstract and general index-theory context. More precisely, if $G$ is a (second countable) locally compact group and $A$ is a (separable) $G$-$C^*$-algebra, then the Baum-Connes property for $G$ with coefficients in $A$ claims that the assembly map $\mu^G_A: K^{top}_{*}(G; A)\longrightarrow K_{*}(G\underset{r}{\ltimes}A)$ is an isomorphism, where $K^{top}_{*}(G; A)$ is the equivariant $K$-homology with compact support of $G$ with coefficients in $A$ and $K_{*}(G\underset{r}{\ltimes}A)$  is the $K$-theory of the reduced crossed product $G\underset{r}{\ltimes}A$. This property has been proved for a large class of groups; let us mention the remarkable work of N. Higson and G. Kasparov \cite{HigsonKasparovHaagerup} about groups with Haagerup property and the one of V. Lafforgue \cite{Lafforgue} about hyperbolic groups.

The equivariant $K$-homology with compact support $K^{top}_{*}(G; A)$ is, of course, the geometrical object obtained from the classifying space of proper actions of $G$, thus it is, \emph{a priori}, easier to calculate than the group $K_{*}(G\underset{r}{\ltimes}A)$ which is the one of analytical nature and then less flexible in its structure. Nevertheless, sometimes the group $K^{top}_{*}(G; A)$ creates non-trivial troubles. This is why R. Meyer and R. Nest provide in 2006 a new formulation of the Baum-Connes property in a well-suited category framework \cite{MeyerNest}. More precisely, if $\mathscr{K}\mathscr{K}^{G}$ is the $G$-equivariant Kasparov category and $F(A):=K_{*}(G\underset{r}{\ltimes}A)$ is the homological functor over $\mathscr{K}\mathscr{K}^{G}$ defining the right-hand side of the Baum-Connes assembly map, then Meyer and Nest show in \cite{MeyerNest} that the assembly map $\mu^G_A$ is equivalent to the natural transformation $\eta^G_A: \mathbb{L}F(A)\longrightarrow F(A)$, where $\mathbb{L}F$ is the localisation of the functor $F$ with respect to an appropriated complementary pair of (localizing) subcategories $(\mathscr{L},\mathscr{N})$; namely, $\mathscr{L}$ is the subcategory of $\mathscr{K}\mathscr{K}^{G}$ of \emph{compactly induced $G$-$C^*$-algebras} and $\mathscr{N}$ is the subcategory of $\mathscr{K}\mathscr{K}^{G}$ of \emph{compactly contractible $G$-$C^*$-algebras}.

This reformulation allows, particularly, to avoid any geometrical construction and thus to replace $G$ by a locally compact \emph{quantum} group $\mathbb{G}$. The problem is the torsion structure of such a $\mathbb{G}$. Indeed, if $G$ is a discrete group, its torsion is completely described in terms of the finite subgroups of $G$ whereas for the case in which $\widehat{\mathbb{G}}$ is a discrete quantum group, the notion of torsion is a non trivial problem that has been introduced firstly by R. Meyer and R. Nest (see \cite{MeyerNestTorsion} and \cite{MeyerNestHomological2}) and recently re-interpreted by Y. Arano and K. De Commer in terms of fusion rings (see \cite{YukiKenny}). But torsion phenomena in the quantum setting are far from been completely understood, so that the current proper formulation of the \emph{quantum} Baum-Connes property concerns only \emph{torsion-free discrete quantum groups $\widehat{\mathbb{G}}$}.

\bigskip
In this article, we study the stability of the Baum-Connes property under the semi-direct product construction. Namely, consider a semi-direct product of locally compact groups $F:=\Gamma\ltimes G$ such that $F$ is equipped with a $\gamma$-element and for every compact subgroup $\Lambda< \Gamma$ the semi-direct product $F_{\Lambda}:=\Lambda\ltimes G$ satisfies the Baum-Connes property. In this situation, J. Chabert shows in \cite{Chabert} that if the Baum-Connes property with coefficients holds for $\Gamma$, so it does for $F$. The strategy consists in using the canonical $*$-isomorphism $F\underset{r}{\ltimes} A\cong \Gamma\underset{r}{\ltimes} (G\underset{r}{\ltimes} A)$ for any $F$-$C^*$-algebra $A$ with the goal of constructing, in a natural way, partial descent homomorphisms and thus to translate the assembly map for $F$ into an assembly map for $\Gamma$ through the transitional group $G$. The assumption of the existence of a $\gamma$-element for $F$ is actually unnecessary (as it is shown in \cite{ChabertPermanence}). In fact, all technical problems we have to deal with, in order to get such a translation, appear in the treatment of the equivariant $K$-homology with compact support in relation with the associativity above. Hence, our strategy is to apply the Meyer-Nest machinery thanks to which all these shortcomings will be encoded in functorial and localization functors properties obtaining that 
the Baum-Connes property with coefficients holds for $F$ if and only if it holds for $\Gamma$ under the same assumptions concerning the compact subgroups of $\Gamma$ (Theorem \ref{theo.FinalTheorem}). It is important to say that H. Oyono-Oyono has already shown in \cite{HerveExtensions} this ``if and only if'' result as a consequence of the stability of the Baum-Connes property for extensions of \emph{discrete} groups.

Our goal is to generalize this result when we have a \emph{quantum} semi-direct product $\mathbb{F}:=\Gamma\ltimes \mathbb{G}$ where $\Gamma$ is a discrete group acting by quantum automorphisms in the compact quantum group $\mathbb{G}$ (such a construction is due to S. Wang \cite{WangSemidirect}). Observe that the $*$-isomorphism $F\underset{r}{\ltimes} A\cong \Gamma\underset{r}{\ltimes} (G\underset{r}{\ltimes} A)$ in the classical case is the key tool to reach all subsequent constructions; therefore our main technical problem is to obtain such associativity in the quantum setting. Namely, we prove that there exists a canonical $*$-isomorphism $\widehat{\mathbb{F}}\underset{r}{\ltimes} A\cong \Gamma\underset{r}{\ltimes} (\widehat{\mathbb{G}}\underset{r}{\ltimes} A)$ for any $\widehat{\mathbb{F}}$-$C^*$-algebra $A$ (Theorem \ref{theo.AssociativityandFunctorZ}). With this aim in mind, it is convenient to analyze the structure of the reduced crossed products by a discrete (quantum) group and, in this sense, we establish an universal property for such a crossed product (Theorem \ref{theo.QuantumReducedCrossedProduct}), which will be very useful throughout all the article. This decomposition allows to define a triangulated functor $\mathscr{K}\mathscr{K}^{\widehat{\mathbb{F}}}\longrightarrow \mathscr{K}\mathscr{K}^{\Gamma}$, which translates the Baum-Connes property for $\widehat{\mathbb{F}}$ into the Baum-Connes property for $\Gamma$ through the transitional quantum group $\widehat{\mathbb{G}}$.

Notice however that, according to the above discussion about the torsion phenomena of quantum groups, we need $\widehat{\mathbb{F}}$ to be torsion-free in order to give a meaning to the \emph{quantum} Baum-Connes property. In this way, we give a more precise picture of the torsion nature of such a quantum semi-direct product and obtain the natural result that we expect: if $\Gamma$ and $\widehat{\mathbb{G}}$ are torsion-free, then $\widehat{\mathbb{F}}$ is torsion-free (Theorem \ref{theo.TorsionQuantumsemi-direct}). Since we work with torsion-freeness assumption in order to legitimate the formulation of the quantum Baum-Connes property, we have in particular that the only finite subgroup of $\Gamma$ is the trivial one as opposed to the classical case of \cite{Chabert}. A last property of own interest is studied: $K$-amenability (see Theorem \ref{theo.KAmenability}). Namely, we prove that $\widehat{\mathbb{F}}$ is $K$-amenable if and only if $\Gamma$ and $\widehat{\mathbb{G}}$ are $K$-amenable. 
A crucial observation is done concerning a \emph{compact bicrossed product} in the sense of \cite{FimaBiproduit}. Namely, if $\mathbb{F}=\Gamma {_\alpha}\bowtie_{\beta}G$ is such a compact quantum group, the torsion-freeness assumption about $\widehat{\mathbb{F}}$ implies that the action $\beta$ is trivial (Proposition \ref{pro.BicrossedBetaTrivial}). Hence $\mathbb{F}$ becomes a quantum semi-direct product, for which the stabilization property has already been established.

Finally, we observe that the preceding categorical strategy can be adapted for the study of the stabilization of the Baum-Connes property under other constructions of compact quantum groups. For instance we examine the quantum direct product construction (due to S. Wang \cite{WangSemidirect}). If $\mathbb{F}:=\mathbb{G}\times\mathbb{H}$ is a direct product of two compact quantum groups and $A_0\in Obj(\mathscr{K}\mathscr{K}^{\widehat{\mathbb{G}}})$ is a fixed object, we define a triangulated functor $\mathscr{K}\mathscr{K}^{\widehat{\mathbb{H}}}\longrightarrow \mathscr{K}\mathscr{K}^{\widehat{\mathbb{F}}}$, which allows to establish the Baum-Connes property for $\widehat{\mathbb{F}}$ (with coefficients in tensor products) whenever $\widehat{\mathbb{G}}$ and $\widehat{\mathbb{H}}$ satisfy the \emph{strong} Baum-Connes property (Theorem \ref{theo.SBCDirectProduct}). Moreover the usual Baum-Connes property for $\widehat{\mathbb{F}}$ is closely related to the \emph{K\"{u}nneth formula} in the analogous way as established in \cite{ChabertEchterhoffOyono} by J. Chabert, S. Echterhoff and H. Oyono-Oyono (Corollary \ref{theo.BCDirectProduct}). Again the torsion phenomena and the $K$-amenability property are studied. We prove that $\widehat{\mathbb{G}}$ and $\widehat{\mathbb{H}}$ are \emph{strong} torsion-free if and only if $\widehat{\mathbb{F}}$ is \emph{strong} torsion-free (Proposition \ref{pro.StrongTorsionQuantumDirect}). We prove that $\widehat{\mathbb{F}}$ is $K$-amenable if and only if $\widehat{\mathbb{G}}$ and $\widehat{\mathbb{H}}$ are $K$-amenable (Theorem \ref{theo.KAmenabilityQDirectProduct}).

\bigskip
\textsc{Acknowledgements.} I would like to thank sincerely my advisor Pierre Fima because of his very useful remarks, comments and rectifications. The rewarding and passionate discussions with him have been a continuous motivation. It is a pleasure to thank as well Roland Vergnioux who has read very carefully my PhD manuscript in order to improve in a capital way the present paper.

\section{\textsc{Preliminary results}}
	\subsection{Notations and conventions}\label{sec.Notations}
	First of all, let us fix the notations we use throughout the whole article.
	
	We denote by $\mathcal{B}(H)$ the space of all linear operators of the Hilbert space $H$ and by $\mathcal{L}_{A}(H)$ the space of all adjointable operators of the Hilbert $A$-module $H$. All our $C^*$-algebras are supposed to be \emph{separables} and all our Hilbert modules are supposed to be \emph{countably generated}. Hilbert $A$-modules are considered to be \emph{right $A$-modules}, so that the corresponding inner products are considered to be conjugate-linear on the left and linear on the right. We use systematically the leg and Sweedler notations. We denote by $\mathscr{A}b$ the abelian category of abelian groups and by $\mathscr{A}b^{\mathbb{Z}/2}$ the abelian category of $\mathbb{Z}/2$-graded groups of $\mathscr{A}b$. The symbol $\otimes$ stands for the minimal tensor product of $C^*$-algebras, the exterior/interior tensor product of Hilbert modules depending on the context. The symbol $\underset{\max}{\otimes}$ stands for the maximal tensor product of $C^*$-algebras. If $M$ and $N$ are two $R$-modules for some ring $R$, the symbol $\odot$ stands for their algebraic tensor product over $R$, and we write $M\underset{R}{\odot} N$. If $S, A$ are $C^*$-algebras, we denote by $M(A)$ the multiplier algebra of $A$ and we put $\widetilde{M}(A\otimes S):=\{x\in M(A\otimes S)\ |\ x(id_A\otimes S)\subset A\otimes S\mbox{ and } (id_A\otimes S)x\subset A\otimes S\}$, which contains $M(A)\otimes S$.
	
	If $H$ is a finite dimensional Hilbert space, we denote by $\overline{H}$ its dual or conjugate vector space, so that if $\{\xi_1,\ldots,\xi_{dim(H)}\}$ is an orthonormal basis for $H$ and $\{\omega_1,\ldots,\omega_{dim(H)}\}$ its dual basis in $\overline{H}$, we denote by $*$ the usual homomorphism between $H$ and $\overline{H}$ such that $\xi_i^*=\omega_i$, for all $i=1,\ldots,dim(H)$.
	
	If $\mathbb{G}=(C(\mathbb{G}), \Delta)$ is a compact quantum group, the set of all unitary equivalence classes of irreducible unitary finite dimensional representations of $\mathbb{G}$ is denoted by $Irr(\mathbb{G})$. The collection of all finite dimensional unitary representations of $\mathbb{G}$ is denoted by $Rep(\mathbb{G})$ and the $C^*$-tensor category of representations of $\mathbb{G}$ is denoted by $\mathscr{R}ep(\mathbb{G})$. The trivial representation of $\mathbb{G}$ is denoted by $\epsilon$. If $x\in Irr(\mathbb{G})$ is such a class, we write $w^x\in\mathcal{B}(H_x)\otimes C(\mathbb{G})$ for a representative of $x$ and $H_x$ for the finite dimensional Hilbert space on which $w^x$ acts (we write $dim(x):= n_x$ for the dimension of $H_x$). The linear span of matrix coefficients of all finite dimensional representations of $\mathbb{G}$ is denoted by $Pol(\mathbb{G})$. Given $x,y\in Irr(\mathbb{G})$, the tensor product of $x$ and $y$ is denoted by $x\otop y$. Given $x\in Irr(\mathbb{G})$, there exists a unique class of irreducible unitary finite dimensional representation of $\mathbb{G}$ denoted by $\overline{x}$ such that $Mor(\epsilon, x\otop \overline{x})\neq 0\neq Mor(\epsilon, \overline{x}\otop x)$ and it is called contragredient or conjugate representation of $x$. Consequently, there exist non-trivial invariant vectors in $H_x\otimes H_{\overline{x}}$ and $H_{\overline{x}}\otimes H_x$ denoted by $E_x$ and $E_{\overline{x}}$, respectively; which regarded as intertwiner operators, they will be denoted by $\Phi_x$ and $\Phi_{\overline{x}}$, respectively and called \emph{canonical intertwiners}. In this way, there exists an antilinear isomorphism $J_x: H_x\longrightarrow H_{\overline{x}}$. We define the operator $Q_x:=J_x^*J_x$, which is an invertible positive self-adjoint operator unique up to multiplication of a real number. We choose this number such that $J_x$ is normalized meaning that $Tr(J_x^*J_x)=Tr((J_x^*J_x)^{-1})$. Thus, the quantum dimension of a class $x\in Irr(\mathbb{G})$ is defined by $dim_q(x)=Tr(Q_x)$. Let $\{\xi^x_1,\ldots, \xi^x_{n_x}\}$ be an orthonormal basis of $H_x$ which diagonalizes $Q_x$ and let $\{\omega^x_1,\ldots, \omega^x_{n_x}\}$ be its dual basis in $\overline{H}_x$. If $\{\xi^{\overline{x}}_1,\ldots, \xi^{\overline{x}}_{n_{\overline{x}}}\}$ is an orthonormal basis of $H_{\overline{x}}$ and $\{\omega^{\overline{x}}_1,\ldots, \omega^{\overline{x}}_{n_{\overline{x}}}\}$ denotes its dual basis in $\overline{H}_{\overline{x}}$ , then we identify systematically $\overline{H}_{\overline{x}}$ and $H_{x}$ via the linear map $\omega^{\overline{x}}_k\mapsto \frac{1}{\sqrt{\lambda^x_k}}\xi^x_k$, for all $k=1,\ldots, n_x$, where $\{\lambda^x_1, \ldots, \lambda^x_{n_x}\}$ are the eigenvalues of $Q_x$. The fundamental unitary of $\mathbb{G}$ is denoted by $W_{\mathbb{G}}\in\mathcal{B}(L^2(\mathbb{G})\otimes L^2(\mathbb{G}))$ and we recall that $\Delta(a)=W^*_{\mathbb{G}}(1\otimes a)W_{\mathbb{G}}$, for all $a\in C(\mathbb{G})$ and $\widehat{\Delta}(a)=\widehat{W}^*_{\mathbb{G}}(1\otimes a)\widehat{W}_{\mathbb{G}}$, for all $a\in c_0(\widehat{\mathbb{G}})$ where $\widehat{W}_{\mathbb{G}}:=\Sigma W^*_{\mathbb{G}}\Sigma$. The Haar state of $\mathbb{G}$ is denoted by $h_{\mathbb{G}}$ and the corresponding GNS construction by $(L^2(\mathbb{G}), \lambda, \Omega)$. We adopt the standard convention for the inner product on $L^2(\mathbb{G})$, which means that $\langle \lambda(a)\Omega, \lambda(b)\Omega\rangle:=h_{\mathbb{G}}(a^*b)$ for all $a,b\in C(\mathbb{G})$. We denote by $\widehat{h}_{L}$ the left Haar weight of $\widehat{\mathbb{G}}$ and by $\widehat{\lambda}: c_0(\widehat{\mathbb{G}})\longrightarrow \mathcal{B}(L^2(\mathbb{G}))$ the corresponding left regular representation. For more details of these definitions and constructions we refer to \cite{Woronowicz}. If $\mathbb{H}$ is another compact quantum group such that $\widehat{\mathbb{H}}<\widehat{\mathbb{G}}$ is a discrete quantum subgroup, then we define an equivalence relation in $Irr(\mathbb{G})$ in the following way: $x,y\in Irr(\mathbb{G})$, $x\sim y\Leftrightarrow$ there exists an irreducible representation $z\in Irr(\mathbb{H})$ such that $y\otop \overline{x}\supset z$. We say that $\widehat{\mathbb{H}}$ is divisible in $\widehat{\mathbb{G}}$ if for each $\alpha\in \sim\backslash Irr(\mathbb{G})$ there exists a representation $l_{\alpha}\in \alpha$ such that $s\otop l_{\alpha}$ is irreducible for all $s\in Irr(\mathbb{H})$ and $s\otop l_\alpha\cong s'\otop l_\alpha$ implies $s\cong s'$, for all $s,s'\in Irr(\mathbb{H})$. This is equivalent to say that for each $\alpha\in Irr(\mathbb{G})/\sim$ there exists a representation $r_{\alpha}\in \alpha$ such that $r_{\alpha}\otop s$ is irreducible for all $s\in Irr(\mathbb{H})$ and $r_\alpha\otop s\cong r_\alpha\otop s'$ implies $s\cong s'$, for all $s,s'\in Irr(\mathbb{H})$. This is again equivalent to say that there exists a $\widehat{\mathbb{H}}$-equivariant $*$-isomorphism $c_0(\widehat{\mathbb{G}})\cong c_0(\widehat{\mathbb{H}})\otimes c_0(\widehat{\mathbb{H}}\backslash\widehat{\mathbb{G}})$ (see \cite{VoigtBaumConnesUnitaryFree} for a proof). The trivial quantum subgroup of $\widehat{\mathbb{G}}$ is denoted by $\mathbb{E}$. 
	
	
	If $\alpha: A\longrightarrow \widetilde{M}(c_0(\widehat{\mathbb{G}})\otimes A)$ is a \emph{left} action of $\widehat{\mathbb{G}}$ on a $C^*$-algebra $A$ and $x\in Irr(\mathbb{G})$, we write $\alpha^x(a):=\alpha(a)(p_x\otimes id_A)\in \mathcal{B}(H_x)\otimes A$, for all $a\in A$ where $p_x$ is the central projection of $c_0(\widehat{\mathbb{G}})$ on $\mathcal{B}(H_x)$ so that $\alpha(a)=\underset{x\in Irr(\mathbb{G})}{\oplus^{c_0}}\alpha^x(a)$. If $\{\xi^x_1,\ldots,\xi^x_{n_x}\}$ is an orthonormal basis of $H_x$ and $\omega_{\xi_i,\xi_j}$ is the linear form of $\mathcal{B}(H_x)$ defined by $\omega_{\xi_i,\xi_j}(T):=\langle \xi_i, T(\xi_j) \rangle$ for all $T\in\mathcal{B}(H_x)$, we define $\alpha^x_{i,j}(a):=(\omega_{\xi^x_i,\xi^x_j}\otimes id_A)(\alpha^x(a))\in A$, for all $a\in A$ and all $i,j=1\ldots,n_x$. Hence, if $\{m^x_{i,j}\}_{i,j=1,\ldots,n_x}$ are the matrix units in $\mathcal{B}(H_x)$ associated to the basis $\{\xi^x_1,\ldots,\xi^x_{n_x}\}$, then we have $\alpha^x(a)=\overset{n_x}{\underset{i,j=1}{\sum}} m^x_{i,j}\otimes \alpha^x_{i,j}(a)$. In an analogous way, if $U\in M(c_0(\widehat{\mathbb{G}})\otimes C)$ for some $C^*$-algebra $C$, then we define $U^x:=U(p_x\otimes id_C)\in \mathcal{B}(H_x)\otimes C$ and $U^x_{i,j}:=(\omega_{\xi^x_i,\xi^x_j}\otimes id_C)U^x\in C$, for all $i,j=1\ldots,n_x$. We use systematically the well known one-to-one correspondence between unitary representations $U\in M(c_0(\widehat{\mathbb{G}})\otimes C)$ and non-degenerate $*$-homomorphisms $\phi_U: C_m(\mathbb{G})\longrightarrow M(C)$ which is such that $\phi_U(w^x_{i,j}):=U^x_{i,j}$ for all $x\in Irr(\mathbb{G})$ and all $i,j=1,\ldots, n_x$. If $u\in Rep(\mathbb{G})$ is any finite dimensional unitary representation of $\mathbb{G}$, then we can define $\alpha^u(a)\in \mathcal{B}(H_u)\otimes A$ and $U^u\in \mathcal{B}(H_u)\otimes C$ using the decomposition of $u$ into direct sum of irreducible representations. It is worth mentioning that giving an action $\alpha: A\longrightarrow \widetilde{M}(c_0(\widehat{\mathbb{G}})\otimes A)$ is equivalent to give a family of $*$-homomorphisms $\alpha^x:A \longrightarrow \mathcal{B}(H_x)\otimes A$, for every $x\in Irr(\mathbb{G})$ satisfying $i)$ $(\Phi\otimes id_A)\alpha^x(a)=(id_{H_z}\otimes \alpha^y)(\alpha^z(a))(\Phi\otimes id_A)$ and $ii)$ $[\alpha^x(A)(H_x\otimes A)]=H_x\otimes A$, for all $a\in A$, all $x,y,z\in Irr(\mathbb{G})$ and all $\Phi\in Mor(x, y\otop z)$. This can be checked straightforwardly by using the above definitions.
	
	Let $(A,\delta)$ be a right $\mathbb{G}$-$C^*$-algebra; the fixed points space of $A$ is denoted by $A^{\delta}$ and we say that $\delta$ is a \emph{torsion action} if $\delta$ is ergodic and $A$ is finite dimensional (hence unital). Given an irreducible representation $x\in Irr(\mathbb{G})$, we put $K_x:=\{X\in \overline{H}_x\otimes A\ |\ (id\otimes \delta)(X)=\big[X\big]_{12}\big[w^x\big]_{13})\}$ and use systematically the natural identification $K_x\cong Mor(x, \delta)$, with $Mor(x, \delta):=\{T:H_x\longrightarrow A \ |\ T \mbox{ is linear such that } \delta(T(\xi))=(T\otimes id_{C(\mathbb{G})})w^x(\xi\otimes 1_{C(\mathbb{G})})\}$. The corresponding $x$-spectral subspace of $A$ is denoted by $\mathcal{A}_x$ so that the corresponding Podle\'{s} subalgebra of $A$ is denoted by $\mathcal{A}_{\mathbb{G}}$. By abuse of language, both $K_x$ and $Mor(x,\delta)$ are called \emph{spectral subspaces} as well. Finally, from the general theory of spectral subspaces (see \cite{KennyActions} for the details) we recall that each $x$-spectral subspace $\mathcal{A}_x$ is finite dimensional with $dim(\mathcal{A}_x)\leqslant dim_q(x)$ whenever $\delta$ is ergodic; and that there always exists a conditional expectation $\mathbb{E}_{\delta}:A \longrightarrow A^{\delta}$ which is faithful on $\mathcal{A}_{\mathbb{G}}$ so that we have the following decomposition $\mathcal{A}_{\mathbb{G}}=\underset{x\in Irr(\mathbb{G})}{\bigoplus}\mathcal{A}_x$. 
	
	For our purposes it is convenient to introduce the following operations in $\mathcal{A}_{\mathbb{G}}$: given irreducible representations of $\mathbb{G}$, say $x,y,z\in Irr(\mathbb{G})$, we define the following element $X\underset{\Phi}{\otimes} Y:= (\big[X\big]_{13}\big[Y\big]_{23})(\Phi\otimes 1_A)\in \overline{H}_z\otimes A$, for all $X\in K_{x}$ and $Y\in K_{y}$ and all intertwiner $\Phi\in Mor(z, x\otop y)$; where $\big[X\big]_{13}$ and $\big[Y\big]_{23}$ are the corresponding legs of $X$ and $Y$ in $\overline{H}_x\otimes \overline{H}_y\otimes A$. It is straightforward to check that $X\underset{\Phi}{\otimes} Y\in K_z$. Precisely, we have the following formula $(id\otimes\delta)\Big(\big[X\big]_{13}\big[Y\big]_{23}\Big)=\Big[\big[X\big]_{13}\big[Y\big]_{23}\Big]_{12}\big[w^{x\otop y}\big]_{13}$, where the legs of the right hand side of the identity are considered in $\overline{H}_{x\otop y}\otimes A\otimes C(\mathbb{G})$. Hence, the decomposition of $x\otop y$ into direct sum of irreducible representations, say $\{z_k\}_{k=1,\ldots, r}$ for some $r\in \mathbb{N}$ yields that $\big[X\big]_{13}\big[Y\big]_{23}\in \overset{r}{\underset{k=1}{\bigoplus}}\ K_{z_k}$. The element $X\underset{\Phi}{\otimes} Y$ is called \emph{spectral product of $X$ by $Y$ with respect to $\Phi$}.
	
	Given an irreducible representation $x\in Irr(\mathbb{G})$ and an element $X\in K_x$, we define the \emph{spectral conjugate of $X$} as the element $X^\#:=(*\circ J_x\otimes id_A)(X^*)\in K_{\overline{x}}$. If $\{\xi^x_1,\ldots,\xi^x_{dim(x)}\}$ is an orthonormal basis of $H_x$ which diagonalizes the canonical operator $Q_x$ and $\{\omega^x_1,\ldots,\omega^x_{dim(x)}\}$ is its dual basis in the dual space $\overline{H}_{x}$, then the intertwiner $\Phi_x$ can be written in coordinates under the expression $\Phi_x=\overset{n_x}{\underset{i=1}{\sum}}\sqrt{\lambda^x_i}\ \xi^x_i\otimes \xi^{\overline{x}}_i$, where $\lambda^x_i\in\mathbb{R}^+$ is the eigenvalue of $Q_x$ associated to the vector $\xi^x_i$, for each $i=1,\ldots, n_x\equiv dim(x)$. In this situation, the element $X\in K_x$ can be written in coordinates under the form $X=\overset{n_x}{\underset{i=1}{\sum}} \omega^x_i\otimes a_i$ as element of $\overline{H}_x\otimes A$, for some $a_i\in A$, for all $i=1,\ldots, n_x$ and its spectral conjugate is given by $X^\#:=\overset{n_x}{\underset{i=1}{\sum}} J_x(\xi^x_i)^*\otimes a^*_i\in \overline{H_{\overline{x}}}\otimes A$. A straightforward computation shows that the association $X\longmapsto X^\#$ is antilinear and that $X^\#\in K_{\overline{x}}$.
	\begin{rem}
		We can do thus the multiplication over $\Phi_x$ and over $\Phi_{\overline{x}}$ defined above. Namely, the previous constructions show that $X\underset{\Phi_x}{\otimes} X^\#\in K_\epsilon\mbox{ and } X^\#\underset{\Phi_{\overline{x}}}{\otimes}X\in K_{\epsilon}$. Observe that $K_\epsilon=A^{\delta}$. Hence, if $\delta$ is an ergodic action, $X \underset{\Phi_x}{\otimes}X^\#$ and $X^\#\underset{\Phi_{\overline{x}}}{\otimes}X$ are scalar multiples of $1_A$. It will be useful to write down explicit formulas in coordinates for these products: $X\underset{\Phi_x}{\otimes} X^\#=\overset{n_x}{\underset{i=1}{\sum}} \lambda^x_i\ a_ia^*_i\mbox{ ; } X^\# \underset{\Phi_{\overline{x}}}{\otimes} X=\overset{n_x}{\underset{i=1}{\sum}} a^*_ia_i$.
	\end{rem}
	
	Concerning these constructions, the following technical observation will help to conclude later on our torsion analysis for a quantum semi-direct product.
	\begin{lem}\label{lem.NonZeroProducts}
		Let $\mathbb{G}$ be a compact quantum group and $(A,\delta)$ a right ergodic $\mathbb{G}$-$C^*$-algebra. Given irreducible representations $x,y\in Irr(\mathbb{G})$ and non-zero elements $X\in K_x$, $Y\in K_y$; then there exist an irreducible representation $z\in Irr(\mathbb{G})$ and an intertwiner $\Phi\in Mor(z, x\otop y)$ such that $X\underset{\Phi}{\otimes} Y\neq 0$.
	\end{lem}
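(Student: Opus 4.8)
The plan is to reduce the assertion to the single statement that the \emph{unprojected} product $\big[X\big]_{13}\big[Y\big]_{23}\in\overline{H}_x\otimes\overline{H}_y\otimes A$ is non-zero, and then to extract the required $z$ and $\Phi$ from its components. Concretely, I would decompose $x\otop y$ into irreducibles, say $x\otop y=\overset{r}{\underset{k=1}{\bigoplus}}\,z_k$, and choose isometric intertwiners $\Phi_k\in Mor(z_k, x\otop y)$ realizing this decomposition, so that $\overset{r}{\underset{k=1}{\sum}}\Phi_k\Phi_k^*=id_{H_x\otimes H_y}$. By definition of the spectral product one has $X\otimes_{\Phi_k}Y=(\big[X\big]_{13}\big[Y\big]_{23})(\Phi_k\otimes 1_A)$, and inserting this resolution of the identity gives $\big[X\big]_{13}\big[Y\big]_{23}=\overset{r}{\underset{k=1}{\sum}}(X\otimes_{\Phi_k}Y)(\Phi_k^*\otimes 1_A)$. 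Thus, in accordance with the decomposition $\big[X\big]_{13}\big[Y\big]_{23}\in\overset{r}{\underset{k=1}{\bigoplus}}K_{z_k}$ already observed above, the products $X\otimes_{\Phi_k}Y$ are precisely the components of $\big[X\big]_{13}\big[Y\big]_{23}$; hence if the latter is non-zero then at least one $X\otimes_{\Phi_k}Y$ is non-zero, and I may take $z:=z_k$ and $\Phi:=\Phi_k$.

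It then remains to prove $\big[X\big]_{13}\big[Y\big]_{23}\neq 0$, which is where ergodicity enters and which I expect to be the only genuine obstacle; everything else is formal. Fixing orthonormal bases diagonalising $Q_x$ and $Q_y$, I would write $X=\overset{n_x}{\underset{i=1}{\sum}} \omega^x_i\otimes a_i$ and $Y=\overset{n_y}{\underset{j=1}{\sum}} \omega^y_j\otimes b_j$. A direct computation yields $\big[X\big]_{13}\big[Y\big]_{23}=\underset{i,j}{\sum}\omega^x_i\otimes\omega^y_j\otimes a_ib_j$, so that this element vanishes if and only if $a_ib_j=0$ for all $i,j$. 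I will argue by contradiction and assume all these products vanish.

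To derive the contradiction I would feed the spectral conjugate $X^\#\in K_{\overline{x}}$ into the ergodicity hypothesis. By the preceding Remark, $X^\#\otimes_{\Phi_{\overline{x}}}X=\overset{n_x}{\underset{i=1}{\sum}} \frac{1}{\lambda^x_i}\,a_i^*a_i$ lies in $K_\epsilon=A^{\delta}$; since $\delta$ is ergodic this fixed-point algebra equals $\mathbb{C}1_A$, and since $X\neq 0$ the element is a strictly positive scalar, say $d_X 1_A$ with $d_X>0$. Multiplying on the right by $b_j$ and using $a_ib_j=0$ then gives $d_X b_j=\Big(\overset{n_x}{\underset{i=1}{\sum}} \frac{1}{\lambda^x_i}a_i^*a_i\Big)b_j=\overset{n_x}{\underset{i=1}{\sum}} \frac{1}{\lambda^x_i}a_i^*(a_ib_j)=0$, whence $b_j=0$ for every $j$ and therefore $Y=0$, contradicting $Y\neq 0$. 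This forces $\big[X\big]_{13}\big[Y\big]_{23}\neq 0$ and, by the reduction of the first paragraph, completes the proof. The essential point is that the weighted sum $\sum_i \frac{1}{\lambda^x_i}a_i^*a_i$ is a non-zero \emph{scalar}: it is exactly ergodicity, through $A^{\delta}=\mathbb{C}1_A$, that converts the hypothetical relations $a_ib_j=0$ into the collapse $Y=0$.
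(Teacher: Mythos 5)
Your proposal is correct and follows essentially the same route as the paper: both arguments reduce the lemma, via a complete family of isometric intertwiners resolving $id_{H_x\otimes H_y}$, to the non-vanishing of $\big[X\big]_{13}\big[Y\big]_{23}$, i.e.\ to the statement that not all coordinate products $a_ib_j$ vanish, and both then exploit ergodicity through the spectral conjugate. The only (harmless) difference is the endgame: the paper applies ergodicity to both $X$ and $Y$, producing nonzero scalars $\lambda,\mu$ with $\lambda\mu=0$ as the contradiction, whereas you apply it only to $X^\#\otimes_{\Phi_{\overline{x}}}X=d_X 1_A$ and conclude $b_j=0$ for all $j$, hence $Y=0$ — a slightly more economical but equivalent finish.
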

	\begin{proof}			
		Let us fix orthonormal basis $\{\xi^x_1,\ldots,\xi^x_{dim(x)}\}$ of $H_x$ and $\{\xi^y_1,\ldots,\xi^y_{dim(y)}\}$ of $H_y$ that diagonalise the canonical operators $Q_x=J_x^*J_x$ and $Q_y=J_y^*J_y$, respectively; with eigenvalues $\{\lambda^x_i\}_{i=1,\ldots,n_x}$ and $\{\mu^y_j\}_{j=1,\ldots,n_y}$, respectively. Denote by $\{\omega^x_1,\ldots,\omega^x_{dim(x)}\}$ and $\{\omega^y_1,\ldots,\omega^y_{dim(y)}\}$ the corresponding dual basis of $\overline{H}_x$ and $\overline{H}_y$, respectively.
					
		Suppose that for all irreducible representation $z\in Irr(\mathbb{G})$ and all intertwiner $\Phi\in Mor(z, x\otop y)$ we have $X\otimes_{\Phi} Y=0$, that is, $X\otimes_{\Phi} Y=\underset{i,j}{\sum}\omega^x_i\otimes \omega^y_j\circ \Phi\otimes a_ib_j=0$ where we use the coordinate expressions for $X$ and $Y$ as above.
			
		Multiplying by $\Phi^*\otimes 1_A$, this is still zero,
		\begin{equation}\label{eq.FormulaZero}
			\underset{i,j}{\sum}\omega^x_i\otimes \omega^y_j\circ \Phi\Phi^*\otimes a_ib_j=0
		\end{equation}
			
		This is true for \emph{every} irreducible representation $z\in Irr(\mathbb{G})$ and \emph{every} intertwiner $\Phi\in Mor(z, x\otop y)$. Let us consider the decomposition in direct sum of irreducible representations of $x\otop y$, say $\{z_l\}_l$; and denote by $\{p_l\}_l\subset \mathcal{B}(H_x\otimes H_y)$ the corresponding family of mutually orthogonal finite-dimensional projections with sum $id_{H_x\otimes H_y}$. Likewise, for every $k=1,\ldots, dim(Mor(z,x\otop y))$ consider the corresponding intertwiners $\Phi_k\in Mor(z_l, x\otop y)$ for each $l$ which are such that $\Phi^*_k\Phi_k=id\mbox{ and } \overset{dim(Mor(z,x\otop y))}{\underset{k=1}{\sum}}\Phi_k\Phi^*_k= p_l$.
			
		Hence we consider the identity (\ref{eq.FormulaZero}) above for these intertwiners $\Phi_k$ for each $k=1,\ldots, dim\big(Mor(z, x\otop y)\big)$ and next we sum over $k$. We get $\underset{i,j}{\sum}\omega^x_i\otimes \omega^y_j\ p_l\otimes a_ib_j=0$. Next, we can sum over $l$ and we get $\underset{i,j}{\sum}\omega^x_i\otimes \omega^y_j \otimes a_ib_j=0$, what implies that $a_ib_j=0$, for all $i=1,\ldots, n_x$ and all $j=1,\ldots, n_y$.
			
		Since $\delta$ is an ergodic action by assumption we have that $X^\# \underset{\Phi_{\overline{x}}}{\otimes} X=\overset{n_x}{\underset{i=1}{\sum}} a^*_ia_i=\lambda\ 1_A$ and $Y \underset{\Phi_{y}}{\otimes} Y^\#=\overset{n_y}{\underset{j=1}{\sum}}\mu^y_j\ b_jb^*_j=\mu\ 1_A$, for some $\lambda, \mu\in\mathbb{C}$. Since $X$ and $Y$ are supposed to be non-zero, there exist at least one $i=1,\ldots, n_x$ and one $j=1,\ldots, n_y$ such that $a_i\neq 0$ and $b_j\neq 0$. Consequently, $X^\# \underset{\Phi_{\overline{x}}}{\otimes} X\neq 0$ and $Y \underset{\Phi_{y}}{\otimes} Y^\#\neq 0$. In other words, we have $\lambda\neq 0$ and $\mu\neq 0$.
			Using equation the preceding discussion, we write
			$$0\neq\lambda\ \mu=\Big(\overset{n_x}{\underset{i=1}{\sum}}\ a_i^*a_i\Big)\Big(\overset{n_y}{\underset{j=1}{\sum}}\mu^y_j\ b_jb^*_j\Big)=\underset{i,j}{\sum}\mu^y_j\ a^*_ia_ib_jb^*_j=0\mbox{, }$$ \enlargethispage{0.5cm}a contradiction.	
		\end{proof}
		
	\subsection{Fusion rings and (\emph{strong}) torsion-freeness}\label{sec.Torsion}
		The notion of torsion-freeness for a discrete quantum group was initially introduced by R. Meyer and R. Nest (see \cite{MeyerNestTorsion}, \cite{MeyerNestHomological2} and \cite{VoigtBaumConnesAutomorphisms} for more details). 
		\begin{defi}
			Let $\mathbb{G}$ be a compact quantum group. We say that $\widehat{\mathbb{G}}$ is torsion-free if one of the following equivalent condition holds:
				\begin{enumerate}[i)]
					\item Any torsion action of $\mathbb{G}$ is $\mathbb{G}$-equivariantly Morita equivalent to the trivial $\mathbb{G}$-$C^*$-algebra $\mathbb{C}$.
					\item Let $(A,\delta)$ be a finite dimensional left $\mathbb{G}$-$C^*$-algebra.
						\begin{enumerate}[a)]
							\item If $\delta$ is ergodic, then $A$ is simple.
							\item If $A$ is simple, then there exists a finite dimensional unitary representation $(H_u,u)$ of $\mathbb{G}$ such that $A\cong \mathcal{K}(H_u)$ as $\mathbb{G}$-$C^*$-algebras.
						\end{enumerate}
				\end{enumerate}
		\end{defi}
		\begin{rem}\label{rem.TorsionFiniteSubgroup}
			If $\widehat{\mathbb{G}}$ is a discrete quantum group that has a non-trivial finite discrete quantum subgroup, then $\widehat{\mathbb{G}}$ is \emph{not} torsion-free because the co-multiplication of such a non-trivial finite discrete quantum group $\Lambda$ would define an ergodic action of $\mathbb{G}$ on $C(\Lambda)$. 
		\end{rem}
		
		It is important to observe that torsion-freeness in the sense of Meyer-Nest is \emph{not} preserved, in general, by discrete quantum subgroups. For instance, consider $\widehat{SO_q(3)}<\widehat{SU_q(2)}$. While $\widehat{SU_q(2)}$ is torsion-free by \cite{VoigtBaumConnesFree}, $\widehat{SO_q(3)}$ is \emph{not} torsion-free by \cite{SoltanSO3}.
	
	In relation with the results obtained in \cite{RubenAmauryTorsion}, we can consider an other example more complicated. Let $\mathbb{G}$ be a compact quantum group such that $\widehat{\mathbb{G}}$ is torsion-free. Then the dual of the free product $\mathbb{G}\ast SU_q(2)$ is torsion-free (because $\widehat{SU_q(2)}$ is torsion-free for all $q\in (-1,1)\backslash \{0\}$ as it is shown in \cite{VoigtBaumConnesFree} and torsion-freeness is preserved by free product as it is shown in \cite{YukiKenny}). Consider the Lemeux-Tarrago's discrete quantum subgroup $\widehat{\mathbb{H}}_q<\widehat{\mathbb{G}\ast SU_q(2)}$ which is such that $\mathbb{H}_q$ is monoidally equivalent to the free wreath product $\mathbb{G}\wr_*S^+_N$ (see \cite{TarragoWreath} for more details). It is explained in \cite{RubenAmauryTorsion} that the dual of $\mathbb{G}\wr_*S^+_N$ is \emph{never} torsion-free. Hence $\widehat{\mathbb{H}}_q$ neither (because torsion-freeness is preserved under monoidally equivalence as it is shown in \cite{VoigtBaumConnesFree} or \cite{RijdtMonoidalProbabilistic}). 

	
	It is reasonable to expect that torsion-freeness is preserved under \emph{divisible} discrete quantum subgroups. Inspired by the study carried out in \cite{RubenAmauryTorsion}, we expect to be able to apply the techniques from \cite{YukiKenny} for proving the following stability result: given a compact quantum group $\mathbb{G}$, $\widehat{\mathbb{G}}$ is torsion-free if and only if every divisible discrete quantum subgroup $\widehat{\mathbb{H}}<\widehat{\mathbb{G}}$ is torsion-free.
		
	\bigskip
		Recently \cite{YukiKenny}, Y. Arano and K. De Commer have re-interpreted this notion in terms of fusion rings giving a \emph{strong} torsion-freeness definition that implies the Meyer-Nest's one. Let us recall briefly the corresponding fusion rings theory (we refer to \cite{YukiKenny} for all the details and further properties).
		
		Let $(I, \mathbbm{1})$ be an involutive pointed set and $J$ any set. Let $(\mathbb{Z}_I, \oplus, \otimes)$ be a fusion ring with fusion rules given by the collection of natural numbers $\{N_{\beta, \gamma}^{\alpha}\}_{\alpha,\beta,\gamma\in I}$ and $(\mathbb{Z}_J, \oplus, \otimes)$ a $J$-based co-finite module with respect to $\mathbb{Z}_I$ defined by a collection of natural numbers $\{N_{\alpha, b}^{c}\}_{\alpha\in I, b,c\in J}$. We say that $(\mathbb{Z}_I, \oplus, \otimes)$ is trivial if $I=\{\mathbbm{1}\}$. We say that $(\mathbb{Z}_J, \oplus, \otimes)$ is \emph{connected} if for any $b,c\in J$, there exists $\alpha\in I$ such that $N_{\alpha,b}^{c}\neq 0$. Remark that $(\mathbb{Z}_I, \oplus, \otimes)$ is a natural $I$-based (connected) co-finite module with respect to itself with left $\otimes$-multiplication and bilinear form given by $\langle \alpha,\beta \rangle:=\alpha\otimes\overline{\beta}\mbox{,}$ for all $\alpha,\beta\in I$. It is called \emph{the standard $I$-based module}.
		
		If $\widehat{\mathbb{G}}$ be a discrete quantum group, we denote by $R(\mathbb{G}):=Fus(\widehat{\mathbb{G}})$ the usual fusion ring of $\widehat{\mathbb{G}}$ given by the irreducible representations of $\mathbb{G}$. If $(\mathbb{Z}_{I_1}, \oplus, \otimes, d_1)$ and $(\mathbb{Z}_{I_2}, \oplus, \otimes, d_2)$ are two fusion rings, we define the tensor product of $\mathbb{Z}_{I_1}$ and $\mathbb{Z}_{I_2}$ as the free $\mathbb{Z}$-module $\mathbb{Z}_{I_1}\underset{\mathbb{Z}}{\odot} \mathbb{Z}_{I_2}$ which is naturally a fusion ring denoted by $\mathbb{Z}_{I_1}\otimes\mathbb{Z}_{I_2}$.

		In this situation, a fusion ring $(\mathbb{Z}_I, \oplus, \otimes, d)$ is called torsion-free if any based connected co-finite module is isomorphic to the standard based module. In particular, we have 
		\begin{defi}
			Let $\mathbb{G}$ be a compact quantum group. We say that $\widehat{\mathbb{G}}$ is strong torsion-free if $Fus(\widehat{\mathbb{G}})$ is torsion-free.
		\end{defi}
		\begin{rem}\label{rem.StrongTorsionFreeSubroups}
			It is known that the strong torsion-freeness is not preserved by quantum subgroups (see \cite{YukiKenny} for a counter example). However, we can show (see \cite{YukiKenny} for a proof) that if $\mathbb{G}$ and $\mathbb{H}$ are compact quantum groups such that $\widehat{\mathbb{H}}<\widehat{\mathbb{G}}$ is a \emph{divisible} discrete quantum subgroup, then $\widehat{\mathbb{H}}$ is strong torsion-free whenever $\widehat{\mathbb{G}}$ is strong torsion-free. 
		\end{rem}
		
			

		Observe that torsion-freeness of fusion rings is \emph{not} preserved, in general, by tensor product. More precisely, we have the following result (see \cite{YukiKenny} for a proof).
		\begin{theo}\label{theo.TensorProductFusionRings}
			Let $(\mathbb{Z}_{I_1}, \otimes)$ and $(\mathbb{Z}_{I_2}, \otimes)$ be torsion-free fusion rings and assume that $(\mathbb{Z}_{I_1}\odot \mathbb{Z}_{I_2}, \otimes)$ is \emph{not} torsion-free. Then $(\mathbb{Z}_{I_1}, \otimes)$ and $(\mathbb{Z}_{I_2}, \otimes)$ have non-trivial isomorphic finite fusion subrings.
		\end{theo}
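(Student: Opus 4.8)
The plan is to mimic, at the level of based modules, the elementary subgroup argument that detects a common finite subgroup inside a direct product $\Gamma_1\times\Gamma_2$: if $H\leq\Gamma_1\times\Gamma_2$ is a finite subgroup and both $\Gamma_i$ are torsion-free, then the two coordinate projections $H\to\Gamma_i$ are injective (their kernels $H\cap\Gamma_2$ and $H\cap\Gamma_1$ are finite subgroups of torsion-free groups, hence trivial), so $p_1(H)\cong H\cong p_2(H)$ are isomorphic non-trivial finite subgroups. A non-standard connected co-finite based module over $\mathbb{Z}_{I_1}\otimes\mathbb{Z}_{I_2}$ is the fusion-ring incarnation of such an $H$, and the proof consists in extracting the two \emph{coordinate projections} as finite fusion subrings and showing they are isomorphic.

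First I would fix a based connected co-finite $(\mathbb{Z}_{I_1}\otimes\mathbb{Z}_{I_2})$-module $\mathbb{Z}_K$ which is not isomorphic to the standard module (such a module exists by the hypothesis that the tensor product is not torsion-free). Restricting the action along the unital inclusions $\mathbb{Z}_{I_1}\hookrightarrow\mathbb{Z}_{I_1}\otimes\mathbb{Z}_{I_2}$, $\alpha\mapsto\alpha\otimes\mathbbm{1}$, and $\mathbb{Z}_{I_2}\hookrightarrow\mathbb{Z}_{I_1}\otimes\mathbb{Z}_{I_2}$, $\beta\mapsto\mathbbm{1}\otimes\beta$, equips $\mathbb{Z}_K$ with two commuting based module structures, over $\mathbb{Z}_{I_1}$ and over $\mathbb{Z}_{I_2}$. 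I would then decompose $K$ into its $I_1$-connected components $K=\bigsqcup_{p}C_p$ and into its $I_2$-connected components $K=\bigsqcup_{q}D_q$. Each $C_p$ is a connected $I_1$-submodule, and co-finiteness of $\mathbb{Z}_K$ should be inherited by $C_p$; since $\mathbb{Z}_{I_1}$ is torsion-free, every $C_p$ is isomorphic to the standard $I_1$-module (this is the categorified statement that every orbit is free), and symmetrically every $D_q$ is isomorphic to the standard $I_2$-module.

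Next, using that the $I_1$- and $I_2$-actions commute, I would show that $\mathbb{Z}_{I_2}$ permutes the $I_1$-components in a based way: for $\beta\in I_2$ and $b,b'$ in the same $C_p$, the identity $(\alpha\otimes\mathbbm{1})(\mathbbm{1}\otimes\beta)=(\mathbbm{1}\otimes\beta)(\alpha\otimes\mathbbm{1})$ forces the components met by $\beta b$ and $\beta b'$ to coincide, and symmetrically. Fixing a base point $o\in K$ lying in $C_{p_0}\cap D_{q_0}$, the relevant common finite datum is the \emph{return set}: let $F_1\subseteq I_1$ be the set of $\alpha\in I_1$ such that $\alpha\cdot o$ meets the $I_2$-component $D_{q_0}$, and define $F_2\subseteq I_2$ symmetrically. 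The heart of the argument is to prove that $\mathbb{Z}_{F_1}$ and $\mathbb{Z}_{F_2}$ are fusion subrings (closed under $\otimes$ and the involution and containing $\mathbbm{1}$), that they are \emph{finite} (this is exactly where co-finiteness of $\mathbb{Z}_K$ must be used, playing the role of finiteness of the stabilizer $H$), and that the finite set $C_{p_0}\cap D_{q_0}$ carries a single fusion-subring structure restricting compatibly to both actions, thereby producing an isomorphism $\mathbb{Z}_{F_1}\cong\mathbb{Z}_{F_2}$ of fusion rings.

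Finally, for non-triviality: if both $F_1$ and $F_2$ were reduced to $\{\mathbbm{1}\}$, then the $I_2$-action would be free on the set of $I_1$-components and the grid $(C_p\cap D_q)$ would be a genuine product $I_1\times I_2$, forcing $\mathbb{Z}_K$ to be the standard module and contradicting our choice; hence $F_1\neq\{\mathbbm{1}\}$, and via the isomorphism $F_2\neq\{\mathbbm{1}\}$ as well. The step I expect to be the main obstacle is the middle one: extracting honest fusion \emph{subrings} $\mathbb{Z}_{F_1},\mathbb{Z}_{F_2}$ from the purely module-theoretic return data and proving their finiteness and mutual isomorphism. In the group picture this was the trivial observation that the projections of a finite subgroup are isomorphic to it; categorifying it requires controlling multiplicities in the fusion rules (so that the return data is closed under fusion and has well-defined, matching structure constants on both sides) and invoking co-finiteness in the correct place to guarantee that $F_1$ and $F_2$ are finite.
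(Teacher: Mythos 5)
You should first be aware that the paper does \emph{not} prove Theorem \ref{theo.TensorProductFusionRings}: it is quoted verbatim from Arano--De Commer with the remark ``see \cite{YukiKenny} for a proof''. So there is no internal proof to compare against, and your proposal has to stand on its own. Its skeleton is indeed the natural categorification of the group argument and is consistent with the strategy one would expect: restrict the non-standard connected co-finite module to the two factors, use torsion-freeness to identify each connected component with the standard module (the inheritance of connectedness and co-finiteness by components is fine, as is your commutation lemma that the $I_2$-action permutes $I_1$-components), then extract return sets at a base point. The problem is that the two steps carrying essentially all of the mathematical content are left unproved, and they are exactly the points where fusion rings stop behaving like groups.

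Concretely: (a) \emph{the base point}. In a group, every point of a free orbit is a free generator, so ``fix $o\in C_{p_0}\cap D_{q_0}$'' is harmless. In a based module it is not: an isomorphism of a component with the standard module sends a basis element $j$ to some $\delta$ in the basis, and $\alpha\cdot j$ is a single basis element for every $\alpha$ if and only if $\bar{\delta}\delta=\mathbbm{1}$, i.e.\ only the (possibly unique) ``unit positions'' behave like group elements. Your bijections $F_1\leftrightarrow C_{p_0}\cap D_{q_0}\leftrightarrow F_2$ and the transport of structure constants through the intersection implicitly require $o$ to be a unit position \emph{simultaneously} for its $I_1$-component and its $I_2$-component, and nothing in the proposal produces such a point; for a general $o$ the map $\alpha\mapsto \mathrm{supp}(\alpha\cdot o)\cap D_{q_0}$ is multivalued and neither injective nor surjective. (b) \emph{Closure and the isomorphism}. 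A fusion subring must contain \emph{every} irreducible constituent of a product of its elements, but positivity and commutation only yield that \emph{some} constituent of $\alpha\otimes\alpha'$ returns: from $\langle\beta o,\alpha o\rangle\neq 0$ and $\langle\beta' o,\alpha' o\rangle\neq 0$ one gets $\beta\beta' o\geq (\alpha'\alpha)\cdot o$ coefficientwise, which pins down all constituents only when $(\alpha'\alpha)\cdot o$ splits into distinct basis elements --- again only at a unit base point --- and even then the symmetric claim for $F_2$ and the equality of multiplicities needed for $\mathbb{Z}_{F_1}\cong\mathbb{Z}_{F_2}$ as fusion rings (rather than a mere bijection of index sets) remain open. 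The final step (both return sets trivial $\Rightarrow$ the module is standard) is likewise asserted, not argued. You flag (b) yourself as ``the main obstacle'', which is honest, but since (a) and (b) are the theorem, what you have is a plausible plan rather than a proof; the actual argument is the content of \cite{YukiKenny}.
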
 
			
	\subsection{Quantum crossed products}
	We recall here the crossed product construction. The next result is well known to specialists. Since we could not find any reference in the literature, we include the complete proof for the reader's convenience.
	\begin{theodefiSubSub}\label{theo.QuantumReducedCrossedProduct}
	Let $\mathbb{G}$ be a compact quantum group and $(A, \alpha)$ a left $\widehat{\mathbb{G}}$-$C^*$-algebra. There exists a unique (up to a canonical isomorphism) $C^*$-algebra $P$ with a non-degenerate $*$-homomorphism $\pi: A\longrightarrow P$, a unitary representation $U\in M(c_0(\widehat{\mathbb{G}})\otimes P)$ and a non-degenerate completely positive KSGNS-faithful map $E: P\longrightarrow M(A)$ such that
	\begin{enumerate}[i)]
		\item $\pi(a)U^u_{i,j}=\overset{dim(u)}{\underset{k=1}{\sum}}U^u_{i,k}\pi(\alpha^x_{k,j}(a))$, for all $u\in Rep(\mathbb{G})$, all $a\in A$ and all $i,j=1,\ldots, dim(u)$.
		\item $P=C^*\langle \pi(a)U^u_{i,j}: a\in A, u\in Rep(\mathbb{G}), i,j=1,\ldots, dim(u) \rangle$
		\item $E(\pi(a)U^u_{i,j})=\delta_{u,\epsilon}a$ for all $u\in Irr(\mathbb{G})$ and all $a\in A$.
	\end{enumerate} 
	
	In addition, $P$ is unique up to a canonical isomorphism meaning that for any $C^*$-algebra $Q$ with a triple $(\rho,V,E')$ where $\rho: A\longrightarrow Q$ is a non-degenerate $*$-homomorphism, $V\in M(c_0(\widehat{\mathbb{G}})\otimes Q)$ is a unitary representation and $E': Q\longrightarrow M(A)$ is a strict completely positive KSGNS-faithful map satisfying the analogous properties $(i), (ii)$ and $(iii)$ above, there exists a (necessarily unique) $*$-isomorphism $\psi: P\longrightarrow Q$ such that $\psi (\pi(a)U^u_{i,j})=\rho(a)V^u_{i,j}$, for all $u\in Rep(\mathbb{G})$, all $a\in A$ and all $i,j=1,\ldots, dim(u)$. Moreover, $E'$ is a non-degenerate map and we have $E=E'\circ \psi$.
	
	The $C^*$-algebra $P$ constructed in this way is called \emph{reduced crossed product of $A$ by $\widehat{\mathbb{G}}$} and is denoted by $\widehat{\mathbb{G}} \underset{\alpha,r}{\ltimes}A$.
\end{theodefiSubSub}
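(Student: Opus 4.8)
\emph{Strategy.} The plan is to exhibit one explicit model satisfying (i)--(iii) and then to show that the faithful map $E$ rigidifies the construction, so that (iv), and with it uniqueness, is forced. First I would realize everything on the exterior tensor product Hilbert $A$-module $\mathcal{E}:=L^2(\mathbb{G})\otimes A$. Write $\sigma: c_0(\widehat{\mathbb{G}})\to\mathcal{B}(L^2(\mathbb{G}))$ for the canonical faithful representation (the Peter--Weyl block inclusion) and set $\pi:=(\sigma\otimes id_A)\circ\alpha: A\to\mathcal{L}_A(\mathcal{E})$, a faithful non-degenerate $*$-homomorphism (faithful since $\alpha$ is injective and $\sigma$ is faithful). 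For the representation I take $U:=(W_{\mathbb{G}})_{12}$ acting on $L^2(\mathbb{G})\otimes\mathcal{E}$ and trivially on the $A$-leg, a unitary representation of $\widehat{\mathbb{G}}$ whose slices are the diagonal matrix coefficients $U^u_{i,j}=\lambda(w^u_{i,j})\otimes 1_A$. I then define $P:=C^*\langle\pi(a)U^u_{i,j}\rangle\subset\mathcal{L}_A(\mathcal{E})$, which makes (ii) hold by construction; a standard approximate-unit argument gives $U\in M(c_0(\widehat{\mathbb{G}})\otimes P)$. Finally, viewing the cyclic vector $\Omega\in L^2(\mathbb{G})$ as an adjointable map $T_{\Omega}:A\to\mathcal{E}$, $a\mapsto\Omega\otimes a$, I put $E(T):=T_{\Omega}^{*}\,T\,T_{\Omega}\in\mathcal{L}_A(A)=M(A)$, a strict, completely positive map.

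\emph{Verification of (i)--(iii).} The covariance relation (i) is equivalent to $U^{*}(1\otimes\pi(a))U=(id\otimes\pi)(\alpha(a))$, which is precisely the assertion that $W_{\mathbb{G}}$ implements the coaction: it follows from $\widehat{\Delta}(x)=\widehat{W}_{\mathbb{G}}^{*}(1\otimes x)\widehat{W}_{\mathbb{G}}$ combined with the coaction identity $(\widehat{\Delta}\otimes id)\alpha=(id\otimes\alpha)\alpha$ and the pentagon equation. For (iii), slicing the $L^2(\mathbb{G})$-leg of $\pi(a)U^u_{i,j}$ against $\Omega$ reduces, via $\langle\Omega,\lambda(w^u_{i,j})\Omega\rangle=h_{\mathbb{G}}(w^u_{i,j})$, the Haar orthogonality relations, and the counit property $\alpha^{\epsilon}=id_A$, to $\delta_{u,\epsilon}a$. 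Finally, KSGNS-faithfulness of $E$ follows from faithfulness of the Haar state $h_{\mathbb{G}}$ on $C_r(\mathbb{G})$ together with faithfulness of $\pi$: the KSGNS module attached to $E$ reproduces $\mathcal{E}$ with $P$ acting faithfully.

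\emph{Universal property (iv).} Given a second triple $(\rho,V,E')$ on $Q$ satisfying (i)--(iii), I define $\psi_0$ on the dense $*$-algebra $\mathrm{span}\{\pi(a)U^u_{i,j}\}$ by $\psi_0(\pi(a)U^u_{i,j}):=\rho(a)V^u_{i,j}$. The covariance relation (i) lets one commute the factors $\pi(\cdot)$ and $U^{\bullet}$ at the cost of applying $\alpha$ and, together with the decomposition of $u\otop v$ into irreducibles $z$ (reflected in the corresponding factorization of $U^uU^v$) via intertwiners $\Phi\in Mor(z,u\otop v)$, reduces every word in the generators to the standard form $\sum\pi(c)U^{z}_{k,l}$; since these manipulations use only $(A,\alpha)$ and the fusion rules of $\mathbb{G}$, they hold verbatim for $(\rho,V)$, and $\psi_0$ is a well-defined $*$-homomorphism onto a dense subalgebra of $Q$. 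The decisive point is that, by (i) and (iii), the $M(A)$-valued expression $E\big((\pi(b)U^v_{k,l})^{*}(\pi(a)U^u_{i,j})\big)$ is computed purely from $(A,\alpha)$, the matrix coefficients and the Haar relations, hence coincides with $E'\big((\rho(b)V^v_{k,l})^{*}(\rho(a)V^u_{i,j})\big)$; by linearity $E(T^{*}T)=E'(\psi_0(T)^{*}\psi_0(T))$ on the dense subalgebra. Because $E$ and $E'$ are KSGNS-faithful, these $M(A)$-valued inner products reconstruct the respective reduced $C^{*}$-norms, so $\psi_0$ is isometric and extends to a $*$-isomorphism $\psi:P\to Q$. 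The identities $E=E'\circ\psi$ and the non-degeneracy of $E'$ then follow by comparing (iii) on both sides, and uniqueness of $\psi$ is clear since it is prescribed on generators.

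\emph{Main obstacle.} The genuinely delicate step is the one underlying (iv): one must verify that $E(T^{*}T)$ depends only on the shared data $(A,\alpha)$ and the representation theory of $\mathbb{G}$. This requires careful control of the product $U^uU^v$ through Clebsch--Gordan intertwiners and of the adjoints $(U^u_{i,j})^{*}$, which bring in the conjugate representation $\overline{u}$ and the operators $Q_x$; keeping track of the resulting scalars is where the bookkeeping is heaviest. The second point needing care is the passage from the equality of the $M(A)$-valued forms to an honest isometry, i.e.\ the statement that KSGNS-faithfulness of $E$ forces the norm it induces to be exactly the reduced $C^{*}$-norm on $P$.
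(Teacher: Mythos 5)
Your proposal is correct and takes essentially the same route as the paper: the same concrete model on $L^2(\mathbb{G})\otimes A$ (with $\pi=(\widehat{\lambda}\otimes id_A)\circ\alpha$, $U^x_{i,j}=\lambda(w^x_{i,j})\otimes id_A$, $E=\Upsilon^*(\cdot)\Upsilon$ for $\Upsilon(a)=\Omega\otimes a$), and the same key computation for uniqueness, namely that $E$ evaluated on products of generators is determined by $(A,\alpha)$, the conjugate representations and the Haar orthogonality relations alone, hence agrees with the corresponding expression for $E'$. The only cosmetic difference is in packaging: the paper turns this computation into an isometry $\mathscr{U}$ between the KSGNS modules of $E$ and $E'$ and defines $\psi=\mathscr{U}(\cdot)\mathscr{U}^*$, whereas you deduce isometry of $\psi_0$ on the dense span directly from KSGNS-faithfulness of $E$ and $E'$ — the same fact in different clothing.
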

\begin{note}
	We should notice that the conditions in the preceding theorem can be written coordinate-free as follows, which gives a more common and conceptual statement:
	\begin{enumerate}[i)]
		\item $U^*(id\otimes \pi(a))U=(id\otimes \pi)\alpha(a)$, for all $a\in A$,
		\item $P=C^*(\pi(A)\phi_U(C_m(\mathbb{G})))$,
		\item $(id\otimes E)\big((1\otimes \pi(a))U\big)=p_{\epsilon}\otimes a$, for all $a\in A$.
	\end{enumerate}
\end{note}
\begin{proof}
	First of all, notice that the statement is proven once it is proven for any $x\in Irr(\mathbb{G})$. 
	
	$P$ will be a sub-$C^*$-algebra of $\mathcal{L}_A(L^2(\mathbb{G})\otimes A)$. For the non-degenerate $*$-homomorphism $\pi$ we consider the representation of $A$ on $L^2(\mathbb{G})\otimes A$ ``twisting'' by the action $\alpha$. Precisely, take the GNS representation $(L^2(\mathbb{G}), \widehat{\lambda}, \Omega)$ associated to the left Haar weight $\widehat{h}_L$ of $\widehat{\mathbb{G}}$. So we have that $\widehat{\lambda}\otimes id_A: c_0(\widehat{\mathbb{G}})\otimes A\longrightarrow \mathcal{L}_A(L^2(\mathbb{G})\otimes A)$ is a non-degenerate $*$-homomorphism. We define the non-degenerate $*$-homomorphism $\pi:A \longrightarrow \mathcal{L}_A(L^2(\mathbb{G})\otimes A)$ by $\pi(a):= (\widehat{\lambda}\otimes id_A)\circ \alpha (a)$, for all $a\in A$.

	For $U$ we consider the unitary representation of $\widehat{\mathbb{G}}$ on $L^2(\mathbb{G})\otimes A$ induced by $\lambda$. Precisely, take the unitary representation $\mathscr{V}:=\underset{x\in Irr(\mathbb{G})}{\bigoplus^{c_0}}w^x\in M(c_0(\widehat{\mathbb{G}})\otimes C_r(\mathbb{G}) )$ with $w^x\in \mathcal{B}(H_x)\otimes C(\mathbb{G})$, for all $x\in Irr(\mathbb{G})$. We define the unitary $U:= ( id_{c_0(\widehat{\mathbb{G}})}\otimes\lambda)(\mathscr{V})\otimes id_A\in M(c_0(\widehat{\mathbb{G}})\otimes \mathcal{L}_A(L^2(\mathbb{G})\otimes A))$.
	
	A straightforward computation yields the following expressions
	$$U^x=(id_{\mathcal{B}(H_x)}\otimes \lambda)\otimes w^x\otimes id_A\in \mathcal{B}(H_x)\otimes \mathcal{L}_A(L^2(\mathbb{G})\otimes A)$$ 
	$$U^x_{i,j}= \lambda(w^x_{i,j})\otimes id_A\in \mathcal{L}_A(L^2(\mathbb{G})\otimes A)$$
	for all $x\in Irr(\mathbb{G})$ and all $i,j=1,\ldots,dim(x)$.				

	In this situation, we can check the formula $\pi(a)U^x_{i,j}=\overset{n_x}{\underset{k=1}{\sum}}U^x_{i,k}\pi(\alpha^x_{k,j}(a))$, for all $x\in Irr(\mathbb{G})$, all $a\in A$ and all $i,j=1,\ldots, dim(x)$. Indeed,
	\begin{equation*}
		\begin{split}
			\overset{n_x}{\underset{k=1}{\sum}}U^x_{i,k}&\pi(\alpha^x_{k,j}(a))=\overset{n_x}{\underset{k=1}{\sum}}(\omega_{\xi^x_i,\xi^x_k}\otimes id)\Big(((id\otimes \lambda)(\mathscr{V})\otimes id_A) (p_x\otimes id)\Big)\big((\widehat{\lambda}\otimes id_A)\circ \alpha (\alpha^x_{k,j}(a))\big)\\
			&=\overset{n_x}{\underset{k=1}{\sum}}(\omega_{\xi^x_i,\xi^x_k}\otimes id)\Big(((\widehat{\lambda}\otimes \lambda)(\mathscr{V})\otimes id_A) (p_x\otimes id)\Big)\big(\alpha (\alpha^x_{k,j}(a))\big)\\
			&=\overset{n_x}{\underset{k=1}{\sum}}(\omega_{\xi^x_i,\xi^x_k}\otimes id)\Big(((\widehat{\lambda}\otimes \lambda)(\mathscr{V})\otimes id_A) (p_x\otimes id)\Big)\big(\alpha \big((\omega_{\xi^x_k, \xi^x_j}\otimes id)\alpha(a)(p_x\otimes id)\big)\big)\\
			&=\overset{n_x}{\underset{k=1}{\sum}}(\omega_{\xi^x_i,\xi^x_k}\otimes id)(\omega_{\xi^x_k, \xi^x_j}\otimes id)\Big((p_x\otimes id)((\widehat{\lambda}\otimes \lambda)(\mathscr{V})\otimes id_A) \Big)\big((id\otimes\alpha)\alpha(a)(p_x\otimes id)\big)\big)\\
			&\overset{(1)}{=}\overset{n_x}{\underset{k=1}{\sum}}(\omega_{\xi^x_i,\xi^x_k}\otimes id)(\omega_{\xi^x_k, \xi^x_j}\otimes id)\Big((p_x\otimes id)((\widehat{\lambda}\otimes \lambda)(\mathscr{V})\otimes id_A) \Big)\big((\widehat{\Delta}\otimes id)\alpha(a)(p_x\otimes id)\big)\big)\\
			&\overset{(2)}{=}\overset{n_x}{\underset{k=1}{\sum}}(\omega_{\xi^x_i,\xi^x_k}\otimes id)(\omega_{\xi^x_k, \xi^x_j}\otimes id)\Big((p_x\otimes id)(1\otimes \alpha(a))(\widehat{\lambda}\otimes \lambda)(\mathscr{V})\otimes id_A) \Big)(p_x\otimes id)\\
			&=(\omega_{\xi^x_i,\xi^x_j}\otimes id\otimes id)\Big((1\otimes \alpha(a))(\widehat{\lambda}\otimes \lambda)(\mathscr{V})\otimes id_A) \Big)(p_x\otimes id)\\
			&=(\widehat{\lambda}\otimes id)\alpha(a)(\omega_{\xi^x_i,\xi^x_j}\otimes id\otimes id)\Big((id\otimes \lambda)(\mathscr{V})\otimes id_A) \Big)(p_x\otimes id)=\pi(a)U^x_{ij}\mbox{,}
		\end{split}
	\end{equation*}
	where the equality $(1)$ holds because $\alpha$ is a left action of $\widehat{\mathbb{G}}$ on $A$ and the equality $(2)$ holds because of the definition of the co-multiplication $\widehat{\Delta}$ of $\widehat{\mathbb{G}}$ in terms of its fundamental unitary (observe that $\widehat{W}_{\mathbb{G}}=(id\otimes \lambda)(\mathscr{V})$). 
	
	Thus we define $P:=C^*\langle \pi(a)U^x_{i,j}: a\in A, x\in Irr(\mathbb{G}), i,j=1,\ldots, dim(x) \rangle$ which is a sub-$C^*$-algebra of $\mathcal{L}_A(L^2(\mathbb{G})\otimes A)$.
	
	To conclude the construction of $P$ as in the statement, we have to define a non-degenerate completely positive KSGNS-faithful map $E: P\longrightarrow M(A)=\mathcal{L}_A(A)$ satisfying the formula $E(\pi(a)U^x_{i,j})=a\delta_{x, \epsilon}$ for all $x\in Irr(\mathbb{G})$, all $a\in A$ and all $i,j=1,\ldots, dim(x)$. Namely, let us define the linear map $\Upsilon:A\longrightarrow  L^2(\mathbb{G})\otimes A$ by $\Upsilon(a):= \Omega\otimes a$, for all $a\in A$. It is actually an adjointable map between $A$ and $L^2(\mathbb{G})\otimes A$ whose adjoint is such that $\Upsilon^*(\lambda(w^x_{i,j})\Omega\otimes a)=h_{\mathbb{G}}(w^x_{i,j})a$, for all $x\in Irr(\mathbb{G})$, all $i,j=1,\ldots, dim(x)$ and all $a\in A$. Thus $E(X):=\Upsilon^*\circ X\circ \Upsilon$, for all $X\in P$ defines a completely positive map from $P$ into $M(A)$.
	
	We claim that the triple $(L^2(\mathbb{G})\otimes A, id, \Upsilon)$ is the KSGNS construction for $E$. We only have to prove that $L^2(\mathbb{G})\otimes A=\overline{span\{P\Upsilon(A)\}}$; but, by construction, it suffices to show that $\lambda(w^x_{i,j})\Omega\otimes a\in \overline{P\Upsilon(A)}$ for all $a\in A$, all $x\in Irr(\mathbb{G})$ and all $i,j=1,\ldots,dim(x)$; which is straightforward.

	Finally, an easy computation shows that the formula $E(\pi(a)U^x_{i,j})=a\delta_{x, \epsilon}$ holds for all $x\in Irr(\mathbb{G})$, all $a\in A$ and all $i,j=1,\ldots, dim(x)$. For, fix an orthonormal basis $\{\xi^x_1,\ldots,\xi^x_{n_x}\}$ of $H_{x}$ diagonalizing the canonical operator $Q_x$ with eigenvalues $\{\lambda^x_{j}\}_{j=1,\ldots, n_x}$, so that the formula $\lambda(w^x_{i,j})\Omega=\frac{\sqrt{\lambda^x_j}}{\sqrt{dim_q(x)}}\xi^x_i\otimes \omega^x_j$ holds for all $i,j=1,\ldots,n_x$ where $\{\omega^x_1,\ldots,\omega^x_{n_x}\}$ is the dual basis of $\{\xi^x_1,\ldots,\xi^x_{n_x}\}$ in the dual space $H_{\overline{x}}$. We write
	\begin{equation*}
		\begin{split}
			E(&\pi(a)U^x_{i,j})(b)=\Upsilon^*\big(\pi(a)U^x_{i,j}(\Upsilon(b))\big)=\Upsilon^*\big(\pi(a)U^x_{i,j}(\Omega\otimes b)\big)\\
			&=\Upsilon^*\big( \pi(a)(\lambda(w^x_{i,j})\otimes id_A)(\Omega\otimes b)\big)=\Upsilon^*\big( \pi(a)\big(\lambda(w^x_{i,j})\Omega\otimes b\big)\big)\\
			&=\Upsilon^*\Big( (\widehat{\lambda}\otimes id_A)\circ \alpha (a)\big(\lambda(w^x_{i,j})\Omega\otimes b\big)\Big) \\
			&=\Upsilon^*\Big( (p_x\otimes id_A)\Big[(\widehat{\lambda}\otimes id_A)\circ \alpha (a)\Big](p_x\otimes id_A)\big(\lambda(w^x_{i,j})\Omega\otimes b\big)\Big) \\
			&=\Upsilon^*\Big((p_x\otimes id_A)\Big[(\widehat{\lambda}\otimes id_A)\circ \alpha^x (a)\Big]\big(\lambda(w^x_{i,j})\Omega\otimes b\big)\Big)\\
			&=\Upsilon^*\Big( (p_x\otimes id_A)\Big[(\widehat{\lambda}\otimes id_A)\circ \overset{n_x}{\underset{i,j=1}{\sum}}m^x_{i,j}\otimes \alpha^x_{i,j}(a) \Big]\Big(\Big(\frac{\sqrt{\lambda^x_j}}{\sqrt{dim_q(x)}}\xi^x_i\otimes \omega^x_j\Big)\otimes b\Big)\Big)\\
			&=\Upsilon^*\Big(\overset{n_x}{\underset{i,j=1}{\sum}}\big(m^x_{i,j}\otimes id_{H_{\overline{x}}}\otimes \alpha^x_{i,j}(a) \big)\Big(\Big(\frac{\sqrt{\lambda^x_j}}{\sqrt{dim_q(x)}}\xi^x_i\otimes \omega^x_j\Big)\otimes b\Big)\Big)\\
			&=\Upsilon^*\Big(\frac{\sqrt{\lambda^x_j}}{\sqrt{dim_q(x)}}\overset{n_x}{\underset{i,j=1}{\sum}}\delta_{j,i}\xi^x_i\otimes \omega^x_j\otimes \alpha^x_{i,j}(a) b\Big)\\
			&=\Upsilon^*\Big(\frac{\sqrt{\lambda^x_i}}{\sqrt{dim_q(x)}}\xi^x_i\otimes \omega^x_i\otimes \alpha^x_{i,i}(a) b\Big)=\Upsilon^*\Big(\lambda(w^x_{i,i})\Omega\otimes \alpha^x_{i,i}(a)b\Big)\\
			&=h_{\mathbb{G}}(w^x_{i,i})\alpha^x_{i,i}(a)b=\alpha^x_{i,i}(a)b\delta_{x, \epsilon}=ab\delta_{x, \epsilon}\mbox{,}
		\end{split}
	\end{equation*}	
	where we use the orthogonality relations (and the definition of the KSGNS construction). Since it is true for all $b\in B$, we conclude the required formula.
	
	Observe that by KSGNS construction, $E$ is just a \emph{strict} completely positive map (see \cite{Lance} for the details). But, thanks to the property $E(\pi(a))=a$, for all $a\in A$ that we've just proved, it is clear that $E$ is actually a \emph{non-degenerate} completely positive map as assured in the statement.
	
	Next, let us establish the uniqueness of such a construction. Suppose $Q$ is another $C^*$-algebra with a triple $(\rho, V, E')$ where $\rho: A\longrightarrow Q$ is a non degenerate $*$-homomorphism, $V\in M(c_0(\widehat{\mathbb{G}})\otimes Q)$ is a unitary representation and $E': Q\longrightarrow M(A)$ is a strict completely positive KSGNS-faithful map satisfying the analogous properties $(i), (ii)$ and $(iii)$ of the triple $(\pi, U, E)$ associated to $P$. We have to show that there exists a (unique) $*$-isomorphism $\psi: P\longrightarrow Q$ such that $\psi (\pi(a)U^x_{i,j})=\rho(a)V^x_{i,j}\mbox{,}$ for all $x\in Irr(\mathbb{G})$, all $a\in A$ and all $i,j=1,\ldots, dim(x)$.
	
	Given the strict completely positive KSGNS-faithful maps $E: P\longrightarrow M(A)$ and $E': Q\longrightarrow M(A)$, consider their KSGNS constructions; say $(L^2(\mathbb{G})\otimes A, id, \Upsilon)$ and $(K,\sigma,\Upsilon')$, respectively. This means in particular that $L^2(\mathbb{G})\otimes A=\overline{span\{P\Upsilon(A)\}}$, $\sigma: Q\longrightarrow \mathcal{L}_A(K)$ is a non-degenerate faithful $*$-homomorphism such that $K=\overline{span\{\sigma(Q)\Upsilon'(A)\}}$ and that $E'(Y)=(\Upsilon')^*\circ \sigma(Y)\circ\Upsilon'$, for all all $Y\in Q$.
	
	Define a unitary operator $\mathscr{U}:L^2(\mathbb{G})\otimes A \longrightarrow K$. If such an operator exists, it must verify the formula $\mathscr{U}\big(X\Upsilon(b)\big)=\sigma(Y)\Upsilon'(b)$, for all $X=\pi(a) U^{x}_{i,j}\in P$, $Y= \rho(a) V^{x}_{i,j}\in Q$ and all $b\in A$.
	
	Actually, a straightforward computation shows that the formula above defines an isometry. Indeed, doing the identification $Q\cong\sigma(Q)$ (by virtue of the faithfulness of the KSGNS construction), let us take $X=\pi(a) U^{x}_{i,j}, X'=\pi(a') U^{x'}_{i,j}\in P$, $Y= \rho(a) V^{x}_{i,j}, \\Y'= \rho(a') V^{x'}_{i,j}\in Q$, $b, b'\in A$ and write
	\begin{equation*}
		\begin{split}
			\langle\mathscr{U}&\Big(X\Upsilon(b)\Big), \mathscr{U}\Big(X'\Upsilon(b')\Big)\rangle=\langle Y\Upsilon'(b), Y'\Upsilon'(b') \rangle\\
			&=\langle \rho(a) V^{x}_{i,j}\Upsilon'(b), \rho(a') V^{x'}_{i,j}\Upsilon'(b') \rangle=\langle \Upsilon'(b), (V^{x}_{i,j})^*\rho(a^*)\rho(a') V^{x'}_{i,j}\Upsilon'(b') \rangle\\
			&=\langle b, (\Upsilon')^*\big((V^{x}_{i,j})^*\rho(a^*a') V^{x'}_{i,j}\Upsilon'(b')\big) \rangle=\langle b, E'\big((V^{x}_{i,j})^*\rho(a^*a') V^{x'}_{i,j}\big)(b') \rangle\\
			&=\langle b, E'\big(V^{\overline{x}}_{i,j}\rho(a^*a') V^{x'}_{i,j}\big)(b') \rangle=\langle b, E'\Big(\overset{n_{x}}{\underset{k=1}{\sum}}\rho(\alpha^{x}_{j,k}(a^*a')) V^{\overline{x}}_{i,k}V^{x'}_{i,j}\Big)(b') \rangle\\
			&\overset{(1)}{=}\langle b, E'\Big(\Big(\overset{n_{x}}{\underset{k=1}{\sum}}\rho(\alpha^{x\otop\epsilon}_{j,k}(a^*a'))V^{\overline{x}\otop x'}_{r,t}\Big)(b') \rangle=\langle b, \underset{t}{\sum} \alpha^{x\otop\epsilon}_{j,k}(a^*a') \delta_{\overline{x}\otop x',\epsilon}\ b' \rangle\\
			&=\langle b, E\Big(\overset{n_{x}}{\underset{k=1}{\sum}}\pi(\alpha^{x\otop\epsilon}_{j,k}(a^*a'))U^{\overline{x}\otop x'}_{r,t}\Big)(b') \rangle\\
			&=\langle b, E\Big(\overset{n_{x}}{\underset{k=1}{\sum}}\pi(\alpha^{x}_{j,k}(a^*a'))U^{\overline{x}}_{i,k}\ U^{x'}_{i,j}\Big)(b') \rangle=\langle b, E\big(U^{\overline{x}}_{i,j}\pi(a^*a') U^{x'}_{i,j}\big)(b') \rangle\\
			&=\langle b, E\big((U^{x}_{i,j})^*\pi(a^*a') U^{x'}_{i,j}\big)(b') \rangle=\langle b, \Upsilon^*\big((U^{x}_{i,j})^*\pi(a^*a') U^{x'}_{i,j}\Upsilon(b')\big) \rangle\\
			&=\langle \Upsilon(b), (U^{x}_{i,j})^*\pi(a^*a') U^{x'}_{i,j}\Upsilon(b') \rangle=\langle \Upsilon(b), (U^{x}_{i,j})^*\pi(a^*)\pi(a') U^{x'}_{i,j}\Upsilon(b') \rangle\\
			&=\langle \pi(a) U^{x}_{i,j}\Upsilon(b), \pi(a') U^{x'}_{i,j}\Upsilon(b') \rangle=\langle X\Upsilon(b), X'\Upsilon(b') \rangle\mbox{,}\\
		\end{split}
	\end{equation*}
	where it should be noticed that in $(1)$ we use the index notation $r:= (i,i)$, $t:= (k,j)$ in order to write down properly the coefficients for the tensor product $\overline{x}\otop x'$.
	
	Doing again the identification $Q\cong\sigma(Q)$, we define
	$$
		\begin{array}{rccl}
			\psi:&P& \longrightarrow & Q\\
			&X & \longmapsto &\psi(X):= \mathscr{U}\circ X\circ \mathscr{U}^*
		\end{array}
	$$
	
	It is clear that $\psi$ is a $*$-isomorphism and the formula $\psi (\pi(a)U^x_{i,j})=\rho(a)V^x_{i,j}$ for all $x\in Irr(\mathbb{G})$, all $a\in A$ and all $i,j=1,\ldots, dim(x)$ is easily checked. 
	
	Moreover, by assumption we have $E'(\rho(a)V^x_{i,j})=a\delta_{\gamma,e}$ for all $x\in Irr(\mathbb{G})$, all $a\in A$ and all $i,j=1,\ldots, dim(x)$ and so $E'(\rho(a))=a$ for all $a\in A$; then it is clear that $E'$ is in fact a \emph{non-degenerate} map. Finally, the relation $E=E'\circ \psi$ holds by construction.
\end{proof}
	
	Applying the universal property of the preceding theorem, we get the following results.
	\begin{cor}\label{cor.DirectProductTensorProduct}
		Let $\mathbb{G}$, $\mathbb{H}$ be two compact quantum groups and let $\mathbb{F}:=\mathbb{G}\times \mathbb{H}$ be the corresponding quantum direct product of $\mathbb{G}$ and $\mathbb{H}$.
		
		If $(A,\alpha)$ is a left $\widehat{\mathbb{G}}$-$C^*$-algebra and $(B,\beta)$ is a left $\widehat{\mathbb{H}}$-$C^*$-algebra, then there exists a canonical $*$-isomorphism $\widehat{\mathbb{F}}\underset{\delta, r}{\ltimes} C\cong \widehat{\mathbb{G}}\underset{\alpha,r}{\ltimes} A\otimes \widehat{\mathbb{H}}\underset{\beta,r}{\ltimes} B$, where $C:=A\otimes B$ is the $\widehat{\mathbb{F}}$-$C^*$-algebra with action $\delta:=\alpha\otimes \beta$.
	\end{cor}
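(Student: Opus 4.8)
The plan is to identify the right-hand side as a reduced crossed product by invoking the universal property of Theorem \ref{theo.QuantumReducedCrossedProduct}, exactly in the spirit of Corollary \ref{pro.CrossedProductTensorProduct}. Write $P_1 := \widehat{\mathbb{G}}\underset{\alpha,r}{\ltimes} A$ and $P_2 := \widehat{\mathbb{H}}\underset{\beta,r}{\ltimes} B$, with their canonical data $(\pi_1, U_1, E_1)$ and $(\pi_2, U_2, E_2)$, and set $Q := P_1\otimes P_2$ (minimal tensor product). First I would record the elementary structural facts about the quantum direct product $\mathbb{F} = \mathbb{G}\times\mathbb{H}$: one has $C(\mathbb{F}) = C(\mathbb{G})\otimes C(\mathbb{H})$ and $C_m(\mathbb{F}) = C_m(\mathbb{G})\otimes C_m(\mathbb{H})$, the irreducible representations of $\mathbb{F}$ are precisely the $x\otop y$ with $x\in Irr(\mathbb{G})$ and $y\in Irr(\mathbb{H})$ (so $H_{x\otop y} = H_x\otimes H_y$ and the trivial representation is $\epsilon_{\mathbb{F}} = \epsilon\otop\epsilon$), and, after the identification $c_0(\widehat{\mathbb{F}})\cong c_0(\widehat{\mathbb{G}})\otimes c_0(\widehat{\mathbb{H}})$, the action $\delta = \alpha\otimes\beta$ is indeed a left $\widehat{\mathbb{F}}$-action whose spectral components decompose as $\delta^{x\otop y}_{(i,k),(j,l)}(a\otimes b) = \alpha^x_{i,j}(a)\otimes\beta^y_{k,l}(b)$.

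Next I would produce on $Q$ the triple required by Theorem \ref{theo.QuantumReducedCrossedProduct}: the non-degenerate $*$-homomorphism $\rho := \pi_1\otimes\pi_2 : C\longrightarrow M(Q)$; the unitary representation $V\in M(c_0(\widehat{\mathbb{F}})\otimes Q)$ determined, via the one-to-one correspondence of Section \ref{sec.Notations}, by $\phi_V := \phi_{U_1}\otimes\phi_{U_2}$ on $C_m(\mathbb{F}) = C_m(\mathbb{G})\otimes C_m(\mathbb{H})$, that is $V^{x\otop y}_{(i,k),(j,l)} = (U_1)^x_{i,j}\otimes (U_2)^y_{k,l}$; and the strict completely positive map $E' := E_1\otimes E_2 : Q\longrightarrow M(C)$. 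Properties (i), (ii), (iii) are then verified by direct computation from the corresponding properties of $P_1$ and $P_2$. For the covariance relation (i) one factorizes on elementary tensors $c = a\otimes b$ and $u = x\otop y$, using the spectral decomposition of $\delta$ above:
\[
\rho(a\otimes b)\,V^{x\otop y}_{(i,k),(j,l)} = \big(\pi_1(a)(U_1)^x_{i,j}\big)\otimes\big(\pi_2(b)(U_2)^y_{k,l}\big) = \sum_{m,n} V^{x\otop y}_{(i,k),(m,n)}\,\rho\big(\delta^{x\otop y}_{(m,n),(j,l)}(a\otimes b)\big),
\]
and one concludes by linearity and density. Property (ii) is immediate because the generators $\rho(a\otimes b)V^{x\otop y}_{(i,k),(j,l)}$ are exactly the elementary tensors of the generators of $P_1$ and $P_2$, which span a dense subalgebra of $Q$. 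Property (iii) follows since $E_1(\pi_1(a)(U_1)^x_{i,j}) = \delta_{x,\epsilon}a$ and $E_2(\pi_2(b)(U_2)^y_{k,l}) = \delta_{y,\epsilon}b$ multiply to $\delta_{x\otop y,\,\epsilon_{\mathbb{F}}}(a\otimes b)$.

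The delicate point, and the one I expect to require the most care, is to check that $E' = E_1\otimes E_2$ is again KSGNS-faithful; its non-degeneracy is already guaranteed by (iii) through $E'(\rho(c)) = c$. Here I would argue that if $(K_1,\sigma_1,\Upsilon_1)$ and $(K_2,\sigma_2,\Upsilon_2)$ are the KSGNS constructions of $E_1$ and $E_2$, then the exterior tensor product $(K_1\otimes K_2,\ \sigma_1\otimes\sigma_2,\ \Upsilon_1\otimes\Upsilon_2)$ is the KSGNS construction of $E_1\otimes E_2$; the representation $\sigma_1\otimes\sigma_2$ of the minimal tensor product $Q = P_1\otimes P_2$ is faithful because the minimal tensor product of two faithful representations on Hilbert modules is faithful, whence $E'$ is KSGNS-faithful. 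Once this is in place, the triple $(\rho, V, E')$ satisfies all the hypotheses of part (iv) of Theorem \ref{theo.QuantumReducedCrossedProduct} for the $\widehat{\mathbb{F}}$-$C^*$-algebra $(C,\delta)$, and that theorem yields a unique $*$-isomorphism $\psi: \widehat{\mathbb{F}}\underset{\delta,r}{\ltimes} C \longrightarrow Q = P_1\otimes P_2$ with $\psi(\pi(c)\,U^u_{i,j}) = \rho(c)\,V^u_{i,j}$, which is the asserted canonical isomorphism.
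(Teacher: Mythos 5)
Your proposal is correct and takes precisely the route the paper intends: the corollary is stated there as a direct application of the universal property of Theorem \ref{theo.QuantumReducedCrossedProduct}, i.e.\ one equips $\widehat{\mathbb{G}}\underset{\alpha,r}{\ltimes} A\otimes \widehat{\mathbb{H}}\underset{\beta,r}{\ltimes} B$ with a triple $(\rho, V, E')$ built from the canonical triples of the two factors, checks conditions $(i)$--$(iii)$ together with KSGNS-faithfulness of $E_1\otimes E_2$ (via the exterior tensor product of the two KSGNS constructions), and invokes part $(iv)$ --- exactly as you do. The only slip is notational: $C_m(\mathbb{F})$ is the \emph{maximal} tensor product $C_m(\mathbb{G})\otimes_{max.}C_m(\mathbb{H})$, not the minimal one, but this is harmless since $\phi_{U_1}\otimes\phi_{U_2}$ is in any case well defined on the maximal tensor product.
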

	\begin{note}
		By abuse of notation we denote by $\alpha\otimes\beta$ the composition \begin{equation*}
			\begin{split}
				A\otimes B\overset{\alpha\otimes\beta}{\longrightarrow}&\widetilde{M}(c_0(\widehat{\mathbb{G}})\otimes A)\otimes \widetilde{M}(c_0(\widehat{\mathbb{H}})\otimes B)\subset \widetilde{M}(c_0(\widehat{\mathbb{G}})\otimes A\otimes c_0(\widehat{\mathbb{H}})\otimes B)\\
				&\overset{\Sigma}{\cong} \widetilde{M}(c_0(\widehat{\mathbb{G}})\otimes c_0(\widehat{\mathbb{H}}) \otimes A\otimes B)= \widetilde{M}(c_0(\widehat{\mathbb{F}})\otimes A\otimes B)\mbox{,}
			\end{split}
		\end{equation*} which is a left action of $\widehat{\mathbb{F}}$ on $A\otimes B$.
	\end{note}
	
	\begin{cor}\label{pro.CrossedProductTensorProduct}
		Let $\mathbb{G}$ be a compact quantum group. If $(A,\alpha)$ is a left $\widehat{\mathbb{G}}$-$C^*$-algebra and $B$ is any $C^*$-algebra, then we have a canonical $*$-isomorphism $\widehat{\mathbb{G}}\underset{id\otimes\alpha, r}{\ltimes} (B\otimes A)\cong B\otimes \widehat{\mathbb{G}}\underset{{\alpha, r}}{\ltimes} A$, where $id\otimes \alpha: B\otimes A\longrightarrow \widetilde{M}(c_0(\widehat{\mathbb{G}})\otimes B\otimes A)$ denotes, by abuse of notation, the action given by the composition $(\Sigma_{12}\otimes id_A)(id_B\otimes \alpha)$.
	\end{cor}

	\begin{rem}\label{rem.Functoriality}
		Let $\mathbb{G}$ be a compact quantum group, $(A,\alpha)$, $(B,\beta)$ two left $\widehat{\mathbb{G}}$-$C^*$-algebras and $\varphi: A\longrightarrow B$ a $\widehat{\mathbb{G}}$-equivariant $*$-homomorphism. 
		
		On the one hand, there exists a $*$-homomorphism $\mathcal{Z}(\varphi):= id\ltimes \varphi: \widehat{\mathbb{G}}\underset{\alpha,r}{\ltimes} A\longrightarrow \widehat{\mathbb{G}}\underset{\beta,r}{\ltimes} B$ such that $\mathcal{Z}(\varphi)(\pi_{\alpha}(a)(U^\alpha)^{x}_{i,j})=\pi_{\beta}(\varphi(a))(U^\beta)^x_{i,j}$, for all $a\in A$, all $x\in Irr(\mathbb{G})$ and all $i,j=1,\ldots, dim(x)$ where $(\pi_{\alpha}, U^{\alpha}, E_{\alpha})$ and $(\pi_{\beta}, U^{\beta}, E_{\beta})$ are the canonical triples associated to the reduced crossed products $\widehat{\mathbb{G}}\underset{\alpha,r}{\ltimes} A$ and $\widehat{\mathbb{G}}\underset{\beta,r}{\ltimes} B$, respectively.
		
		The $*$-homomorphism $\mathcal{Z}(\varphi)$ above is nothing but the restriction of $\mathcal{L}_{A}(L^2(\mathbb{G})\otimes A) \longrightarrow \mathcal{L}_{B}(L^2(\mathbb{G})\otimes B)$ defined by $T \longmapsto\mathscr{U}_{\varphi}(T\otimes_{\varphi} id_B)\mathscr{U}_{\varphi}^{-1}$, for all $T\in \mathcal{L}_{A}(L^2(\mathbb{G})\otimes A)$, where $\mathscr{U}_{\varphi}:L^2(\mathbb{G})\otimes A\otimes_{\varphi} B\overset{\sim}{\longrightarrow} L^2(\mathbb{G})\otimes B$ is the canonical isometry of Hilbert modules such that $\mathscr{U}_{\varphi}(\xi\otimes a\otimes_{\varphi} b)=\xi\otimes \varphi(a)b$, for all $\xi\in L^2(\mathbb{G})$, all $a\in A$ and all $b\in B$.
		
		Finally, observe that $\mathcal{Z}(\varphi)=id\ltimes\varphi$ is, by construction, compatible with the elements of the canonical triples in the following sense
		$$\mathcal{Z}(\varphi)(\pi_{\alpha}(a))=\pi_{\beta}(\varphi(a))\mbox{, }\mathcal{Z}(\varphi)((U^\alpha)^{x}_{i,j})=(U^\beta)^{x}_{i,j}\mbox{, }E_{\beta}\circ\mathcal{Z}(\varphi)=E_{\alpha}\circ\varphi\mbox{,}$$
		for all $a\in A$, all $x\in Irr(\mathbb{G})$ and all $i,j=1,\ldots, dim(x)$.
		
		On the other hand, the cone $C_\varphi$ is a $\widehat{\mathbb{G}}$-$C^*$-algebra with action 
		$$
			\begin{array}{rccl}
				\delta:&C_{\varphi}& \longrightarrow &M(c_0(\widehat{\mathbb{G}})\otimes C_{\varphi})\\
				&(a,h) & \longmapsto &\delta(a,h):=(\alpha(a), \beta\circ h)
			\end{array}
		$$
		
		Given an irreducible representation $x\in Irr(\mathbb{G})$, the matrix coefficients of $\delta^x$ with respect to an orthonormal basis of $H_x$ are given by $\delta^{x}_{i,j}(a,h):=\big(\alpha^{x}_{i,j}(a),\beta^{x}_{i,j}\circ h\big)\in C_{\varphi}$, for all $(a,h)\in C_{\varphi}$ and all $i,j=1,\ldots, n_x$.
	\end{rem}
	
	The following is a straightforward but useful result for our purpose.
	\begin{pro}\label{pro.ConesTensorProduct}
		Let $\mathbb{G}$ and $\mathbb{H}$ be two compact quantum groups and put $\mathbb{F}:=\mathbb{G}\times \mathbb{H}$. If $A_0$ is a $\widehat{\mathbb{G}}$-$C^*$-algebra and $\phi: B\longrightarrow B'$ is a $\widehat{\mathbb{H}}$-equivariant $*$-homomorphism, then there exists a canonical $\widehat{\mathbb{F}}$-equivariant $*$-isomorphism $A_0\otimes C_{\phi}\cong C_{id\otimes\phi}$, where $C_{\phi}$ denotes the cone of the $*$-homomorphism $\phi$ and $C_{id\otimes\phi}$ the cone of the induced $*$-homomorphism $id_{A_0}\otimes\phi: A_0\otimes B\longrightarrow A_0\otimes B'$.
	\end{pro}
	
	\begin{pro}\label{pro.ConesCrossedProduct}
		Let $\mathbb{G}$ be a compact quantum group and $(A,\alpha)$, $(B,\beta)$ two left $\widehat{\mathbb{G}}$-$C^*$-algebras. If $\varphi: A\longrightarrow B$ is any $\widehat{\mathbb{G}}$-equivariant $*$-homomorphism, then there exists a canonical $*$-isomorphism $\widehat{\mathbb{G}}\underset{r}{\ltimes} C_{\varphi}\cong C_{id\ltimes\varphi}$, where $C_{\varphi}$ denotes the cone of the $*$-homomorphism $\varphi$ and $C_{id\ltimes\varphi}$ the cone of the induced $*$-homomorphism $id\ltimes \varphi: \widehat{\mathbb{G}}\underset{\alpha,r}{\ltimes} A\longrightarrow \widehat{\mathbb{G}}\underset{\beta,r}{\ltimes} B$.
	\end{pro}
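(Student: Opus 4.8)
The plan is to identify the cone $C_{\varphi}$ as a $\widehat{\mathbb{G}}$-$C^*$-algebra in a natural way, form its reduced crossed product using Theorem \ref{theo.QuantumReducedCrossedProduct}, and then recognize the resulting object as the cone of the induced morphism by means of the universal property. Recall that the cone is $C_{\varphi}=\{(a,f)\in A\oplus C_0((0,1],B)\ |\ f(1)=\varphi(a)\}$. Since $\varphi$ is $\widehat{\mathbb{G}}$-equivariant, the action $\alpha$ on $A$ together with the action $id\otimes\beta$ on $C_0((0,1],B)$ (acting trivially on the interval coordinate) restricts to a left $\widehat{\mathbb{G}}$-action on $C_{\varphi}$; this is the point to verify first, and it is immediate from the equivariance condition $(\mathrm{id}\otimes\beta)\circ\varphi=(\varphi\otimes\mathrm{id})\circ\alpha$ compatibly with evaluation at $1$.

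Next I would compute $\widehat{\mathbb{G}}\underset{r}{\ltimes}C_{\varphi}$ through the universal property. The candidate isomorphism sends $C_{\varphi}$ onto the cone $C_{id\ltimes\varphi}=\{(x,g)\in (\widehat{\mathbb{G}}\underset{\alpha,r}{\ltimes}A)\oplus C_0((0,1],\widehat{\mathbb{G}}\underset{\beta,r}{\ltimes}B)\ |\ g(1)=(id\ltimes\varphi)(x)\}$. To build it, I would equip $C_{id\ltimes\varphi}$ with a triple $(\rho,V,E')$ satisfying the hypotheses $(i),(ii),(iii)$ of Theorem \ref{theo.QuantumReducedCrossedProduct} for the $\widehat{\mathbb{G}}$-$C^*$-algebra $C_{\varphi}$, and then invoke the uniqueness clause to get the desired $*$-isomorphism. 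Here I would use Corollary \ref{pro.CrossedProductTensorProduct} to identify $\widehat{\mathbb{G}}\underset{id\otimes\beta,r}{\ltimes}C_0((0,1],B)\cong C_0((0,1],\widehat{\mathbb{G}}\underset{\beta,r}{\ltimes}B)$, since $C_0((0,1],B)\cong C_0((0,1])\otimes B$ with the action trivial on the first factor. The representation $\rho$ is assembled from $\pi_{\alpha}$ and $\pi_{\beta}$ restricted to the cone, the unitary $V$ from $U^{\alpha}$ and $U^{\beta}$ (they agree on the first legs by Remark \ref{rem.Functoriality}, which guarantees compatibility at the endpoint $t=1$), and $E'$ from the conditional expectations $E_{\alpha},E_{\beta}$ componentwise; the KSGNS-faithfulness of $E'$ follows from that of $E_{\alpha}$ and $E_{\beta}$.

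The main technical point to check is property $(i)$ of the triple $(\rho,V,E')$, i.e.\ that $\rho$ and $V$ satisfy the covariance relation dictated by the restricted action on $C_{\varphi}$; this reduces, componentwise, to the covariance relations already valid for $(\pi_{\alpha},U^{\alpha})$ and $(\pi_{\beta},U^{\beta})$, together with the endpoint compatibility ensured by Remark \ref{rem.Functoriality}. The condition $f(1)=\varphi(a)$ at the level of $C_{\varphi}$ translates, after applying $id\ltimes\varphi$, precisely into the boundary condition $g(1)=(id\ltimes\varphi)(x)$ defining $C_{id\ltimes\varphi}$, using the compatibility $E_{\beta}\circ(id\ltimes\varphi)=E_{\alpha}\circ\varphi$ and the relations $\mathcal{Z}(\varphi)(\pi_{\alpha}(a))=\pi_{\beta}(\varphi(a))$. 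I expect the main obstacle to be bookkeeping at the endpoint: one must verify that the crossed-product structure respects evaluation at $t=1$ so that the boundary condition of the cone is preserved, and this is exactly where the explicit formulas of Remark \ref{rem.Functoriality} are needed. Once the triple is shown to satisfy $(i)$--$(iii)$, the uniqueness part of Theorem \ref{theo.QuantumReducedCrossedProduct} delivers the canonical $*$-isomorphism $\widehat{\mathbb{G}}\underset{r}{\ltimes}C_{\varphi}\cong C_{id\ltimes\varphi}$, completing the proof.
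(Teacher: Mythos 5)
Your proposal is correct and follows essentially the same route as the paper's own proof: equip $C_{id\ltimes\varphi}$ with a componentwise triple $(\overline{\rho},\overline{V},\overline{E})$ assembled from the canonical triples of $\widehat{\mathbb{G}}\underset{\alpha,r}{\ltimes}A$ and $\widehat{\mathbb{G}}\underset{\beta,r}{\ltimes}B$, check conditions (i)--(iii), and invoke the uniqueness clause of Theorem \ref{theo.QuantumReducedCrossedProduct}. The only point where the paper is more explicit than you are is the KSGNS-faithfulness of $\overline{E}$, which it proves by exhibiting the concrete Hilbert $C_{\varphi}$-module $\mathscr{H}$ of pairs $(\xi,\eta)$ satisfying $\eta(1)=\mathscr{U}_{\varphi}(\xi\otimes_{\varphi}id_B)$ --- exactly the endpoint bookkeeping via Remark \ref{rem.Functoriality} that you anticipate as the main obstacle.
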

	\begin{proof}
			First, recall the definitions of our cones: $C_{\varphi}:=\{(a,h)\in A\times C_0\big((0,1], B\big)\ |\ \varphi(a)=h(1)\}$ and \\$C_{id\ltimes\varphi}:=\{(X, \tilde{h})\in \widehat{\mathbb{G}}\underset{\alpha,r}{\ltimes} A\times C_0\big((0,1], \widehat{\mathbb{G}}\underset{\beta,r}{\ltimes} B\big)\ |\ id\ltimes\varphi(X)=\tilde{h}(1)\}$. Observe that if $(A,\alpha)$, $(B,\beta)$ are left $\widehat{\mathbb{G}}$-$C^*$-algebras, then $(C_{\varphi},\delta)$ is again a left $\widehat{\mathbb{G}}$-$C^*$-algebra in the obvious way.
			
			In order to show the canonical $*$-isomorphism $\widehat{\mathbb{G}}\underset{r}{\ltimes} C_{\varphi}\cong C_{id\ltimes\varphi}$, we are going to show that the $C^*$-algebra $C_{id\ltimes\varphi}$ satisfies the universal property of the reduced crossed product $\widehat{\mathbb{G}}\underset{r}{\ltimes} C_{\varphi}$. To do so, we have to define a triple $(\overline{\rho}, \overline{V}, \overline{E})$ associated to $C_{id\ltimes\varphi}$ in the sense of Theorem \ref{theo.QuantumReducedCrossedProduct}. 
			
			Given the reduced crossed products $\widehat{\mathbb{G}}\underset{\alpha,r}{\ltimes} A$ and $\widehat{\mathbb{G}}\underset{\beta, r}{\ltimes} B$, consider the corresponding canonical associated triples $(\pi_{\alpha}, U^{\alpha}, E_{\alpha})$ and $(\pi_{\beta}, U^{\beta}, E_{\beta})$, respectively; and define the non-degenerate $*$-homomorphism $\overline{\rho}:C_{\varphi}\longrightarrow C_{id\ltimes\varphi}$ by $\overline{\rho}(a,h):=(\pi_{\alpha}(a), \pi_{\beta}\circ h)$, for all $(a,h)\in C_{\varphi}$; the unitary representation $\overline{V}\in M(c_0(\widehat{\mathbb{G}})\otimes C_{id\ltimes\varphi})$ is defined by the non-degenerate $*$-homomorphism $\phi_{\overline{V}}: C_{m}(\mathbb{G})\longrightarrow M(C_{id\ltimes\varphi})$ as $\phi_{\overline{V}}(c):=\big(\phi_{U^{\alpha}}(c)\cdot\ , \phi_{U^{\beta}}(c)\cdot\ \big)$, for all $c\in C_m(\mathbb{G})$	and observe that by construction we have $\overline{V}^{x}_{i,j}=\big((U^{\alpha})^{x}_{i,j}\cdot\ , (U^{\beta})^x_{i,j}\cdot\ \big)\in M(C_{id\ltimes\varphi})$, for all $x\in Irr(\mathbb{G})$ and all $i,j=1,\ldots,dim(x)$. The strict completely positive KSGNS-faithful map $\overline{E}:C_{id\ltimes\varphi}\longrightarrow M(C_{\varphi})=\mathcal{L}_{C_{\varphi}}(C_{\varphi})$ is defined by $\overline{E}(X,\tilde{h}):=\big(E_{\alpha}(X)\cdot\ , E_{\beta}\circ \tilde{h}\cdot\ \big)$, for all $(X,\tilde{h})\in C_{id\ltimes\varphi}$.			
	
			To conclude the proof we have to check the following
			\begin{enumerate}[i)]
				\item $\overline{\rho}(a,h)\overline{V}^x_{i,j}=\overset{dim(x)}{\underset{k=1}{\sum}}\overline{V}^x_{i,k}\overline{\rho}(\delta^x_{k,j}(a,h))$, for all $(a,h)\in C_{\varphi}$, all $x\in Irr(\mathbb{G})$ and all $i,j=1,\ldots, dim(x)$; which is a routine computation.
				\item $\overline{E}$ is always a KSGNS-faithful map such that $\overline{E}(\overline{\rho}(a,h)\overline{V}^{x}_{i,j})=(a,h)\delta_{x,\epsilon}$, for all $(a,h)\in C_{\varphi}$, all $x\in Irr(\mathbb{G})$ and all $i,j=1,\ldots,dim(x)$. The formula is straightforward and concerning the KSGNS-faithfulness we are going to exhibit directly the KSGNS-construction for our $\overline{E}:C_{id\ltimes\varphi}\longrightarrow M(C_{\varphi})=\mathcal{L}_{C_{\varphi}}(C_{\varphi})$. In order to do so, recall that $(L^2(\mathbb{G})\otimes A, id, \Upsilon_{\alpha})$ and $(L^2(\mathbb{G})\otimes B, id, \Upsilon_{\beta})$ are the KSGNS constructions for $E_{\alpha}$ and $E_{\beta}$, respectively. First, we need an appropriated Hilbert $C_{\varphi}$-module. Let us take
				$$\mathscr{H}:=\{(\xi,\eta)\in L^2(\mathbb{G})\otimes A\times C_0\big((0,1], L^2(\mathbb{G})\otimes B\big)\ |\ \eta(1)=\mathscr{U}_{\varphi}(\xi\otimes_{\varphi} id_B)\}\mbox{,}$$
				where $\mathscr{U}_{\varphi}$ is the canonical isometry between $L^2(\mathbb{G})\otimes A\otimes_{\varphi}B$ and $L^2(\mathbb{G})\otimes B$ of Remark \ref{rem.Functoriality} above.
						
				Next, consider the adjointable operator $\Upsilon:C_{\varphi} \longrightarrow \mathscr{H}$ defined by \\$\Upsilon(a,h):=\big(\Upsilon_{\alpha}(a), \Upsilon_{\beta}\circ h\big)$, for all $(a,h)\in C_{\varphi}$ and the representation $\sigma$ (faithful, thanks to the faithfulness of the KSGNS constructions of $E_{\alpha}$ and $E_{\beta}$) of $C_{id\ltimes\varphi}$ on $\mathscr{H}$ given by $\sigma(X,\tilde{h}):=\big(X\cdot\ , \tilde{h}\cdot\ \big)$, for all $(X,\tilde{h})\in C_{id\ltimes\varphi}$.				
				
				In this way, it is easy to check that the triple $(\mathscr{G}, \sigma, \Upsilon)$ with $\mathscr{G}:=\overline{span\{\sigma(C_{id\ltimes\varphi})\Upsilon(C_{\varphi})\}}$ is the KSGNS construction of our $\overline{E}$ and then $\overline{E}$ is KSGNS-faithful (observe by the way that our $\overline{E}$ above is defined exactly through $\Upsilon$ by construction).
		\end{enumerate}
	\end{proof}

	\subsection{The Baum-Connes property for discrete quantum groups}\label{sec.BCDiscreteQuantumGroups}
		We collect here the main categorical framework for the formulation of the Baum-Connes property for \emph{torsion-free} discrete quantum groups. We refer to \cite{MeyerNest} or \cite{Jorgensen} for a complete presentation of the subject.
	
	Let $\widehat{\mathbb{G}}$ be a discrete quantum group and consider the corresponding equivariant Kasparov category, $\mathscr{K}\mathscr{K}^{\widehat{\mathbb{G}}}$, with canonical suspension functor denoted by $\Sigma$. The word \emph{homomorphism (resp., isomorphism)} will mean \emph{homomorphism (resp., isomorphism) in the corresponding Kasparov category}; it will be a true homomorphism (resp., isomorphism) between $C^*$-algebras or any Kasparov triple between $C^*$-algebras (resp., any $KK$-equivalence between $C^*$-algebras). From now on, in order to formulate the Baum-Connes property for a discrete quantum group, we assume that $\widehat{\mathbb{G}}$ is \emph{torsion-free}. In that case, consider the usual complementary pair of localizing subcategories in $\mathscr{K}\mathscr{K}^{\widehat{\mathbb{G}}}$, $(\mathscr{L}_{\widehat{\mathbb{G}}}, \mathscr{N}_{\widehat{\mathbb{G}}})$. Denote by $(L,N)$ the canonical triangulated functors associated to this complementary pair. More precisely we have that $\mathscr{L}_{\widehat{\mathbb{G}}}$ is defined as the \emph{localizing subcategory of $\mathscr{K}\mathscr{K}^{\widehat{\mathbb{G}}}$ generated by the objects of the form $Ind^{\widehat{\mathbb{G}}}_{\mathbb{E}}(C)=c_0(\widehat{\mathbb{G}})\otimes C$ with $C$ any $C^*$-algebra in the Kasparov category $\mathscr{K}\mathscr{K}$} and $\mathscr{N}_{\widehat{\mathbb{G}}}$ is defined as the \emph{localizing subcategory of objects which are isomorphic to $0$ in $\mathscr{K}\mathscr{K}$}.
	$$\mathscr{L}_{\widehat{\mathbb{G}}}:=\langle\{Ind^{\widehat{\mathbb{G}}}_{\mathbb{E}}(C)=c_0(\widehat{\mathbb{G}})\otimes C\ |\ C\in Obj.(\mathscr{K}\mathscr{K})\}\rangle$$
	$$\mathscr{N}_{\widehat{\mathbb{G}}}=\{A\in Obj.(\mathscr{K}\mathscr{K}^{\widehat{\mathbb{G}}})\ |\ Res^{\widehat{\mathbb{G}}}_{\mathbb{E}}(A)=0\}=\{A\in Obj.(\mathscr{K}\mathscr{K}^{\widehat{\mathbb{G}}})\ |\ L(A)=0\}$$
	
	\begin{note}
		The following nomenclature is useful. Given $A\in Obj.(\mathscr{K}\mathscr{K}^{\widehat{\mathbb{G}}})$ consider a $(\mathscr{L}_{\widehat{\mathbb{G}}}, \mathscr{N}_{\widehat{\mathbb{G}}})$-triangle associated to $A$, say $\Sigma N(A)\longrightarrow L(A)\overset{D}{\longrightarrow} A\longrightarrow N(A)$. We know that such triangles are distinguished and unique up to isomorphism. The homomorphism $D:L(A)\longrightarrow A$ is called \emph{Dirac homomorphism for $A$}. In particular, we consider the Dirac homomorphism for $\mathbb{C}$ (as trivial $\widehat{\mathbb{G}}$-$C^*$-algebra), $D_{\mathbb{C}}:L(\mathbb{C})\longrightarrow \mathbb{C}$. We refer to $D_{\mathbb{C}}$ simply as \emph{Dirac homomorphism}.
		
		More generally, if $(\mathcal{T},\Sigma)$ is any triangulated category and $(\mathscr{L}_{\mathcal{T}}, \mathscr{N}_{\mathcal{T}})$ is a complementary pair of localizing subcategories in $\mathcal{T}$ with canonical functors $(L,N)$, then for all object $X\in Obj.(\mathcal{T})$ there exists a distinguished triangle (unique up to isomorphism) of the form $\Sigma(N(X))\longrightarrow L(X)\overset{u}{\longrightarrow} X\longrightarrow N(X)$. By abuse of language, the homomorphism $u$ is called \emph{Dirac homomorphism for $X$}. Given a functor $F:\mathcal{T}\longrightarrow \mathscr{C}$, where $\mathscr{C}$ is some category, we define $\mathbb{L}F:=F\circ L$ and $\eta_X:=F(u)$, for all $X\in Obj.(\mathcal{T})$. The latter yields a natural transformation $\eta: \mathbb{L}F\longrightarrow F$.
	\end{note}
	
	Finally, consider the homological functor defining the \emph{quantum} Baum-Connes assembly map for $\widehat{\mathbb{G}}$,
	$$
		\begin{array}{rccl}
			F:&\mathscr{K}\mathscr{K}^{\widehat{\mathbb{G}}}& \longrightarrow &\mathscr{A}b^{\mathbb{Z}/2}\\
			&(A,\delta) & \longmapsto &F(A):=K_{*}(\widehat{\mathbb{G}}\underset{\delta, r}{\ltimes} A)
		\end{array}
	$$
	
	The quantum assembly map for $\widehat{\mathbb{G}}$ is given by the natural transformation $\eta^{\widehat{\mathbb{G}}}: \mathbb{L}F\longrightarrow F$. 
	\begin{defi}
		Let $\widehat{\mathbb{G}}$ be a torsion-free discrete quantum group. 
		\begin{itemize}
			\item[-] We say that $\widehat{\mathbb{G}}$ satisfies the quantum Baum-Connes property (with coefficients) if the natural transformation $\eta^{\widehat{\mathbb{G}}}: \mathbb{L}F\longrightarrow F$ is a natural equivalence.
		
			\item[-] We say that $\widehat{\mathbb{G}}$ satisfies the \emph{strong} Baum-Connes property if $\mathscr{K}\mathscr{K}^{\widehat{\mathbb{G}}}=\mathscr{L}_{\widehat{\mathbb{G}}}$.
		\end{itemize}
	\end{defi}
	
	\bigskip
	To the best knowledge of the author it is open to know if the Baum-Connes property is preserved by quantum subgroups in general. However, we can show that it is preserved by \emph{divisible} discrete quantum subgroups. Let $\widehat{\mathbb{H}}<\widehat{\mathbb{G}}$ be any discrete quantum subgroup of $\widehat{\mathbb{G}}$. We have two relevant functors: restriction, which is obvious, and induction, which has been studied by S. Vaes in \cite{VaesInduction} in the framework of quantum groups.
	$$Res^{\widehat{\mathbb{G}}}_{\widehat{\mathbb{H}}}:\mathscr{K}\mathscr{K}^{\widehat{\mathbb{G}}}\longrightarrow \mathscr{K}\mathscr{K}^{\widehat{\mathbb{H}}}\mbox{ and }Ind^{\widehat{\mathbb{G}}}_{\widehat{\mathbb{H}}}:\mathscr{K}\mathscr{K}^{\widehat{\mathbb{H}}}\longrightarrow \mathscr{K}\mathscr{K}^{\widehat{\mathbb{G}}}$$
	
	It is well-known that restriction and induction are \emph{triangulated} functors by virtue of the universal property of the Kasparov category (see \cite{VoigtPoincareDuality} for more details). Moreover, they are adjoint in the sense that $KK^{\widehat{\mathbb{G}}}(Ind^{\widehat{\mathbb{G}}}_{\widehat{\mathbb{H}}}(B), A)\cong KK^{\widehat{\mathbb{H}}}(B, Res^{\widehat{\mathbb{G}}}_{\widehat{\mathbb{H}}}(A))$, for all $\widehat{\mathbb{G}}$-$C^*$-algebra $A$ and all $\widehat{\mathbb{H}}$-$C^*$-algebra $B$ (see \cite{VoigtBaumConnesUnitaryFree} for a proof). Denote by $(L',N')$ the canonical triangulated functors associated to the complementary pair $(\mathscr{L}_{\widehat{\mathbb{H}}}, \mathscr{N}_{\widehat{\mathbb{H}}})$.
	\begin{lem}\label{lem.ResIndDirac}
		Let $\mathbb{G}$, $\mathbb{H}$ be two compact quantum groups. If $\widehat{\mathbb{G}}$ is torsion-free and $\widehat{\mathbb{H}}<\widehat{\mathbb{G}}$ is a divisible torsion-free discrete quantum subgroup, then the following properties hold.
		\begin{enumerate}[i)]
			\item $Res^{\widehat{\mathbb{G}}}_{\widehat{\mathbb{H}}}(\mathscr{L}_{\widehat{\mathbb{G}}})\subset \mathscr{L}_{\widehat{\mathbb{H}}}$ and $Res^{\widehat{\mathbb{G}}}_{\widehat{\mathbb{H}}}(\mathscr{N}_{\widehat{\mathbb{G}}})\subset \mathscr{N}_{\widehat{\mathbb{H}}}$. Hence, we have the following natural isomorphisms $Res^{\widehat{\mathbb{G}}}_{\widehat{\mathbb{H}}}\circ L\cong L'\circ Res^{\widehat{\mathbb{G}}}_{\widehat{\mathbb{H}}}$ and $Res^{\widehat{\mathbb{G}}}_{\widehat{\mathbb{H}}}\circ N\cong N'\circ Res^{\widehat{\mathbb{G}}}_{\widehat{\mathbb{H}}}$.
			\item $Ind^{\widehat{\mathbb{G}}}_{\widehat{\mathbb{H}}}(\mathscr{L}_{\widehat{\mathbb{H}}})\subset \mathscr{L}_{\widehat{\mathbb{G}}}$ and $Ind^{\widehat{\mathbb{G}}}_{\widehat{\mathbb{H}}}(\mathscr{N}_{\widehat{\mathbb{H}}})\subset \mathscr{N}_{\widehat{\mathbb{G}}}$. Hence, we have the following natural isomorphisms $Ind^{\widehat{\mathbb{G}}}_{\widehat{\mathbb{H}}}\circ L'\cong L\circ Ind^{\widehat{\mathbb{G}}}_{\widehat{\mathbb{H}}}$ and $Ind^{\widehat{\mathbb{G}}}_{\widehat{\mathbb{H}}}\circ N'\cong N\circ Ind^{\widehat{\mathbb{G}}}_{\widehat{\mathbb{H}}}$.
		\end{enumerate}
		
		Consequently, $Res^{\widehat{\mathbb{G}}}_{\widehat{\mathbb{H}}}$ transforms the assembly map for $\widehat{\mathbb{G}}$ into the assembly map for $\widehat{\mathbb{H}}$ and $Ind^{\widehat{\mathbb{G}}}_{\widehat{\mathbb{H}}}$ transforms the assembly map for $\widehat{\mathbb{H}}$ into the assembly map for $\widehat{\mathbb{G}}$.
	\end{lem}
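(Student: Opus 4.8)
The plan is to reduce every one of the six inclusions to a check on the generators of the relevant localizing subcategory and then to let a single categorical principle do the rest. That principle is: if $T$ is a triangulated functor between two triangulated categories carrying complementary pairs $(\mathscr{L}_1,\mathscr{N}_1)$, $(\mathscr{L}_2,\mathscr{N}_2)$ with localization functors $(L_1,N_1)$, $(L_2,N_2)$, and if $T(\mathscr{L}_1)\subset\mathscr{L}_2$ and $T(\mathscr{N}_1)\subset\mathscr{N}_2$, then applying $T$ to the $(\mathscr{L}_1,\mathscr{N}_1)$-triangle $\Sigma N_1(X)\to L_1(X)\to X\to N_1(X)$ produces a distinguished triangle whose left vertex lies in $\mathscr{L}_2$ and whose right vertex lies in $\mathscr{N}_2$; by uniqueness of the complementary decomposition this is the $(\mathscr{L}_2,\mathscr{N}_2)$-triangle of $T(X)$, whence $T\circ L_1\cong L_2\circ T$ and $T\circ N_1\cong N_2\circ T$. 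Each \emph{Hence} in the statement is exactly this principle applied to the triangulated functors $T=-\otimes A$, $T=Res^{\widehat{\mathbb{G}}}_{\widehat{\mathbb{H}}}$ and $T=Ind^{\widehat{\mathbb{G}}}_{\widehat{\mathbb{H}}}$; so all the content lies in the inclusions, and for those it suffices to test the generators $c_0(\widehat{\mathbb{G}})\otimes C$ of $\mathscr{L}_{\widehat{\mathbb{G}}}$ (resp. $c_0(\widehat{\mathbb{H}})\otimes C$ of $\mathscr{L}_{\widehat{\mathbb{H}}}$) together with the defining condition $Res^{\widehat{\mathbb{G}}}_{\mathbb{E}}(\cdot)=0$ for $\mathscr{N}$.

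For $(i)$ I would first record the absorption isomorphism $c_0(\widehat{\mathbb{G}})\otimes A\cong Ind^{\widehat{\mathbb{G}}}_{\mathbb{E}}\big(Res^{\widehat{\mathbb{G}}}_{\mathbb{E}}(A)\big)$ of $\widehat{\mathbb{G}}$-$C^*$-algebras, the diagonal action on the left corresponding to the action on the first leg only on the right. Applied to a generator this gives $(c_0(\widehat{\mathbb{G}})\otimes C)\otimes A\cong Ind^{\widehat{\mathbb{G}}}_{\mathbb{E}}\big(C\otimes Res^{\widehat{\mathbb{G}}}_{\mathbb{E}}(A)\big)\in\mathscr{L}_{\widehat{\mathbb{G}}}$, and since the objects $X$ with $X\otimes A\in\mathscr{L}_{\widehat{\mathbb{G}}}$ form a localizing subcategory containing the generators, it is all of $\mathscr{L}_{\widehat{\mathbb{G}}}$. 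For $\mathscr{N}_{\widehat{\mathbb{G}}}$ I would use that $Res^{\widehat{\mathbb{G}}}_{\mathbb{E}}$ is a tensor functor, so $Res^{\widehat{\mathbb{G}}}_{\mathbb{E}}(N\otimes A)\cong Res^{\widehat{\mathbb{G}}}_{\mathbb{E}}(N)\otimes Res^{\widehat{\mathbb{G}}}_{\mathbb{E}}(A)=0$ once $Res^{\widehat{\mathbb{G}}}_{\mathbb{E}}(N)=0$. The displayed isomorphisms $L(A)\cong L(\mathbb{C})\otimes A$, $N(A)\cong N(\mathbb{C})\otimes A$ then follow from the principle of the first paragraph applied to $T=-\otimes A$ and the Dirac triangle of $\mathbb{C}$.

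For $(ii)$ and $(iii)$ the $\mathscr{N}$-inclusions are formal. Transitivity of restriction $Res^{\widehat{\mathbb{H}}}_{\mathbb{E}}\circ Res^{\widehat{\mathbb{G}}}_{\widehat{\mathbb{H}}}=Res^{\widehat{\mathbb{G}}}_{\mathbb{E}}$ gives $Res^{\widehat{\mathbb{G}}}_{\widehat{\mathbb{H}}}(\mathscr{N}_{\widehat{\mathbb{G}}})\subset\mathscr{N}_{\widehat{\mathbb{H}}}$ at once; for induction I would invoke the Mackey-type compatibility from Vaes's induction theory, namely that $Res^{\widehat{\mathbb{G}}}_{\widehat{\mathbb{H}}}\circ Ind^{\widehat{\mathbb{G}}}_{\widehat{\mathbb{H}}}(N)$ is an amplification of $N$ by $c_0(Irr(\mathbb{G})/\sim)$, so that $Res^{\widehat{\mathbb{G}}}_{\mathbb{E}}\big(Ind^{\widehat{\mathbb{G}}}_{\widehat{\mathbb{H}}}(N)\big)$ is a direct sum of copies of $Res^{\widehat{\mathbb{H}}}_{\mathbb{E}}(N)=0$. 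The $\mathscr{L}$-inclusion of $(iii)$ is likewise immediate from transitivity of induction $Ind^{\widehat{\mathbb{G}}}_{\widehat{\mathbb{H}}}\circ Ind^{\widehat{\mathbb{H}}}_{\mathbb{E}}=Ind^{\widehat{\mathbb{G}}}_{\mathbb{E}}$, which sends the generator $Ind^{\widehat{\mathbb{H}}}_{\mathbb{E}}(C)=c_0(\widehat{\mathbb{H}})\otimes C$ to $Ind^{\widehat{\mathbb{G}}}_{\mathbb{E}}(C)=c_0(\widehat{\mathbb{G}})\otimes C\in\mathscr{L}_{\widehat{\mathbb{G}}}$. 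It is precisely at these induction steps, and at the remaining $\mathscr{L}$-inclusion of $(ii)$, that divisibility is used, guaranteeing that Vaes's induction functor is defined, triangulated, and satisfies these transitivity and Mackey formulas.

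The only genuinely non-formal point, and the one I expect to be the main obstacle, is the $\mathscr{L}$-inclusion of $(ii)$: $Res^{\widehat{\mathbb{G}}}_{\widehat{\mathbb{H}}}(c_0(\widehat{\mathbb{G}})\otimes C)\in\mathscr{L}_{\widehat{\mathbb{H}}}$. Here I would exploit divisibility concretely: choosing for each class $\alpha\in Irr(\mathbb{G})/\sim$ a representative $l_\alpha$ with $s\otop l_\alpha$ irreducible for every $s\in Irr(\mathbb{H})$ yields the bijection $Irr(\mathbb{H})\times(Irr(\mathbb{G})/\sim)\to Irr(\mathbb{G})$, $(s,\alpha)\mapsto s\otop l_\alpha$, and hence an $\widehat{\mathbb{H}}$-equivariant identification $Res^{\widehat{\mathbb{G}}}_{\widehat{\mathbb{H}}}\big(c_0(\widehat{\mathbb{G}})\big)\cong\bigoplus_{\alpha}c_0(\widehat{\mathbb{H}})$ of the restricted algebra with a direct sum of copies of the regular $\widehat{\mathbb{H}}$-algebra. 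Consequently $Res^{\widehat{\mathbb{G}}}_{\widehat{\mathbb{H}}}(c_0(\widehat{\mathbb{G}})\otimes C)\cong Ind^{\widehat{\mathbb{H}}}_{\mathbb{E}}\big(\bigoplus_{\alpha}C\big)\in\mathscr{L}_{\widehat{\mathbb{H}}}$, and the localizing argument promotes this from the generators to all of $\mathscr{L}_{\widehat{\mathbb{G}}}$. With the six inclusions in place, the final \emph{Consequently} is immediate: the commutations $Res^{\widehat{\mathbb{G}}}_{\widehat{\mathbb{H}}}\circ L=L'\circ Res^{\widehat{\mathbb{G}}}_{\widehat{\mathbb{H}}}$ and $Res^{\widehat{\mathbb{G}}}_{\widehat{\mathbb{H}}}\circ N=N'\circ Res^{\widehat{\mathbb{G}}}_{\widehat{\mathbb{H}}}$ (and their induction analogues) say exactly that $Res^{\widehat{\mathbb{G}}}_{\widehat{\mathbb{H}}}$ and $Ind^{\widehat{\mathbb{G}}}_{\widehat{\mathbb{H}}}$ carry the Dirac triangle, hence the Dirac homomorphism, of one group to that of the other, and therefore intertwine the natural transformations $\eta^{\widehat{\mathbb{G}}}$ and $\eta^{\widehat{\mathbb{H}}}$ defining the two assembly maps.
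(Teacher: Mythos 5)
Your proposal is correct and, for five of the six inclusions, follows the same route as the paper: part (i) via the projection formula $Ind^{\widehat{\mathbb{G}}}_{\mathbb{E}}(C)\otimes A\cong Ind^{\widehat{\mathbb{G}}}_{\mathbb{E}}\big(C\otimes Res^{\widehat{\mathbb{G}}}_{\mathbb{E}}(A)\big)$ on generators together with the fact that $Res^{\widehat{\mathbb{G}}}_{\mathbb{E}}$ is a tensor functor; the $\mathscr{L}$-half of (ii) via the divisibility decomposition $c_0(\widehat{\mathbb{G}})\cong c_0(\widehat{\mathbb{H}})\otimes c_0(\widehat{\mathbb{H}}\backslash\widehat{\mathbb{G}})$ as $\widehat{\mathbb{H}}$-$C^*$-algebras; the $\mathscr{N}$-half of (ii) via restriction by stages; the $\mathscr{L}$-half of (iii) via induction by stages; and every ``Hence'' via exactly your uniqueness-of-$(\mathscr{L},\mathscr{N})$-triangles principle, which is also how the paper argues. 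The one place you genuinely diverge is the inclusion $Ind^{\widehat{\mathbb{G}}}_{\widehat{\mathbb{H}}}(\mathscr{N}_{\widehat{\mathbb{H}}})\subset\mathscr{N}_{\widehat{\mathbb{G}}}$: you argue it directly from the Mackey-type structure of induction over a divisible subgroup, namely that $Ind^{\widehat{\mathbb{G}}}_{\widehat{\mathbb{H}}}(N)$ is, as a plain $C^*$-algebra, a $c_0$-direct sum of copies of $N$ indexed by $\widehat{\mathbb{H}}\backslash\widehat{\mathbb{G}}$, hence $KK$-contractible when $N$ is. The paper instead uses the characterization $\mathscr{N}_{\widehat{\mathbb{G}}}=\{A\ |\ L(A)=0\}$ together with the chain $L\big(Ind^{\widehat{\mathbb{G}}}_{\widehat{\mathbb{H}}}(B)\big)\cong L(\mathbb{C})\otimes Ind^{\widehat{\mathbb{G}}}_{\widehat{\mathbb{H}}}(B)\cong Ind^{\widehat{\mathbb{G}}}_{\widehat{\mathbb{H}}}\big(L'(\mathbb{C})\otimes B\big)\cong Ind^{\widehat{\mathbb{G}}}_{\widehat{\mathbb{H}}}\big(L'(B)\big)$, obtained from part (i), the projection formula for induction, and the commutation $Res^{\widehat{\mathbb{G}}}_{\widehat{\mathbb{H}}}\circ L\cong L'\circ Res^{\widehat{\mathbb{G}}}_{\widehat{\mathbb{H}}}$ already proved in (ii). Your variant buys a more concrete and self-contained argument; the paper's variant buys independence from the explicit structure theory of induced algebras, resting only on formal properties established earlier in the lemma. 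Both are valid.

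One small correction of attribution: you assert that divisibility is what guarantees that Vaes's induction functor is ``defined, triangulated'' and satisfies induction by stages. That is not where divisibility enters: induction exists, is triangulated, and satisfies induction by stages for arbitrary discrete quantum subgroups (the paper cites Proposition 2.7 of Nest--Voigt for stages, with no divisibility hypothesis). Divisibility is needed precisely where you use it concretely, namely for the decomposition $c_0(\widehat{\mathbb{G}})\cong c_0(\widehat{\mathbb{H}})\otimes c_0(\widehat{\mathbb{H}}\backslash\widehat{\mathbb{G}})$ behind $Res^{\widehat{\mathbb{G}}}_{\widehat{\mathbb{H}}}(\mathscr{L}_{\widehat{\mathbb{G}}})\subset\mathscr{L}_{\widehat{\mathbb{H}}}$, and for your amplification formula for $Ind^{\widehat{\mathbb{G}}}_{\widehat{\mathbb{H}}}(N)$.
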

	\begin{proof}
		\begin{enumerate}[i)]
			\item Since $\widehat{\mathbb{H}}$ is divisible in $\widehat{\mathbb{G}}$, then $c_0(\widehat{\mathbb{G}})=c_0(\widehat{\mathbb{H}})\otimes c_0(\widehat{\mathbb{H}}\backslash \widehat{\mathbb{G}})$ as $\widehat{\mathbb{H}}$-$C^*$-algebras. Hence, it is clear that $Res^{\widehat{\mathbb{G}}}_{\widehat{\mathbb{H}}}(\mathscr{L}_{\widehat{\mathbb{G}}})\subset \mathscr{L}_{\widehat{\mathbb{H}}}$. Take $N\in\mathscr{N}_{\widehat{\mathbb{G}}}$, then we have that $Res^{\widehat{\mathbb{G}}}_{\mathbb{E}}(N)=0$. Restriction by stages yields that $0=Res^{\widehat{\mathbb{G}}}_{\mathbb{E}}(N)=Res^{\widehat{\mathbb{H}}}_{\mathbb{E}}\big(Res^{\widehat{\mathbb{G}}}_{\widehat{\mathbb{H}}}(N)\big)$, which means that $Res^{\widehat{\mathbb{G}}}_{\widehat{\mathbb{H}}}(N)\in\mathscr{N}_{\widehat{\mathbb{H}}}$.
			
				Given any $A\in Obj.(\mathscr{K}\mathscr{K}^{\widehat{\mathbb{G}}})$, its corresponding $(\mathscr{L}_{\widehat{\mathbb{G}}}, \mathscr{N}_{\widehat{\mathbb{G}}})$-triangle is transformed into a distinguished triangle by restriction because $Res^{\widehat{\mathbb{G}}}_{\widehat{\mathbb{H}}}$ is triangulated. We have just seen that restriction functor preserves the subcategories $\mathscr{L}$ and $\mathscr{N}$. Hence the distinguished triangle given by restriction is actually a $(\mathscr{L}_{\widehat{\mathbb{H}}}, \mathscr{N}_{\widehat{\mathbb{H}}})$-triangle for $Res^{\widehat{\mathbb{G}}}_{\widehat{\mathbb{H}}}(A)$. By uniqueness of these distinguished triangles we get the relations.
			\item Take a generator $Ind^{\widehat{\mathbb{H}}}_{\mathbb{E}}(C)\in\mathscr{L}_{\widehat{\mathbb{H}}}$ with $C\in Obj(\mathscr{K}\mathscr{K})$. Induction by stages (see Proposition $2.7$ in \cite{VoigtPoincareDuality} for a proof) yields that $ Ind^{\widehat{\mathbb{G}}}_{\widehat{\mathbb{H}}}\big(Ind^{\widehat{\mathbb{H}}}_{\mathbb{E}}(C)\big)=Ind^{\widehat{\mathbb{G}}}_{\mathbb{E}}(C)$, which is again a generator in $\mathscr{L}_{\widehat{\mathbb{G}}}$. Hence, we also have $Ind^{\widehat{\mathbb{G}}}_{\widehat{\mathbb{H}}}(\mathscr{L}_{\widehat{\mathbb{H}}})\subset \mathscr{L}_{\widehat{\mathbb{G}}}$. 
			
			Take $N'\in \mathscr{N}_{\widehat{\mathbb{H}}}$. Recall that, since $\mathscr{L}_{\widehat{\mathbb{H}}}$ and $\mathscr{N}_{\widehat{\mathbb{H}}}$ are complementary, then we have $\mathscr{N}_{\widehat{\mathbb{H}}}=\mathscr{L}^{\dashv}_{\widehat{\mathbb{H}}}$. Accordingly, $KK^{\widehat{\mathbb{H}}}(L', N')=(0)$, for all $L'\in \mathscr{L}_{\widehat{\mathbb{H}}}$. By virtue of property $(i)$ above, we can take $L':=Res^{\widehat{\mathbb{G}}}_{\widehat{\mathbb{H}}}(L)$ for any $L\in \mathscr{L}_{\widehat{\mathbb{G}}}$. Hence, the adjointness property between restriction and induction functor yields that $KK^{\widehat{\mathbb{G}}}(L, Ind^{\widehat{\mathbb{G}}}_{\widehat{\mathbb{H}}}(N'))=KK^{\widehat{\mathbb{H}}}(Res^{\widehat{\mathbb{G}}}_{\widehat{\mathbb{H}}}(L), N')=(0)$, for all $L\in \mathscr{L}_{\widehat{\mathbb{G}}}$, which means that $Ind^{\widehat{\mathbb{G}}}_{\widehat{\mathbb{H}}}(N')\in \mathscr{L}^{\dashv}_{\widehat{\mathbb{G}}}=\mathscr{N}_{\widehat{\mathbb{G}}}$.
			
				Given any $B\in Obj.(\mathscr{K}\mathscr{K}^{\widehat{\mathbb{H}}})$, its corresponding $(\mathscr{L}_{\widehat{\mathbb{H}}}, \mathscr{N}_{\widehat{\mathbb{H}}})$-triangle is transformed into a distinguished triangle by induction because $Ind^{\widehat{\mathbb{G}}}_{\widehat{\mathbb{H}}}$ is triangulated. We have just seen that induction functor preserves the subcategories $\mathscr{L}$ and $\mathscr{N}$. Hence the distinguished triangle given by induction is actually a $(\mathscr{L}_{\widehat{\mathbb{G}}}, \mathscr{N}_{\widehat{\mathbb{G}}})$-triangle for $Ind^{\widehat{\mathbb{G}}}_{\widehat{\mathbb{H}}}(B)$. By uniqueness of these distinguished triangles we get the relations.
		\end{enumerate}
	\end{proof}
	\begin{pro}\label{pro.BCDivisibleQuantumSubgroups}
		Let $\mathbb{G}$, $\mathbb{H}$ be two compact quantum groups. Assume that $\widehat{\mathbb{G}}$ is torsion-free. $\widehat{\mathbb{G}}$ satisfies the quantum Baum-Connes property if and only if every divisible torsion-free discrete quantum subgroup $\widehat{\mathbb{H}}<\widehat{\mathbb{G}}$ satisfies the quantum Baum-Connes property.
	\end{pro}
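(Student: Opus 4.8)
The plan is to establish the two implications separately, the ``if'' direction being immediate and the ``only if'' direction resting on the induction functor together with an imprimitivity-type identification of reduced crossed products. For the ``if'' direction it suffices to note that $\widehat{\mathbb{G}}$ is trivially a divisible discrete quantum subgroup of itself, so the hypothesis that every divisible discrete quantum subgroup satisfies the quantum Baum-Connes property applies in particular to $\widehat{\mathbb{G}}$; hence all the work lies in the converse.

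For the ``only if'' direction I would fix a divisible discrete quantum subgroup $\widehat{\mathbb{H}}<\widehat{\mathbb{G}}$ and first observe that $\widehat{\mathbb{H}}$ is automatically torsion-free by Proposition \ref{pro.TorsionDivisibleSubgroup}, so that its quantum Baum-Connes property is well defined. Writing $F'$ and $(L',N')$ for the homological functor and the canonical triangulated functors attached to $\widehat{\mathbb{H}}$, I would take an arbitrary coefficient $B\in Obj.(\mathscr{K}\mathscr{K}^{\widehat{\mathbb{H}}})$ with Dirac homomorphism $D'_B: L'(B)\longrightarrow B$ and aim to show that the assembly map $\eta'_B=F'(D'_B)$ is an isomorphism. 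The strategy is to transport this question to $\widehat{\mathbb{G}}$ through induction, where the Baum-Connes property is available for \emph{all} coefficients and in particular for $Ind^{\widehat{\mathbb{G}}}_{\widehat{\mathbb{H}}}(B)$.

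The two ingredients I would assemble are the following. First, the previous Lemma gives $Ind^{\widehat{\mathbb{G}}}_{\widehat{\mathbb{H}}}\circ L'=L\circ Ind^{\widehat{\mathbb{G}}}_{\widehat{\mathbb{H}}}$ and $Ind^{\widehat{\mathbb{G}}}_{\widehat{\mathbb{H}}}\circ N'=N\circ Ind^{\widehat{\mathbb{G}}}_{\widehat{\mathbb{H}}}$; since induction is triangulated, applying it to a $(\mathscr{L}_{\widehat{\mathbb{H}}},\mathscr{N}_{\widehat{\mathbb{H}}})$-triangle for $B$ produces, by uniqueness of these distinguished triangles, a $(\mathscr{L}_{\widehat{\mathbb{G}}},\mathscr{N}_{\widehat{\mathbb{G}}})$-triangle for $Ind^{\widehat{\mathbb{G}}}_{\widehat{\mathbb{H}}}(B)$, so that $Ind^{\widehat{\mathbb{G}}}_{\widehat{\mathbb{H}}}(D'_B)$ is identified with the Dirac homomorphism of $Ind^{\widehat{\mathbb{G}}}_{\widehat{\mathbb{H}}}(B)$. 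Second, I would need the imprimitivity-type identification of $\widehat{\mathbb{G}}\underset{r}{\ltimes}Ind^{\widehat{\mathbb{G}}}_{\widehat{\mathbb{H}}}(B)$ with $\widehat{\mathbb{H}}\underset{r}{\ltimes}B$ at the level of $K$-theory, naturally in $B$, yielding a natural isomorphism $\Theta:F\circ Ind^{\widehat{\mathbb{G}}}_{\widehat{\mathbb{H}}}\overset{\sim}{\longrightarrow}F'$; this is precisely what makes $Ind^{\widehat{\mathbb{G}}}_{\widehat{\mathbb{H}}}$ transform the assembly map for $\widehat{\mathbb{H}}$ into the one for $\widehat{\mathbb{G}}$. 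I expect this second ingredient to be the main obstacle: it must be extracted from Vaes' construction of induction \cite{VaesInduction}, exploiting the factorisation $c_0(\widehat{\mathbb{G}})=c_0(\widehat{\mathbb{H}})\otimes c_0(\widehat{\mathbb{H}}\backslash\widehat{\mathbb{G}})$ available for divisible subgroups, and the naturality of the resulting identification in the coefficient has to be checked carefully.

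Finally I would combine the two ingredients by a diagram chase. Naturality of $\Theta$ along the morphism $D'_B$ gives a commutative square whose vertical arrows $\Theta_{L'(B)}$ and $\Theta_B$ are isomorphisms and whose horizontal arrows are $F(Ind^{\widehat{\mathbb{G}}}_{\widehat{\mathbb{H}}}(D'_B))$ and $\eta'_B=F'(D'_B)$. By the first ingredient the top arrow equals $\eta_{Ind^{\widehat{\mathbb{G}}}_{\widehat{\mathbb{H}}}(B)}$, which is an isomorphism because the quantum Baum-Connes property for $\widehat{\mathbb{G}}$ makes $\eta^{\widehat{\mathbb{G}}}$ a natural equivalence, hence an isomorphism at every object and in particular at $Ind^{\widehat{\mathbb{G}}}_{\widehat{\mathbb{H}}}(B)$. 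It then follows that $\eta'_B$ is an isomorphism; since $B$ was arbitrary, $\eta^{\widehat{\mathbb{H}}}$ is a natural equivalence and $\widehat{\mathbb{H}}$ satisfies the quantum Baum-Connes property.
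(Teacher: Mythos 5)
Your proof is correct and follows essentially the same route as the paper: torsion-freeness of $\widehat{\mathbb{H}}$ via Proposition \ref{pro.TorsionDivisibleSubgroup}, the commutation $Ind^{\widehat{\mathbb{G}}}_{\widehat{\mathbb{H}}}\circ L'=L\circ Ind^{\widehat{\mathbb{G}}}_{\widehat{\mathbb{H}}}$ from the preceding lemma, and the quantum Green's imprimitivity theorem of Vaes identifying $K_*(\widehat{\mathbb{G}}\ltimes Ind^{\widehat{\mathbb{G}}}_{\widehat{\mathbb{H}}}(B))$ with $K_*(\widehat{\mathbb{H}}\ltimes B)$, with the trivial converse via $\widehat{\mathbb{G}}<\widehat{\mathbb{G}}$. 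Your version merely spells out the naturality diagram chase that the paper leaves implicit.
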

	\begin{proof}
		Assume that $\widehat{\mathbb{G}}$ satisfies the quantum Baum-Connes property and consider a divisible torsion-free discrete quantum subgroup $\widehat{\mathbb{H}}<\widehat{\mathbb{G}}$.
		
		By assumption, $\widehat{\mathbb{G}}$ satisfies the quantum Baum-Connes property with coefficients. In particular, we have a natural isomorphism $\eta^{\widehat{\mathbb{G}}}_{Ind^{\widehat{\mathbb{G}}}_{\widehat{\mathbb{H}}}(B)}: K_*\big(\widehat{\mathbb{G}}\ltimes L(Ind^{\widehat{\mathbb{G}}}_{\widehat{\mathbb{H}}}(B))\big)\longrightarrow K_*\big(\widehat{\mathbb{G}}\ltimes Ind^{\widehat{\mathbb{G}}}_{\widehat{\mathbb{H}}}(B)\big)$, for all $B\in Obj(\mathscr{K}\mathscr{K}^{\widehat{\mathbb{H}}})$.
		
		Thanks to the preceding lemma $Ind^{\widehat{\mathbb{G}}}_{\widehat{\mathbb{H}}}\circ L'\cong L\circ Ind^{\widehat{\mathbb{G}}}_{\widehat{\mathbb{H}}}$, so that we have a natural isomorphism $\eta^{\widehat{\mathbb{G}}}_{Ind^{\widehat{\mathbb{G}}}_{\widehat{\mathbb{H}}}(B)}: K_*\big(\widehat{\mathbb{G}}\ltimes Ind^{\widehat{\mathbb{G}}}_{\widehat{\mathbb{H}}}(L'(B)\big)\longrightarrow K_*\big(\widehat{\mathbb{G}}\ltimes Ind^{\widehat{\mathbb{G}}}_{\widehat{\mathbb{H}}}(B)\big)$, for all $B\in Obj(\mathscr{K}\mathscr{K}^{\widehat{\mathbb{H}}})$.
		
		By virtue of the quantum Green's Imprimitivity theorem (see Theorem $7.3$ in \cite{VaesInduction} for a proof) we have a natural Morita equivalence $\widehat{\mathbb{G}}\ltimes Ind^{\widehat{\mathbb{G}}}_{\widehat{\mathbb{H}}}(B)\underset{M}{\sim} \widehat{\mathbb{H}}\ltimes B$ for all $B\in Obj(\mathscr{K}\mathscr{K}^{\widehat{\mathbb{H}}})$, which yields an isomorphism between $\widehat{\mathbb{G}}\ltimes Ind^{\widehat{\mathbb{G}}}_{\widehat{\mathbb{H}}}(B)$ and $\widehat{\mathbb{H}}\ltimes B$ in $\mathscr{K}\mathscr{K}$.
		
		Moreover, the induction functor transforms the assembly map for $\widehat{\mathbb{H}}$ into the assembly map for $\widehat{\mathbb{G}}$ by the preceding lemma. More precisely, given $B\in Obj(\mathscr{K}\mathscr{K}^{\widehat{\mathbb{H}}})$ if $\Sigma(N'(B))\longrightarrow L'(B)\overset{u'}{{\longrightarrow}} B\longrightarrow N'(B)$ is the $(\mathscr{L}_{\widehat{\mathbb{H}}}, \mathscr{N}_{\widehat{\mathbb{H}}})$-triangle for $B$, then $\Sigma(Ind^{\widehat{\mathbb{G}}}_{\widehat{\mathbb{H}}}\big(N'(B))\big)\longrightarrow Ind^{\widehat{\mathbb{G}}}_{\widehat{\mathbb{H}}}\big(L'(B)\big)\overset{Ind^{\widehat{\mathbb{G}}}_{\widehat{\mathbb{H}}}(u')}{{\longrightarrow}} Ind^{\widehat{\mathbb{G}}}_{\widehat{\mathbb{H}}}\big(B\big)\longrightarrow Ind^{\widehat{\mathbb{G}}}_{\widehat{\mathbb{H}}}\big(N'(B)\big)$ is the $(\mathscr{L}_{\widehat{\mathbb{G}}}, \mathscr{N}_{\widehat{\mathbb{G}}})$-triangle for $Ind^{\widehat{\mathbb{G}}}_{\widehat{\mathbb{H}}}\big(B\big)$.
		
		
		Apply the triangulated functors $\widehat{\mathbb{H}}\underset{r}{\ltimes}\cdot$ and $\widehat{\mathbb{G}}\underset{r}{\ltimes}\cdot$ to these two triangles, respectively so that we get the following distinguished triangles in $\mathscr{K}\mathscr{K}$, $\Sigma(\widehat{\mathbb{H}}\underset{r}{\ltimes}N'(B))\longrightarrow \widehat{\mathbb{H}}\underset{r}{\ltimes}L'(B)\overset{\widehat{\mathbb{H}}\ltimes u'}{{\longrightarrow}} \widehat{\mathbb{H}}\underset{r}{\ltimes}B\longrightarrow \widehat{\mathbb{H}}\underset{r}{\ltimes}N'(B)$ and $\Sigma\big(\widehat{\mathbb{G}}\underset{r}{\ltimes}Ind^{\widehat{\mathbb{G}}}_{\widehat{\mathbb{H}}}\big(N'(B)\big)\big)\longrightarrow \widehat{\mathbb{G}}\underset{r}{\ltimes}Ind^{\widehat{\mathbb{G}}}_{\widehat{\mathbb{H}}}\big(L'(B)\big)\overset{\widehat{\mathbb{G}}\ltimes Ind^{\widehat{\mathbb{G}}}_{\widehat{\mathbb{H}}}(u')}{{\longrightarrow}} \widehat{\mathbb{G}}\underset{r}{\ltimes}Ind^{\widehat{\mathbb{G}}}_{\widehat{\mathbb{H}}}\big(B\big)\longrightarrow \widehat{\mathbb{G}}\underset{r}{\ltimes}Ind^{\widehat{\mathbb{G}}}_{\widehat{\mathbb{H}}}\big(N'(B)\big)$.
		
		Since the isomorphism between $\widehat{\mathbb{G}}\ltimes Ind^{\widehat{\mathbb{G}}}_{\widehat{\mathbb{H}}}(B)$ and $\widehat{\mathbb{H}}\ltimes B$ in $\mathscr{K}\mathscr{K}$ is natural by the quantum Green's Imprimitivity theorem, then we get an isomorphism of distinguished triangles
		$$
			\xymatrix@!C=30mm@R=15mm{
				\mbox{$\Sigma(\widehat{\mathbb{H}}\underset{r}{\ltimes}N'(B))$}\ar[d]^{\mbox{$\wr$}}\ar[r]&\mbox{$\widehat{\mathbb{H}}\underset{r}{\ltimes}L'(B)$}\ar[r]^{\mbox{$\widehat{\mathbb{H}}\ltimes u'$}}\ar[d]^{\mbox{$\wr$}}&\mbox{$\widehat{\mathbb{H}}\underset{r}{\ltimes} B$}\ar[r]\ar[d]^{\mbox{$\wr$}}&\mbox{$\widehat{\mathbb{H}}\underset{r}{\ltimes}N'(B)$}\ar[d]^{\mbox{$\wr$}}\\
				\mbox{$\Sigma\big(\widehat{\mathbb{G}}\underset{r}{\ltimes}Ind^{\widehat{\mathbb{G}}}_{\widehat{\mathbb{H}}}\big(N'(B)\big)\big)$}\ar[r]&\mbox{$\widehat{\mathbb{G}}\underset{r}{\ltimes}Ind^{\widehat{\mathbb{G}}}_{\widehat{\mathbb{H}}}\big(L'(B)\big)$}\ar[r]_{\mbox{$\widehat{\mathbb{G}}\ltimes Ind^{\widehat{\mathbb{G}}}_{\widehat{\mathbb{H}}}(u')$}}&\mbox{$\widehat{\mathbb{G}}\underset{r}{\ltimes}Ind^{\widehat{\mathbb{G}}}_{\widehat{\mathbb{H}}}\big(B\big)$}\ar[r]&\mbox{$\widehat{\mathbb{G}}\underset{r}{\ltimes}Ind^{\widehat{\mathbb{G}}}_{\widehat{\mathbb{H}}}\big(N'(B)\big)$}}
		$$
		which allows to consider the following commutative diagram,
		$$
			\xymatrix@!C=30mm@R=15mm{
				\mbox{$\mathbb{L}F'(B)$}\ar[r]^{\mbox{$\eta^{\widehat{\mathbb{H}}}_{B}$}}\ar[d]^{\mbox{$\wr$}}&\mbox{$F'(B)$}\ar[d]^{\mbox{$\wr$}}\\
				\mbox{$\mathbb{L}F\big(Ind^{\widehat{\mathbb{G}}}_{\widehat{\mathbb{H}}}\big(L'(B)\big)\big)$}\ar[r]_{\mbox{$\eta^{\widehat{\mathbb{G}}}_{Ind^{\widehat{\mathbb{G}}}_{\widehat{\mathbb{H}}}(B)}$}}&\mbox{$F\big(Ind^{\widehat{\mathbb{G}}}_{\widehat{\mathbb{H}}}\big(B\big)\big)$}}
		$$
	
		Since $\eta^{\widehat{\mathbb{G}}}_{Ind^{\widehat{\mathbb{G}}}_{\widehat{\mathbb{H}}}(B)}$ is an isomorphism for all $B\in Obj(\mathscr{K}\mathscr{K}^{\widehat{\mathbb{H}}})$, we conclude that the same is true for $\eta^{\widehat{\mathbb{H}}}_{B}$, that is, $\widehat{\mathbb{H}}$ satisfies the quantum Baum-Connes property with coefficients. 
		The converse is obvious and the proof is complete.
	\end{proof}
	
	\begin{rem}
		For classical groups it is well-known that the Baum-Connes property is preserved by \emph{closed} subgroups. It was showed by J. Chabert and S. Echterhoff in \cite{ChabertPermanence}. To this end they showed that the induction homomorphism $K_*^{top}(H; B)\longrightarrow K_*^{top}(G; Ind^{G}_H(B))$ is \emph{always bijective} (see Theorem 2.2 in \cite{ChabertPermanence} for a proof). In our case, this result is encoded in the identification $K_*(\widehat{\mathbb{H}}\ltimes L'(B))\cong K_*(\widehat{\mathbb{G}}\ltimes L(Ind^{\widehat{\mathbb{G}}}_{\widehat{\mathbb{H}}}(B)))$, obtained by the property $Ind^{\widehat{\mathbb{G}}}_{\widehat{\mathbb{H}}}\circ L'\cong L\circ Ind^{\widehat{\mathbb{G}}}_{\widehat{\mathbb{H}}}$ plus the quantum Green's Imprimitivity theorem.
	\end{rem}
	\begin{rem}\label{rem.BCDivisibleSubgroups}
		To the best knowledge of the author it is open to know if the \emph{strong} Baum-Connes property is preserved by quantum subgroups in general. However, it is well-known that it is preserved by \emph{divisible} discrete quantum subgroups (see Lemma $6.7$ in \cite{VoigtBaumConnesUnitaryFree} for a proof).
	\end{rem}

\section{\textsc{Quantum semi-direct product}}\label{sec.Quantumsemi-directProduct}		
	Let $\mathbb{G}=(C(\mathbb{G}),\Delta)$ be a compact quantum group and $\Gamma$ be a discrete group so that $\Gamma$ is acting on $\mathbb{G}$ by quantum automorphisms with action $\alpha$. In this situation, we can construct the \emph{quantum semi-direct product of $\mathbb{G}$ by $\Gamma$} and it is denoted by $\mathbb{F}:=\Gamma \underset{\alpha}{\ltimes}\mathbb{G}$, where $C(\mathbb{F})=\Gamma\underset{\alpha, m}{\ltimes} C_m(\mathbb{G})$ (see \cite{WangSemidirect} for more details). By definition of the crossed product by a discrete group we have a unital faithful $*$-homomorphism $\pi: C_{m}(\mathbb{G})\longrightarrow C(\mathbb{F})$ and a group homomorphism $u: \Gamma\longrightarrow \mathcal{U}(C(\mathbb{F}))$ defined by $u_{\gamma}:=\lambda_{\gamma}\otimes id_{C_m(\mathbb{G})}$, for all $\gamma\in\Gamma$ such that $C(\mathbb{F})\equiv\Gamma\ltimes_{\alpha, m}C_{m}(\mathbb{G})=C^*\langle \pi(a)u_{\gamma}: a\in C_{m}(\mathbb{G}), \gamma\in\Gamma \rangle$. The co-multiplication $\Theta$ of $\mathbb{F}$ is such that $\Theta\circ \pi=(\pi\otimes\pi)\circ\Delta\mbox{ and } \Theta(u_{\gamma})=u_{\gamma}\otimes u_{\gamma}$, for all $\gamma\in\Gamma$. The Haar state on $\mathbb{F}$ is given by $h_{\mathbb{F}}:=h_{\mathbb{G}}\circ E\circ\kappa$, where $h_{\mathbb{G}}$ is the Haar state of $\mathbb{G}$, $\kappa: \Gamma\underset{\alpha,m}{\ltimes} C_m(\mathbb{G}) \twoheadrightarrow \Gamma\underset{\alpha,r}{\ltimes} C_r(\mathbb{G})$ is the canonical surjection and $E:\Gamma\underset{\alpha,r}{\ltimes} C_r(\mathbb{G})\rightarrow C_r(\mathbb{G})$ is the canonical conditional expectation.
	\begin{noteSec}
		It should be noticed that the notation $\mathbb{F}:=\Gamma \underset{\alpha}{\ltimes}\mathbb{G}$ to refer the semi-direct product of a compact quantum group $\mathbb{G}$ by a discrete group $\Gamma$ is not well behaved with the classical case. Indeed, if $G$ is a classical compact group, the semi-direct product $\Gamma\ltimes G$ is non-discrete in general. In order to cover the compatibility with the classical case, one possible replacement for this notation could be $\mathbb{F}:=\Gamma \underset{\alpha}{\widehat{\ltimes}}\mathbb{G}$ meaning $\widehat{\mathbb{F}}:=\Gamma \underset{\alpha}{\rtimes} \widehat{\mathbb{G}}$. Nevertheless, the above notation will be kept for the rest of the paper.
	\end{noteSec}

	We have $Irr(\mathbb{F})=\Gamma\bigotop Irr(\mathbb{G})$, which means precisely that if $y\in Irr(\mathbb{F})$, then there exist unique $\gamma\in\Gamma$ and $x\in Irr(\mathbb{G})$ such that $w^y:= w^{(\gamma,x)}=v^{\gamma}\otop v^x=\big[v^{\gamma}\big]_{13}\big[v^x\big]_{23}\in\mathcal{B}(\mathbb{C}\otimes H_x)\otimes C(\mathbb{F})$, where $v^{\gamma}:=1_{\mathbb{C}}\otimes u_{\gamma}\in\mathbb{C}\otimes C(\mathbb{F})$ and $v^x:=(id\otimes \pi)(w^x)\in\mathcal{B}(H_x)\otimes C(\mathbb{F})$. 
	
	The representation theory of a quantum semi-direct product $\mathbb{F}$ described above allows to give some explicit expressions which are useful for subsequent computations. For instance, it is advisable to give an explicit description of $\widehat{\mathbb{F}}$ in terms of $\Gamma$ and $\widehat{\mathbb{G}}$.
			
	First of all, since $\alpha$ is an action of $\Gamma$ on $\mathbb{G}$ by quantum automorphisms, then for every $\gamma\in\Gamma$, we have that $(id\otimes \alpha_\gamma)(w^x)$ is an irreducible unitary finite dimensional representation of $\mathbb{G}$ on $H_x$ whenever $x\in Irr(\mathbb{G})$. Hence there exists a unique class $\alpha_\gamma(x)\in Irr(\mathbb{G})$ such that $(id\otimes \alpha_\gamma)(w^x)\cong w^{\alpha_\gamma(x)}$. Since $dim(\alpha_\gamma(x))=dim(x)$ we can assume that $w^{\alpha_\gamma(x)}\in\mathcal{B}(H_x)\otimes C(\mathbb{G})$, for all $\gamma\in\Gamma$ (if this is not the case, we might change the representative of $\alpha_\gamma(x)$ by an appropriate one in the orbit of $x$).
				
	Hence, there exists a unique, up to a multiplicative factor in $S^1$, unitary operator $V_{\gamma, x}\in \mathcal{U}(H_x)$ such that $(id\otimes\alpha_\gamma)(w^x)=(V_{\gamma, x}\otimes id)w^{\alpha_\gamma(x)}(V^*_{\gamma, x}\otimes id)$. Notice that it is clear that $\alpha_e(x)=x$, for all $x\in Irr(\mathbb{G})$ and that $\alpha_\gamma(\epsilon)=\epsilon$, for all $\gamma\in\Gamma$. Therefore, we can choose the multiplicative factor defining $V_{\gamma, x}$ such that $V_{e, x}=id_{H_x}$, for all $x\in Irr(\mathbb{G})$ and $V_{\gamma, \epsilon}=1_\mathbb{C}$, for all $\gamma\in\Gamma$. We keep this choice for the sequel.
	
	Given $\gamma,\gamma'\in \Gamma$ and $x,x'\in Irr(\mathbb{G})$, consider the corresponding irreducible representations of $\mathbb{F}$, say $y:=(\gamma,x), y':=(\gamma', x')\in Irr(\mathbb{F})$, which means that $w^y=v^\gamma\otop v^x$ and $w^{y'}=v^{\gamma'}\otop v^{x'}$. A straightforward computation yields the following $w^{y\otop y'}:=w^{y}\otop w^{y'}=v^{\gamma\gamma'}\otop\big((V_{\gamma'^{-1}}\otimes id)v^{\alpha_{\gamma'^{-1}}(x)}(V^*_{\gamma'^{-1}}\otimes id)\otop v^{x'}\big)$, where $v^{\alpha_{\gamma'^{-1}}(x)}:=(id\otimes \pi\circ\alpha_{\gamma'^{-1}})(w^x)\in \mathcal{B}(H_x)\otimes C(\mathbb{F})$. 
	Consequently, the decomposition of $y\otop y'$ into direct sum of irreducible representations depends only on the corresponding decomposition of $\alpha_{\gamma'^{-1}}(x)\otop x'$. More precisely, if $\{x_k\}_{k=1,\ldots, r}$ is such a decomposition for $\alpha_{\gamma'^{-1}}(x)\otop x'$, then the formula above implies that the corresponding decomposition for $y\otop y'$ is given by $\{(\gamma\gamma', x_k)\}_{k=1,\dots, r}$.
	
	The following lemma provides explicit and useful formulas for the sequel.
	\begin{lemSec}\label{lem.FusionRulesSemiDirect}
		Let $\mathbb{G}=(C(\mathbb{G}),\Delta)$ be a compact quantum group and $\Gamma$ be a discrete group acting on $\mathbb{G}$ by quantum automorphisms with action $\alpha$. Let $\mathbb{F}:=\Gamma\underset{\alpha}{\ltimes} \mathbb{G}$ be the corresponding quantum semi-direct product.
		\begin{enumerate}[i)]
			\item For all $\gamma, g, h\in \Gamma$ and all $x, y, z\in Irr(\mathbb{G})$, we have
				$$h_{\mathbb{F}}\Big(\chi_{\mathbb{F}}(\gamma, x)^*\chi_{\mathbb{F}}\big((g,y)\otop (h, z)\big)\Big)=\left\{ \begin{array}{l}
					h_{\mathbb{G}}\Big(\chi_{\mathbb{G}}(x)^*\chi_{\mathbb{G}}\big(\alpha_{h^{-1}}(y)\otop z\big)\Big)\mbox{, if $\gamma=gh$} \\
					0 \mbox{, otherwise} \\
				\end{array} \right.$$
			\item For all $\gamma, g, h\in \Gamma$ and all $x, y, z\in Irr(\mathbb{G})$, we have
				$$Mor\Big((\gamma, x), (g,y)\otop (h, z)\Big)\cong \left\{ \begin{array}{l}
					Mor\Big(x, \alpha_{h^{-1}}(y)\otop z\Big)\mbox{, if $\gamma=gh$} \\
					0 \mbox{, otherwise} \\
				\end{array} \right.$$
			\item The dual discrete quantum group $\widehat{\mathbb{F}}=(c_0(\widehat{\mathbb{F}}), \widehat{\Theta})$ is given precisely by
				$$c_0(\widehat{\mathbb{F}})\cong c_0(\Gamma)\otimes c_0(\widehat{\mathbb{G}})$$
				and $\widehat{\Theta}: c_0(\widehat{\mathbb{F}})\longrightarrow M(c_0(\widehat{\mathbb{F}})\otimes c_0(\widehat{\mathbb{F}}))$ such that
				$$\widehat{\Theta}(\delta_\gamma\otimes a)\big(p_{(g,y)}\otimes p_{(h,z)}\big)=\delta_{\gamma, gh}\ (\delta_g\otimes p_y\otimes \delta_h\otimes p_z)\Big((V_{h^{-1}, y}\otimes p_z)\widehat{\Delta}(a)(p_y\otimes p_z)(V^*_{h^{-1}, y} \otimes p_z)\Big)_{24}\mbox{,}$$
				for all $\gamma, g, h\in \Gamma$, all $a\in c_0(\widehat{\mathbb{G}})$ and all $y,z\in Irr(\mathbb{G})$.
		\end{enumerate}
	\end{lemSec}
	\begin{proof}
		\begin{enumerate}[i)]
			\item This follows from elementary properties of the character together with the definition of the Haar state of $\mathbb{F}$.
			\item On the one hand, given $\gamma, g, h\in \Gamma$ and all $x, y, z\in Irr(\mathbb{G})$, the computation of $Mor\Big((\gamma, x), (g,y)\otop (h, z)\Big)$ reduces to case when $\gamma =gh$ thanks to the formula of $(i)$. On the other hand, as observed above, for all $\gamma\in \Gamma$ and all $x\in Irr(\mathbb{G})$ there exists a unique (up to a multiplicative factor) unitary operator $V_{\gamma, x}\in \mathcal{U}(H_x)$ such that $(id\otimes \alpha_{\gamma})(w^x)=(V_{\gamma, x}\otimes id_{C(\mathbb{G})})w^{\alpha_{\gamma}(x)}(V^*_{\gamma, x}\otimes id_{C(\mathbb{G})})$.
			
			Hence for all $\gamma, g, h\in \Gamma$ and all $x, y, z\in Irr(\mathbb{G})$ such that $\gamma=gh$, we define
			$$
			\begin{array}{rccl}
				\psi :&Mor\Big(x, \alpha_{h^{-1}}(y)\otop z\Big)& \longrightarrow & Mor\Big((\gamma, x), (g,y)\otop (h, z)\Big)\\
				&\Phi & \longmapsto &\psi(\Phi):= (V_{h^{-1}, y}\otimes id_{H_z})\circ \Phi\mbox{,}
			\end{array}
			$$
			which is a linear isomorphism with inverse $\psi^{-1}(\widetilde{\Phi})=(V^*_{h^{-1}, y}\otimes id_{H_z})\circ \widetilde{\Phi}$, for all $\widetilde{\Phi}\in Mor\Big((\gamma, x), (g,y)\otop (h, z)\Big)$. Routine computations show that $\psi$ is well-defined.
			\item It is clear that $c_0(\widehat{\mathbb{F}})\cong c_0(\Gamma)\otimes c_0(\widehat{\mathbb{G}})$. For the formula of the statement we have just to use the fact that the co-multiplication $\widehat{\Theta}$ is characterized by the relation $\widehat{\Theta}(S)\circ \widetilde{\Phi}=\widetilde{\Phi}\circ S$, for all $S\in \mathcal{B}(H_{(\gamma, x)})\cong \mathcal{B}(H_x)$ with $(\gamma, x)\in Irr(\mathbb{F})$, $\widetilde{\Phi}\in Mor\Big((\gamma, x), (g,y)\otop (h, z)\Big)$ and all $(g,y), (h,z)\in Irr(\mathbb{F})$ together with the isomorphism $\psi$ of $(ii)$.
		\end{enumerate}
	\end{proof}

	\begin{remsSec}\label{rem.QuantumSubgroups}
		\begin{enumerate}
			\item It is important to observe that $\Gamma$ and $\widehat{\mathbb{G}}$ are quantum subgroups of $\widehat{\mathbb{F}}$ with canonical surjections given respectively by $\rho_{\widehat{\mathbb{G}}}:= \varepsilon_{\Gamma}\otimes id_{c_0(\widehat{\mathbb{G}})}\mbox{ and } \rho_{\Gamma}:= id_{c_0(\Gamma)}\otimes \varepsilon_{\widehat{\mathbb{G}}}$, where $\varepsilon_{\Gamma}$ denote de co-unit of $\Gamma$ and $\varepsilon_{\widehat{\mathbb{G}}}$ the co-unit of $\widehat{\mathbb{G}}$, which can be checked by using the formulae of the preceding lemma.
		
		In other words, we have the following canonical injections
		$$\iota^r_\Gamma: C^*_r(\Gamma)\hookrightarrow C_r(\mathbb{F})\mbox{ and } \iota^m_\Gamma: C^*_m(\Gamma)\hookrightarrow C_m(\mathbb{F})$$
		$$\iota^r_{\mathbb{G}}: C_r(\mathbb{G})\hookrightarrow C_r(\mathbb{F})\mbox{ and } \iota^m_{\mathbb{G}}: C_m(\mathbb{G})\hookrightarrow C_m(\mathbb{F})$$
		which intertwine the corresponding co-multiplications.
		
		Let us describe this injections more precisely. The co-unit map $\varepsilon_{\mathbb{G}}: Pol(\mathbb{G})\longrightarrow\mathbb{C}$ extends to a ($\alpha$-invariant) character on $C_m(\mathbb{G})$, which we always denote by $\varepsilon_{\mathbb{G}}: C_m(\mathbb{G})\longrightarrow \mathbb{C}$. Recall that $C_m(\mathbb{F})=\Gamma\underset{\alpha, m}{\ltimes}C_m(\mathbb{G})=C^*\langle \pi(a)u_{\gamma}: a\in C_m(\mathbb{G}), \gamma\in\Gamma\rangle $. So, with the help of the $\alpha$-invariant character above, we can identify $C^*_m(\Gamma)$ with the subalgebra of $C_m(\mathbb{F})$ generated by $\{u_{\gamma} : \gamma\in\Gamma\}$ by universal property (see Remark 3.6 in \cite{FimaBiproduit} for more details). Likewise, recall that $C_r(\mathbb{F})=\Gamma\underset{\alpha, r}{\ltimes}C_r(\mathbb{G})=C^*\langle \pi(a)u_{\gamma}: a\in C_r(\mathbb{G}), \gamma\in\Gamma\rangle $ is equipped with a GNS-faithful conditional expectation $E:\Gamma\ltimes_{\alpha, r}C_r(\mathbb{G})\longrightarrow C_r(\mathbb{G})$, which restricted to the subalgebra generated by $\{u_{\gamma}:\gamma\in\Gamma\}$ is just $E(u_{\gamma})=\delta_{\gamma,e}\in\mathbb{C}$. Remember as well that $u_{\gamma}=\lambda_{\gamma} \otimes id_{C_r(\mathbb{G})} \cong \big[\lambda_{\gamma}\big]_1$ in $\Gamma\underset{\alpha, r}{\ltimes}C_r(\mathbb{G})\subset \mathcal{L}_{C_r(\mathbb{G})}(l^2(\Gamma)\otimes C_r(\mathbb{G}))$; so that this subalgebra is identified canonically to $C^*_r(\Gamma)=\Gamma\underset{tr,r}{\ltimes}\mathbb{C}$ by universal property (here $tr$ denotes the trivial action). 
				
		Observe that, by construction, we have the following relations
		$$\tau_{\mathbb{F}}\circ \iota^m_{\Gamma}=\iota^r_{\Gamma}\circ \tau_{\Gamma}\mbox{, } \varepsilon_{\mathbb{F}}\circ \iota^m_{\Gamma}=\varepsilon_{\Gamma}\mbox{, }\tau_{\mathbb{F}}\circ \iota^m_{\mathbb{G}}=\iota^r_{\mathbb{G}}\circ \tau_{\mathbb{G}}\mbox{ and } \varepsilon_{\mathbb{F}}\circ \iota^m_{\mathbb{G}}=\varepsilon_{\mathbb{G}}\mbox{,}$$
		
		where $\tau_{\mathbb{F}}: C_m(\mathbb{F})\twoheadrightarrow C_r(\mathbb{F})$, $\tau_{\Gamma}: C^*_m(\Gamma)\twoheadrightarrow C^*_r(\Gamma)$, $\tau_{\mathbb{G}}: C_m(\mathbb{G})\twoheadrightarrow C_r(\mathbb{G})$ are the canonical surjections and $\varepsilon_{\mathbb{F}}:Pol(\mathbb{F})\longrightarrow \mathbb{C}$, $\varepsilon_{\Gamma}:\Gamma\longrightarrow \mathbb{C}$, $\varepsilon_{\mathbb{G}}:Pol(\mathbb{G})\longrightarrow \mathbb{C}$ are the co-unit of $\mathbb{F}$, $\Gamma$ and $\mathbb{G}$, respectively whose extension to  $C_m(\mathbb{F})$, $C^*_m(\Gamma)$ and $C_m(\mathbb{G})$ are still denoted by $\varepsilon_{\mathbb{F}}$, $\varepsilon_{\Gamma}$ and $\varepsilon_{\mathbb{G}}$, respectively.
			\item Accordingly, if $(A, \delta)$ is a left $\widehat{\mathbb{F}}$-$C^*$-algebra, then $(A, \delta_{\widehat{\mathbb{G}}})$ is a left $\widehat{\mathbb{G}}$-$C^*$-algebra with $\delta_{\widehat{\mathbb{G}}}:=(\rho_{\widehat{\mathbb{G}}}\otimes id_A)\circ \delta$ and $(A, \delta_{\Gamma})$ is a left $\Gamma$-$C^*$-algebra with $\delta_{\Gamma}:=(\rho_{\Gamma}\otimes id_A)\circ \delta$.
			
			Here it is important to notice the following. Since $\Gamma$ is a classical group, a $\Gamma$-$C^*$-algebra is equivalent to a $C^*$-algebra equipped with a co-action of $c_0(\Gamma)$. This correspondence explains the abuse of language used above. Indeed, the non-degenerate $*$-homomorphism $\delta_\Gamma:= (\rho_\Gamma\otimes id_A)\circ \delta : A\longrightarrow M(c_0(\Gamma)\otimes A)$ defined above is a \emph{co-action} of $(c_0(\Gamma), \widehat{\Delta}_\Gamma)$. The latter is equivalent to give a family of $*$-homomorphisms $\delta^\gamma_{\Gamma}: A\longrightarrow A$, for all $\gamma\in \Gamma$ satisfying $\delta^e_{\Gamma}=id_A$ and $\delta^{\gamma\gamma'}_{\Gamma}=\delta^{\gamma'}_\Gamma\circ \delta^{\gamma}_\Gamma$ for all $\gamma, \gamma'\in\Gamma$, among other properties. Hence, the map
		$$
			\begin{array}{rccl}
				&\Gamma& \longrightarrow & Aut(A)\\
				&\gamma & \longmapsto &\big(\delta_{\Gamma}\big)_{\gamma}\mbox{, $\big(\delta_{\Gamma}\big)_{\gamma}(a):=\delta^{\gamma^{-1}}_\Gamma(a)$}
			\end{array}
		$$
		defines an action of $\Gamma$ on $A$. By abuse of notation, we denote this action by $\delta_\Gamma$ and the difference between the action and the co-action will be clear by the context. 
			\item Moreover, the representation theory of $\mathbb{F}$ yields that $\Gamma$ and $\widehat{\mathbb{G}}$ are divisible in $\widehat{\mathbb{F}}$. Namely, given an irreducible representation $y:=(\gamma, x)\in Irr(\mathbb{F})$ with $\gamma\in \Gamma$ and $x\in Irr(\mathbb{G})$, then $x=(e, x), \gamma=(\gamma,\epsilon)\in [y]$ in $\sim\backslash Irr(\mathbb{F})$. For all $s\in \Gamma$ we have that $s\otop (e,x)=(s,\epsilon)\otop (e,x)=(s,x)\in Irr(\mathbb{F})$, which shows that $\Gamma$ is divisible in $\widehat{\mathbb{F}}$. For all $s\in Irr(G)$ we have that $(\gamma, \epsilon)\otop s=(\gamma, \epsilon)\otop (e,s)=(\gamma, s)$, which shows that $\widehat{\mathbb{G}}$ is divisible in $\widehat{\mathbb{F}}$.
		\end{enumerate}
	\end{remsSec}
	
	Using the formulae of Lemma \ref{lem.FusionRulesSemiDirect} and the previous remarks we get the following formulas.
	\begin{corSec}\label{cor.ActionsComultiplicationsF}
		The following properties hold
		\begin{enumerate}[i)]
			\item The $C^*$-algebra $c_0(\widehat{\mathbb{F}})=c_0(\Gamma)\otimes c_0(\widehat{\mathbb{G}})$ is a $\widehat{\mathbb{G}}$-$C^*$-algebra with action $\widehat{\Theta}_{\widehat{\mathbb{G}}}:=(\rho_{\widehat{\mathbb{G}}}\otimes id_{c_0(\widehat{\mathbb{F}})})\circ \widehat{\Theta}$ such that
			$$\widehat{\Theta}_{\widehat{\mathbb{G}}}(\delta_\gamma\otimes a)\big(p_y\otimes\delta_h\otimes p_z\big)=\delta_{\gamma, h}\ (p_y\otimes\delta_h\otimes p_z) \Big((V_{h^{-1}, y}\otimes p_z)\widehat{\Delta}(a)(p_y\otimes p_z)(V^*_{h^{-1}, y} \otimes p_z)\Big)_{13}\mbox{,}$$
			for all $\gamma, h\in\Gamma$, all $a\in c_0(\widehat{\mathbb{G}})$ and all $y,z\in Irr(\mathbb{G})$.
			
			\item The $C^*$-algebra $c_0(\widehat{\mathbb{F}})=c_0(\Gamma)\otimes c_0(\widehat{\mathbb{G}})$ is a $\Gamma$-$C^*$-algebra with action $\widehat{\Theta}_{\Gamma}:=(\rho_{\Gamma}\otimes id_{c_0(\widehat{\mathbb{F}})})\circ \widehat{\Theta}$ such that
			$$\widehat{\Theta}_{\Gamma}(\delta_\gamma\otimes a)\big(\delta_g\otimes\delta_h\otimes p_z\big)=\delta_{\gamma, gh}\ (\delta_g\otimes \delta_h\otimes p_z) (id\otimes id\otimes a) \mbox{,}$$
			for all $\gamma, g, h\in\Gamma$, all $a\in c_0(\widehat{\mathbb{G}})$ and all $z\in Irr(\mathbb{G})$.
			\item If $\eta: c_0(\Gamma)\otimes c_0(\widehat{\mathbb{G}})\longrightarrow \widetilde{M}(c_0(\widehat{\mathbb{G}})\otimes c_0(\Gamma)\otimes c_0(\widehat{\mathbb{G}}))$ denotes the action of $\widehat{\mathbb{G}}$ on $c_0(\widehat{\mathbb{F}})=c_0(\Gamma)\otimes c_0(\widehat{\mathbb{G}})$ given by the composition $(\Sigma_{12}\otimes id_{c_0(\widehat{\mathbb{G}})})\circ(id_{c_0(\Gamma)}\otimes \widehat{\Delta})$, then $$\widehat{\Theta}_{\widehat{\mathbb{G}}}(\cdot)=(\mathscr{U}\otimes id_{c_0(\widehat{\mathbb{G}})})\eta(\cdot)(\mathscr{U}^*\otimes id_{c_0(\widehat{\mathbb{G}})})\mbox{,}$$
			where $\mathscr{U}\in \mathcal{U}\big(M(c_0(\widehat{\mathbb{G}})\otimes c_0(\Gamma))\big)$ is the unitary such that $\mathscr{U}(p_x\otimes \delta_\gamma)=V_{\gamma^{-1}, x}\otimes \delta_\gamma$, for every $x\in Irr(\mathbb{G})$ and every $\gamma\in \Gamma$.
			
		\end{enumerate}
	\end{corSec}
	\begin{remSec}
		In accordance with the previous remarks, let us give the expression of $\widehat{\Theta}_\Gamma$ as a true action of $\Gamma$ on $c_0(\Gamma)\otimes c_0(\widehat{\mathbb{G}})$ and not as a co-action of $(c_0(\Gamma), \widehat{\Delta}_\Gamma)$ as done in the previous corollary. By applying the formula obtained in the previous corollary, for every $\gamma, r\in\Gamma$ and $a\in c_0(\widehat{\mathbb{G}})$ we write
		\begin{equation*}
		\begin{split}
			\widehat{\Theta}^\gamma_\Gamma(\delta_r\otimes a)&=\widehat{\Theta}_\Gamma(\delta_r\otimes a)(\delta_\gamma\otimes id_{c_0(\widehat{\mathbb{F}})})=\underset{r=st}{\underset{s,t}{\sum}}(\delta_s\otimes \delta_t\otimes id_{c_0(\widehat{\mathbb{G}})}) (id_{c_0(\Gamma)}\otimes id_{c_0(\Gamma)}\otimes a)(\delta_\gamma\otimes id_{c_0(\widehat{\mathbb{F}})})=\delta_{\gamma^{-1}r}\otimes a
		\end{split}
		\end{equation*}
		
		Hence, the corresponding action of $\Gamma$ on $c_0(\Gamma)\otimes c_0(\widehat{\mathbb{G}})$, still denoted by $\widehat{\Theta}_\Gamma$, is given by $\big(\widehat{\Theta}_\Gamma\big)_\gamma(\delta_r\otimes a)=\widehat{\Theta}^{\gamma^{-1}}_\Gamma(\delta_r\otimes a)=\delta_{\gamma r}\otimes a$, for all $\gamma, r\in\Gamma$ and $a\in c_0(\widehat{\mathbb{G}})$.
	\end{remSec}

	\bigskip
	Let us set some notations for the sequel. The canonical triple (in the sense of Theorem \ref{theo.QuantumReducedCrossedProduct}) associated to the reduced crossed product $\Gamma \underset{\delta_{\Gamma},r}{\ltimes} A\subset \mathcal{L}_{A}(l^2(\Gamma)\otimes A)$ is denoted by $(\sigma, \nu, E)$, the one associated to the reduced crossed product $\widehat{\mathbb{F}}\underset{\delta, r}{\ltimes} A\subset \mathcal{L}_{A}(L^2(\mathbb{F})\otimes A)$ is denoted by $(\pi_{\delta}, V, E_{\delta})$ and the one associated to the reduced crossed product $\widehat{\mathbb{G}}\underset{\delta_{\widehat{\mathbb{G}}}, r}{\ltimes} A\subset \mathcal{L}_{A}(L^2(\mathbb{G})\otimes A)$ is denoted by $(\pi_{\delta_{\widehat{\mathbb{G}}}}, U, E_{\delta_{\widehat{\mathbb{G}}}})$.
	
	\bigskip
	\begin{remSec}\label{rem.AlternativeDescription}
		Using the universal property of $\widehat{\mathbb{G}}\underset{\delta_{\widehat{\mathbb{G}}}, r}{\ltimes} A$ and the notations above, it is straightforward to see that if $(A,\delta)$ is a left $\widehat{\mathbb{F}}$-$C^*$-algebra, there exists a canonical $*$-isomorphism $$\widehat{\mathbb{G}}\underset{\delta_{\widehat{\mathbb{G}}}, r}{\ltimes} A\cong C^*\langle \pi_\delta(a)V^{(e,x)}_{i,j}: a\in A, x\in Irr(\mathbb{G}), i,j=1,\ldots,dim(x) \rangle=:\mathscr{C}$$
		
		For this we have just to restrict the canonical triple $(\pi_{\delta}, V, E_{\delta})$ to $\mathscr{C}$.
	\end{remSec}

	\bigskip
	Finally, let $(A,\delta)$ be a left $\widehat{\mathbb{F}}$-$C^*$-algebra and consider the reduced crossed product $\widehat{\mathbb{G}}\underset{\delta_{\widehat{\mathbb{G}}}, r}{\ltimes} A$. We use systematically the canonical identifications $\pi_{\delta_{\widehat{\mathbb{G}}}}(a)\cong \pi_{\delta}(a)$ and $U^{x}_{i,j}\cong V^{(e,x)}_{i,j}$, for all $a\in A$, all $x\in Irr(\mathbb{G})$ and all $i,j=1,\ldots, dim(x)$ given by the preceding remark. 
	
	
	Let us define an action $\partial:\Gamma\rightarrow Aut(\widehat{\mathbb{G}}\ltimes_{\delta_{\widehat{\mathbb{G}}}, r} A)$. Given $\gamma\in\Gamma$ we define the automorphism $\partial_\gamma : \widehat{\mathbb{G}}\underset{\delta_{\widehat{\mathbb{G}}}, r}{\ltimes} A\longrightarrow \widehat{\mathbb{G}}\underset{\delta_{\widehat{\mathbb{G}}}, r}{\ltimes} A$ by $\partial_\gamma := Ad_{\phi_V(u_\gamma)}$, where $Ad_{(\cdot)}$ denotes the adjoint map. This defines clearly an invertible map for each $\gamma\in\Gamma$ so it remains to show that the space $\widehat{\mathbb{G}}\underset{\delta_{\widehat{\mathbb{G}}}, r}{\ltimes} A$ is preserved.
		
		On the one hand, we have $\partial_\gamma\big(\phi_V(w^x_{i,j})\big)=\phi_V(u_\gamma)\phi_V(w^x_{i,j})\phi^*_V(u_\gamma)=\phi_V\big(u_\gamma w^x_{i,j}u^*_\gamma\big)=\phi_V\big(\alpha_\gamma(w^x_{i,j})\big)\in \widehat{\mathbb{G}}\underset{\delta_{\widehat{\mathbb{G}}}, r}{\ltimes} A$,
		for all $\gamma\in\Gamma$, $x\in Irr(\mathbb{G})$, $i,j=1,\ldots, n_x$. On the other hand, the relations of the reduced crossed product $\widehat{\mathbb{F}}\underset{\delta, r}{\ltimes} A$ following Theorem \ref{theo.QuantumReducedCrossedProduct} are precisely $\pi_\delta(a)\phi_V(u_\gamma v^x_{i,j})=\overset{n_x}{\underset{k=1}{\sum}}\phi_V(u_\gamma v^x_{i,k})\pi_\delta(\delta^y_{k,j}(a))$, for all $y:=(\gamma, x)\in Irr(\mathbb{F})$, $a\in A$ and all $i,j=1,\ldots, n_x$. In particular, if we take $x:=\epsilon$ this formula becomes $\pi_{\delta_{\widehat{\mathbb{G}}}}(a)\phi_V(u_\gamma)=\phi_V(u_\gamma)\pi_{\delta_{\widehat{\mathbb{G}}}}\big(\delta^{\gamma}_{\Gamma}(a)\big)$, which means $\big(\partial_\gamma(\pi_{\delta_{\widehat{\mathbb{G}}}}(a))\big)^*=\phi^*_V(u_\gamma)\pi_{\delta_{\widehat{\mathbb{G}}}}(a)\phi_V(u_\gamma)=\pi_{\delta_{\widehat{\mathbb{G}}}}\big(\delta^{\gamma}_{\Gamma}(a)\big)\in \widehat{\mathbb{G}}\underset{\delta_{\widehat{\mathbb{G}}}, r}{\ltimes} A$,
		for all $\gamma\in\Gamma$, $a\in A$. In particular, we have $\phi_V(u_\gamma)\pi_{\delta_{\widehat{\mathbb{G}}}}(a)\phi^*_V(u_\gamma)=\pi_{\delta_{\widehat{\mathbb{G}}}}\big(\delta^{\gamma^{-1}}_{\Gamma}(a)\big)$. Hence, $\partial$ is a well-defined action of $\Gamma$ on $\widehat{\mathbb{G}}\ltimes_{\delta_{\widehat{\mathbb{G}}}, r} A$ such that $\partial_{\gamma}\big(\pi_{\delta_{\widehat{\mathbb{G}}}}(a)U^x_{i,j}\big)=\pi_{\delta}\big(\big(\delta_{\Gamma}\big)_{\gamma}(a)\big)\phi_U\big(\alpha_{\gamma}(w^x_{i,j})\big)$, for all $\gamma\in \Gamma$, all $a\in A$, all $x\in Irr(\mathbb{G})$ and all $i,j=1,\ldots, n_x$

	\bigskip
	The preceding results are true for any $\widehat{\mathbb{F}}$-$C^*$-algebra. We can apply them to the case of the $\widehat{\mathbb{F}}$-$C^*$-algebra $(c_0(\widehat{\mathbb{F}}), \widehat{\Theta})$, which is particularly interesting for our purpose. Recall from Lemma \ref{lem.FusionRulesSemiDirect} that we have the identification $c_0(\widehat{\mathbb{F}})=c_0(\Gamma)\otimes c_0(\widehat{\mathbb{G}})$ and that the dual co-multiplication $\widehat{\Theta}$ has been explicitly described in terms of this identification. Next, we want to describe explicitly the action $\partial$ of $\Gamma$ on $\widehat{\mathbb{G}}\underset{\widehat{\Theta}_{\widehat{\mathbb{G}}}}{\ltimes} \big(c_0(\Gamma)\otimes c_0(\widehat{\mathbb{G}})\big)$ constructed above. Let us set some notations. We denote by $(\pi_{\widehat{\Theta}}, U_{\widehat{\Theta}}, E_{\widehat{\Theta}})$, $(\pi_{\widehat{\Delta}}, U_{\widehat{\Delta}}, E_{\widehat{\Delta}})$, $(\pi_{\eta}, U_{\eta}, E_\eta)$ the canonical triples (following Theorem \ref{theo.QuantumReducedCrossedProduct}) associated to the reduced crossed products $\widehat{\mathbb{G}}\underset{\widehat{\Theta}_{\widehat{\mathbb{G}}}}{\ltimes} \big(c_0(\Gamma)\otimes c_0(\widehat{\mathbb{G}})\big)$, $\widehat{\mathbb{G}}\underset{\widehat{\Delta}}{\ltimes} c_0(\widehat{\mathbb{G}})$, $\widehat{\mathbb{G}}\underset{\eta}{\ltimes} \big(c_0(\Gamma)\otimes c_0(\widehat{\mathbb{G}})\big)$, respectively. 
	We denote by $\mathscr{U}\in \mathcal{U}\big(M(c_0(\widehat{\mathbb{G}})\otimes c_0(\Gamma))\big)$ the unitary introduced in Corollary \ref{cor.ActionsComultiplicationsF} such that $\mathscr{U}(p_x\otimes \delta_\gamma)=V_{\gamma^{-1}, x}\otimes \delta_\gamma$, for every $x\in Irr(\mathbb{G})$ and every $\gamma\in \Gamma$.
	\begin{lemSec}\label{lem.MoritaEquivalenceLemmaBC}
		The following properties hold.
		\begin{enumerate}[i)]
			\item There exists a canonical $*$-isomorphism
				$$\widehat{\mathbb{G}}\underset{\widehat{\Theta}_{\widehat{\mathbb{G}}}}{\ltimes} \big(c_0(\Gamma)\otimes c_0(\widehat{\mathbb{G}})\big)\cong c_0(\Gamma)\otimes \widehat{\mathbb{G}}\underset{\widehat{\Delta}}{\ltimes} c_0(\widehat{\mathbb{G}})$$
				which is $\Gamma$-equivariant, where $c_0(\Gamma)\otimes \widehat{\mathbb{G}}\underset{\widehat{\Delta}}{\ltimes} c_0(\widehat{\mathbb{G}})$ is equipped with the action $\mu$ of $\Gamma$ such that
				$$\mu_\gamma\big((\delta_r\otimes \pi_{\widehat{\Delta}}(a))W^x_{i,j}\big)=(\delta_{\gamma r}\otimes \pi_{\widehat{\Delta}}(a))\phi_{W}\big(\alpha_\gamma(w^x_{i,j})\big)\mbox{,}$$
				for all $a\in c_0(\widehat{\mathbb{G}})$, $\gamma\in\Gamma$, $x\in Irr(\mathbb{G})$, $i,j=1,\ldots, n_x$, where 
				$$W:=\big(U_{\widehat{\Delta}}\big)_{13}\big(id_{c_0(\widehat{\mathbb{G}})}\otimes (id_{c_0(\Gamma)}\otimes \pi_{\widehat{\Delta}})\big)(\mathscr{U}^*\otimes id_{c_0(\widehat{\mathbb{G}})})\in M\big(c_0(\widehat{\mathbb{G}})\otimes c_0(\Gamma)\otimes \widehat{\mathbb{G}}\underset{\widehat{\Delta}}{\ltimes} c_0(\widehat{\mathbb{G}})\big)$$
			\item There exists a canonical $\Gamma$-equivariant $*$-isomorphism
				$$c_0(\Gamma)\otimes \widehat{\mathbb{G}}\underset{\widehat{\Delta}}{\ltimes} c_0(\widehat{\mathbb{G}})\cong c_0(\Gamma)\otimes \mathcal{K}(L^2(\mathbb{G}))\mbox{,}$$
				where $c_0(\Gamma)\otimes \widehat{\mathbb{G}}\underset{\widehat{\Delta}}{\ltimes} c_0(\widehat{\mathbb{G}})$ is equipped with the action $\mu$ defined in $(i)$, $c_0(\Gamma)$ is equipped with the action induced by co-multiplication $\widehat{\Delta}_\Gamma$ and $\mathcal{K}(L^2(\mathbb{G}))$ is equipped with the adjoint action of $\Gamma$ with respect to the unitary representation of $\Gamma$ on $L^2(\mathbb{G})$ induced by $\alpha$.
				
				In particular, there exists a canonical $\Gamma$-equivariant Morita equivalence
				$$c_0(\Gamma)\otimes \widehat{\mathbb{G}}\underset{\widehat{\Delta}}{\ltimes} c_0(\widehat{\mathbb{G}})\underset{\Gamma-M}{\sim} c_0(\Gamma)$$
				
				
		\end{enumerate}
	\end{lemSec}
	\begin{proof}
		\begin{enumerate}[i)]

			\item Recall that $\eta=(\Sigma_{12}\otimes id_{c_0(\widehat{\mathbb{G}})})\circ (id_{c_0(\Gamma)}\otimes \widehat{\Delta})$. By Proposition \ref{pro.CrossedProductTensorProduct} we know that there exists a canonical $*$-isomorphism $\widehat{\mathbb{G}}\underset{\eta}{\ltimes} \big(c_0(\Gamma)\otimes c_0(\widehat{\mathbb{G}})\big)\cong c_0(\Gamma)\otimes \widehat{\mathbb{G}}\underset{\widehat{\Delta}}{\ltimes} c_0(\widehat{\mathbb{G}})$, where the latter is equipped with the triple $\big(id_{c_0(\Gamma)}\otimes \pi_{\widehat{\Delta}}, \big(U_{\widehat{\Delta}}\big)_{13}, id_{c_0(\Gamma)}\otimes E_{\widehat{\Delta}}\big)$. Next, if we replace $\big(U_{\widehat{\Delta}}\big)_{13}$ by the unitary $W$ of the statement, the corresponding triple is again associated to $c_0(\Gamma)\otimes \widehat{\mathbb{G}}\underset{\widehat{\Delta}}{\ltimes} c_0(\widehat{\mathbb{G}})$ in the sense of Theorem \ref{theo.QuantumReducedCrossedProduct}. 
			
			Observe that both triple give rise to isomorphic underlying $C^*$-algebras by means of the unitary $\mathscr{U}$. We claim that the $C^*$-algebra described in terms of the triple $\big(id_{c_0(\Gamma)}\otimes \pi_{\widehat{\Delta}}, W, id_{c_0(\Gamma)}\otimes E_{\widehat{\Delta}}\big)$ is identified to the reduced crossed product $\widehat{\mathbb{G}}\underset{\widehat{\Theta}_{\widehat{\mathbb{G}}}}{\ltimes} \big(c_0(\Gamma)\otimes c_0(\widehat{\mathbb{G}})\big)$. Let us check its universal property.
			
			On the one hand, for all $a\in c_0(\Gamma)\otimes c_0(\widehat{\mathbb{G}})$ we write
			\begin{equation*}
		\begin{split}
			W^*&\big(id_{c_0(\widehat{\mathbb{G}})}\otimes (id_{c_0(\Gamma)}\otimes\pi_{\widehat{\Delta}})(a)\big)W\\
			&=\big(id_{c_0(\widehat{\mathbb{G}})}\otimes (id_{c_0(\Gamma)}\otimes \pi_{\widehat{\Delta}})\big)(\mathscr{U}\otimes id_{c_0(\widehat{\mathbb{G}})})\\
			&\big(U^*_{\widehat{\Delta}}\big)_{13}\big(id_{c_0(\widehat{\mathbb{G}})}\otimes (id_{c_0(\Gamma)}\otimes\pi_{\widehat{\Delta}})(a)\big)\big(U_{\widehat{\Delta}}\big)_{13}\\
			&\big(id_{c_0(\widehat{\mathbb{G}})}\otimes (id_{c_0(\Gamma)}\otimes \pi_{\widehat{\Delta}})\big)(\mathscr{U}^*\otimes id_{c_0(\widehat{\mathbb{G}})})\\
			&\overset{(1)}{=}\big(id_{c_0(\widehat{\mathbb{G}})}\otimes (id_{c_0(\Gamma)}\otimes \pi_{\widehat{\Delta}})\big)(\mathscr{U}\otimes id_{c_0(\widehat{\mathbb{G}})})\\
			&\big(id_{c_0(\widehat{\mathbb{G}})}\otimes(id_{c_0(\Gamma)}\otimes \pi_{\widehat{\Delta}}\big)(\Sigma_{12}\otimes id_{c_0(\widehat{\mathbb{G}})})(id_{c_0(\Gamma)}\otimes \widehat{\Delta})(a)\\
			&\big(id_{c_0(\widehat{\mathbb{G}})}\otimes (id_{c_0(\Gamma)}\otimes \pi_{\widehat{\Delta}})\big)(\mathscr{U}^*\otimes id_{c_0(\widehat{\mathbb{G}})})\\
			&=\big(id_{c_0(\widehat{\mathbb{G}})}\otimes (id_{c_0(\Gamma)}\otimes \pi_{\widehat{\Delta}})\big)\Big((\mathscr{U}\otimes id_{c_0(\widehat{\mathbb{G}})})\eta(a)(\mathscr{U}^*\otimes id_{c_0(\widehat{\mathbb{G}})})\Big)\\
			&\overset{(2)}{=}\big(id_{c_0(\widehat{\mathbb{G}})}\otimes (id_{c_0(\Gamma)}\otimes \pi_{\widehat{\Delta}})\big)\widehat{\Theta}_{\widehat{\mathbb{G}}}(a)\mbox{,}
			\end{split}
		\end{equation*}
		where in $(1)$ we have used Proposition \ref{pro.CrossedProductTensorProduct} and in $(2)$ we have used that  $\widehat{\Theta}_{\widehat{\mathbb{G}}}$ is conjugate of $\eta$ by $\mathscr{U}$ thanks to Corollary \ref{cor.ActionsComultiplicationsF}. On the other hand, a routine computation yields the following expression for all $x\in Irr(\mathbb{G})$, $i,j=1,\ldots, n_x$ $W^x_{i,j}=\underset{\gamma\in \Gamma}{\sum} \delta_\gamma\otimes \big(U^x_{\widehat{\Delta}}\big)_{i,j}\big(V^*_{\gamma, x}\big)_{i,j}\in M\big(c_0(\Gamma)\otimes \widehat{\mathbb{G}}\underset{\widehat{\Delta}}{\ltimes} c_0(\widehat{\mathbb{G}})\big)$, so that for all $r\in\Gamma$, $a\in c_0(\widehat{\mathbb{G}})$, $x\in Irr(\mathbb{G})$, $i,j=1,\ldots, n_x$ we have
		\begin{equation*}
		\begin{split}
			(id_{c_0(\Gamma)}\otimes E_{\widehat{\Delta}})&\Big((id_{c_0(\Gamma)}\otimes \pi_{\widehat{\Delta}})(\delta_r\otimes a)W^x_{i,j}\Big)\\
			&=(id_{c_0(\Gamma)}\otimes E_{\widehat{\Delta}})\Big(\delta_r\otimes \pi_{\widehat{\Delta}}(a)\big(U^x_{\widehat{\Delta}}\big)_{i,j}\big(V^*_{\gamma, x}\big)_{i,j}\Big)\\
			&=\delta_r\otimes a\ \delta_{x,\epsilon}\big(V^*_{\gamma, x}\big)_{i,j}=\delta_r\otimes a\ \delta_{x,\epsilon}=E_{\widehat{\Theta}}(\pi_{\widehat{\Theta}}(\delta_r\otimes a)\big(U^x_{\widehat{\Theta}}\big)_{i,j})
			\end{split}
		\end{equation*}
		
		Hence, by universal property of $\widehat{\mathbb{G}}\underset{\widehat{\Theta}_{\widehat{\mathbb{G}}}}{\ltimes} \big(c_0(\Gamma)\otimes c_0(\widehat{\mathbb{G}})\big)$ there exists a canonical $*$-isomorphism
			$$\psi: \widehat{\mathbb{G}}\underset{\widehat{\Theta}_{\widehat{\mathbb{G}}}}{\ltimes} \big(c_0(\Gamma)\otimes c_0(\widehat{\mathbb{G}})\big)\overset{\sim}{\longrightarrow} c_0(\Gamma)\otimes \widehat{\mathbb{G}}\underset{\widehat{\Delta}}{\ltimes} c_0(\widehat{\mathbb{G}})$$
			such that $$\psi\big(\pi_{\widehat{\Theta}}(\delta_\gamma\otimes a)\big(U_{\widehat{\Theta}}^x\big)_{i,j}\big)=(\delta_\gamma\otimes \pi_{\widehat{\Delta}}(a))W^x_{i,j}\mbox{,}$$
			for all $\gamma\in\Gamma$, $a\in c_0(\widehat{\mathbb{G}})$, $x\in Irr(\mathbb{G})$, $i,j=1,\ldots, n_x$. Moreover, for all $\gamma, r\in\Gamma$, $a\in c_0(\widehat{\mathbb{G}})$, $x\in Irr(\mathbb{G})$, $i,j=1,\ldots, n_x$ we write
			$$\pi_{\widehat{\Theta}}(\delta_r\otimes a)\big(U_{\widehat{\Theta}}^x\big)_{i,j}\overset{\partial_\gamma}{\mapsto} \pi_{\widehat{\Theta}}\big((\widehat{\Theta}_\Gamma)_\gamma(\delta_r\otimes a)\big)\phi_{U_{\widehat{\Theta}}}(\alpha_\gamma(w^x_{i,j}))\overset{\psi}{\mapsto} (\delta_{\gamma r}\otimes \pi_{\widehat{\Delta}}(a))\phi_{W}(\alpha_\gamma(w^x_{i,j}))$$
			$$\pi_{\widehat{\Theta}}(\delta_r\otimes a)\big(U_{\widehat{\Theta}}^x\big)_{i,j}\overset{\psi}{\mapsto} (\delta_r\otimes \pi_{\widehat{\Delta}}(a))W^x_{i,j}\overset{\mu_\gamma}{\mapsto} (\delta_{\gamma r}\otimes \pi_{\widehat{\Delta}}(a))\phi_{W}(\alpha_\gamma(w^x_{i,j}))\mbox{,}$$
			which yields that $\mu$ is a well-defined action of $\Gamma$ on $c_0(\Gamma)\otimes \widehat{\mathbb{G}}\underset{\widehat{\Delta}}{\ltimes} c_0(\widehat{\mathbb{G}})$ and thus the $\Gamma$-equivariance of the statement.

			\item 
			
			We are going to establish a canonical $*$-isomorphism $\psi: c_0(\Gamma)\otimes \widehat{\mathbb{G}}\underset{\widehat{\Delta}}{\ltimes} c_0(\widehat{\mathbb{G}})\overset{\sim}{\longrightarrow} c_0(\Gamma)\otimes \mathcal{K}(L^2(\mathbb{G}))$.
			For this, remark firstly that the canonical triple $(\pi_{\widehat{\Delta}}, U_{\widehat{\Delta}}, E_{\widehat{\Delta}})$ associated to $\widehat{\mathbb{G}}\underset{\widehat{\Delta}}{\ltimes} c_0(\widehat{\mathbb{G}})$ following Theorem \ref{theo.QuantumReducedCrossedProduct} is exactly $(\widehat{\lambda}, \widehat{W}_{\mathbb{G}}\otimes  id_{c_0(\widehat{\mathbb{G}})}, \Omega \otimes id_{c_0(\widehat{\mathbb{G}})})$. 
			
			Moreover, it is well-known that $\widehat{\mathbb{G}}\underset{\widehat{\Delta}}{\ltimes} c_0(\widehat{\mathbb{G}})\cong \mathcal{K}(L^2(\mathbb{G}))$ and for the latter we have the triple $(\widehat{\lambda}, \widehat{W}_{\mathbb{G}}, \widehat{E})$ where $\widehat{E}$ is defined as in the proof of Theorem \ref{theo.QuantumReducedCrossedProduct}. In particular, we have $\mathcal{K}(L^2(\mathbb{G}))=C^*\langle\widehat{\lambda}(a)\lambda(w^x_{i,j})\ |\ a\in c_0(\widehat{\mathbb{G}}), x\in Irr(\mathbb{G}), i.j=1,\ldots, n_x\rangle$.
			
			Since $\Gamma$ acts on $\mathbb{G}$ by quantum automorphisms with action $\alpha$, then $L^2(\mathbb{G})$ is equipped with the action of $\Gamma$ such that $\gamma\cdot \lambda(w^{x'}_{k,l})\Omega = \lambda\big(\alpha_\gamma(w^{x'}_{k,l})\big)\Omega$, for all $\gamma\in\Gamma$, $x'\in Irr(\mathbb{G})$, $k,l=1,\ldots, n_{x'}$. Therefore, the corresponding action on $\mathcal{K}(L^2(\mathbb{G}))$ is such that $\gamma \cdot \widehat{\lambda}(a)\lambda(w^x_{i,j})= \widehat{\lambda}(a)\lambda\big(\alpha_\gamma(w^x_{i,j})\big)$, for all $a\in c_0(\widehat{\mathbb{G}})$, $x\in Irr(\mathbb{G})$, $i,j=1,\ldots, n_x$, which is a straightforward computation.
			
			Since $c_0(\Gamma)\otimes \widehat{\mathbb{G}}\underset{\widehat{\Delta}}{\ltimes} c_0(\widehat{\mathbb{G}})$ has been described in $(ii)$ with the help of the unitary $\mathscr{U}$, then we consider now the triple $(id_{c_0(\Gamma)}\otimes \widehat{\lambda}, \widetilde{W}, id_{c_0(\Gamma)}\otimes \widehat{E})$ associated to $c_0(\Gamma)\otimes \mathcal{K}(L^2(\mathbb{G}))$ where 
			$$\widetilde{W}:=\big(\widehat{W}_{\mathbb{G}}\big)_{13}\big(id_{c_0(\widehat{\mathbb{G}})}\otimes (id_{c_0(\Gamma)}\otimes \widehat{\lambda})\big)(\mathscr{U}^*\otimes id_{c_0(\widehat{\mathbb{G}})})\in M\big(c_0(\widehat{\mathbb{G}})\otimes c_0(\Gamma)\otimes \mathcal{K}(L^2(\mathbb{G})\big)$$
			
			Observe that both triple $(id_{c_0(\Gamma)}\otimes \widehat{\lambda}, \big(\widehat{W}_{\mathbb{G}}\big)_{13}, id_{c_0(\Gamma)}\otimes \widehat{E})$ and $(id_{c_0(\Gamma)}\otimes \widehat{\lambda}, \widetilde{W}, id_{c_0(\Gamma)}\otimes \widehat{E})$ give rise to isomorphic underlying $C^*$-algebras by means of the unitary $\mathscr{U}$. We claim that the $C^*$-algebra $c_0(\Gamma)\otimes \mathcal{K}(L^2(\mathbb{G}))$ described in terms of the triple $\big(id_{c_0(\Gamma)}\otimes \pi_{\widehat{\Delta}}, \widetilde{W}, id_{c_0(\Gamma)}\otimes E_{\widehat{\Delta}}\big)$ is identified to the reduced crossed product $\widehat{\mathbb{G}}\underset{\widehat{\Theta}_{\widehat{\mathbb{G}}}}{\ltimes} \big(c_0(\Gamma)\otimes c_0(\widehat{\mathbb{G}})\big)$. We check its universal property in an analogous way as in $(i)$ by observing that here we have $\widetilde{W}^x_{i,j}=\underset{\gamma\in \Gamma}{\sum} \delta_\gamma\otimes \big(\widehat{W}_{\mathbb{G}}^x\big)_{i,j}\big(V^*_{\gamma, x}\big)_{i,j}\in M\big(c_0(\Gamma)\otimes \mathcal{K}(L^2(\mathbb{G}))\big)$, for all $x\in Irr(\mathbb{G})$, $i,j=1,\ldots, n_x$.

		Hence, by property $(i)$ and universal property of $\widehat{\mathbb{G}}\underset{\widehat{\Theta}_{\widehat{\mathbb{G}}}}{\ltimes} \big(c_0(\Gamma)\otimes c_0(\widehat{\mathbb{G}})\big)$ there exists a canonical $*$-isomorphism
			$$\psi: c_0(\Gamma)\otimes \widehat{\mathbb{G}}\underset{\widehat{\Delta}}{\ltimes} c_0(\widehat{\mathbb{G}})\overset{\sim}{\longrightarrow} c_0(\Gamma)\otimes \mathcal{K}(L^2(\mathbb{G}))$$
			such that $$\psi\big((\delta_\gamma\otimes \pi_{\widehat{\Delta}}(a))W^x_{i,j}\big)=(\delta_\gamma\otimes \widehat{\lambda}(a))\widetilde{W}^x_{i,j}\mbox{,}$$
			for all $\gamma\in\Gamma$, $a\in c_0(\widehat{\mathbb{G}})$, $x\in Irr(\mathbb{G})$, $i,j=1,\ldots, n_x$.
			
			Finally, let us study the $\Gamma$-equivariance condition. 
			By definition, the action of $\Gamma$ on $c_0(\Gamma)\otimes \mathcal{K}(L^2(\mathbb{G}))$ is such that for all $\gamma, r\in\Gamma$, $a\in c_0(\widehat{\mathbb{G}})$, $i,j=1,\ldots, n_x$ $\gamma\cdot \big(\delta_r\otimes \widehat{\lambda}(a)\lambda(w^x_{i,j})\big)=\delta_{\gamma r}\otimes \widehat{\lambda}(a)\lambda\big(\alpha_\gamma(w^x_{i,j})\big)$, which allows to show the $\Gamma$-equivariance of the above $*$-isomorphism because for all $\gamma, r\in\Gamma$, $a\in c_0(\widehat{\mathbb{G}})$, $x\in Irr(\mathbb{G})$, $i,j=1,\ldots, n_x$ we write
			$$(\delta_r\otimes \pi_{\widehat{\Delta}}(a))W^x_{i,j}\overset{\mu_\gamma}{\mapsto} (\delta_{\gamma r}\otimes \pi_{\widehat{\Delta}}(a))\phi_W\big(\alpha_\gamma(w^x_{i,j})\big)\overset{\psi}{\mapsto} (\delta_{\gamma r}\otimes \widehat{\lambda}(a))\phi_{\widetilde{W}}\big(\alpha_\gamma(w^x_{i,j})\big)$$
			$$(\delta_r\otimes \pi_{\widehat{\Delta}}(a))W^x_{i,j}\overset{\psi}{\mapsto} (\delta_r\otimes \widehat{\lambda}(a))\widetilde{W}^x_{i,j}\overset{\gamma\cdot }{\mapsto} (\delta_{\gamma r}\otimes \widehat{\lambda}(a))\phi_{\widetilde{W}}\big(\alpha_\gamma(w^x_{i,j})\big)$$

		\end{enumerate}
	\end{proof}

\subsection{Torsion phenomena}
	Let us study the torsion in the sense of Meyer-Nest. For the following result it is advisable to keep in mind the spectral theory for compact quantum groups recalled in Section \ref{sec.Notations}.
	
	\begin{theo}\label{theo.TorsionQuantumsemi-direct}
		Let $\mathbb{F}=\Gamma\underset{\alpha}{\ltimes}\mathbb{G}$ be the quantum semi-direct product of $\mathbb{G}$ by $\Gamma$. If $\Gamma$ and $\widehat{\mathbb{G}}$ are torsion-free, then $\widehat{\mathbb{F}}$ is torsion-free.
	\end{theo}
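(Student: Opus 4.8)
The \emph{only if} direction is immediate: by Remarks \ref{rem.QuantumSubgroups} both $\Gamma$ and $\widehat{\mathbb{G}}$ are \emph{divisible} discrete quantum subgroups of $\widehat{\mathbb{F}}$, so Proposition \ref{pro.TorsionDivisibleSubgroup} yields that they are torsion-free whenever $\widehat{\mathbb{F}}$ is. For the \emph{if} direction, assume $\Gamma$ and $\widehat{\mathbb{G}}$ are torsion-free and let $(A,\delta)$ be a torsion action of $\mathbb{F}$, i.e. $\delta$ ergodic and $A$ finite dimensional. Then $A$ coincides with its Podle\'{s} subalgebra, so $A=\bigoplus_{y\in Irr(\mathbb{F})}\mathcal{A}_y$. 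Using $Irr(\mathbb{F})=\Gamma\bigotop Irr(\mathbb{G})$ I would set $A_\gamma:=\bigoplus_{x\in Irr(\mathbb{G})}\mathcal{A}_{(\gamma,x)}$ and first check that $A=\bigoplus_{\gamma\in\Gamma}A_\gamma$ is a $\Gamma$-grading of $A$ as a $C^*$-algebra. Indeed, the fusion rules computed above show that every irreducible component of $(\gamma,x)\otop(\gamma',x')$ has first coordinate $\gamma\gamma'$, whence $\mathcal{A}_{(\gamma,x)}\mathcal{A}_{(\gamma',x')}\subseteq A_{\gamma\gamma'}$ and $\mathcal{A}_{(\gamma,x)}^*\subseteq A_{\gamma^{-1}}$; equivalently, this is the grading induced by the canonical surjection of compact quantum groups $\mathbb{F}\twoheadrightarrow\widehat{\Gamma}$ dual to the inclusion $\Gamma<\widehat{\mathbb{F}}$.

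Next I would analyse the unit component. The subspace $A_e=\bigoplus_{x\in Irr(\mathbb{G})}\mathcal{A}_{(e,x)}$ is a finite dimensional sub-$C^*$-algebra of $A$, and since the matrix coefficients of the representations $(e,x)$ are exactly those in $\pi(Pol(\mathbb{G}))\subset Pol(\mathbb{F})$, the restriction of $\delta$ to $A_e$ takes values in $\widetilde{M}(A_e\otimes C(\mathbb{G}))$ and defines a $\mathbb{G}$-action $\delta|_{A_e}$. Its fixed point algebra is the spectral subspace $\mathcal{A}_{(e,\epsilon)}=A^{\delta}=\mathbb{C}$ by ergodicity of $\delta$, so $(A_e,\delta|_{A_e})$ is a torsion action of $\mathbb{G}$. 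As $\widehat{\mathbb{G}}$ is torsion-free, $A_e\cong\mathcal{K}(H_v)$ as $\mathbb{G}$-$C^*$-algebras for some finite dimensional representation $v$ of $\mathbb{G}$; in particular $A_e$ is simple.

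It remains to control the support $S:=\{\gamma\in\Gamma\ |\ A_\gamma\neq 0\}$ of the grading, and I claim $S$ is a subgroup of $\Gamma$. Clearly $e\in S$ and $S=S^{-1}$. Given $\gamma,\gamma'\in S$, choose $x,x'\in Irr(\mathbb{G})$ and nonzero $X\in K_{(\gamma,x)}$, $Y\in K_{(\gamma',x')}$; Lemma \ref{lem.NonZeroProducts}, applied to the ergodic action $\delta$ of $\mathbb{F}$, produces $z\in Irr(\mathbb{F})$ and $\Phi\in Mor(z,(\gamma,x)\otop(\gamma',x'))$ with $0\neq X\otimes_\Phi Y\in K_z$. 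By the fusion rules $z=(\gamma\gamma',x'')$, so $A_{\gamma\gamma'}\neq 0$ and $\gamma\gamma'\in S$. Thus $S$ is a finite (since $\dim A<\infty$) subgroup of $\Gamma$, and torsion-freeness of $\Gamma$ forces $S=\{e\}$ by Remark \ref{rem.TorsionFiniteSubgroup}. Consequently $A=A_e\cong\mathcal{K}(H_v)$, and since all spectral subspaces of $A$ now lie over $Irr(\mathbb{G})$ the coaction $\delta$ factors through the inclusion $C(\mathbb{G})\subset C(\mathbb{F})$; regarding $v$ as an $\mathbb{F}$-representation via this inclusion, we obtain $A\cong\mathcal{K}(H_v)$ as $\mathbb{F}$-$C^*$-algebras, hence $(A,\delta)$ is $\mathbb{F}$-equivariantly Morita equivalent to $\mathbb{C}$, proving that $\widehat{\mathbb{F}}$ is torsion-free.

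The one genuinely delicate point is the stability of the support $S$ under multiplication: this is exactly where ergodicity is essential, and it is supplied by the non-degeneracy of spectral products of Lemma \ref{lem.NonZeroProducts} together with the fusion-rule bookkeeping that the $\Gamma$-coordinate of a tensor product is the product of the $\Gamma$-coordinates. Everything else is a routine translation between the $\mathbb{F}$-, $\mathbb{G}$- and $\Gamma$-pictures afforded by $Irr(\mathbb{F})=\Gamma\bigotop Irr(\mathbb{G})$; in particular, the reader should note that the argument also explains the remark in the introduction that torsion-freeness of $\widehat{\mathbb{F}}$ forces the only finite subgroup of $\Gamma$ to be trivial.
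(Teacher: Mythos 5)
Your proof is correct and follows essentially the same route as the paper's: the heart of both arguments is that the support $S$ (the paper's $\Lambda$) of the spectral decomposition over $\Gamma$ is a finite subgroup of $\Gamma$ — closure under products supplied by Lemma \ref{lem.NonZeroProducts} together with the fusion rule that the $\Gamma$-coordinate of a tensor product is the product of the $\Gamma$-coordinates — hence trivial by torsion-freeness of $\Gamma$, after which $\delta$ takes values in $A\otimes \pi(C(\mathbb{G}))$ and torsion-freeness of $\widehat{\mathbb{G}}$ concludes. Your intermediate identification $A_e\cong\mathcal{K}(H_v)$ before knowing $S=\{e\}$ is harmless but redundant, since once $S=\{e\}$ one has $A=A_e$ and can apply torsion-freeness of $\widehat{\mathbb{G}}$ directly, exactly as the paper does.
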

	\begin{proof}
		
		Let $A$ be a finite dimensional $C^*$-algebra equipped with a right torsion action of $\mathbb{F}$, say $\delta:A\longrightarrow A\otimes C(\mathbb{F})$. 
		
		Let us define $\Lambda:=\{\gamma\in\Gamma\ |\ \exists\ x\in Irr(\mathbb{G}) \mbox{ such that } K_{(\gamma,x)}\neq 0\}$, where $K_{(\gamma,x)}$ denotes the spectral subspace associated to the representation $(\gamma,x) =: y \in Irr(\mathbb{F})$. We claim that $\Lambda$ is a finite subgroup of $\Gamma$. Indeed, $\Lambda$ is a subgroup of $\Gamma$ because given $g,h\in \Lambda$, let $X_g\in K_{(g,x_g)}$ and $X_h\in K_{(h,x_h)}$ be some non-zero elements for the corresponding irreducible representations $x_g, x_h\in Irr(\mathbb{G})$. Put $y_g:=(g,x_g), y_h:=(h, x_h)\in Irr(\mathbb{F})$. By virtue of Lemma \ref{lem.NonZeroProducts}, there exist an irreducible representation $z:=(\gamma,x)\in Irr(\mathbb{F})$ and an intertwiner $\Phi\in Mor(z,y_g\otop y_h)$ such that $X_g\underset{\Phi}{\otimes} X_h\neq 0$.
		
		Besides, the proof of Lemma \ref{lem.NonZeroProducts} shows that $z$ is an irreducible representation of the decomposition of $y_g\otop y_h$ in direct sum of irreducible representations. Thanks to the fusion rules of a quantum semi-direct product we have that $w^{y_g}\otop w^{y_h}=v^{gh}\otop (v^{\alpha_{h^{-1}}(x_g)}\otop v^{x_h})$. Next, consider the decomposition in direct sum of irreducible representations of the tensor product $\alpha_{h^{-1}}(x_g)\otop x_h$, say $\{x_k\}_{k=1,\ldots, r}$ for some $r\in\mathbb{N}$. Hence we write $w^{y_g\otop y_h}=\overset{r}{\underset{k=1}{\bigoplus}}\ v^{gh}\otop x_k$. As a result, the irreducible representation $z=(\gamma, x)\in Irr(\mathbb{F})$ found above must be of the form $(gh, x_k)$ for some $k=1,\ldots, r$. Recall that $X_g\underset{\Phi}{\otimes} X_h\in K_z$ by definition. This shows that $gh=\gamma\in \Lambda$ as required. Moreover, $\Lambda$ is finite because $A$ is finite dimensional.
		
		Thanks to the torsion-freeness of $\Gamma$, $\Lambda$ is just the trivial subgroup $\{e\}$. Hence, for every $y\in Irr(\mathbb{F})$, $K_y\neq 0$ implies $y=(e,x)$ for some $x\in Irr(\mathbb{G})$. Consequently, the spectral decomposition for $A=\mathcal{A}_{\mathbb{F}}$ becomes $A=\underset{x\in Irr(\mathbb{G})}{\bigoplus}\mathcal{A}_{(e,x)}=\mathcal{A}_{\mathbb{G}}$ and the action $\delta$ takes its values on $A\otimes \pi (C_m(\mathbb{G}))$ so that $\delta$ is actually an action of $\mathbb{G}$ on $A$. Since $\widehat{\mathbb{G}}$ is torsion-free by assumption, we achieve the conclusion.
	\end{proof}
	\begin{note}
		The converse of the preceding statement would be true whenever the torsion-freeness is preserved under divisible discrete quantum subgroups as conjectured in Section \ref{sec.Torsion}.
	\end{note}

\subsection{The Baum-Connes property}\label{sec.BCQuantumsemi-directProduct}
	Let us adapt the notations from Section \ref{pro.ConesTensorProduct} for a quantum semi-direct product $\mathbb{F}=\Gamma\underset{\alpha}{\ltimes}\mathbb{G}$. In order to formulate the quantum Baum-Connes property we assume that $\widehat{\mathbb{F}}$, $\Gamma$ and $\widehat{G}$ are all torsion-free.
	
	Consider the equivariant Kasparov categories associated to $\widehat{\mathbb{F}}$ and $\Gamma$, say $\mathscr{K}\mathscr{K}^{\widehat{\mathbb{F}}}$ and $\mathscr{K}\mathscr{K}^{\Gamma}$, respectively; with canonical suspension functors denoted by $\Sigma$. 
	Consider the usual complementary pair of localizing subcategories in $\mathscr{K}\mathscr{K}^{\widehat{\mathbb{F}}}$ and $\mathscr{K}\mathscr{K}^{\Gamma}$, say $(\mathscr{L}_{\widehat{\mathbb{F}}}, \mathscr{N}_{\widehat{\mathbb{F}}})$ and $(\mathscr{L}_{\Gamma}, \mathscr{N}_{\Gamma})$, respectively. The canonical triangulated functors associated to these complementary pairs will be denoted by $(L,N)$ and $(L', N')$, respectively. Next, consider the homological functors defining the \emph{quantum} Baum-Connes assembly maps for $\widehat{\mathbb{F}}$ and $\Gamma$. Namely,
	$$
		\begin{array}{rccllrccl}
			F:&\mathscr{K}\mathscr{K}^{\widehat{\mathbb{F}}}& \longrightarrow &\mathscr{A}b^{\mathbb{Z}/2}&&F':&\mathscr{K}\mathscr{K}^{\Gamma}& \longrightarrow & \mathscr{A}b^{\mathbb{Z}/2}\\
			&(A,\delta) & \longmapsto &F(A):=K_{*}(\widehat{\mathbb{F}}\underset{\delta, r}{\ltimes} A)&&&(B,\beta) & \longmapsto &F'(B):= K_{*}(\Gamma \underset{\beta,r}{\ltimes} B)
		\end{array}
	$$
	
	The quantum assembly maps for $\widehat{\mathbb{F}}$ and for $\Gamma$ are given by the following natural transformations $\eta^{\widehat{\mathbb{F}}}: \mathbb{L}F\longrightarrow F \mbox{ and } \eta^{\Gamma}: \mathbb{L}F'\longrightarrow F'$ (where that $\mathbb{L}F=F\circ L$ and $\mathbb{L}F'= F'\circ L'$). 

	Given a $\widehat{\mathbb{F}}$-$C^*$-algebra $(A,\delta)\in Obj.(\mathscr{K}\mathscr{K}^{\widehat{\mathbb{F}}})$, we regard it as an object in $\mathscr{K}\mathscr{K}^{\widehat{\mathbb{G}}}$ by restricting the action as explained in Remark \ref{rem.QuantumSubgroups}, that is, we consider \\$(A, \delta_{\widehat{\mathbb{G}}})\in Obj.(\mathscr{K}\mathscr{K}^{\widehat{\mathbb{G}}})$. In this way, it is licit to consider the crossed product $\widehat{\mathbb{G}}\underset{\delta_{\widehat{\mathbb{G}}}, r}{\ltimes} A$. Observe by the way that we have shown previously that $\widehat{\mathbb{G}}\underset{\delta_{\widehat{\mathbb{G}}}, r}{\ltimes} A$ is naturally a $\Gamma$-$C^*$-algebra with action $\partial$. Consider now $(B,\nu)\in Obj.(\mathscr{K}\mathscr{K}^{\widehat{\mathbb{F}}})$ an other $\widehat{\mathbb{F}}$-$C^*$-algebra, let $(B,\nu_{\widehat{\mathbb{G}}})\in Obj.(\mathscr{K}\mathscr{K}^{\widehat{\mathbb{G}}})$ be the corresponding restriction and $\widehat{\mathbb{G}}\underset{\nu_{\widehat{\mathbb{G}}}, r}{\ltimes} B$ the corresponding crossed product, which is again a $\Gamma$-$C^*$-algebra with action $\partial'$. If $\mathcal{X}\in KK^{\widehat{\mathbb{F}}}(A,B)$ is a homomorphism in $\mathscr{K}\mathscr{K}^{\widehat{\mathbb{F}}}$ between $A$ and $B$, we regard it as a homomorphism between $A$ and $B$ in $\mathscr{K}\mathscr{K}^{\widehat{\mathbb{G}}}$, that is, $\mathcal{X}$ is also a $\widehat{\mathbb{G}}$-equivariant triple. Then the functoriality of the crossed product assures that there exists a Kasparov triple (via the descendent homomorphism) $\widehat{\mathbb{G}}\underset{r}{\ltimes} \mathcal{X}\in KK(\widehat{\mathbb{G}}\underset{\delta_{\widehat{\mathbb{G}}}, r}{\ltimes} A,\widehat{\mathbb{G}}\underset{\nu_{\widehat{\mathbb{G}}}, r}{\ltimes} B)$ which is a homomorphism between $\widehat{\mathbb{G}}\underset{\delta_{\widehat{\mathbb{G}}}, r}{\ltimes} A$ and $\widehat{\mathbb{G}}\underset{\nu_{\widehat{\mathbb{G}}}, r}{\ltimes} B$ in $\mathscr{K}\mathscr{K}$. But $\widehat{\mathbb{G}}\underset{\delta_{\widehat{\mathbb{G}}}, r}{\ltimes} A$ and $\widehat{\mathbb{G}}\underset{\nu_{\widehat{\mathbb{G}}}, r}{\ltimes} B$ are actually $\Gamma$-$C^*$-algebras and we can show that the descent homomorphism yields an \emph{equivariant} Kasparov triple, that is, $\widehat{\mathbb{G}}\underset{r}{\ltimes} \mathcal{X}\in KK^{\Gamma}(\widehat{\mathbb{G}}\underset{\delta_{\widehat{\mathbb{G}}}, r}{\ltimes} A,\widehat{\mathbb{G}}\underset{\nu_{\widehat{\mathbb{G}}}, r}{\ltimes} B)$. Namely, if $\mathcal{X}=((H,\delta_H),\pi,F)$, the descent Kasparov triple is given by \\$\widehat{\mathbb{G}}\underset{r}{\ltimes} \mathcal{X}:=(H\otimes_{\pi_{\nu}}\widehat{\mathbb{G}}\underset{\nu_{\widehat{\mathbb{G}}}, r}{\ltimes} B, id\ltimes \pi, F\otimes id)$. Thanks again to Remark \ref{rem.QuantumSubgroups}, $A$ and $B$ are also $\Gamma$-$C^*$-algebras with actions $\delta_\Gamma$ and $\nu_\Gamma$, respectively and so we can regard $\mathcal{X}$ as a homomorphism between $A$ and $B$ in $\mathscr{K}\mathscr{K}^{\Gamma}$, that is, $\mathcal{X}$ is also a $\Gamma$-equivariant triple with action $\delta_{H, \Gamma}:=(id\otimes \rho_{\Gamma})\circ \delta_{H}$. Hence  $H\otimes_{\pi_{\nu}}\widehat{\mathbb{G}}\underset{\nu_{\widehat{\mathbb{G}}}, r}{\ltimes} B$ is a $\Gamma$-equivariant Hilbert $\widehat{\mathbb{G}}\underset{\nu_{\widehat{\mathbb{G}}}, r}{\ltimes} B$-module with the diagonal action $\tau:=(\delta_{H,\Gamma})\otimes \partial'$. Routine computations show that $\widehat{\mathbb{G}}\underset{r}{\ltimes} \mathcal{X}$ is a $\Gamma$-equivariant triple. For more details about these functorial constructions we refer to \cite{VergniouxThesis} or \cite{SkandalisUnitaries}. 
	
	In other words, it is licit to consider the following functor:
	$$
		\begin{array}{rccl}
			\mathcal{Z}:&\mathscr{K}\mathscr{K}^{\widehat{\mathbb{F}}}& \longrightarrow &\mathscr{K}\mathscr{K}^{\Gamma}\\
			&(A,\delta) & \longmapsto &\mathcal{Z}(A):= \widehat{\mathbb{G}}\underset{\delta_{\widehat{\mathbb{G}}}, r}{\ltimes} A
		\end{array}
	$$
	
	\begin{rem}
		Notice that the functor above is well defined at the level of equivariant Kasparov groups. Indeed, let $\mathcal{X}:=((H,\delta_H),\pi,F), \mathcal{X}':=((H',\delta_{H'}),\pi',F')\in \mathbb{E}^{\widehat{\mathbb{F}}}(A, B)$ two $\widehat{\mathbb{F}}$-equivariant Kasparov triple which are homotopic by means of $\mathcal{E}:=((\mathcal{E},\delta_{\mathcal{E}}),\rho,L)\in\mathbb{E}^{\widehat{\mathbb{F}}}(A, C([0,1])\otimes B)$. Remark that $\mathcal{X}$ and $\mathcal{X}'$ will be also homotopic with respect to the restriction actions to $\widehat{\mathbb{G}}$ and $\Gamma$. By the well-known descent homomorphism, $\mathcal{Z}(\mathcal{E})\in\mathbb{E}\big(\widehat{\mathbb{G}}\underset{\delta_{\widehat{\mathbb{G}}}, r}{\ltimes} A,C([0,1])\otimes \widehat{\mathbb{G}}\underset{\nu_{\widehat{\mathbb{G}}}, r}{\ltimes} B\big)$ yields a homotopy between $\mathcal{Z}(\mathcal{X})$ and $\mathcal{Z}(\mathcal{X}')$. If we equipped $\mathcal{Z}(\mathcal{X})$, $\mathcal{Z}(\mathcal{X}')$ and $\mathcal{Z}(\mathcal{E})$ with the diagonal actions $\tau:=(\delta_{H,\Gamma})\otimes \partial'$, $\tau':=(\delta_{H',\Gamma})\otimes \partial'$ and $\widetilde{\tau}:=(\delta_{\mathcal{E},\Gamma})\otimes \partial'$, then a straightforward computation yields that $\mathcal{Z}(\mathcal{X})$ and $\mathcal{Z}(\mathcal{X}')$ are \emph{equivariantly} homotopic by means of $\mathcal{Z}(\mathcal{E})\in \mathbb{E}^{\Gamma}\big(\widehat{\mathbb{G}}\underset{\delta_{\widehat{\mathbb{G}}}, r}{\ltimes} A,C([0,1])\otimes \widehat{\mathbb{G}}\ltimes_{\nu_{\widehat{\mathbb{G}}}, r} B\big)$.
	\end{rem}
	\pagebreak
	\begin{theo}\label{theo.AssociativityandFunctorZ}
		Let $\mathbb{F}=\Gamma\underset{\alpha}{\ltimes}\mathbb{G}$ be a quantum semi-direct product.
		\begin{enumerate}[i)]
			\item (Associativity for the quantum semi-direct product) If $(A,\delta)$ is a $\widehat{\mathbb{F}}$-$C^*$-algebra, then there exists a canonical $*$-isomorphism $\widehat{\mathbb{F}}\underset{\delta, r}{\ltimes} A\cong \Gamma\underset{\partial, r}{\ltimes}\Big(\widehat{\mathbb{G}}\underset{\delta_{\widehat{\mathbb{G}}}, r}{\ltimes} A\Big)$.
			\item The functor $\mathcal{Z}$
					is triangulated such that $\mathcal{Z}(\mathscr{L}_{\widehat{\mathbb{F}}})\subset \mathscr{L}_{\Gamma}$.
		\end{enumerate}
	\end{theo}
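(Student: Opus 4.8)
For \textbf{(i)} the plan is to show that the iterated crossed product $Q:=\Gamma\underset{\partial,r}{\ltimes}\big(\widehat{\mathbb{G}}\underset{\delta_{\widehat{\mathbb{G}}},r}{\ltimes}A\big)$ satisfies the universal property of $\widehat{\mathbb{F}}\underset{\delta,r}{\ltimes}A$ from Theorem \ref{theo.QuantumReducedCrossedProduct}, and then to invoke its uniqueness clause. Write $\mathscr{C}:=\widehat{\mathbb{G}}\underset{\delta_{\widehat{\mathbb{G}}},r}{\ltimes}A$ with canonical triple $(\pi_{\delta_{\widehat{\mathbb{G}}}},U,E_{\delta_{\widehat{\mathbb{G}}}})$ and let $(\varrho,\vartheta,\mathscr{E})$ be the triple of the outer crossed product. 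I would set $\rho:=\varrho\circ\pi_{\delta_{\widehat{\mathbb{G}}}}:A\to Q$ and $E':=\overline{E_{\delta_{\widehat{\mathbb{G}}}}}\circ\mathscr{E}:Q\to M(A)$, and, exploiting $c_0(\widehat{\mathbb{F}})\cong c_0(\Gamma)\otimes c_0(\widehat{\mathbb{G}})$ together with the fusion rule $w^{(\gamma,x)}=v^\gamma\otop v^x$, I would build the $\widehat{\mathbb{F}}$-representation $W$ from the non-degenerate $*$-homomorphism $\phi_W:C_m(\mathbb{F})=\Gamma\underset{\alpha,m}{\ltimes}C_m(\mathbb{G})\to M(Q)$ determined by the covariant pair $\phi_W(w^x_{i,j}):=\varrho(U^x_{i,j})$ and $\phi_W(u_\gamma):=\vartheta_\gamma$, so that $W^{(\gamma,x)}_{i,j}=\vartheta_\gamma\varrho(U^x_{i,j})$.

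The well-definedness of $\phi_W$ amounts to the covariance $\vartheta_\gamma\varrho(U^x_{i,j})\vartheta_\gamma^*=\varrho(\alpha_\gamma(U^x_{i,j}))$, which is exactly the defining relation of the $\Gamma$-action $\partial$ on $\mathscr{C}$. Property (i) of Theorem \ref{theo.QuantumReducedCrossedProduct} for the triple $(\rho,W,E')$ would then be obtained by chaining the covariance of $\vartheta$, the defining relation of $\partial$ on $\pi_{\delta_{\widehat{\mathbb{G}}}}(A)$, and property (i) of $\mathscr{C}$; property (ii) follows from $\mathscr{C}=C^*\langle\pi_{\delta_{\widehat{\mathbb{G}}}}(a)U^x_{i,j}\rangle$ and $Q=C^*\langle\varrho(\mathscr{C})\vartheta_\gamma\rangle$ after commuting $\vartheta_\gamma$ past $\varrho$; and property (iii) follows by applying $\mathscr{E}$, which annihilates the terms with $\gamma\neq e$, and then $E_{\delta_{\widehat{\mathbb{G}}}}$, which annihilates those with $x\neq\epsilon$, yielding $E'(\rho(a)W^{(\gamma,x)}_{i,j})=\delta_{\gamma,e}\delta_{x,\epsilon}\,a=\delta_{(\gamma,x),\epsilon}\,a$. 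The one genuinely new input, and the \emph{main obstacle} of part (i), is the spectral-component identity $\delta^{(\gamma,x)}_{k,j}=(\delta_{\widehat{\mathbb{G}}})^x_{k,j}\circ(\delta_\Gamma)_{\gamma^{-1}}$: this is where the semidirect-product structure (the twist of the $\widehat{\mathbb{G}}$-coaction by $\alpha$) genuinely enters, and I would derive it from the coassociativity of $\delta$ combined with the coproduct of $\widehat{\mathbb{F}}$ on $c_0(\Gamma)\otimes c_0(\widehat{\mathbb{G}})$.

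It remains to verify that $E'$ is KSGNS-faithful and non-degenerate; I would do this by exhibiting its KSGNS construction as the composite of those of $\mathscr{E}$ and $E_{\delta_{\widehat{\mathbb{G}}}}$, realized on the module $L^2(\mathbb{F})\otimes A\cong\ell^2(\Gamma)\otimes L^2(\mathbb{G})\otimes A$, in the same spirit as the module built in the proof of Proposition \ref{pro.ConesCrossedProduct}. Once $(\rho,W,E')$ is checked, the uniqueness clause of Theorem \ref{theo.QuantumReducedCrossedProduct} produces the canonical $*$-isomorphism $\widehat{\mathbb{F}}\underset{\delta,r}{\ltimes}A\cong Q$, which is precisely the asserted associativity.

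For \textbf{(ii)}, $\mathcal{Z}$ is additive and commutes with countable direct sums because reduced crossed products do; it commutes with the suspension $\Sigma$ by Corollary \ref{pro.CrossedProductTensorProduct} (taking $B=C_0(\mathbb{R})$ with trivial action, equivariantly for $\partial$); and it carries mapping-cone triangles to mapping-cone triangles by Proposition \ref{pro.ConesCrossedProduct}, whose isomorphism $\widehat{\mathbb{G}}\underset{r}{\ltimes}C_\varphi\cong C_{id\ltimes\varphi}$ is $\Gamma$-equivariant. Since every distinguished triangle is isomorphic to a mapping-cone triangle, this shows $\mathcal{Z}$ is triangulated. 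To get $\mathcal{Z}(\mathscr{L}_{\widehat{\mathbb{F}}})\subset\mathscr{L}_\Gamma$ it then suffices to test on the generators $Ind^{\widehat{\mathbb{F}}}_{\mathbb{E}}(C)=c_0(\widehat{\mathbb{F}})\otimes C$: using $c_0(\widehat{\mathbb{F}})\cong c_0(\Gamma)\otimes c_0(\widehat{\mathbb{G}})$ as $\widehat{\mathbb{G}}$-algebras and Corollary \ref{pro.CrossedProductTensorProduct}, I find $\mathcal{Z}(Ind^{\widehat{\mathbb{F}}}_{\mathbb{E}}(C))\cong c_0(\Gamma)\otimes\big(\widehat{\mathbb{G}}\underset{r}{\ltimes}(c_0(\widehat{\mathbb{G}})\otimes C)\big)\cong c_0(\Gamma)\otimes\mathcal{K}(L^2(\mathbb{G}))\otimes C$ by imprimitivity. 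The remaining, and delicate, point is to track the $\Gamma$-action $\partial$ through these identifications and recognize that it is the translation action on the $c_0(\Gamma)$-factor; after the standard untwisting isomorphism for induced algebras the object becomes $Ind^\Gamma_{\{e\}}(\mathcal{K}(L^2(\mathbb{G}))\otimes C)$, which lies in $\mathscr{L}_\Gamma$. Since $\mathcal{Z}$ is triangulated and commutes with direct sums, it maps the whole localizing subcategory $\mathscr{L}_{\widehat{\mathbb{F}}}$ into $\mathscr{L}_\Gamma$.
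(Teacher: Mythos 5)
Your proposal is correct and follows essentially the same route as the paper's proof: part (i) is obtained by equipping the iterated crossed product $\Gamma\underset{\partial,r}{\ltimes}\big(\widehat{\mathbb{G}}\underset{\delta_{\widehat{\mathbb{G}}},r}{\ltimes}A\big)$ with a triple $(\rho,W,E')$ and invoking the uniqueness clause of Theorem \ref{theo.QuantumReducedCrossedProduct}, and part (ii) by combining Corollary \ref{pro.CrossedProductTensorProduct} and Proposition \ref{pro.ConesCrossedProduct} with the decomposition $c_0(\widehat{\mathbb{F}})\cong c_0(\Gamma)\otimes c_0(\widehat{\mathbb{G}})$ and Baaj--Skandalis/Takai duality to recognize the image of a generator as an algebra induced from the trivial subgroup $\{e\}<\Gamma$. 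The only difference is one of detail: you spell out the covariance identity, the KSGNS-faithfulness of $E'$, and the untwisting of the $\Gamma$-action, which the paper compresses into ``routine computations.''
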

	\begin{proof}
		\begin{enumerate}[i)]
			\item In order to prove the isomorphism $\widehat{\mathbb{F}}\underset{\delta, r}{\ltimes} A\cong \Gamma\underset{\partial, r}{\ltimes}\mathscr{C}$, we are going to apply the universal property of the reduced crossed product $\widehat{\mathbb{F}}\underset{\delta, r}{\ltimes} A$. Thus we have to define a triple $(\overline{\rho}, \overline{V}, \overline{E})$ associated to $\Gamma\underset{\partial, r}{\ltimes}\mathscr{C}$ (in the sense of Theorem \ref{theo.QuantumReducedCrossedProduct}). Namely, let us put $\overline{\rho}: A\longrightarrow \Gamma\underset{\partial, r}{\ltimes}\mathscr{C}$ as the composition $\pi_\delta\circ\varrho$; $\overline{V}\in M(c_0(\widehat{\mathbb{F}})\otimes \Gamma\underset{\partial, r}{\ltimes}\mathscr{C})$ as the unitary $V$ itself and $\overline{E}:\Gamma\underset{\partial, r}{\ltimes}\mathscr{C}\longrightarrow A$ as the composition $\mathscr{E}\circ E_{\delta |}$.
					
					
					Routine computations show that the triple $(\overline{\rho}, \overline{V}, \overline{E})$ constructed in this way satisfies the appropriated universal property.
			\item First of all, using Corollary \ref{pro.CrossedProductTensorProduct} and Proposition \ref{pro.ConesCrossedProduct} it is straightforward to see that $\mathcal{Z}$ is triangulated and stable with respect to the canonical suspension functors of the corresponding Kasparov categories.In addition, Proposition \ref{pro.ConesCrossedProduct} guarantees the functor $\mathcal{Z}$ transforms mapping cone triangles into mapping cone triangles and thus it is triangulated. 

					Let us show that $\mathcal{Z}(\mathscr{L}_{\widehat{\mathbb{F}}})\subset \mathscr{L}_{\Gamma}$. Namely,  since all our discrete quantum groups are supposed to be torsion-free, then we know that $\mathscr{L}_{\widehat{\mathbb{F}}}$ is the localizing subcategory of $\mathscr{K}\mathscr{K}^{\widehat{\mathbb{F}}}$ generated by the objects of the form $c_0(\widehat{\mathbb{F}})\otimes C$ with $C$ any $C^*$-algebra in the Kasparov category $\mathscr{K}\mathscr{K}$. Likewise, $\mathscr{L}_{\Gamma}$ is by definition the localizing subcategory of $\mathscr{K}\mathscr{K}^{\Gamma}$ generated by the objects of the form $Ind_{\{e\}}^{\Gamma}(B)$ with $B$ any $C^*$-algebra in the Kasparov category $\mathscr{K}\mathscr{K}$. Recall as well that $c_0(\widehat{\mathbb{F}})\cong c_0(\Gamma)\otimes c_0(\widehat{\mathbb{G}})$ by virtue of the representation theory of $\mathbb{F}=\Gamma\underset{\alpha}{\ltimes}\mathbb{G}$. Hence we write
		\begin{equation*}\label{eq.PreservingL}
			\begin{split}
				\mathcal{Z}(c_0(\widehat{\mathbb{F}})\otimes C)&=\widehat{\mathbb{G}}\underset{\widehat{\Theta}_{\widehat{\mathbb{G}}}, r}{\ltimes} \big(c_0(\widehat{\mathbb{F}})\otimes C\big)\cong \widehat{\mathbb{G}}\underset{\widehat{\Theta}_{\widehat{\mathbb{G}}}, r}{\ltimes} \big(c_0(\Gamma)\otimes c_0(\widehat{\mathbb{G}})\otimes C\big)\\
				&\overset{(1)}{\cong} \widehat{\mathbb{G}}\underset{\widehat{\Theta}_{\widehat{\mathbb{G}}}, r}{\ltimes} \big(c_0(\Gamma)\otimes c_0(\widehat{\mathbb{G}})\big)\otimes C\overset{(2)}{\cong} c_0(\Gamma)\otimes C
			\end{split}
		\end{equation*}
		where in $(1)$ we use Proposition \ref{pro.CrossedProductTensorProduct} and in $(2)$ we use the $\Gamma$-equivariant Morita equivalence given by Lemma \ref{lem.MoritaEquivalenceLemmaBC}. In other words, $\mathcal{Z}(c_0(\widehat{\mathbb{F}})\otimes C)$ is a $\Gamma$-$C^*$-algebra in $\mathscr{K}\mathscr{K}^{\Gamma}$ induced by the trivial subgroup $\{e\}<\Gamma$, which yields the claim because $\mathcal{Z}$ is triangulated and compatible with countable direct sums.
		\end{enumerate}
	\end{proof}

	\begin{rem}\label{rem.AssociativityFunctor}
		Consider the following functors: $\mathscr{K}\mathscr{K}^{\widehat{\mathbb{F}}}\overset{j_{\widehat{\mathbb{F}}}}{\longrightarrow} \mathscr{K}\mathscr{K}$ and $\mathscr{K}\mathscr{K}^{\widehat{\mathbb{F}}}\overset{\mathcal{Z}}{\longrightarrow} \mathscr{K}\mathscr{K}^{\Gamma}\overset{j_{\Gamma}}{\longrightarrow}\mathscr{K}\mathscr{K}$, where $j_{\widehat{\mathbb{F}}}$ is the descent functor with respect to $\widehat{\mathbb{F}}$ and $j_{\Gamma}$ is the descent functor with respect to $\Gamma$.
		
		The theorem above yields that for every $\widehat{\mathbb{F}}$-$C^*$-algebra $(A,\delta)\in Obj.(\mathscr{K}\mathscr{K}^{\widehat{\mathbb{F}}})$ there exists an isomorphism $\eta_{A}:\widehat{\mathbb{F}}\underset{\delta, r}{\ltimes} A\overset{\sim}{\longrightarrow} \Gamma\underset{\partial, r}{\ltimes}\Big(\widehat{\mathbb{G}}\underset{\delta_{\widehat{\mathbb{G}}}, r}{\ltimes} A\Big)$ in $\mathscr{K}\mathscr{K}$. Actually, we get a natural equivalence between the functors above. For this, we have to show that given two $\widehat{\mathbb{F}}$-$C^*$-algebra $(A,\delta), (B, \nu)\in Obj.(\mathscr{K}\mathscr{K}^{\widehat{\mathbb{F}}})$ and a Kasparov triple $\mathcal{X}\in KK^{\widehat{\mathbb{F}}}(A,B)$, the following diagram in $\mathscr{K}\mathscr{K}$ is commutative
		\begin{equation*}
			\xymatrix@!C=70mm@R=15mm{
				\mbox{$\widehat{\mathbb{F}}\underset{\delta, r}{\ltimes} A$}\ar[d]_{\mbox{$\eta_A$}}\ar[r]^{\mbox{$\widehat{\mathbb{F}}\underset{r}{\ltimes}\mathcal{X}$}}&\mbox{$\widehat{\mathbb{F}}\underset{\nu, r}{\ltimes} B$}\ar[d]^{\mbox{$\eta_B$}}\\
				\mbox{$\Gamma\underset{\partial, r}{\ltimes}\Big(\widehat{\mathbb{G}}\underset{\delta_{\widehat{\mathbb{G}}}, r}{\ltimes} A\Big)$}\ar[r]_{\mbox{$\Gamma\underset{r}{\ltimes} \Big(\widehat{\mathbb{G}}\underset{r}{\ltimes}\mathcal{X}\Big)$}}&\mbox{$\Gamma\underset{\partial', r}{\ltimes}\Big(\widehat{\mathbb{G}}\underset{\nu_{\widehat{\mathbb{G}}}, r}{\ltimes} B\Big)$}}
		\end{equation*}
		which is a routine computation. Hence, we have canonically $F=F'\circ \mathcal{Z}$.
	\end{rem}
	
	\begin{lem}\label{lem.InvertibleElement}
		Let $(\mathcal{T}, \Sigma)$, $(\mathcal{T}', \Sigma')$ be two triangulated categories. Let $(\mathscr{L}_{\mathcal{T}}, \mathscr{N}_{\mathcal{T}})$ and $(\mathscr{L}_{\mathcal{T}'}, \mathscr{N}_{\mathcal{T}'})$ be two complementary pairs of localizing subcategories in $\mathcal{T}$ and $\mathcal{T}'$, respectively. Denote by $(L,N)$ and by $(L', N')$ respectively, the canonical triangulated functors associated the these complementary pairs. Let $F:\mathcal{T}\longrightarrow \mathcal{A}b$ and $F':\mathcal{T}'\longrightarrow \mathcal{A}b$ be two homological functors.
		
		If $\mathcal{Z}:\mathcal{T}\longrightarrow \mathcal{T}'$ is a triangulated functor such that $F=F'\circ \mathcal{Z}$ and $\mathcal{Z}(\mathscr{L}_{\mathcal{T}})\subset \mathscr{L}_{\mathcal{T}'}$, then for all object $X\in Obj.(\mathcal{T})$ there exists a homomorphism
		$$\psi \in Hom_{\mathcal{T}'}(\mathcal{Z}(L(X)), L'(\mathcal{Z}(X)))$$
		such that the following diagram is commutative
		\begin{equation*}
		\begin{gathered}
			\xymatrix@C=20mm@!R=15mm{
				\mbox{$\mathbb{L}F(X)$}\ar[d]_{\mbox{$\eta_X$}}\ar[r]^{\mbox{$\Psi$}}&\mbox{$\mathbb{L}F'(\mathcal{Z}(X))$}\ar[d]^{\mbox{$\eta'_{\mathcal{Z}(X)}$}}\\
				\mbox{$F(X)$}\ar[r]^{\mbox{$\cong$}}&\mbox{$F'(\mathcal{Z}(X))$}}
		\end{gathered}
		\end{equation*}
		where $\Psi=F'(\psi)$. If moreover $\mathcal{Z}(\mathscr{N}_{\mathcal{T}})\subset \mathscr{N}_{\mathcal{T}'}$, then $\psi$ is an isomorphism.
	\end{lem}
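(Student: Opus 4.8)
The plan is to produce $\psi$ directly from the universal property of the localisation, and then to recognise $\Psi$ as $F'(\psi)$. Recall that the $(\mathscr{L}_{\mathcal{T}}, \mathscr{N}_{\mathcal{T}})$-triangle for $X$ has the shape $\Sigma N(X)\longrightarrow L(X)\overset{u}{\longrightarrow} X\longrightarrow N(X)$ with $L(X)\in \mathscr{L}_{\mathcal{T}}$ and $N(X)\in \mathscr{N}_{\mathcal{T}}$, and similarly for $\mathcal{Z}(X)$ in $\mathcal{T}'$ with Dirac homomorphism $u':L'(\mathcal{Z}(X))\longrightarrow \mathcal{Z}(X)$. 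The key elementary fact is that, since $Hom_{\mathcal{T}'}(\mathscr{L}_{\mathcal{T}'}, \mathscr{N}_{\mathcal{T}'})=0$ and $\mathscr{N}_{\mathcal{T}'}$ is closed under (de)suspension, applying $Hom_{\mathcal{T}'}(P,-)$ to the triangle for $\mathcal{Z}(X)$ shows that $u'$ induces an isomorphism $Hom_{\mathcal{T}'}(P, L'(\mathcal{Z}(X)))\cong Hom_{\mathcal{T}'}(P, \mathcal{Z}(X))$ for every $P\in\mathscr{L}_{\mathcal{T}'}$; that is, every morphism from an object of $\mathscr{L}_{\mathcal{T}'}$ into $\mathcal{Z}(X)$ factors uniquely through $u'$.

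First I would apply the triangulated functor $\mathcal{Z}$ to the triangle for $X$, obtaining a distinguished triangle $\Sigma' \mathcal{Z}(N(X))\longrightarrow \mathcal{Z}(L(X))\overset{\mathcal{Z}(u)}{\longrightarrow} \mathcal{Z}(X)\longrightarrow \mathcal{Z}(N(X))$ in $\mathcal{T}'$ (using the natural isomorphism $\mathcal{Z}\circ\Sigma\cong \Sigma'\circ\mathcal{Z}$ that is part of the datum of a triangulated functor). By hypothesis $\mathcal{Z}(\mathscr{L}_{\mathcal{T}})\subset \mathscr{L}_{\mathcal{T}'}$, so $\mathcal{Z}(L(X))\in\mathscr{L}_{\mathcal{T}'}$, and the factorisation property above applied to $\mathcal{Z}(u):\mathcal{Z}(L(X))\longrightarrow \mathcal{Z}(X)$ yields a unique $\psi\in Hom_{\mathcal{T}'}(\mathcal{Z}(L(X)), L'(\mathcal{Z}(X)))$ with $u'\circ\psi=\mathcal{Z}(u)$. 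This is the required homomorphism. To check the square, note that the bottom isomorphism $F(X)\cong F'(\mathcal{Z}(X))$ is simply the tautological identification coming from $F=F'\circ\mathcal{Z}$, that $\eta_X=F(u)=F'(\mathcal{Z}(u))$ under this identification, that $\eta'_{\mathcal{Z}(X)}=F'(u')$, and that $\Psi$ is nothing but $F'(\psi)$ after using $\mathbb{L}F(X)=F(L(X))=F'(\mathcal{Z}(L(X)))$ and $\mathbb{L}F'(\mathcal{Z}(X))=F'(L'(\mathcal{Z}(X)))$. Functoriality then gives $\eta'_{\mathcal{Z}(X)}\circ\Psi=F'(u')\circ F'(\psi)=F'(u'\circ\psi)=F'(\mathcal{Z}(u))=\eta_X$, which is exactly commutativity.

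For the final assertion I would invoke the uniqueness of complementary-pair triangles. If in addition $\mathcal{Z}(\mathscr{N}_{\mathcal{T}})\subset \mathscr{N}_{\mathcal{T}'}$, then $\mathcal{Z}(N(X))\in \mathscr{N}_{\mathcal{T}'}$, so the image triangle $\Sigma' \mathcal{Z}(N(X))\longrightarrow \mathcal{Z}(L(X))\longrightarrow \mathcal{Z}(X)\longrightarrow \mathcal{Z}(N(X))$ is itself an $(\mathscr{L}_{\mathcal{T}'}, \mathscr{N}_{\mathcal{T}'})$-triangle for $\mathcal{Z}(X)$. Since such triangles are unique up to a unique isomorphism compatible with $id_{\mathcal{Z}(X)}$, it is isomorphic to the canonical triangle $\Sigma' N'(\mathcal{Z}(X))\longrightarrow L'(\mathcal{Z}(X))\overset{u'}{\longrightarrow} \mathcal{Z}(X)\longrightarrow N'(\mathcal{Z}(X))$, and the comparison isomorphism $\mathcal{Z}(L(X))\overset{\sim}{\longrightarrow} L'(\mathcal{Z}(X))$ is characterised by intertwining $\mathcal{Z}(u)$ with $u'$, which is precisely the defining equation $u'\circ\psi=\mathcal{Z}(u)$ of $\psi$. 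Hence $\psi$ coincides with this comparison isomorphism and is therefore invertible. I expect the only delicate points to be bookkeeping: keeping the suspension-compatibility of $\mathcal{Z}$ explicit, and being careful that the bottom isomorphism in the square is the tautological identification $F=F'\circ\mathcal{Z}$ rather than an independently chosen map; there is no genuine analytic obstacle, the argument being purely formal within the triangulated/localisation framework.
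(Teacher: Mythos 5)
Your proof is correct and follows essentially the same route as the paper's: you obtain $\psi$ as the unique factorisation of $\mathcal{Z}(u)$ through $u'$, using the vanishing of $Hom_{\mathcal{T}'}(\mathscr{L}_{\mathcal{T}'},\mathscr{N}_{\mathcal{T}'})$ applied to the canonical triangle for $\mathcal{Z}(X)$ (the paper phrases this via the long exact sequence for $Hom_{\mathcal{T}'}(\mathcal{Z}(L(X)),-)$), set $\Psi=F'(\psi)$, and deduce invertibility from the uniqueness of $(\mathscr{L}_{\mathcal{T}'},\mathscr{N}_{\mathcal{T}'})$-triangles when $\mathcal{Z}(\mathscr{N}_{\mathcal{T}})\subset\mathscr{N}_{\mathcal{T}'}$. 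Your explicit remark that the comparison isomorphism of triangles must coincide with $\psi$ because both satisfy the same defining equation $u'\circ(-)=\mathcal{Z}(u)$ is a welcome clarification of a step the paper leaves implicit.
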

	\begin{proof}
		Given an object $X\in Obj.(\mathcal{T})$, consider the corresponding distinguished triangle with respect to the complementary pair $(\mathscr{L}_{\mathcal{T}}, \mathscr{N}_{\mathcal{T}})$, say $\Sigma(N(X))\longrightarrow L(X)\overset{u}{\longrightarrow} X\longrightarrow N(X)$. Consider the distinguished $(\mathscr{L}_{\mathcal{T}'}, \mathscr{N}_{\mathcal{T}'})$-triangle associated to the object $\mathcal{Z}(X)\in Obj.(\mathcal{T}')$ say
		\begin{equation}\label{eq.Triangle}
			\begin{split}
				\Sigma'(N'(\mathcal{Z}(X)))\longrightarrow L'(\mathcal{Z}(X))\overset{u'}{\longrightarrow} \mathcal{Z}(X)\longrightarrow N'(\mathcal{Z}(X))	
			\end{split}
		\end{equation}
						
		Let us fix the object $\mathcal{Z}(L(X))=:T\in Obj.(\mathcal{T}')$ and take the long exact sequence associated to the above triangle with respect to the object $T$. Namely,
		\begin{equation*}
			\begin{split}
				\ldots&\rightarrow Hom_{\mathcal{T}'}(T,\Sigma'(N'(\mathcal{Z}(X))))\rightarrow Hom_{\mathcal{T}'}(T,L'(\mathcal{Z}(X)))\overset{(u')_{*}}{\rightarrow}\\
				&\rightarrow Hom_{\mathcal{T}'}(T,\mathcal{Z}(X))\rightarrow Hom_{\mathcal{T}'}(T,N'(\mathcal{Z}(X)))\rightarrow \ldots				
			\end{split}
		\end{equation*}
						
		Since $L(X)\in \mathscr{L}_{\mathcal{T}}$ and we have $\mathcal{Z}(\mathscr{L}_{\mathcal{T}})\subset \mathscr{L}_{\mathcal{T}'}$ by assumption, then $T\in \mathscr{L}_{\mathcal{T}'}$. But, by definition of complementary pair, we have $\mathscr{L}_{\mathcal{T}'}\subset \mathscr{N}^{\vdash}_{\mathcal{T}'}$. In particular, we obtain $Hom_{\mathcal{T}'}(T,\Sigma'(N'(\mathcal{Z}(X))))=(0)=Hom_{\mathcal{T}'}(T,N'(\mathcal{Z}(X)))$. Hence the above long exact sequence yields the isomorphism $Hom_{\mathcal{T}'}(T,L'(\mathcal{Z}(X)))\overset{(u')_{*}}{\cong} Hom_{\mathcal{T}'}(T,\mathcal{Z}(X))$. Hence, just take $\psi:=(u')^{-1}_{*}(\mathcal{Z}(u))$.
		
		Next, put $\Psi:=F'(\psi): F'\big(\mathcal{Z}(L(X))\big)\longrightarrow F'\big(L'(\mathcal{Z}(X))\big)$. The functoriality of constructions and the definition of the element $\psi$ above yields straightforwardly the diagram of the statement.
		
		If moreover we have $\mathcal{Z}(\mathscr{N}_{\mathcal{T}})\subset \mathscr{N}_{\mathcal{T}'}$, then the functor $\mathcal{Z}$ transforms a $(\mathscr{L}_{\mathcal{T}}, \mathscr{N}_{\mathcal{T}})$-triangle for $X$ into a $(\mathscr{L}_{\mathcal{T}'}, \mathscr{N}_{\mathcal{T}'})$-triangle for $\mathcal{Z}(X)$. Since the distinguished triangles associated to a complementary pair are unique up to a isomorphism, we have an isomorphism of distinguished triangles between (\ref{eq.Triangle}) and the image of $\Sigma(N(X))\longrightarrow L(X)\overset{u}{\longrightarrow} X\longrightarrow N(X)$ by $\mathcal{Z}$,
		 $$
			\xymatrix@!C=30mm@R=15mm{
				\mbox{$\Sigma'(\mathcal{Z}(N(X)))$}\ar[d]^{\mbox{$\wr$}}\ar[r]&\mbox{$\mathcal{Z}(L(X))$}\ar[r]^{\mbox{$\mathcal{Z}(u)$}}\ar[d]^{\mbox{$\wr\ \psi$}}&\mbox{$\mathcal{Z}(X)$}\ar[r]\ar@{=}[d]^{\mbox{$id$}}&\mbox{$ \mathcal{Z}(N(X))$}\ar[d]^{\mbox{$\wr$}}\\
				\mbox{$\Sigma'(N'(\mathcal{Z}(X)))$}\ar[r]&\mbox{$L'(\mathcal{Z}(X))$}\ar[r]_{\mbox{$u'$}}&\mbox{$\mathcal{Z}(X)$}\ar[r]&\mbox{$N'(\mathcal{Z}(X)))$}}
		$$
	\end{proof}

	If we apply the preceding lemma to our particular situation, we get that for every $\widehat{\mathbb{F}}$-$C^*$-algebra $(A,\delta)$ there exists an element $\psi\in KK^{\Gamma}(\widehat{\mathbb{G}}\underset{r}{\ltimes} L(A), L'(\widehat{\mathbb{G}}\underset{\delta_{\widehat{\mathbb{G}}}, r}{\ltimes} A))$ such that the following diagram is commutative
		\begin{equation}\label{eq.CommutativeDiagram}
		\begin{gathered}
			\xymatrix@C=20mm@!R=15mm{
				\mbox{$\mathbb{L}F(A)$}\ar[d]_{\mbox{$\eta^{\widehat{\mathbb{F}}}_{A}$}}\ar[r]^-{\mbox{$\Psi$}}&\mbox{$\mathbb{L}F'(\widehat{\mathbb{G}}\underset{\delta_{\widehat{\mathbb{G}}}, r}{\ltimes}A)$}\ar[d]^{\mbox{$\eta^{\Gamma}_{\widehat{\mathbb{G}}\underset{r}{\ltimes} A}$}}\\
				\mbox{$F(A)$}\ar[r]^{\mbox{$\cong$}}&\mbox{$F'(\widehat{\mathbb{G}}\underset{\delta_{\widehat{\mathbb{G}}}, r}{\ltimes} A)$}}
		\end{gathered}
		\end{equation}
				
		where $\Psi:=F'(\psi)$, $\eta^{\widehat{\mathbb{F}}}_{A}$ is the assembly map for $\widehat{\mathbb{F}}$ with coefficients in $A$ and $\eta^{\Gamma}_{\widehat{\mathbb{G}}\ltimes_{r} A}$ is the assembly map for $\Gamma$ with coefficients in $\widehat{\mathbb{G}}\underset{\delta_{\widehat{\mathbb{G}}}, r}{\ltimes} A$. Precisely, if $\Sigma(N(A))\longrightarrow L(A)\overset{u}{\longrightarrow} A\longrightarrow N(A)$ is a $(\mathscr{L}_{\widehat{\mathbb{F}}}, \mathscr{N}_{\widehat{\mathbb{F}}})$-triangle associated to $A$ and $\Sigma(N'(\widehat{\mathbb{G}}\underset{\delta_{\widehat{\mathbb{G}}}, r}{\ltimes} A))\longrightarrow L'(\widehat{\mathbb{G}}\underset{\delta_{\widehat{\mathbb{G}}}, r}{\ltimes} A)\overset{u'}{\longrightarrow} \widehat{\mathbb{G}}\underset{\delta_{\widehat{\mathbb{G}}}, r}{\ltimes} A\longrightarrow N'(\widehat{\mathbb{G}}\underset{\delta_{\widehat{\mathbb{G}}}, r}{\ltimes} A)$ is a $(\mathscr{L}_{\Gamma}, \mathscr{N}_{\Gamma})$-triangle associated to $\widehat{\mathbb{G}}\ltimes_{ \delta_{\widehat{\mathbb{G}}}, r} A$, then $\psi:=(u')^{-1}_{*}(\mathcal{Z}(u))$.
	
	\bigskip
	We can now conclude our study with the following theorem, generalizing the result \cite{Chabert} of J. Chabert as we have discussed in the introduction of this article.
	\begin{theo}\label{theo.FinalTheorem}
		Let $\mathbb{F}=\Gamma\underset{\alpha}{\ltimes}\mathbb{G}$ be a quantum semi-direct product. Assume that $\widehat{\mathbb{F}}$, $\Gamma$ and $\widehat{\mathbb{G}}$ are torsion-free discrete quantum groups. Let $(A,\delta)$ be a left $\widehat{\mathbb{F}}$-$C^*$-algebra. $\widehat{\mathbb{F}}$ satisfies the quantum Baum-Connes property with coefficients in $A$ if and only if $\Gamma$ satisfies the Baum-Connes property with coefficients in $\widehat{\mathbb{G}}\underset{\delta_{\widehat{\mathbb{G}}}, r}{\ltimes}A$ and $\widehat{\mathbb{G}}$ satisfies the quantum Baum-Connes property with coefficients in $A$.
	\end{theo}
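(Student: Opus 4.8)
The plan is to transport the abstract comparison of Lemma \ref{lem.InvertibleElement} through the concrete decomposition of Theorem \ref{theo.AssociativityandFunctorZ}. First I would reduce each of the three properties to the vanishing of an obstruction group. Fixing a $(\mathscr{L}_{\widehat{\mathbb{F}}},\mathscr{N}_{\widehat{\mathbb{F}}})$-triangle $\Sigma N(A)\rightarrow L(A)\overset{u}{\rightarrow} A\rightarrow N(A)$ and an analogous $(\mathscr{L}_{\Gamma},\mathscr{N}_{\Gamma})$-triangle for $\mathcal{Z}(A)=\widehat{\mathbb{G}}\underset{\delta_{\widehat{\mathbb{G}}},r}{\ltimes}A$, the quantum Baum--Connes property for $\widehat{\mathbb{F}}$ with coefficients in $A$ is equivalent to $F(N(A))=0$, and the one for $\Gamma$ with coefficients in $\mathcal{Z}(A)$ to $F'(N'(\mathcal{Z}(A)))=0$. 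Because $\mathcal{Z}$ is triangulated with $\mathcal{Z}(\mathscr{L}_{\widehat{\mathbb{F}}})\subset\mathscr{L}_{\Gamma}$ (Theorem \ref{theo.AssociativityandFunctorZ}(ii)) and $F=F'\circ\mathcal{Z}$ (Remark \ref{rem.AssociativityFunctor}), the first assertion of Lemma \ref{lem.InvertibleElement} produces the commutative square (\ref{eq.CommutativeDiagram}), in which the bottom arrow is the associativity isomorphism and the comparison map is $\Psi=F'(\psi)$ with $\psi\in KK^{\Gamma}(\mathcal{Z}(L(A)),L'(\mathcal{Z}(A)))$. I would not invoke the hypothesis $\mathcal{Z}(\mathscr{N}_{\widehat{\mathbb{F}}})\subset\mathscr{N}_{\Gamma}$, which fails here, so $\psi$ need not be an isomorphism in $\mathscr{K}\mathscr{K}^{\Gamma}$.

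The crux is to decide when $\Psi$ is invertible and to tie this to $\widehat{\mathbb{G}}$. Applying $\mathcal{Z}$ to the Dirac triangle of $A$ and comparing it with the Dirac triangle of $\mathcal{Z}(A)$ along $\psi$ and the identity of $\mathcal{Z}(A)$ gives a morphism of distinguished triangles; its left and right mapping cones agree up to suspension, so writing $Q$ for the cone of $\psi$ I obtain a distinguished triangle with left term $\Sigma^{-1}Q\in\mathscr{L}_{\Gamma}$ and right term $N'(\mathcal{Z}(A))\in\mathscr{N}_{\Gamma}$ having $\mathcal{Z}(N(A))$ as its middle term. By uniqueness of complementary-pair triangles this must be the Dirac triangle of $\mathcal{Z}(N(A))$, whence $L'(\mathcal{Z}(N(A)))\cong\Sigma^{-1}Q$. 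Feeding the cone triangle of $\psi$ into $F'$ then yields that $\Psi$ is an isomorphism if and only if $\mathbb{L}F'(\mathcal{Z}(N(A)))=F'(L'(\mathcal{Z}(N(A))))=0$. Finally I would note that $\widehat{\mathbb{G}}<\widehat{\mathbb{F}}$ is divisible (Remarks \ref{rem.QuantumSubgroups}), so $Res^{\widehat{\mathbb{F}}}_{\widehat{\mathbb{G}}}N(A)=N_{\widehat{\mathbb{G}}}(Res^{\widehat{\mathbb{F}}}_{\widehat{\mathbb{G}}}A)$ and the $\widehat{\mathbb{G}}$-property with coefficients in $A$ is exactly $K_{*}(\widehat{\mathbb{G}}\underset{r}{\ltimes}N(A))=0$, i.e. the vanishing of the $K$-theory of $Res^{\Gamma}_{\{e\}}\mathcal{Z}(N(A))$. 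Since $\Gamma$ is torsion-free (Theorem \ref{theo.TorsionQuantumsemi-direct}), $\mathbb{L}F'(\mathcal{Z}(N(A)))$ is computed out of this restricted $K$-theory and hence vanishes; thus the quantum Baum--Connes property for $\widehat{\mathbb{G}}$ forces $\Psi$ to be invertible.

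The ``if'' direction is then immediate: the property for $\widehat{\mathbb{G}}$ makes $\Psi$ invertible and the property for $\Gamma$ makes $\eta^{\Gamma}_{\mathcal{Z}(A)}$ invertible, so (\ref{eq.CommutativeDiagram}) exhibits $\eta^{\widehat{\mathbb{F}}}_{A}$ as a composite of isomorphisms. For the ``only if'' direction I would first deduce the property for $\widehat{\mathbb{G}}$ from that of $\widehat{\mathbb{F}}$ using the permanence of quantum Baum--Connes under the divisible discrete quantum subgroup $\widehat{\mathbb{G}}<\widehat{\mathbb{F}}$ (Proposition \ref{pro.BCDivisibleQuantumSubgroups}); this again makes $\Psi$ invertible, and reading (\ref{eq.CommutativeDiagram}) with $\eta^{\widehat{\mathbb{F}}}_{A}$ invertible forces $\eta^{\Gamma}_{\mathcal{Z}(A)}$ to be invertible, i.e. the property for $\Gamma$ with coefficients in $\mathcal{Z}(A)$.

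The step I expect to be the main obstacle is precisely the bridge built in the second paragraph: translating invertibility of the purely categorical map $\Psi$ into the analytic property of $\widehat{\mathbb{G}}$. This rests on the exact identification of the Dirac triangle of $\mathcal{Z}(N(A))$ and on the torsion-free reduction of the $\Gamma$-equivariant localisation $\mathbb{L}F'$ to ordinary $K$-theory of the restriction to $\{e\}$; both must be handled with care. A secondary delicate point, in the ``only if'' direction, is to ensure that Proposition \ref{pro.BCDivisibleQuantumSubgroups} delivers the $\widehat{\mathbb{G}}$-property with coefficients in the prescribed algebra $A$ rather than in an induced one. Everything else---triangulatedness of $\mathcal{Z}$, compatibility of the descent with the diagonal $\Gamma$-actions, and uniqueness of the complementary-pair triangles---is routine.
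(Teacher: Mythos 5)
Your argument is correct, and it is assembled from the same ingredients as the paper's proof: the functor $\mathcal{Z}$ and the associativity of Theorem \ref{theo.AssociativityandFunctorZ}, the element $\psi$ and the square (\ref{eq.CommutativeDiagram}) from Lemma \ref{lem.InvertibleElement}, divisibility of $\Gamma,\widehat{\mathbb{G}}<\widehat{\mathbb{F}}$ (Remarks \ref{rem.QuantumSubgroups}), torsion-freeness of $\Gamma$ and $\widehat{\mathbb{G}}$ (Theorem \ref{theo.TorsionQuantumsemi-direct}), and the final diagram chase. The genuine difference lies in how invertibility of $\Psi$ is extracted from the Baum--Connes property of $\widehat{\mathbb{G}}$. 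The paper re-runs the whole construction for the quantum semi-direct product $\mathbb{F}_{\{e\}}=\{e\}\ltimes\mathbb{G}=\mathbb{G}$, reads off from the resulting diagram that $\psi$ is a non-equivariant $K$-equivalence between $\mathcal{Z}(L(A))$ and $L'(\mathcal{Z}(A))$, and then upgrades this to invertibility of $\Psi=F'(\psi)$ using $L'(\mathcal{Z}(L(A)))\cong\mathcal{Z}(L(A))$. You work instead with obstruction groups: the octahedral axiom applied to $\mathcal{Z}(u)=u'\circ\psi$ identifies the cone of $\psi$, which lies in $\mathscr{L}_{\Gamma}$, with $L'(\mathcal{Z}(N(A)))$ up to suspension; the relation $Res^{\widehat{\mathbb{F}}}_{\widehat{\mathbb{G}}}\circ N=N_{\widehat{\mathbb{G}}}\circ Res^{\widehat{\mathbb{F}}}_{\widehat{\mathbb{G}}}$ (the lemma preceding Proposition \ref{pro.BCDivisibleQuantumSubgroups}, valid because $\widehat{\mathbb{G}}<\widehat{\mathbb{F}}$ is divisible) converts the hypothesis on $\widehat{\mathbb{G}}$ into $K_{*}(\mathcal{Z}(N(A)))=0$; and vanishing of $\mathbb{L}F'$ on such coefficients yields $\Psi$ invertible. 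The two routes are logically equivalent; yours makes the role of $N(A)$ explicit and dispenses with the $\{e\}$-instantiation, at the price of the octahedron and the $Res$--$N$ commutation. Your treatment of the ``only if'' direction (Proposition \ref{pro.BCDivisibleQuantumSubgroups} for $\widehat{\mathbb{G}}$ only, then the diagram for $\Gamma$) is a harmless variant of the paper's, which invokes that proposition for both subgroups; the coefficient caveat you raise there applies equally to both versions and is inherited from the paper's reading of its own hypothesis.

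The one point that must be made explicit is the step you yourself flag as the main obstacle. That $K_{*}\big(Res^{\Gamma}_{\{e\}}\mathcal{Z}(N(A))\big)=0$ forces $\mathbb{L}F'(\mathcal{Z}(N(A)))=0$ does \emph{not} follow from the complementary-pair formalism alone: $\mathcal{Z}(N(A))$ has vanishing $K$-theory but in general does not belong to $\mathscr{N}_{\Gamma}$ (this is exactly the failure of $\mathcal{Z}(\mathscr{N}_{\widehat{\mathbb{F}}})\subset\mathscr{N}_{\Gamma}$ that you correctly noted), so one cannot simply say that the localized functor kills it. What is needed is that, for the torsion-free discrete group $\Gamma$, the functor $\mathbb{L}F'$ is invariant under coefficients (and morphisms) that are trivial in non-equivariant $K$-theory, i.e.\ Theorem 9.3 of \cite{MeyerNest} applied with only the trivial compact subgroup. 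The paper relies on precisely the same theorem at precisely the same spot --- its sentence ``Therefore the same element induces a $K$-equivalence\dots'', made explicit only in the remark following the theorem. With that citation inserted your proof is complete; without it, your argument and the paper's carry the same unproved assertion.
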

	\begin{proof}
		Assume that $\widehat{\mathbb{F}}$ satisfies the quantum Baum-Connes property. Since $\Gamma$ and $\widehat{\mathbb{G}}$ are divisible torsion-free discrete quantum subgroups of $\widehat{\mathbb{F}}$ thanks to Remarks \ref{rem.QuantumSubgroups}, then by Proposition \ref{pro.BCDivisibleQuantumSubgroups} they satisfy the Baum-Connes property.
		
		Conversely, assume that $\widehat{\mathbb{G}}$ satisfies the quantum Baum-Connes property with coefficients in $A$ and $\Gamma$ satisfies the Baum-Connes property with coefficients in $\widehat{\mathbb{G}}\underset{\delta_{\widehat{\mathbb{G}}}, r}{\ltimes}A$. Since $\widehat{\mathbb{F}}$ is supposed to be torsion-free, then Theorem \ref{theo.TorsionQuantumsemi-direct} assures that $\Gamma$ and $\widehat{\mathbb{G}}$ are torsion-free. Consequently, the \emph{only} finite subgroup of $\Gamma$ is the trivial one, $\{e\}<\Gamma$. It is obvious that the trivial group $\{e\}$ satisfies the Baum-Connes property.
		
		Denote by $(\mathscr{L}_{\{e\}}, \mathscr{N}_{\{e\}})$ the complementary pair of localizing subcategories in $\mathscr{K}\mathscr{K}^{\{e\}}$ and by $(L'', N'')$ the associated functors. Consider the element $\psi\in KK^{\Gamma}(\widehat{\mathbb{G}}\underset{r}{\ltimes} L(A), L'(\widehat{\mathbb{G}}\underset{\delta_{\widehat{\mathbb{G}}}, r}{\ltimes} A))$ constructed from Lemma \ref{lem.InvertibleElement} and denote by $\psi_{\{e\}}\in KK(\widehat{\mathbb{G}}\underset{r}{\ltimes} L(A), L'(\widehat{\mathbb{G}}\underset{\delta_{\widehat{\mathbb{G}}}, r}{\ltimes} A))$ the same element when the action of $\Gamma$ is restricted to the trivial subgroup. Precisely, the latter is defined in the following way. Given a $\widehat{\mathbb{F}}$-$C^*$-algebra $A$, let $\Sigma(N(A))\longrightarrow L(A)\overset{u}{\longrightarrow} A\longrightarrow N(A)$ be a $(\mathscr{L}_{\widehat{\mathbb{F}}}, \mathscr{N}_{\widehat{\mathbb{F}}})$-triangle associated to $A$ and let \\$\Sigma(N''\big(Res^{\Gamma}_{\{e\}}\big(\widehat{\mathbb{G}}\ltimes_{ \delta_{\widehat{\mathbb{G}}}, r} A\big)\big)\longrightarrow L''\big(Res^{\Gamma}_{\{e\}}\big(\widehat{\mathbb{G}}\ltimes_{ \delta_{\widehat{\mathbb{G}}}, r} A\big)\big)\overset{u''}{\longrightarrow} Res^{\Gamma}_{\{e\}}\big(\widehat{\mathbb{G}}\ltimes_{ \delta_{\widehat{\mathbb{G}}}, r} A\big)\longrightarrow N''\big(Res^{\Gamma}_{\{e\}}\big(\widehat{\mathbb{G}}\ltimes_{ \delta_{\widehat{\mathbb{G}}}, r} A\big)\big)$ be a $(\mathscr{L}_{\{e\}}, \mathscr{N}_{\{e\}})$-triangle associated to $Res^{\Gamma}_{\{e\}}\big(\widehat{\mathbb{G}}\ltimes_{ \delta_{\widehat{\mathbb{G}}}, r} A\big)$. Then $\psi_{\{e\}}:=(u'')^{-1}_{*}\big(Res^{\Gamma}_{\{e\}}\big(\mathcal{Z}(u)\big)\big)$, which is well defined following the analogue proof of Lemma \ref{lem.InvertibleElement} together with Lemma \ref{lem.ResIndDirac}. Observe that $\psi_{\{e\}}\in KK\Big(Res^{\Gamma}_{\{e\}}\big(\widehat{\mathbb{G}}\underset{r}{\ltimes} L(A)\big), L''\big(Res^{\Gamma}_{\{e\}}\big(\widehat{\mathbb{G}}\underset{r}{\ltimes} A\big)\big)\Big)$, but by Lemma \ref{lem.ResIndDirac} we know that there exists a natural isomorphism $Res^{\Gamma}_{\{e\}}\circ L'\cong L''\circ Res^{\Gamma}_{\{e\}}$ so that we identify $\psi_{\{e\}}$ with $Res^{\Gamma}_{\{e\}}(\psi)\in KK\Big(Res^{\Gamma}_{\{e\}}\big(\widehat{\mathbb{G}}\underset{r}{\ltimes} L(A)\big), Res^{\Gamma}_{\{e\}}\big(L'\big(\widehat{\mathbb{G}}\underset{r}{\ltimes} A\big)\big)\Big)$. Thanks again to Lemma \ref{lem.ResIndDirac} we know that the restriction functor transforms the assembly map for $\Gamma$ into the assembly map for $\{e\}$. In this way, the restriction of the action of $\Gamma$ to the trivial subgroup transforms the commutative diagram (\ref{eq.CommutativeDiagram}) into the following commutative diagram (where the notation $Res^{\Gamma}_{\{e\}}(\cdot)$ has been removed for simplicity),
		\begin{equation*}
		\begin{gathered}
			\xymatrix@C=15mm@!R=15mm{
				\mbox{$\mathbb{L}F_{\{e\}}(A)$}\ar[d]_{\mbox{$\eta^{\widehat{\mathbb{F}}_{\{e\}}}_{A}$}}\ar[r]^-{\mbox{$\Psi_{\{e\}}$}}&\mbox{$\mathbb{L}F'_{\{e\}}(\widehat{\mathbb{G}}\underset{\delta_{\widehat{\mathbb{G}}}, r}{\ltimes}A)$}\ar[d]^{\mbox{$\eta^{\{e\}}_{\widehat{\mathbb{G}}\underset{r}{\ltimes} A}$}}\\
				\mbox{$F_{\{e\}}(A)$}\ar@{=}[r]&\mbox{$F'_{\{e\}}(\widehat{\mathbb{G}}\underset{\delta_{\widehat{\mathbb{G}}}, r}{\ltimes} A)$}}
		\end{gathered}
		\end{equation*} 
	where $F_{\{e\}}$ et $F'_{\{e\}}$ are the analogous functors to $F$ and $F'$ defined with respect to $\mathbb{F}_{\{e\}}:=\{e\}\underset{\alpha_{|}}{\ltimes}\mathbb{G}=\mathbb{G}$ and $\{e\}$, respectively and $\Psi_{\{e\}}:=F'_{\{e\}}(\psi_{\{e\}})$. In this situation we have $\eta^{\widehat{\mathbb{F}}_{\{e\}}}_{A}=\eta^{\widehat{\mathbb{G}}}_{A}$. Since $\widehat{\mathbb{G}}$ satisfies the quantum Baum-Connes property by assumption, then $\eta^{\widehat{\mathbb{F}}_{\{e\}}}_{A}=\eta^{\widehat{\mathbb{G}}}_{A}$ is a natural isomorphism. Hence, $\Psi_{\{e\}}$ is a natural isomorphism. This means in particular that the element $\psi$ of Lemma \ref{lem.InvertibleElement} induces, by restriction, a $K$-equivalence between $\widehat{\mathbb{G}}\underset{r}{\ltimes} L(A)$ and $L'(\widehat{\mathbb{G}}\underset{\delta_{\widehat{\mathbb{G}}}, r}{\ltimes} A)$. Observe that $\{e\}\underset{r}{\ltimes}\Big(\widehat{\mathbb{G}}\underset{r}{\ltimes} L(A)\Big)\cong \widehat{\mathbb{G}}\underset{r}{\ltimes} L(A)$ and that $\{e\}\underset{r}{\ltimes}\Big(L'(\widehat{\mathbb{G}}\underset{\delta_{\widehat{\mathbb{G}}}, r}{\ltimes} A)\Big)\cong L'(\widehat{\mathbb{G}}\underset{\delta_{\widehat{\mathbb{G}}}, r}{\ltimes} A)$. Since $\{e\}$ is the only finite subgroup of $\Gamma$ by our assumptions, Theorem $9.3$ in \cite{MeyerNest} yields therefore that the same element induces a $K$-equivalence between $\Gamma\underset{r}{\ltimes} L'\Big(\widehat{\mathbb{G}}\underset{r}{\ltimes} L(A)\Big)$ and $\Gamma\underset{r}{\ltimes} L'(\widehat{\mathbb{G}}\underset{\delta_{\widehat{\mathbb{G}}}, r}{\ltimes} A)$.

	Observe that $L(A)\in\mathscr{L}_{\widehat{\mathbb{F}}}$, so $\widehat{\mathbb{G}}\underset{r}{\ltimes} L(A)=\mathcal{Z}(L(A))\in\mathscr{L}_{\Gamma}$ thanks to Theorem \ref{theo.AssociativityandFunctorZ} $(ii)$. Hence we have $L'\Big(\widehat{\mathbb{G}}\underset{r}{\ltimes} L(A)\Big)\cong \widehat{\mathbb{G}}\underset{r}{\ltimes} L(A)$ in $\mathscr{K}\mathscr{K}^{\Gamma}$. In other words, $\Gamma\underset{r}{\ltimes} \Big(\widehat{\mathbb{G}}\underset{r}{\ltimes} L(A)\Big)$ is $K$-equivalent to $\Gamma\underset{r}{\ltimes} L'(\widehat{\mathbb{G}}\underset{\delta_{\widehat{\mathbb{G}}}, r}{\ltimes} A)$ via the element $\psi$. That is, $\Psi=F'(\psi)$ is an isomorphism.
	
	To conclude, we use the commutative diagramme (\ref{eq.CommutativeDiagram}). Namely, since $\Gamma$ satisfies the Baum-Connes property with coefficients in $\widehat{\mathbb{G}}\underset{\delta_{\widehat{\mathbb{G}}}, r}{\ltimes}A$ by assumption, then $K_{*}(\Gamma\underset{r}{\ltimes}L'(\widehat{\mathbb{G}}\underset{\delta_{\widehat{\mathbb{G}}}, r}{\ltimes} A))\cong K_{*}(\Gamma\underset{\partial, r}{\ltimes}(\widehat{\mathbb{G}}\underset{\delta_{\widehat{\mathbb{G}}}, r}{\ltimes}A))$ via $\eta^{\Gamma}_{\widehat{\mathbb{G}}\underset{r}{\ltimes} A}$. By using the associativity for quantum semi-direct products from Theorem \ref{theo.AssociativityandFunctorZ} $(i)$ we get $K_{*}(\widehat{\mathbb{F}}\underset{r}{\ltimes}L(A))\overset{\sim}{\longrightarrow} K_{*}(\widehat{\mathbb{F}}\underset{\delta, r}{\ltimes}A)$ via $\eta^{\Gamma}_{\widehat{\mathbb{G}}\underset{r}{\ltimes} A}\circ\Psi$. So $\mathbb{L}F(A)\cong F(A)$ through $\eta^{\widehat{\mathbb{F}}}_{A}$ thanks to the commutativity of diagram (\ref{eq.CommutativeDiagram}). That is, $\widehat{\mathbb{F}}$ satisfies the quantum Baum-Connes property with coefficients in $A$.
	\end{proof}
	\begin{rem}
		The argument of the preceding theorem can be applied when $\Gamma$ has more finite subgroups than the trivial one. Indeed, we could do the argument with the quantum semi-direct products given by $\mathbb{F}_{\Lambda}:=\Lambda\underset{\alpha_{|}}{\ltimes}\mathbb{G}$ for every finite subgroup $\Lambda<\Gamma$. In that case, the claim ``\emph{$\Psi=F'(\psi)$ is an isomorphism}'', which is used in order to conclude using the commutative diagram (\ref{eq.CommutativeDiagram}), can be achieved by applying Theorem $9.3$ in \cite{MeyerNest}. The problem with this case is that the finite subgroups of $\Gamma$ provide torsion of $\widehat{\mathbb{F}}$ by virtue of Theorem \ref{theo.TorsionQuantumsemi-direct}. Hence the theoretical framework for the quantum Baum-Connes property fails. It is reasonable to expect that the same stabilization property holds for any quantum semi-direct product (not necessarily torsion-free) once the Baum-Connes property can be formulated properly without the torsion-freeness assumption.
	\end{rem}
	
	\begin{theo}\label{theo.FinalTheoremStrong}
		Let $\mathbb{F}=\Gamma\underset{\alpha}{\ltimes}\mathbb{G}$ be a quantum semi-direct product such that $\widehat{\mathbb{F}}$, $\Gamma$ and $\widehat{\mathbb{G}}$ is a torsion-free discrete quantum groups. 
		
		If $\widehat{\mathbb{F}}$ satisfies the \emph{strong} quantum Baum-Connes property, then $\Gamma$ satisfies the \emph{strong} Baum-Connes property and $\widehat{\mathbb{G}}$ satisfies the \emph{strong} quantum Baum-Connes property.
	\end{theo}
	\begin{proof}
		Assume that $\widehat{\mathbb{F}}$ satisfies the \emph{strong} quantum Baum-Connes property. Since $\Gamma$ and $\widehat{\mathbb{G}}$ are \emph{divisible} torsion-free discrete quantum subgroups of $\widehat{\mathbb{F}}$ thanks to Remark \ref{rem.QuantumSubgroups}, then they satisfy the \emph{strong} Baum-Connes property by virtue of Remark \ref{rem.BCDivisibleSubgroups}.
	\end{proof}
		
\subsection{$K$-amenability property}
	To finish, we study an other property of own interest: the \emph{$K$-amenability}. Namely, we get the following
	\begin{theo}\label{theo.KAmenability}
		Let $\mathbb{F}=\Gamma\underset{\alpha}{\ltimes}\mathbb{G}$ be a quantum semi-direct product. Then $\mathbb{F}$ is co-$K$-amenable if and only if $\Gamma$ is $K$-amenable and $\mathbb{G}$ is co-$K$-amenable.
	\end{theo}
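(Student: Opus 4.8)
The plan is to reformulate the three (co-)$K$-amenability properties as the invertibility in Kasparov theory of the canonical surjections from the maximal to the reduced completions. Write $\tau_{\mathbb{F}}\colon C_m(\mathbb{F})\to C_r(\mathbb{F})$, $\tau_{\mathbb{G}}\colon C_m(\mathbb{G})\to C_r(\mathbb{G})$ and $\tau_{\Gamma}\colon C^*_m(\Gamma)\to C^*_r(\Gamma)$ for these surjections; the statement to prove becomes that $[\tau_{\mathbb{F}}]\in KK(C_m(\mathbb{F}),C_r(\mathbb{F}))$ is invertible if and only if both $[\tau_{\mathbb{G}}]$ and $[\tau_{\Gamma}]$ are invertible. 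Throughout I use the identifications $C_m(\mathbb{F})=\Gamma\underset{\alpha,m}{\ltimes}C_m(\mathbb{G})$ and $C_r(\mathbb{F})=\Gamma\underset{\alpha,r}{\ltimes}C_r(\mathbb{G})$ from Remark \ref{rem.QuantumSubgroups} (the latter is also the content of the associativity of Theorem \ref{theo.AssociativityandFunctorZ} applied to $A=\mathbb{C}$, for which $\partial=\alpha$ and $\widehat{\mathbb{G}}\underset{r}{\ltimes}\mathbb{C}=C_r(\mathbb{G})$), under which $\tau_{\mathbb{F}}$ is the evident maximal-to-reduced map.

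For the forward implication I would argue exactly as in the Baum--Connes case (Theorem \ref{theo.FinalTheorem}): since $\Gamma$ and $\widehat{\mathbb{G}}$ are \emph{divisible} discrete quantum subgroups of $\widehat{\mathbb{F}}$ by Remark \ref{rem.QuantumSubgroups}, and $K$-amenability is inherited by divisible discrete quantum subgroups (the analogue for $K$-amenability of Propositions \ref{pro.TorsionDivisibleSubgroup} and \ref{pro.BCDivisibleQuantumSubgroups}), co-$K$-amenability of $\mathbb{F}$ forces that of $\mathbb{G}$ and $K$-amenability of $\Gamma$. Concretely, the $K$-amenability class of $\widehat{\mathbb{F}}$ restricts along the divisible inclusions using the decompositions $c_0(\widehat{\mathbb{F}})\cong c_0(\widehat{\mathbb{G}})\otimes c_0(\widehat{\mathbb{G}}\backslash\widehat{\mathbb{F}})$ and $c_0(\widehat{\mathbb{F}})\cong c_0(\Gamma)\otimes c_0(\Gamma\backslash\widehat{\mathbb{F}})$; for the $\Gamma$-factor one may even be partly explicit, since the $\alpha$-invariant character $\varepsilon_{\mathbb{G}}$ provides a genuine $*$-homomorphic retraction $q_{\Gamma}=\mathrm{id}_{\Gamma}\underset{m}{\ltimes}\varepsilon_{\mathbb{G}}\colon C_m(\mathbb{F})\to C^*_m(\Gamma)$ of $\iota^m_{\Gamma}$ at the maximal level, compatible with the intertwining relation $\tau_{\mathbb{F}}\circ\iota^m_{\Gamma}=\iota^r_{\Gamma}\circ\tau_{\Gamma}$ of Remark \ref{rem.QuantumSubgroups}. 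Note that no such $*$-homomorphic retraction exists for $\mathbb{G}$ (it would force the action $\alpha$ to be trivial) nor at the reduced level for $\Gamma$ (where $\varepsilon_{\mathbb{G}}$ is unbounded), so the clean route for the forward direction is the divisible-subgroup inheritance rather than naive retractions.

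For the converse I would set up the commutative square of canonical maps with corners $C_m(\mathbb{F})=\Gamma\underset{\alpha,m}{\ltimes}C_m(\mathbb{G})$, $\Gamma\underset{\alpha,m}{\ltimes}C_r(\mathbb{G})$, $\Gamma\underset{\alpha,r}{\ltimes}C_m(\mathbb{G})$ and $C_r(\mathbb{F})=\Gamma\underset{\alpha,r}{\ltimes}C_r(\mathbb{G})$, in which both composites from the maximal corner to the reduced corner equal $\tau_{\mathbb{F}}$. One pair of parallel edges consists of the maximal-to-reduced surjections of the $\Gamma$-crossed product with coefficients $C_m(\mathbb{G})$ and $C_r(\mathbb{G})$; since $\Gamma$ is $K$-amenable, these are $KK$-equivalences by the standard descent of $K$-amenability to arbitrary $\Gamma$-$C^*$-coefficients. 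The other pair of parallel edges is $\mathrm{id}_{\Gamma}\underset{m}{\ltimes}\tau_{\mathbb{G}}$ and $\mathrm{id}_{\Gamma}\underset{r}{\ltimes}\tau_{\mathbb{G}}$, obtained by applying the maximal, respectively reduced, $\Gamma$-crossed-product descent to $\tau_{\mathbb{G}}$. As soon as one of these last two edges is a $KK$-equivalence, the two-out-of-three property in the square gives that $\tau_{\mathbb{F}}$ is a $KK$-equivalence, i.e. $\mathbb{F}$ is co-$K$-amenable.

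The main obstacle is precisely this last pair of edges: co-$K$-amenability of $\mathbb{G}$ only says that $[\tau_{\mathbb{G}}]$ is invertible in the \emph{non-equivariant} category $\mathscr{K}\mathscr{K}$, whereas to apply a $\Gamma$-descent functor I need $\tau_{\mathbb{G}}$ to be invertible $\Gamma$-\emph{equivariantly}, in $\mathscr{K}\mathscr{K}^{\Gamma}$. My plan to close this gap is to observe first that, since $\tau_{\mathbb{G}}$ is $\Gamma$-equivariant, uniqueness of inverses in $KK$ forces its inverse to be fixed by the action induced by $\alpha$ (each $\alpha_{\gamma}^{*}$ carries an inverse to an inverse, hence to itself), so the only remaining issue is to promote this $\alpha$-invariant class to a genuine class in $KK^{\Gamma}(C_r(\mathbb{G}),C_m(\mathbb{G}))$. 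For this I would use that the co-$K$-amenability element of a compact quantum group is produced by a construction natural under quantum automorphisms, so that it lifts canonically to the $\Gamma$-equivariant category; equivalently, one builds the $K$-amenability module of $\widehat{\mathbb{F}}$ directly out of those of $\Gamma$ and $\widehat{\mathbb{G}}$ through the tensor splitting $c_0(\widehat{\mathbb{F}})\cong c_0(\Gamma)\otimes c_0(\widehat{\mathbb{G}})$, the canonical (hence $\alpha$-invariant) module of $\widehat{\mathbb{G}}$ surviving the $\Gamma$-crossed product. Turning this naturality into an honest equivariant $KK$-equivalence is where I expect the real work to lie.
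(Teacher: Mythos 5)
Your architecture coincides with the paper's. For the forward direction the paper does exactly what you sketch, only more directly: it sets $\alpha_{\Gamma}:=[\iota^r_{\Gamma}]\underset{C_r(\mathbb{F})}{\otimes}\alpha_{\mathbb{F}}$ and $\alpha_{\mathbb{G}}:=[\iota^r_{\mathbb{G}}]\underset{C_r(\mathbb{F})}{\otimes}\alpha_{\mathbb{F}}$ and verifies the defining identity of (co-)$K$-amenability using the relations $\tau_{\mathbb{F}}\circ\iota^m_{\Gamma}=\iota^r_{\Gamma}\circ\tau_{\Gamma}$, $\varepsilon_{\mathbb{F}}\circ\iota^m_{\Gamma}=\varepsilon_{\Gamma}$ (and likewise for $\mathbb{G}$) from Remark \ref{rem.QuantumSubgroups}; your appeal to divisibility is unnecessary (any discrete quantum subgroup inherits $K$-amenability by this restriction argument) but harmless. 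For the converse the paper uses the same factorization as your square: $\tau_{\mathbb{F}}$ equals the composite $\Gamma\underset{m}{\ltimes}C_m(\mathbb{G})\twoheadrightarrow\Gamma\underset{r}{\ltimes}C_m(\mathbb{G})\longrightarrow\Gamma\underset{r}{\ltimes}C_r(\mathbb{G})$, the first arrow is a $KK$-equivalence by Cuntz's theorem \cite{CuntzKmoyenabilite} since $\Gamma$ is $K$-amenable, and the second arrow is $id\ltimes\tau_{\mathbb{G}}$, to be handled by descent.

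The genuine gap is that your proposal stops at precisely the step which carries all the weight of the converse, and neither of your sketches for closing it can work as stated. What is needed is invertibility of $[\tau_{\mathbb{G}}]$ in $\mathscr{K}\mathscr{K}^{\Gamma}$, so that $j_{\Gamma}([\tau_{\mathbb{G}}])=[id\ltimes\tau_{\mathbb{G}}]$ is invertible; co-$K$-amenability of $\mathbb{G}$ only gives invertibility in $\mathscr{K}\mathscr{K}$. Your argument (a) — the inverse class is fixed by every $\alpha_{\gamma}^{*}$ — is true but insufficient: a $\Gamma$-invariant class in $KK(C_r(\mathbb{G}),C_m(\mathbb{G}))$ need not lift to $KK^{\Gamma}$, and in fact \emph{no} formal argument using only equivariance of $\tau_{\mathbb{G}}$, non-equivariant invertibility, and $K$-amenability of $\Gamma$ can exist. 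Indeed, such a principle applied to the equivariant morphism $D\longrightarrow 0$ would show that every $KK$-contractible $\Gamma$-$C^*$-algebra has $KK$-contractible reduced crossed product; this already fails for $\Gamma=\mathbb{Z}/2$ (amenable, hence $K$-amenable, and admissible here since Theorem \ref{theo.KAmenability} makes no torsion-freeness assumption): the Meyer--Nest object $N(\mathbb{C})$ for the trivial-subgroup pair of Section \ref{sec.BCDiscreteQuantumGroups} is $KK$-contractible, yet $\mathbb{Z}/2\ltimes N(\mathbb{C})$ has non-vanishing $K$-theory, because otherwise the trivial-family assembly map $K_*(\mathbb{Z}/2\ltimes L(\mathbb{C}))\longrightarrow R(\mathbb{Z}/2)$ would be surjective, whereas its image lies in the rank-one subgroup generated by the regular representation. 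This is exactly the phenomenon ($\mathscr{N}_{\Gamma}$ not being killed by the crossed product) that forces the torsion-freeness hypotheses elsewhere in the paper, so any correct proof must use finer structure of the $K$-amenability element of $\widehat{\mathbb{G}}$ (a Cuntz--Vergnioux weak-containment representative chosen compatibly with the $\Gamma$-action); your argument (b), naturality of that element under quantum automorphisms, merely restates this missing step without proving it. For comparison, the paper closes the step by assertion: since $\mathbb{G}$ is co-$K$-amenable and $\tau_{\mathbb{G}}$ is $\Gamma$-equivariant, it ``induces a $\Gamma$-equivariant $KK$-equivalence,'' to which $j_{\Gamma}$ is applied. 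You have therefore located the exact crux on which the published proof rests, but, as submitted, your proposal does not prove the theorem.
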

	\begin{proof}
		Assume that $\mathbb{F}$ is co-$K$-amenable. This means that there exists an element $\alpha_{\mathbb{F}}\in KK(C_r(\mathbb{F}), \mathbb{C})$ such that $[\tau_{\mathbb{F}}]\underset{C_r(\mathbb{F})}{\otimes} \alpha=[\varepsilon_{\mathbb{F}}]\in KK(C_m(\mathbb{F}),\mathbb{C})$,
			where $\tau_{\mathbb{F}}: C_m(\mathbb{F})\twoheadrightarrow C_r(\mathbb{F})$ is the canonical surjection and $\varepsilon_{\mathbb{F}}:Pol(\mathbb{F})\longrightarrow \mathbb{C}$ is the co-unit of $\mathbb{F}$ whose extension to $C_m(\mathbb{F})$ is still denoted by $\varepsilon_{\mathbb{F}}$.
			
			By virtue of Remark \ref{rem.QuantumSubgroups} we know that $\Gamma$ and $\widehat{\mathbb{G}}$ are discrete quantum subgroups of $\widehat{\mathbb{F}}$ via the canonical injections $\iota^r_{\Gamma}: C^*_r(\Gamma)\hookrightarrow C_r(\mathbb{F})\mbox{ and }\iota^r_{\mathbb{G}}: C_r(\mathbb{G})\hookrightarrow C_r(\mathbb{F})$. In this situation it is straightforward to show that $\Gamma$ and $\widehat{\mathbb{G}}$ are $K$-amenable with elements $\alpha_{\Gamma}:=[\iota^r_{\Gamma}]\underset{C_r(\mathbb{F})}{\otimes}\alpha_{\mathbb{F}}\in KK(C^*_r(\Gamma), \mathbb{C})\mbox{ and }\alpha_{\mathbb{G}}:=[\iota^r_{\mathbb{G}}]\underset{C_r(\mathbb{F})}{\otimes}\alpha_{\mathbb{F}}\in KK(C_r(\mathbb{G}), \mathbb{C})$.
			
		 Conversely, assume that $\Gamma$ is $K$-amenable and that $\mathbb{G}$ is co-$K$-amenable. By virtue of the $K$-amenability characterization of J. Cuntz (see Theorem 2.1 in \cite{CuntzKmoyenabilite}), the surjection $\Gamma\underset{m}{\ltimes} A\twoheadrightarrow \Gamma\underset{r}{\ltimes} A$ induces a $KK$-equivalence for every $\Gamma$-$C^*$-algebra $A$.	In particular, $\Gamma\underset{m}{\ltimes} C_m(\mathbb{G})\twoheadrightarrow \Gamma\underset{r}{\ltimes} C_m(\mathbb{G})$ induces a $KK$-equivalence. Since $\mathbb{G}$ is co-$K$-amenable, then the canonical surjection $\tau_{\mathbb{G}}: C_m(\mathbb{G})\twoheadrightarrow C_r(\mathbb{G})$, which is $\Gamma$-equivariant, induces a $\Gamma$-equivariant $KK$-equivalence. If $j_{\Gamma}$ denotes the descent homomorphism with respect to $\Gamma$, which is compatible with the Kasparov product, then it is clear that $[id\ltimes \tau_{\mathbb{G}}]=j_{\Gamma}([\tau_{\mathbb{G}}])\in KK(\Gamma\underset{r}{\ltimes} C_m(\mathbb{G}), \Gamma\underset{r}{\ltimes} C_r(\mathbb{G}))$ is an invertible element. In other words, the composition $\Gamma\underset{m}{\ltimes} C_m(\mathbb{G})\twoheadrightarrow \Gamma\underset{r}{\ltimes} C_m(\mathbb{G})\overset{id\ltimes \tau_{\mathbb{G}}}{\rightarrow} \Gamma\underset{r}{\ltimes} C_r(\mathbb{G}))$, which is precisely $\tau_{\mathbb{F}}$, induces a $KK$-equivalence. Hence $\mathbb{F}$ is co-$K$-amenable.
	\end{proof}
	
\section{\textsc{Compact Bicrossed Product}}
In this section we observe that all preceding results can be established for a compact bricrossed product in the sense of \cite{FimaBiproduit} under torsion-freeness assumption. Let $G$ be a (classical) compact  group and $\Gamma$ be a discrete group so that $(\Gamma, G)$ is a matched pair (see \cite{SkandalisUnitaries} or \cite{FimaBiproduit} for a precise definition). Then there exists a continuous left action of $\Gamma$ on the topological space $G$, $\alpha:\Gamma\times G\longrightarrow G$, and a continuous right action of $G$ on the topological space $\Gamma$, $\beta:G\times \Gamma\longrightarrow \Gamma$. Both actions $\alpha$ and $\beta$ are related in the following way: for every $\gamma\in\Gamma$ and every $g\in G$, we have $\gamma g=\alpha_\gamma(g)\beta_g(\gamma)$. In particular, if $e\in\Gamma$ denotes the identity element of $\Gamma$, then $\beta_g(e)=e$, for all $g\in G$. Hence $\#[e]=1$, where $[e]\in\Gamma/G$ is the corresponding class in the quotient space. Observe that $\alpha_{e}=id_G$ and $\beta_{e}=id_{\Gamma}$, where $e$ denotes either the identity element in $\Gamma$ or in $G$, respectively. Notice that, since $\beta$ is continuous and $G$ is compact, then every orbit $[\gamma]$ in $\Gamma/G$ with $\gamma\in \Gamma$ is finite.
		
		For every class $[\gamma]\in \Gamma/G$, we define the following \emph{clopen} subsets of $G$ (see \cite{FimaBiproduit} for more details) $A_{r,s}:=\{g\in G : \beta_g(r)=s\}$, for every $r,s\in [\gamma]$. Consider as well its characteristic function, say $\mathbbold{1}_{A_{r,s}}=: \mathbbold{1}_{r,s}$, for all $r,s\in[\gamma]$. We can show that $\Big(\mathbbold{1}_{r,s}\Big)_{r,s\in[\gamma]}\in \mathcal{M}_{\#[\gamma]}(\mathbb{C})\otimes C(G)$ is a magic unitary and a unitary representation of $G$ (see \cite{FimaBiproduit} for more details).
		
		In this situation, we can construct the compact bicrossed product of the matched pair $(\Gamma, G)$ and it is denoted by $\mathbb{F}=\Gamma {_\alpha}\bowtie_{\beta}G$, where $C(\mathbb{F})=\Gamma\underset{\alpha, m}{\ltimes} C(G)$ (see \cite{FimaBiproduit} for more details). By definition of the crossed product by a discrete group we have a unital faithful $*$-homomorphism $\pi: C(G)\longrightarrow C(\mathbb{F})$ and a group homomorphism $u: \Gamma\longrightarrow \mathcal{U}(C(\mathbb{F}))$ defined by $u_{\gamma}:=\lambda_{\gamma}\otimes id_{C(G)}$, for all $\gamma\in\Gamma$ such that $C(\mathbb{F})\equiv\Gamma\underset{\alpha, m}{\ltimes}C(G)=C^*\langle \pi(f)u_{\gamma}: f\in C(G), \gamma\in\Gamma \rangle$. The co-multiplication $\Theta$ of $\mathbb{F}$ is such that $\Theta\circ \pi=(\pi\otimes\pi)\circ\Delta_G$ and $\Theta(u_{\gamma})=\underset{r\in [\gamma]}{\sum} u_{\gamma}\alpha(\mathbbm{1}_{\gamma,r})\otimes u_{r}$ for all $\gamma\in\Gamma$.


	\begin{remSec}\label{rem.QuantumSubgroupsBicrossed}
		As in Remarks \ref{rem.QuantumSubgroups} we observe that $\hat{G}$ is a quantum subgroup of $\hat{\mathbb{F}}$ with canonical surjection given by $\rho_{\hat{G}}:=\varepsilon_{\Gamma}\otimes id_{C^*_r(G)}$, where $\varepsilon_{\Gamma}$ is the co-unit of $\Gamma$. As a result, if $(A, \delta)$ is any $\hat{\mathbb{F}}$-$C^*$-algebra, then $(A, \delta_{\hat{G}})$ is a $\hat{G}$-$C^*$-algebra with $\delta_{\hat{G}}=(\rho_{\hat{G}}\otimes id_A)\circ \delta$. Notice that $\Gamma$ is not in general a quantum subgroup of $\widehat{\mathbb{F}}$. Indeed, the canonical injection $\iota^m_\Gamma:C^*_m(\Gamma)\hookrightarrow C_m(\mathbb{F})$ does not intertwine the corresponding co-multiplications.
	\end{remSec}
	
	\bigskip
	In order to legitimate the Baum-Connes property formulation for the dual of a compact bicrossed product $\mathbb{F}=\Gamma {_\alpha}\bowtie_{\beta}G$, we need $\widehat{\mathbb{F}}$ to be torsion-free. In this way, we do the following crucial observation.
	\begin{proSec}\label{pro.BicrossedBetaTrivial}
		Let $\mathbb{F}=\Gamma {_\alpha}\bowtie_{\beta}G$ be a compact bicrossed product of the matched pair $(\Gamma, G)$. If $\widehat{\mathbb{F}}$ is torsion-free, then the action $\beta$ is trivial. Consequently, $\mathbb{F}=\Gamma \underset{\alpha}{\ltimes}\mathbb{G}$ is a quantum semi-direct product with $\mathbb{G}:=(C(G), \Delta)$.
	\end{proSec}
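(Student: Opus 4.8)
The plan is to argue by contraposition. Suppose that $\beta$ is \emph{not} trivial; I will produce an ergodic action of $\mathbb{F}$ on a commutative finite-dimensional $C^*$-algebra of dimension $\geqslant 2$. Such an action is a torsion action of $\mathbb{F}$ which cannot be $\mathbb{F}$-equivariantly Morita equivalent to the trivial algebra $\mathbb{C}$ (a commutative algebra with $n\geqslant 2$ minimal projections has $n$ simple summands, a Morita invariant), contradicting the torsion-freeness of $\widehat{\mathbb{F}}$ by the very definition recalled above. Since $\beta_g(e)=e$ for all $g$, the assumption that $\beta$ is nontrivial means that some $\beta$-orbit $O:=[\gamma_0]\subset\Gamma$ satisfies $n:=\#O\geqslant 2$; recall that every orbit is finite because $\beta$ is continuous and $G$ is compact, and that $G$ acts \emph{transitively} on $O$ via $\beta$.

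The candidate coefficient algebra is $A:=C(O)\cong\mathbb{C}^{n}$, with minimal projections $\{e_r\}_{r\in O}$, and I would define $\delta\colon A\longrightarrow A\otimes C(\mathbb{F})$ on generators by $\delta(e_r):=\sum_{s\in O}e_s\otimes\pi(\mathbbm{1}_{s,r})$, using the clopen sets $A_{s,r}$ and the attached magic unitary $(\mathbbm{1}_{s,r})_{s,r\in O}$. First I would check that $\delta$ is a unital $*$-homomorphism: this reduces to $\mathbbm{1}_{s,r}\mathbbm{1}_{s,r'}=\delta_{r,r'}\mathbbm{1}_{s,r}$ and $\sum_{r}\mathbbm{1}_{s,r}=1$, both immediate from the definition of $A_{s,r}$ and the $G$-invariance of $O$. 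The coaction identity $(\delta\otimes\mathrm{id})\circ\delta=(\mathrm{id}\otimes\Theta)\circ\delta$ is the crucial step: it follows from $\Theta\circ\pi=(\pi\otimes\pi)\circ\Delta_G$ together with the fact that $(\mathbbm{1}_{s,r})$ is a representation of $G$, i.e. $\Delta_G(\mathbbm{1}_{s,r})=\sum_{t\in O}\mathbbm{1}_{s,t}\otimes\mathbbm{1}_{t,r}$, which is precisely the statement that $\beta$ is a genuine right action, $\beta_{gh}=\beta_h\circ\beta_g$. The counit identity $(\mathrm{id}\otimes\varepsilon_{\mathbb{F}})\circ\delta=\mathrm{id}$ follows from $\varepsilon_{\mathbb{F}}\circ\pi=\varepsilon_G=\mathrm{ev}_e$ and $\beta_e=\mathrm{id}_{\Gamma}$. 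Thus $(A,\delta)$ is a genuine (right) $\mathbb{F}$-$C^*$-algebra.

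Next I would verify ergodicity: an element $f=\sum_{r}c_r e_r$ is fixed iff $c_{\beta_g(s)}=c_s$ for all $g\in G$ and $s\in O$, and since $G$ acts transitively on $O$ this forces all $c_r$ to be equal, whence $A^{\delta}=\mathbb{C}1$. Therefore $\delta$ is an ergodic action of $\mathbb{F}$ on the finite-dimensional, \emph{non-simple} algebra $A\cong\mathbb{C}^{n}$ with $n\geqslant 2$ — a torsion action that is not Morita-trivial, the desired contradiction. Hence $\beta$ must be trivial; consequently every orbit is a singleton, $\mathbbm{1}_{\gamma,\gamma}=1$, so the formula $\Theta(u_\gamma)=\sum_{r\in[\gamma]}u_\gamma\,\alpha(\mathbbm{1}_{\gamma,r})\otimes u_r$ collapses to $\Theta(u_\gamma)=u_\gamma\otimes u_\gamma$, and the bicrossed comultiplication reduces to that of the quantum semi-direct product $\Gamma\underset{\alpha}{\ltimes}\mathbb{G}$ with $\mathbb{G}=(C(G),\Delta)$.

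The main obstacle is the coassociativity verification, and relatedly the conceptual point that the purely ``$G$-flavoured'' formula for $\delta$ really assembles into an $\mathbb{F}$-coaction rather than a mere $G$-coaction: the reason it works is that $\pi(C(G))\subset C(\mathbb{F})$ is a Hopf subalgebra (the inclusion $\iota_{G}$ intertwines the comultiplications, as recorded for $\widehat{G}<\widehat{\mathbb{F}}$ in Remarks \ref{rem.QuantumSubgroupsBicrossed}), so a $G$-action implemented by the magic unitary lifts verbatim. A minor bookkeeping point is the left/right convention: $\delta$ is a right coaction, which matches the notion of \emph{torsion action} fixed in Section \ref{sec.Notations}, so the torsion-freeness criterion applies directly without any passage to an opposite action.
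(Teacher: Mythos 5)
Your argument is correct, and it reaches the contradiction by a genuinely different route than the paper. The paper pushes torsion-freeness \emph{down} to the classical group: since $\widehat{G}<\widehat{\mathbb{F}}$ is divisible (Remarks \ref{rem.QuantumSubgroupsBicrossed}), Proposition \ref{pro.TorsionDivisibleSubgroup} makes $\widehat{G}$ torsion-free, and then the transitive $\beta$-action of $G$ on each orbit $[\gamma]$ gives an ergodic action of $G$ on $C([\gamma])$, which must therefore be simple, forcing $\#[\gamma]=1$. You instead lift the ergodic orbit action \emph{up} to $\mathbb{F}$, writing it explicitly through the magic unitary as $\delta(e_r)=\sum_{s}e_s\otimes\pi(\mathbbm{1}_{s,r})$, and then apply the torsion-freeness of $\widehat{\mathbb{F}}$ directly to this non-simple ergodic finite-dimensional coefficient algebra. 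Your verifications are sound, and every ingredient you use is recorded in the paper: multiplicativity and unitality come from the magic unitary relations, coassociativity from $\Theta\circ\pi=(\pi\otimes\pi)\circ\Delta_G$ together with the representation property $\Delta_G(\mathbbm{1}_{s,r})=\sum_{t}\mathbbm{1}_{s,t}\otimes\mathbbm{1}_{t,r}$ (equivalent to $\beta_{gh}=\beta_h\circ\beta_g$), the counit condition from $\beta_e=\mathrm{id}_\Gamma$, and ergodicity from transitivity of $G$ on the orbit. What your approach buys: it is self-contained, bypassing both the divisibility of $\widehat{G}$ in $\widehat{\mathbb{F}}$ and Proposition \ref{pro.TorsionDivisibleSubgroup} (whose proof is among the longest pieces of background); in effect you redo by hand only the easy ``extend an action along a quantum subgroup inclusion'' half of that proposition, specialized to $C(G)\subset C(\mathbb{F})$. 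What the paper's approach buys: given the machinery already in place, the proof is three lines and isolates the conceptual point that torsion-freeness passes to divisible discrete quantum subgroups. One small simplification available to you: instead of arguing that $\mathbb{C}^n$, $n\geqslant 2$, is not Morita equivalent to $\mathbb{C}$ via the count of simple summands, you can invoke condition (ii)(a) of the torsion-freeness definition directly — an ergodic finite-dimensional action must have simple underlying algebra, and $\mathbb{C}^n$ is not simple.
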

	\begin{proof}
		Let $G_0$ be the connected component of the identity element $e$, which is always a closed normal subgroup of $G$. Consequently, $G/G_{0}$ is a finite group because $G$ is supposed to be compact. Its dual is therefore a finite discrete quantum subgroup of $\widehat{G}$. The latter is a discrete quantum subgroup of $\widehat{\mathbb{F}}$ as explained in Remark \ref{rem.QuantumSubgroupsBicrossed}. Since $\widehat{\mathbb{F}}$ is torsion-free by assumption, then $G/G_0$ must be the trivial group (recall Remark \ref{rem.TorsionFiniteSubgroup}). Hence $G$ must be connected, which forces $\beta$ to be the trivial action.
	\end{proof}
	
	As we have pointed out several times, the torsion-freeness assumption is needed whenever we work with the (current) quantum Baum-Connes property for discrete quantum groups. So, this hypothesis forces the compact bicrossed product case to became a quantum semi-direct product. Therefore, Theorem \ref{theo.FinalTheorem} and Theorem \ref{theo.FinalTheoremStrong} still hold. In this sense, the torsion case is the interesting one. The analogous strategy used in Section \ref{sec.BCQuantumsemi-directProduct} could be applied for a compact bicrossed product because its representation theory can be described explicitly as in the quantum semi-direct product case. However, the presence of a non trivial action $\beta$ makes this task more involved. It is reasonable to expect that the same stabilization property holds for any compact bicrossed product (not necessarily torsion-free) once the Baum-Connes property can be formulated without the torsion-freeness assumption.
	
	The $K$-amenability property can be established independently of this assumption. Notice here that $G$ is automatically amenable (so $K$-amenable) because it is a classical compact group.
	\begin{theoSec}\label{theo.KAmenabilityBicrossed}
		Let $\mathbb{F}=\Gamma{_\alpha}\bowtie_{\beta} G$ be a compact bicrossed product. Then $\mathbb{F}$ is co-$K$-amenable if and only if $\Gamma$ is $K$-amenable.
	\end{theoSec}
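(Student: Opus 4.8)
The plan is to mirror the proof of Theorem~\ref{theo.KAmenability} for the quantum semi-direct product, exploiting the two facts that the underlying $C^*$-algebra of $\mathbb{F}$ is again the crossed product $C_m(\mathbb{F})=\Gamma\underset{\alpha,m}{\ltimes}C(G)$ and that $G$, being a classical compact group, is amenable; in particular $C_m(G)=C_r(G)=C(G)$ and the compact quantum group $\mathbb{G}:=(C(G),\Delta_G)$ is (trivially) co-$K$-amenable. The whole argument will rest on the interplay between the canonical surjection $\tau_{\mathbb{F}}:C_m(\mathbb{F})\twoheadrightarrow C_r(\mathbb{F})$, the co-unit $\varepsilon_{\mathbb{F}}$, and the $C^*$-level inclusions $\iota^m_\Gamma:C^*_m(\Gamma)\hookrightarrow C_m(\mathbb{F})$, $\iota^r_\Gamma:C^*_r(\Gamma)\hookrightarrow C_r(\mathbb{F})$ of Remarks~\ref{rem.QuantumSubgroupsBicrossed}.

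For the direct implication I would assume $\mathbb{F}$ co-$K$-amenable, witnessed by $\alpha_{\mathbb{F}}\in KK(C_r(\mathbb{F}),\mathbb{C})$ with $[\tau_{\mathbb{F}}]\underset{C_r(\mathbb{F})}{\otimes}\alpha_{\mathbb{F}}=[\varepsilon_{\mathbb{F}}]$, and construct $\alpha_\Gamma:=[\iota^r_\Gamma]\underset{C_r(\mathbb{F})}{\otimes}\alpha_{\mathbb{F}}\in KK(C^*_r(\Gamma),\mathbb{C})$ exactly as before. The point to check by hand --- since $\Gamma$ is \emph{not} a quantum subgroup of $\widehat{\mathbb{F}}$ --- is that the two purely $C^*$-algebraic ingredients survive: the naturality $\tau_{\mathbb{F}}\circ\iota^m_\Gamma=\iota^r_\Gamma\circ\tau_\Gamma$ of the full-to-reduced quotient on the subalgebra generated by $\{u_\gamma\}$, and the co-unit relation $\varepsilon_{\mathbb{F}}\circ\iota^m_\Gamma=\varepsilon_\Gamma$. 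The latter holds because evaluating the matched-pair relation $\gamma g=\alpha_\gamma(g)\beta_g(\gamma)$ at the unit $g=e$ of $G$ and using $\beta_e=\mathrm{id}_\Gamma$ gives $\alpha_\gamma(e)=e$, so that the co-unit $\varepsilon_G$ (evaluation at $e\in G$) is $\alpha$-invariant and $\varepsilon_{\mathbb{F}}(u_\gamma)=1=\varepsilon_\Gamma(u_\gamma)$. With these relations the Kasparov-product computation
\[
[\tau_\Gamma]\underset{C^*_r(\Gamma)}{\otimes}\alpha_\Gamma
=[\iota^m_\Gamma]\underset{C_m(\mathbb{F})}{\otimes}[\tau_{\mathbb{F}}]\underset{C_r(\mathbb{F})}{\otimes}\alpha_{\mathbb{F}}
=[\iota^m_\Gamma]\underset{C_m(\mathbb{F})}{\otimes}[\varepsilon_{\mathbb{F}}]=[\varepsilon_\Gamma]
\]
yields the $K$-amenability of $\Gamma$.

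For the converse I would assume $\Gamma$ is $K$-amenable. Since $G$ is a classical compact group it is amenable, so $C_m(G)=C_r(G)=C(G)$ and the canonical surjection $\tau_G$ is the identity; this is precisely what trivializes the second step of the semi-direct product argument. By Cuntz's characterization of $K$-amenability (Theorem~2.1 in \cite{CuntzKmoyenabilite}) the quotient $\Gamma\underset{m}{\ltimes}A\twoheadrightarrow\Gamma\underset{r}{\ltimes}A$ is a $KK$-equivalence for every $\Gamma$-$C^*$-algebra $A$; applied to $A=C(G)$ this shows that $\tau_{\mathbb{F}}:\Gamma\underset{\alpha,m}{\ltimes}C(G)\twoheadrightarrow\Gamma\underset{\alpha,r}{\ltimes}C(G)$ is itself a $KK$-equivalence. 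I would then set $\alpha_{\mathbb{F}}:=[\tau_{\mathbb{F}}]^{-1}\underset{C_m(\mathbb{F})}{\otimes}[\varepsilon_{\mathbb{F}}]\in KK(C_r(\mathbb{F}),\mathbb{C})$, which immediately satisfies $[\tau_{\mathbb{F}}]\underset{C_r(\mathbb{F})}{\otimes}\alpha_{\mathbb{F}}=[\varepsilon_{\mathbb{F}}]$, i.e. $\mathbb{F}$ is co-$K$-amenable.

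The only genuinely delicate point is in the direct implication: because $\Gamma$ fails to be a quantum subgroup of $\widehat{\mathbb{F}}$, I cannot invoke the quantum-subgroup formalism used in Theorem~\ref{theo.KAmenability} and must instead verify directly that the inclusions $\iota^{m}_\Gamma,\iota^{r}_\Gamma$ exist at the $C^*$-level and are compatible with $\tau$ and with the co-unit. Once these relations are established the computation is identical to the semi-direct product case, and the converse is in fact simpler, since the amenability of $G$ removes the need for a co-$K$-amenability hypothesis on $\mathbb{G}$.
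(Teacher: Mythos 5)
Your proposal is correct and takes essentially the same approach as the paper: both directions rest on the $C^*$-level inclusions $\iota_m,\iota_r$, the $\alpha$-invariance of the evaluation character $\varepsilon_G$ (i.e.\ $\alpha_\gamma(e)=e$), the compatibility $\tau_{\mathbb{F}}\circ\iota_m=\iota_r\circ\tau_\Gamma$, and Cuntz's theorem applied to $A=C(G)$ for the converse. The only difference is which equivalent characterization of (co-)$K$-amenability is used in the forward direction: the paper takes $[\tau_{\mathbb{F}}]$ to be $KK$-invertible and builds a two-sided inverse of $[\tau_\Gamma]$ by composing with the retraction $\varepsilon^m_\Gamma:=\Gamma\underset{\alpha,m}{\ltimes}\varepsilon_G$, whereas you take the co-unit factorization $[\tau_{\mathbb{F}}]\underset{C_r(\mathbb{F})}{\otimes}\alpha_{\mathbb{F}}=[\varepsilon_{\mathbb{F}}]$ and slice by $[\iota^r_\Gamma]$, exactly as in the paper's own proof of Theorem \ref{theo.KAmenability} --- both are legitimate, since the paper itself uses the two characterizations interchangeably.
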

	\begin{proof}	
		Assume that $\mathbb{F}$ is co-$K$-amenable. This means that the canonical surjection $\tau_{\mathbb{F}}: C_m(\mathbb{F})\twoheadrightarrow C_r(\mathbb{F})$ induces a $KK$-equivalence, that is, the corresponding Kasparov triple $[\tau_{\mathbb{F}}]\in KK(C_m(\mathbb{F}), C_r(\mathbb{F}))$ is invertible. Let $\mathcal{X}\in KK(C_r(\mathbb{F}), C_m(\mathbb{F}))$ be its inverse, so that we have $[\tau_{\mathbb{F}}]\underset{C_r(\mathbb{F})}{\otimes} \mathcal{X}=1_{C_m(\mathbb{F})}$ and $\mathcal{X}\underset{C_m(\mathbb{F})}{\otimes} [\tau_{\mathbb{F}}]=1_{C_r(\mathbb{F})}$. 
		
		Consider the canonical $\alpha$-invariant character $\varepsilon_G: C(G) \longrightarrow \mathbb{C}$, $f\mapsto \varepsilon(f):=f(e)$. Consider then the unital $*$-homomorphisms $\varepsilon^m_{\Gamma}:=\Gamma\underset{\alpha, m}{\ltimes}\varepsilon_G: C_m(\mathbb{F})\longrightarrow C^*_m(\Gamma)$ and  $\varepsilon^r_{\Gamma}:=\Gamma\underset{\alpha, r}{\ltimes}\varepsilon_G: C_r(\mathbb{F})\longrightarrow C^*_r(\Gamma)$, which are such that $\tau_{\Gamma}\circ\varepsilon^m_{\Gamma}=\varepsilon^r_{\Gamma}\circ\tau_{\mathbb{F}}$, where $\tau_\Gamma:C^*_m(\Gamma)\longrightarrow C^*_r(\Gamma)$ denotes the canonical surjection.
		
		Recall that $C_m(\mathbb{F})=\Gamma\underset{\alpha, m}{\ltimes}C(G)=C^*\langle \pi(f)u_{\gamma}: f\in C(G), \gamma\in\Gamma\rangle $. So, with the help of the $\alpha$-invariant character above, we identify $C^*_m(\Gamma)$ with the subalgebra of $C_m(\mathbb{F})$ generated by $\{u_{\gamma} : \gamma\in\Gamma\}$ by universal property (see Remark 3.6 in \cite{FimaBiproduit} for more details). Hence, we consider the canonical injection $\iota_m: C^*_m(\Gamma)\hookrightarrow C_m(\mathbb{F})$, which is such that $\varepsilon^m_{\Gamma}\circ \iota_m=id_{C^*_m(\Gamma)}$.
		
		Likewise, recall that $C_r(\mathbb{F})=\Gamma\underset{\alpha, r}{\ltimes}C(G)=C^*\langle \pi(f)u_{\gamma}: f\in C(G), \gamma\in\Gamma\rangle $ is equipped with a conditional expectation $E:\Gamma\underset{\alpha, r}{\ltimes}C(G)\longrightarrow C(G)$, which restricted to the subalgebra generated by $\{u_{\gamma}\ :\ \gamma\in\Gamma\}$ is just $E(u_{\gamma})=\delta_{\gamma,e}\in\mathbb{C}$. Remember as well that $u_{\gamma}=\lambda_{\gamma}\otimes id_{C(G)}$ in $\Gamma\ltimes_{\alpha, r}C(G)\subset \mathcal{L}_{C(G)}(l^2(\Gamma)\otimes C(G))$. Hence this subalgebra is identified canonically to $C^*_r(\Gamma)=\Gamma\underset{tr, r}{\ltimes}\mathbb{C}$ by universal property. Hence, we consider the canonical injection $\iota_r: C^*_r(\Gamma)\hookrightarrow C_r(\mathbb{F})$, which is such that $\varepsilon^r_{\Gamma}\circ \iota_r=id_{C^*_r(\Gamma)}$
		
		By construction we observe that $\tau_{\mathbb{F}}\circ \iota_m=\iota_r\circ\tau_{\Gamma}$. Define the element $\mathcal{Y}:=[\iota_r]\underset{C_r(\mathbb{F})}{\otimes} \mathcal{X}\underset{C_m(\mathbb{F})}{\otimes} [\varepsilon^m_{\Gamma}]\in KK(C^*_r(\Gamma), C^*_m(\Gamma))$. A straightforward computation, using all the preceding relations, yields that $\mathcal{Y}$ is the inverse of the induced element $[\tau_{\Gamma}]\in KK(C^*_m(\Gamma), C^*_r(\Gamma))$. Hence $\Gamma$ is $K$-amenable. 

		Conversely, assume that $\Gamma$ is $K$-amenable. By virtue of the $K$-amenability characterization of J. Cuntz (see Theorem 2.1 in \cite{CuntzKmoyenabilite}), the surjection $\tau_{\mathbb{F}}: \Gamma\underset{\alpha, m}{\ltimes} C(G)\twoheadrightarrow \Gamma\underset{\alpha, r}{\ltimes} C(G)$ induces a $KK$-equivalence. Hence $\widehat{\mathbb{F}}$ is co-$K$-amenabble.
	\end{proof}
	\begin{remSec}
		It is important to notice that the preceding proof can be simplified by using the same argument as in Theorem \ref{theo.KAmenability} together with Remark \ref{rem.QuantumSubgroupsBicrossed}.
	\end{remSec}

\section{\textsc{Quantum Direct Product}}\label{sec.QuantumDirectProduct}
		Let $\mathbb{G}=(C(\mathbb{G}),\Delta_{\mathbb{G}})$ and $\mathbb{H}=(C(\mathbb{H}),\Delta_{\mathbb{H}})$ be two compact quantum groups. We construct the \emph{quantum direct product of $\mathbb{G}$ and $\mathbb{H}$} and it is denoted by $\mathbb{F}:=\mathbb{G} \times \mathbb{H}$, where $C(\mathbb{F})=C_m(\mathbb{G})\underset{\max}{\otimes} C_m(\mathbb{H})$ (see \cite{WangSemidirect} for more details). The co-multiplication $\Theta$ of $\mathbb{F}$ is such that $\Theta(a\otimes b)=\Delta_{\mathbb{G}}(a)_{(1)}\otimes \Delta_{\mathbb{H}}(b)_{(1)}\otimes \Delta_{\mathbb{G}}(a)_{(2)}\otimes \Delta_{\mathbb{H}}(b)_{(2)}$, for all $a\in C_m(\mathbb{G})$ and all $b\in C_m(\mathbb{H})$.
		
	We have $Irr(\mathbb{F})=\big[Irr(\mathbb{G})\big]_{13}\ \big[Irr(\mathbb{H})\big]_{24}$, which means precisely that if $y\in Irr(\mathbb{F})$, then there exist unique $x\in Irr(\mathbb{G})$ and $z\in Irr(\mathbb{H})$ such that $w^y:= w^{(x,z)}=\big[w^x\big]_{13}\big[w^z\big]_{24}\in\mathcal{B}(H_x\otimes H_z)\otimes C(\mathbb{F})$, where $\big[w^x\big]_{13}$ and $\big[w^z\big]_{24}$ are the corresponding legs of $w^x$ and $w^z$, respectively inside $\mathcal{B}(H_x)\otimes\mathcal{B}(H_z)\otimes C_m(\mathbb{G})\underset{\max}{\otimes}C_m(\mathbb{H})$. As a result, we obtain the following decomposition $c_0(\widehat{\mathbb{F}})\cong c_0(\Gamma)\otimes c_0(\widehat{\mathbb{G}})$.
	
	The fusion rules for a quantum direct product can be easily established. More precisely, let $x,x'\in Irr(\mathbb{G})$ and $z,z'\in Irr(\mathbb{H})$ be irreducible representations of $\mathbb{G}$ and $\mathbb{H}$ and consider the corresponding irreducible representations of $\mathbb{F}$, say $y:=(x,z), y':=(x', z')\in Irr(\mathbb{F})$. Thanks to the description of $Irr(\mathbb{F})$ we know that $w^y= \big[w^x\big]_{13}\big[w^z\big]_{24}\mbox{ and } w^{y'}= \big[w^{x'}\big]_{13}\big[w^{z'}\big]_{24}$, where the legs are considered inside $\mathcal{B}(H_x)\otimes\mathcal{B}(H_z)\otimes C_m(\mathbb{G})\underset{\max}{\otimes}C_m(\mathbb{H})$ and $\mathcal{B}(H_{x'})\otimes\mathcal{B}(H_{z'})\otimes C_m(\mathbb{G})\underset{\max}{\otimes}C_m(\mathbb{H})$, respectively. The flip map $H_z\otimes H_{x'}\longrightarrow H_{x'}\otimes H_z$ yields the following obvious identification $w^{y\otop y'}:=w^{y}\otop w^{y'}=\big[w^{x}\otop w^{x'}\big]_{13}\big[w^z\otop w^{z'}\big]_{24}$.
	
	\begin{remsSec}\label{rem.QuantumSubgroupsDirectProduct}
		\begin{enumerate}
			\item It is important to observe that $\widehat{\mathbb{G}}$ and $\widehat{\mathbb{H}}$ are quantum subgroups of $\widehat{\mathbb{F}}$. Indeed, the canonical injections $\iota^r_{\mathbb{G}}: C_r(\mathbb{G})\hookrightarrow C_r(\mathbb{F})\mbox{ and } \iota^r_{\mathbb{H}}: C_r(\mathbb{H})\hookrightarrow C_r(\mathbb{F})$ intertwine the corresponding co-multiplications by construction.
			\item Moreover, the representation theory of $\mathbb{F}$ yields that $\widehat{\mathbb{G}}$ and $\widehat{\mathbb{H}}$ are divisible in $\widehat{\mathbb{F}}$. Namely, given an irreducible representation $y:=(x, z)\in Irr(\mathbb{F})$ with $x\in Irr(\mathbb{G})$ and $z\in Irr(\mathbb{H})$, then $x=(x, \epsilon), z=(\epsilon,z)\in [y]$ in $\sim\backslash Irr(\mathbb{F})$. For all $s\in Irr(\mathbb{G})$ we have that $s\otop (\epsilon,z)=(s,\epsilon)\otop (\epsilon,z)=(s,z)\in Irr(\mathbb{F})$, which shows that $\widehat{\mathbb{G}}$ is divisible in $\widehat{\mathbb{F}}$. For all $s\in Irr(H)$ we have that $(x, \epsilon)\otop s=(x, \epsilon)\otop (\epsilon,s)=(x, s)$, which shows that $\widehat{\mathbb{H}}$ is divisible in $\widehat{\mathbb{F}}$.
		\end{enumerate}
	\end{remsSec}

\subsection{Torsion phenomena}
	The description of the irreducible representations of $\mathbb{F}:=\mathbb{G} \times \mathbb{H}$ allows to obtain the following decomposition of its fusion ring: $Fus(\widehat{\mathbb{F}})=Fus(\widehat{\mathbb{G}})\otimes Fus(\widehat{\mathbb{H}})$. Namely, we have $Irr(\mathbb{F})=\big[Irr(\mathbb{G})\big]_{13} \big[Irr(\mathbb{H})\big]_{24}$, so that $Irr(\mathbb{F})$ can be regarded as the tensor product of $Irr(\mathbb{G})$ and $Irr(\mathbb{H})$ as \emph{based rings} (recall Section \ref{sec.Torsion}). Indeed, given $y\in Irr(\mathbb{F})$, take $x\in Irr(\mathbb{G})$ and $z\in Irr(\mathbb{H})$ such that $y=\big[x\big]_{13} \big[z\big]_{24}$. If $w^{\overline{y}}$, $w^{\overline{x}}$ and $w^{\overline{z}}$ are representatives of $\overline{y}$, $\overline{x}$ and $\overline{z}$, respectively; then we have $\epsilon_{\mathbb{F}}=\epsilon_{\mathbb{G}}\otop \epsilon_{\mathbb{H}}$, $w^{\overline{y}}=\overline{\big[w^{x}\big]_{13}\big[w^{z}\big]_{24}}=\big[w^{\overline{x}}\big]_{13} \big[w^{\overline{z}}\big]_{24}$ and $d(w^{y}= \big[w^{x}\big]_{13}\big[w^{z}\big]_{24})=d(w^{x})d(w^z)$. In conclusion, the decomposition $Fus(\widehat{\mathbb{F}})=Fus(\widehat{\mathbb{G}})\otimes Fus(\widehat{\mathbb{H}})$ holds. Accordingly, we have the following result.
	\begin{pro}\label{pro.StrongTorsionQuantumDirect}
		Let $\mathbb{F}:=\mathbb{G} \times \mathbb{H}$ be the quantum direct product of $\mathbb{G}$ and $\mathbb{H}$. $\widehat{\mathbb{F}}$ is strong torsion-free if and only if $\widehat{\mathbb{G}}$ and $\widehat{\mathbb{H}}$ are strong torsion-free.
	\end{pro}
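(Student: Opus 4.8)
The plan is to mirror, almost verbatim, the proof of Proposition \ref{pro.StrongTorsionQuantumsemi-direct}, exploiting the fusion-ring decomposition $Fus(\widehat{\mathbb{F}})=Fus(\widehat{\mathbb{G}})\otimes Fus(\widehat{\mathbb{H}})$ recorded just above the statement, together with the fact that $\widehat{\mathbb{G}}$ and $\widehat{\mathbb{H}}$ are \emph{divisible} discrete quantum subgroups of $\widehat{\mathbb{F}}$ (Remarks \ref{rem.QuantumSubgroupsDirectProduct}). The whole argument is formal once these two ingredients are in hand, so I do not expect any genuine computation.

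For the forward implication, I would argue purely by permanence: if $\widehat{\mathbb{F}}$ is strong torsion-free, then since $\widehat{\mathbb{G}}$ and $\widehat{\mathbb{H}}$ are divisible discrete quantum subgroups of $\widehat{\mathbb{F}}$, Remark \ref{rem.StrongTorsionFreeSubroups} immediately yields that both $\widehat{\mathbb{G}}$ and $\widehat{\mathbb{H}}$ are strong torsion-free. Nothing further is needed here.

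For the converse, suppose $\widehat{\mathbb{G}}$ and $\widehat{\mathbb{H}}$ are strong torsion-free; in particular they are torsion-free in the Meyer--Nest sense. By Remark \ref{rem.TorsionFiniteSubgroup}, neither $\widehat{\mathbb{G}}$ nor $\widehat{\mathbb{H}}$ can contain a non-trivial finite discrete quantum subgroup, and transporting this to the level of fusion rings shows that neither $Fus(\widehat{\mathbb{G}})$ nor $Fus(\widehat{\mathbb{H}})$ admits a non-trivial finite fusion subring. I would then apply the contrapositive of Theorem \ref{theo.TensorProductFusionRings}: two torsion-free fusion rings whose tensor product fails to be torsion-free must share non-trivial isomorphic finite fusion subrings; since ours have none, $Fus(\widehat{\mathbb{G}})\otimes Fus(\widehat{\mathbb{H}})=Fus(\widehat{\mathbb{F}})$ is forced to be torsion-free, i.e. $\widehat{\mathbb{F}}$ is strong torsion-free.

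The only delicate point, and hence the one place I would take care to phrase precisely, is the implication \emph{``torsion-free $\Rightarrow$ no non-trivial finite fusion subring''}, which rests on the correspondence between finite fusion subrings of $Fus(\widehat{\mathbb{G}})$ and finite discrete quantum subgroups of $\widehat{\mathbb{G}}$ combined with Remark \ref{rem.TorsionFiniteSubgroup}. Apart from this translation the proof is a direct application of the fusion-ring machinery, exactly as in the semi-direct case.
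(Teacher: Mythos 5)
Your proof is correct and follows essentially the same route as the paper, which simply transports the argument of Proposition \ref{pro.StrongTorsionQuantumsemi-direct}: divisibility of $\widehat{\mathbb{G}},\widehat{\mathbb{H}}<\widehat{\mathbb{F}}$ (Remarks \ref{rem.QuantumSubgroupsDirectProduct}) combined with Remark \ref{rem.StrongTorsionFreeSubroups} for the forward direction, and Remark \ref{rem.TorsionFiniteSubgroup} together with Theorem \ref{theo.TensorProductFusionRings} applied to $Fus(\widehat{\mathbb{F}})=Fus(\widehat{\mathbb{G}})\otimes Fus(\widehat{\mathbb{H}})$ for the converse. The ``delicate point'' you single out (no finite discrete quantum subgroups $\Rightarrow$ no non-trivial finite fusion subrings) is exactly the step the paper also uses, though it passes over it without comment.
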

	\begin{proof}
		Assume that $\widehat{\mathbb{G}}$ and $\widehat{\mathbb{H}}$ are strong torsion-free. Then they are in particular torsion-free. By Remark \ref{rem.TorsionFiniteSubgroup}, $\widehat{\mathbb{G}}$ and $\widehat{\mathbb{H}}$ can \emph{not} contain finite discrete quantum subgroups. Hence, $Fus(\widehat{\mathbb{G}})$ and $Fus(\widehat{\mathbb{H}})$ can \emph{not} contain finite fusion subrings. Theorem \ref{theo.TensorProductFusionRings} assures thus that $Fus(\widehat{\mathbb{G}})\otimes Fus(\widehat{\mathbb{H}})=Fus(\widehat{\mathbb{F}})$ is torsion-free. In other words, $\widehat{\mathbb{F}}$ is strong torsion-free. The converse is a consequence of \cite{YukiKenny} as explained in Remark \ref{rem.StrongTorsionFreeSubroups} because $\widehat{\mathbb{G}}$ and $\widehat{\mathbb{H}}$ are divisible in $\widehat{\mathbb{F}}$.
	\end{proof}
	
	It is interesting to study directly the torsion-freeness in the sense of Meyer-Nest, which has already been done in \cite{YukiKenny}.
	\begin{theo}[Y. Arano and K. De Commer, \cite{YukiKenny}]\label{theo.TorsionQuantumDirect}
		Let $\mathbb{F}:=\mathbb{G} \times \mathbb{H}$ be the quantum direct product. If $\widehat{\mathbb{G}}$ and $\widehat{\mathbb{H}}$ are torsion-free, then $\widehat{\mathbb{F}}$ is torsion-free.
	\end{theo}
	
	\begin{note}
		The converse of the preceding statement would be true whenever the torsion-freeness is preserved under divisible discrete quantum subgroups as conjectured in Section \ref{sec.Torsion}.
	\end{note}
	
\subsection{The Baum-Connes property}
	Let us adapt the notations from Section \ref{pro.ConesTensorProduct} for a quantum direct product $\mathbb{F}:=\mathbb{G} \times \mathbb{H}$. In order to formulate the quantum Baum-Connes property we assume that $\widehat{\mathbb{F}}$, $\widehat{\mathbb{G}}$ and $\widehat{\mathbb{H}}$ are all torsion-free.

	Consider the equivariant Kasparov categories associated to $\widehat{\mathbb{F}}$, $\widehat{\mathbb{G}}$ and $\widehat{\mathbb{H}}$, say $\mathscr{K}\mathscr{K}^{\widehat{\mathbb{F}}}$, $\mathscr{K}\mathscr{K}^{\widehat{\mathbb{G}}}$ and $\mathscr{K}\mathscr{K}^{\widehat{\mathbb{H}}}$, respectively; with canonical suspension functors denoted by $\Sigma$. Consider the usual complementary pair of localizing subcategories in $\mathscr{K}\mathscr{K}^{\widehat{\mathbb{F}}}$, $\mathscr{K}\mathscr{K}^{\widehat{\mathbb{G}}}$ and $\mathscr{K}\mathscr{K}^{\widehat{\mathbb{H}}}$, say $(\mathscr{L}_{\widehat{\mathbb{F}}}, \mathscr{N}_{\widehat{\mathbb{F}}})$, $(\mathscr{L}_{\widehat{\mathbb{G}}}, \mathscr{N}_{\widehat{\mathbb{G}}})$ and $(\mathscr{L}_{\widehat{\mathbb{H}}}, \mathscr{N}_{\widehat{\mathbb{H}}})$, respectively. The canonical triangulated functors associated to these complementary pairs will be denoted by $(L,N)$, $(L', N')$ and $(L'', N'')$, respectively. Next, consider the homological functors defining the \emph{quantum} Baum-Connes assembly maps for $\widehat{\mathbb{F}}$, $\widehat{\mathbb{G}}$ and $\widehat{\mathbb{H}}$. Namely,
	 $$
		\begin{array}{rccl}
			F:&\mathscr{K}\mathscr{K}^{\widehat{\mathbb{F}}}& \longrightarrow &\mathscr{A}b^{\mathbb{Z}/2}\\
			&(C,\delta) & \longmapsto &F(C):=K_{*}(\widehat{\mathbb{F}}\ltimes_{\delta, r} C)
		\end{array}
	$$
	$$
		\begin{array}{rccllrccl}
			F':&\mathscr{K}\mathscr{K}^{\widehat{\mathbb{G}}}& \longrightarrow &\mathscr{A}b^{\mathbb{Z}/2}&&F'':&\mathscr{K}\mathscr{K}^{\widehat{\mathbb{H}}}& \longrightarrow & \mathscr{A}b^{\mathbb{Z}/2}\\
			&(A,\alpha) & \longmapsto &F'(A):=K_{*}(\widehat{\mathbb{G}}\underset{\alpha, r}{\ltimes} A)&&&(B,\beta) & \longmapsto &F''(B):= K_{*}(\widehat{\mathbb{H}} \underset{\beta,r}{\ltimes} B)
		\end{array}
	$$
	
	The quantum assembly maps for $\widehat{\mathbb{F}}$, $\widehat{\mathbb{G}}$ and $\widehat{\mathbb{H}}$ are given by the following natural transformations $\eta^{\widehat{\mathbb{F}}}: \mathbb{L}F\longrightarrow F$, $\eta^{\widehat{\mathbb{G}}}: \mathbb{L}F'\longrightarrow F'$ and $\eta^{\widehat{\mathbb{H}}}: \mathbb{L}F''\longrightarrow F''$ (where that $\mathbb{L}F=F\circ L$, $\mathbb{L}F'= F'\circ L'$ and $\mathbb{L}F''= F''\circ L''$).
	
	Consider an object in $\mathscr{K}\mathscr{K}^{\widehat{\mathbb{G}}}\times \mathscr{K}\mathscr{K}^{\widehat{\mathbb{H}}}$, say $(A,\alpha)\times (B,\beta)\in Obj.(\mathscr{K}\mathscr{K}^{\widehat{\mathbb{G}}})\times Obj.(\mathscr{K}\mathscr{K}^{\widehat{\mathbb{H}}})$. The tensor product $A\otimes B$ is a $\widehat{\mathbb{F}}$-$C^*$-algebra with action $\alpha\otimes\beta$. Let $(A',\alpha')\times (B',\beta')\in Obj.(\mathscr{K}\mathscr{K}^{\widehat{\mathbb{G}}})\times Obj.(\mathscr{K}\mathscr{K}^{\widehat{\mathbb{H}}})$ an other object in $\mathscr{K}\mathscr{K}^{\widehat{\mathbb{G}}}\times \mathscr{K}\mathscr{K}^{\widehat{\mathbb{H}}}$. Let $\mathcal{X}:=((H,\delta_H),\pi,F)\in KK^{\widehat{\mathbb{G}}}(A,A')$ and $\mathcal{Y}:=((H',\delta_{H'}), \pi', F')\in KK^{\widehat{\mathbb{H}}}(B,B')$ be two Kasparov triples so that $\mathcal{X}\times \mathcal{Y}\in KK^{\widehat{\mathbb{G}}}(A,A')\times KK^{\widehat{\mathbb{H}}}(B,B')$ is a homomorphism between $(A,B)$ and $(A',B')$ in $\mathscr{K}\mathscr{K}^{\widehat{\mathbb{G}}}\times \mathscr{K}\mathscr{K}^{\widehat{\mathbb{H}}}$. Define the tensor product of these Kasparov triples in the following way
	$$\mathcal{Z}(\mathcal{X},\mathcal{Y}):=\tau^{B}_R(\mathcal{X})\otimes_{A'\otimes B}\tau^{A'}_L(\mathcal{Y})=:\mathcal{X}\otimes \mathcal{Y}\in KK(A\otimes B, A'\otimes B')\mbox{,}$$
	where $\tau^{B}_R(\mathcal{X}):=\mathcal{X}\otimes B$ and $\tau^{A'}_L(\mathcal{Y}):=A'\otimes \mathcal{Y}$ are the corresponding exterior tensor product Kasparov triples (see \cite{Blackadar} for more details). Moreover, since $\tau^B_R(\mathcal{X})\in KK^{\widehat{\mathbb{F}}}(A\otimes B,A'\otimes B)$ and $\tau^{A'}_L(\mathcal{Y})\in KK^{\widehat{\mathbb{F}}}(A'\otimes B,A'\otimes B')$, then $\mathcal{X}\otimes \mathcal{Y}$ is also equipped with an action of $\widehat{\mathbb{F}}$ given by Kasparov product.
	
	In other words, it is licit to consider the following functor:
	$$
		\begin{array}{rccl}
			\mathcal{Z}:&\mathscr{K}\mathscr{K}^{\widehat{\mathbb{G}}}\times \mathscr{K}\mathscr{K}^{\widehat{\mathbb{H}}}& \longrightarrow &\mathscr{K}\mathscr{K}^{\widehat{\mathbb{F}}}\\
			&(A,\alpha)\times (B,\beta) & \longmapsto &\mathcal{Z}(A, B):= (C:=A\otimes B, \delta:=\alpha\otimes \beta)
		\end{array}
	$$
	
	\begin{lem}\label{lem.FunctorPreservingLDirectProduct}
		The functor $\mathcal{Z}$ is such that $\mathcal{Z}(\mathscr{L}_{\widehat{\mathbb{G}}}\times \mathscr{L}_{\widehat{\mathbb{H}}})\subset \mathscr{L}_{\widehat{\mathbb{F}}}$ and $\mathcal{Z}(\mathscr{N}_{\widehat{\mathbb{G}}}\times \mathscr{N}_{\widehat{\mathbb{H}}})\subset \mathscr{N}_{\widehat{\mathbb{F}}}$.
		
		If $(A_0, \alpha_0)\in Obj(\mathscr{K}\mathscr{K}^{\widehat{\mathbb{G}}})$ is a given $\widehat{\mathbb{G}}$-$C^*$-algebra, the functor
		$$
		\begin{array}{rccl}
			{_{A_0}}\mathcal{Z}:&\mathscr{K}\mathscr{K}^{\widehat{\mathbb{H}}}& \longrightarrow &\mathscr{K}\mathscr{K}^{\widehat{\mathbb{F}}}\\
			&(B,\beta) & \longmapsto &{_{A_0}}\mathcal{Z}(B):= \mathcal{Z}(A_0, B)
		\end{array}
		$$
		is triangulated such that ${_{A_0}}\mathcal{Z}(\mathscr{N}_{\widehat{\mathbb{H}}})\subset \mathscr{N}_{\widehat{\mathbb{F}}}$.
		
	\end{lem}
	\begin{proof}
		Firstly, let us show that $\mathcal{Z}(\mathscr{L}_{\widehat{\mathbb{G}}}\times \mathscr{L}_{\widehat{\mathbb{H}}})\subset \mathscr{L}_{\widehat{\mathbb{F}}}$. Namely, since all our discrete quantum groups are supposed to be torsion-free, then we know that $\mathscr{L}_{\widehat{\mathbb{G}}}$, $\mathscr{L}_{\widehat{\mathbb{H}}}$ and $\mathscr{L}_{\widehat{\mathbb{F}}}$ are the localizing subcategories generated by the objects of the form $c_0(\widehat{\mathbb{G}})\otimes C_1$, $c_0(\widehat{\mathbb{H}})\otimes C_2$ and $c_0(\widehat{\mathbb{F}})\otimes C_3$ in $\mathscr{K}\mathscr{K}^{\widehat{\mathbb{G}}}$, $\mathscr{K}\mathscr{K}^{\widehat{\mathbb{H}}}$ and $\mathscr{K}\mathscr{K}^{\widehat{\mathbb{F}}}$, respectively where $C_1, C_2, C_3\in Obj(\mathscr{K}\mathscr{K})$. Recall as well that $c_0(\widehat{\mathbb{F}})=c_0(\widehat{\mathbb{G}})\otimes c_0(\widehat{\mathbb{H}})$ by virtue of the representation theory of $\mathbb{F}=\mathbb{G}\times\mathbb{H}$ (see Section \ref{sec.QuantumDirectProduct}). Hence we write
		\begin{equation*}
			\begin{split}
				\mathcal{Z}\big(c_0(\widehat{\mathbb{G}})\otimes C_1, c_0(\widehat{\mathbb{H}})\otimes C_2\big)&=c_0(\widehat{\mathbb{G}})\otimes C_1\otimes c_0(\widehat{\mathbb{H}})\otimes C_2\\
				&\cong c_0(\widehat{\mathbb{G}})\otimes c_0(\widehat{\mathbb{H}})\otimes C_1\otimes C_2= c_0(\widehat{\mathbb{F}})\otimes C_3\in Obj(\mathscr{L}_{\widehat{\mathbb{F}}})\mbox{,}
			\end{split}
		\end{equation*}
		where $C_3:=C_1\otimes C_2\in Obj(\mathscr{K}\mathscr{K})$. This shows that $\mathcal{Z}$ sends generators of $\mathscr{L}_{\widehat{\mathbb{G}}}\times \mathscr{L}_{\widehat{\mathbb{H}}}$ to generators of $\mathscr{L}_{\widehat{\mathbb{F}}}$, so $\mathcal{Z}$ sends generators of $\mathscr{L}_{\widehat{\mathbb{G}}}\times \mathscr{L}_{\widehat{\mathbb{H}}}$ to $\mathscr{L}_{\widehat{\mathbb{F}}}$. Denote by $\mathcal{S}$ the class of objects in $\mathscr{K}\mathscr{K}^{\widehat{\mathbb{F}}}$ of the form $\mathcal{Z}(\mbox{``generator of $\mathscr{L}_{\widehat{\mathbb{G}}}\times \mathscr{L}_{\widehat{\mathbb{H}}}$''})$. It is clear that $\mathcal{S}\subset \mathcal{Z}(\mathscr{L}_{\widehat{\mathbb{G}}}\times \mathscr{L}_{\widehat{\mathbb{H}}})$. Consider $\langle \mathcal{S} \rangle$ the localising subcategory generated by $\mathcal{S}$, that is, the smallest triangulated subcategory containing the objects of $\mathcal{S}$ and stable with respect to countable direct sums. $\mathscr{L}_{\widehat{\mathbb{F}}}$ is a triangulated subcategory containing the objects of $\mathcal{S}$ by the discussion above and stable with respect to countable direct sums by definition. Hence, by minimality, we have that $\langle \mathcal{S} \rangle \subset \mathscr{L}_{\widehat{\mathbb{F}}}$. Finally, since $\mathcal{Z}$ sends generators of $\mathscr{L}_{\widehat{\mathbb{G}}}\times \mathscr{L}_{\widehat{\mathbb{H}}}$ to generators of $\mathscr{L}_{\widehat{\mathbb{F}}}$, $\mathcal{Z}$ is compatible with countable direct sums and $\mathcal{Z}(\mathscr{L}_{\widehat{\mathbb{G}}}\times \mathscr{L}_{\widehat{\mathbb{H}}})$ is a subcategory in $\mathscr{K}\mathscr{K}^{\widehat{\mathbb{F}}}$ containing $\mathcal{S}$ but not triangulated (because $\mathcal{Z}$ is not a triangulated functor), it must be $\mathcal{Z}(\mathscr{L}_{\widehat{\mathbb{G}}}\times \mathscr{L}_{\widehat{\mathbb{H}}})\subset \langle \mathcal{S} \rangle$, which yields the claim.
		
		
		Secondly, let us show that $\mathcal{Z}(\mathscr{N}_{\widehat{\mathbb{G}}}\times \mathscr{N}_{\widehat{\mathbb{H}}})\subset \mathscr{N}_{\widehat{\mathbb{F}}}$. For this we have to notice that the restriction functor is obviously compatible with the tensor functor $\mathcal{Z}$. Given $A\in Obj(\mathscr{N}_{\widehat{\mathbb{G}}})$ and $B\in Obj(\mathscr{N}_{\widehat{\mathbb{H}}})$, we can write $Res^{\widehat{\mathbb{F}}}_{\mathbb{E}}\big(\mathcal{Z}(A,B)\big)=Res^{\widehat{\mathbb{F}}}_{\mathbb{E}}(A\otimes B)=Res^{\widehat{\mathbb{G}}}_{\mathbb{E}}(A)\otimes Res^{\widehat{\mathbb{H}}}_{\mathbb{E}}(B)\cong 0$, so that $\mathcal{Z}(A,B)\in Obj(\mathscr{N}_{\widehat{\mathbb{F}}})$.
		
		Next, fix a $\widehat{\mathbb{G}}$-$C^*$-algebra $(A_0, \alpha_0)\in Obj(\mathscr{K}\mathscr{K}^{\widehat{\mathbb{G}}})$ and consider the functor ${_{A_0}}\mathcal{Z}$ of the statement (which is well defined on homomorphisms in an analogous way as $\mathcal{Z}$ by using the exterior tensor product of Kasparov triples). In order to show that ${_{A_0}}\mathcal{Z}$ is triangulated, we are going to show that ${_{A_0}}\mathcal{Z}$ is compatible with the suspension functors of the corresponding Kasparov categories and that ${_{A_0}}\mathcal{Z}$ preserves mapping cone triangles.
		
		For the first claim, given $(B,\beta)\in Obj(\mathscr{K}\mathscr{K}^{\widehat{\mathbb{H}}})$ we have
		\begin{equation*}
			\begin{split}
				{_{A_0}}\mathcal{Z}(\Sigma(B))&={_{A_0}}\mathcal{Z}\big(C_0(\mathbb{R})\otimes B\big)=A_0\otimes C_0(\mathbb{R})\otimes B\\
				&\cong C_0(\mathbb{R})\otimes A_0\otimes B\overset{(1)}{\cong}\Sigma(A_0\otimes B)=\Sigma({_{A_0}}\mathcal{Z}(B))\mbox{,}
			\end{split}
		\end{equation*}
		where the identification $(1)$ is simply induced by the canonical identification $A_0\otimes C_0\big([0,1], B\big)\cong C_0\big([0,1], A_0\otimes B\big)$. Let us show that ${_{A_0}}\mathcal{Z}$ preserves mapping cone triangles. Consider a mapping cone triangle in $\mathscr{K}\mathscr{K}^{\widehat{\mathbb{H}}}$, say $\Sigma(B')\longrightarrow C_{\phi}\longrightarrow B\overset{\phi}{\longrightarrow} B'$, where $\phi: B\longrightarrow B'$ is a $\widehat{\mathbb{H}}$-equivariant $*$-homomorphism. Apply the functor ${_{A_0}}\mathcal{Z}$ so that we obtain the following diagram $\Sigma(A_0\otimes B')\longrightarrow A_0\otimes C_{\phi}\longrightarrow A_0\otimes B\overset{id\otimes\phi}{\longrightarrow} A_0\otimes B'$, where $A_0\otimes C_{\phi}\cong C_{id\otimes \phi}$ by virtue of Proposition \ref{pro.ConesTensorProduct}. Hence the above diagram is again a mapping cone triangle in $\mathscr{K}\mathscr{K}^{\widehat{\mathbb{F}}}$. Moreover, if now $B\in Obj(\mathscr{N}_{\widehat{\mathbb{H}}})$, then $Res^{\widehat{\mathbb{F}}}_{\mathbb{E}}\big({_{A_0}}\mathcal{Z}(B)\big)=Res^{\widehat{\mathbb{G}}}_{\mathbb{E}}(A_0)\otimes Res^{\widehat{\mathbb{H}}}_{\mathbb{E}}(B)\cong 0$, which implies that ${_{A_0}}\mathcal{Z}(B)\in \mathscr{N}_{\widehat{\mathbb{H}}}$. 
	\end{proof}
	
	\begin{lem}\label{lem.InvertibleElementDirectProduct}
		Let $\mathbb{F}=\mathbb{G}\times \mathbb{H}$ be a quantum direct product of compact quantum groups such that $\widehat{\mathbb{F}}$, $\widehat{\mathbb{G}}$ and $\widehat{\mathbb{H}}$ are a torsion-free discrete quantum groups.
		
		\begin{enumerate}[i)]
			\item For all $\widehat{\mathbb{G}}$-$C^*$-algebra $(A,\alpha)$ and all $\widehat{\mathbb{H}}$-$C^*$-algebra $(B,\beta)$ there exists a Kasparov triple
		$$\psi\in KK^{\widehat{\mathbb{F}}}\big(L'(A)\otimes L''(B), L(A\otimes B)\big)$$
		such that the following diagram is commutative
		\begin{equation}\label{eq.CommutativeDiagramDirectProduct}
		\begin{gathered}
			\xymatrix@C=20mm@!R=15mm{
				\mbox{$\widehat{\mathbb{G}}\underset{r}{\ltimes} L'(A)\otimes \widehat{\mathbb{H}}\underset{r}{\ltimes} L''(B)$}\ar[d]_{\mbox{$\widehat{\mathbb{G}}\underset{r}{\ltimes} u'\otimes \widehat{\mathbb{H}}\underset{r}{\ltimes} u''$}}\ar[r]^-{\mbox{$\Psi$}}&\mbox{$\widehat{\mathbb{F}}\underset{r}{\ltimes} L(A\otimes B)$}\ar[d]^{\mbox{$\widehat{\mathbb{F}}\underset{r}{\ltimes} u$}}\\
				\mbox{$\widehat{\mathbb{G}}\underset{r}{\ltimes} A\otimes \widehat{\mathbb{H}}\underset{r}{\ltimes} B$}\ar[r]^{\mbox{$\cong$}}&\mbox{$\widehat{\mathbb{F}}\underset{r}{\ltimes}(A\otimes B)$}}
		\end{gathered}
		\end{equation}
		where $\Psi:=\widehat{\mathbb{F}}\underset{r}{\ltimes}\psi$ and $u'$, $u''$, $u$ are the Dirac homomorphisms for $A$, $B$, $A\otimes B$, respectively.
			\item For all $\widehat{\mathbb{G}}$-$C^*$-algebra $(A_0,\alpha_0)\in \mathscr{L}_{\widehat{\mathbb{G}}}$ and all $\widehat{\mathbb{H}}$-$C^*$-algebra $(B,\beta)$ there exists an \emph{invertible} Kasparov triple
		$${_{A_0}}\psi\in KK^{\widehat{\mathbb{F}}}\big(A_0\otimes L''(B), L(A_0\otimes B)\big)$$
		such that the following diagram is commutative
		\begin{equation}\label{eq.CommutativeDiagramDirectProductbis}
		\begin{gathered}
			\xymatrix@C=20mm@!R=15mm{
				\mbox{$\widehat{\mathbb{G}}\underset{r}{\ltimes} A_0\otimes \widehat{\mathbb{H}}\underset{r}{\ltimes} L''(B)$}\ar[d]_{\mbox{$\widehat{\mathbb{F}}\underset{r}{\ltimes} {_{A_0}}\mathcal{Z}(u'')$}}\ar[r]^-{\mbox{$\underset{\sim}{{_{A_0}}\Psi}$}}&\mbox{$\widehat{\mathbb{F}}\underset{r}{\ltimes} L(A_0\otimes B)$}\ar[d]^{\mbox{$\widehat{\mathbb{F}}\underset{r}{\ltimes} u$}}\\
				\mbox{$\widehat{\mathbb{G}}\underset{r}{\ltimes} A_0\otimes \widehat{\mathbb{H}}\underset{r}{\ltimes} B$}\ar[r]^{\mbox{$\cong$}}&\mbox{$\widehat{\mathbb{F}}\underset{r}{\ltimes}(A_0\otimes B)$}}
		\end{gathered}
		\end{equation}
		where ${_{A_0}}\Psi:=\widehat{\mathbb{F}}\underset{r}{\ltimes}{_{A_0}}\psi$ and $u''$, $u$ are the Dirac homomorphism for $B$, $A_0\otimes B$, respectively.
		\end{enumerate}
	\end{lem}
	\begin{proof}
		First of all, we recall that for all $\widehat{\mathbb{G}}$-$C^*$-algebra $(A,\alpha)$ and all $\widehat{\mathbb{H}}$-$C^*$-algebra $(B,\beta)$ we have a canonical $*$-isomorphism $\widehat{\mathbb{F}}\underset{\delta,r}{\ltimes} (A\otimes B)\cong \widehat{\mathbb{G}}\underset{\alpha,r}{\ltimes} A\otimes \widehat{\mathbb{H}}\underset{\beta,r}{\ltimes} B$ by Proposition \ref{cor.DirectProductTensorProduct}.
		\begin{enumerate}[i)]
			\item Given a $\widehat{\mathbb{G}}$-$C^*$-algebra $(A,\alpha)$, consider the corresponding $(\mathscr{L}_{\widehat{\mathbb{G}}}, \mathscr{N}_{\widehat{\mathbb{G}}})$-triangle, say $\Sigma(N'(A))\longrightarrow L'(A)\overset{u'}{\longrightarrow} A\longrightarrow N'(A)$. Given a $\widehat{\mathbb{H}}$-$C^*$-algebra $(B,\beta)$, consider the corresponding $(\mathscr{L}_{\widehat{\mathbb{H}}}, \mathscr{N}_{\widehat{\mathbb{H}}})$-triangle, say $\Sigma(N''(B))\longrightarrow L''(B)\overset{u''}{\longrightarrow} B\longrightarrow N''(B)$. Consider the $(\mathscr{L}_{\widehat{\mathbb{F}}}, \mathscr{N}_{\widehat{\mathbb{F}}})$-triangle of the $\widehat{\mathbb{F}}$-$C^*$-algebra $\mathcal{Z}(A\otimes B)=A\otimes B$, say $\Sigma(N(A\otimes B))\longrightarrow L(A\otimes B)\overset{u}{\longrightarrow} A\otimes B\longrightarrow N(A\otimes B)$.
			
			Let us fix the object $\mathcal{Z}(L'(A), L''(B))=L'(A)\otimes L''(B)=:T\in Obj(\mathscr{K}\mathscr{K}^{\widehat{\mathbb{F}}})$ and take the long exact sequence associated to the above triangle with respect to the object $T$. Namely,
		\begin{equation*}
			\begin{split}
				\ldots&\rightarrow KK^{\widehat{\mathbb{F}}}(T,\Sigma(N(A\otimes B)))\rightarrow KK^{\widehat{\mathbb{F}}}(T, L(A\otimes B))\overset{(u)_{*}}{\rightarrow}\\
				&\rightarrow KK^{\widehat{\mathbb{F}}}(T, A\otimes B)\rightarrow KK^{\widehat{\mathbb{F}}}(T, N(A\otimes B))\rightarrow \ldots				
			\end{split}
		\end{equation*}
						
		Since $(L'(A), L''(B))\in \mathscr{L}_{\widehat{\mathbb{G}}}\times \mathscr{L}_{\widehat{\mathbb{H}}}$, then $T\in \mathscr{L}_{\widehat{\mathbb{F}}}$ by Lemma \ref{lem.FunctorPreservingLDirectProduct}. But, by definition of complementary pair, we have $\mathscr{L}_{\widehat{\mathbb{F}}}\subset \mathscr{N}^{\vdash}_{\widehat{\mathbb{F}}}$. In particular, we obtain $KK^{\widehat{\mathbb{F}}}(T,\Sigma(N(A\otimes B)))=(0)=KK^{\widehat{\mathbb{F}}}(T, N(A\otimes B))$. Hence the above long exact sequence yields the isomorphism $KK^{\widehat{\mathbb{F}}}(T, L(A\otimes B))\overset{(u)_{*}}{\cong} KK^{\widehat{\mathbb{F}}}(T, A\otimes B)$. Take $\psi:=(u)^{-1}_{*}(\mathcal{Z}(u', u''))$. Consequently, we have $u\circ \psi=\mathcal{Z}(u', u'')=u'\otimes u''$, by definition.
		
		If we put $\Psi:=\widehat{\mathbb{F}}\underset{r}{\ltimes}\psi: \widehat{\mathbb{F}}\underset{r}{\ltimes} (L'(A)\otimes L''(B))\longrightarrow \widehat{\mathbb{F}}\underset{r}{\ltimes} L(A\otimes B)$, then the functoriality of constructions and the definition of the element $\psi$ yields straightforwardly the diagram $(\ref{eq.CommutativeDiagramDirectProduct})$ of the statement.
			\item Given a $\widehat{\mathbb{G}}$-$C^*$-algebra $(A_0,\alpha_0)\in \mathscr{L}_{\widehat{\mathbb{G}}}$ and a $\widehat{\mathbb{H}}$-$C^*$-algebra $(B,\beta)$, consider the corresponding $(\mathscr{L}, \mathscr{N})$-triangles as above. 
			
			The same argument as in $(i)$ by replacing $L'(A)$ by $A_0$ yields the existence of a Kasparov triple ${_{A_0}}\psi\in KK^{\widehat{\mathbb{F}}}\big(A_0\otimes L''(B), L(A_0\otimes B)\big)$ such that diagram $(\ref{eq.CommutativeDiagramDirectProductbis})$ of the statement is commutative. Namely, put ${_{A_0}}\psi:=(u)^{-1}_{*}({_{A_0}}\mathcal{Z}(u''))$.
			
			Let us show that the Kasparov triple ${_{A_0}}\psi$ is invertible. If we apply the triangulated (by Lemma \ref{lem.FunctorPreservingLDirectProduct}) functor ${_{A_0}}\mathcal{Z}$ to the $(\mathscr{L}_{\widehat{\mathbb{H}}},\mathscr{N}_{\widehat{\mathbb{H}}})$-triangle of $B$, we get the following distinguished triangle in $\mathscr{K}\mathscr{K}^{\widehat{\mathbb{F}}}$ $\Sigma(A_0\otimes N''(B))\longrightarrow A_0\otimes L''(B)\overset{{_{A_0}}\mathcal{Z}(u'')}{\longrightarrow} A_0\otimes B\longrightarrow A_0\otimes N''(B)$, where $A_0\otimes L''(B)=\mathcal{Z}(A_0, L''(B))\in \mathscr{L}_{\widehat{\mathbb{F}}}$ because $A_0\in \mathscr{L}_{\widehat{\mathbb{G}}}$, $L''(B)\in \mathscr{L}_{\widehat{\mathbb{H}}}$ and we apply Lemma \ref{lem.FunctorPreservingLDirectProduct} and $A_0\otimes N''(B)\in \mathscr{N}_{\widehat{\mathbb{F}}}$ because ${_{A_0}}\mathcal{Z}(\mathscr{N}_{\widehat{\mathbb{H}}})\subset \mathscr{N}_{\widehat{\mathbb{F}}}$ by Lemma \ref{lem.FunctorPreservingLDirectProduct}. In other words, the above is a $(\mathscr{L}_{\widehat{\mathbb{F}}},\mathscr{N}_{\widehat{\mathbb{F}}})$-triangle for $A_0\otimes B$. Hence, by uniqueness of this kind of distinguished triangles, we have the following isomorphism of distinguished triangles in $\mathscr{K}\mathscr{K}^{\widehat{\mathbb{F}}}$,
			$$
			\xymatrix@!C=30mm@R=15mm{
				\mbox{$\Sigma(A_0\otimes L''(B))$}\ar[d]^{\mbox{$\wr$}}\ar[r]&\mbox{$A_0\otimes L''(B)$}\ar[r]^{\mbox{${_{A_0}}\mathcal{Z}(u'')$}}\ar[d]^{\mbox{$\wr\ {_{A_0}}\psi$}}&\mbox{$A_0\otimes B$}\ar[r]\ar@{=}[d]^{\mbox{$id$}}&\mbox{$ A_0\otimes N''(B))$}\ar[d]^{\mbox{$\wr$}}\\
				\mbox{$\Sigma'(N(A_0\otimes B)$}\ar[r]&\mbox{$L(A_0\otimes B)$}\ar[r]_{\mbox{$u$}}&\mbox{$A_0\otimes B$}\ar[r]&\mbox{$N(A_0\otimes B)$}}
		$$
		which yields in particular the invertibility of ${_{A_0}}\psi$ as claimed.
		\end{enumerate}
	\end{proof}
	\begin{theo}\label{theo.SBCDirectProduct}
		Let $\mathbb{F}=\mathbb{G}\times \mathbb{H}$ be a quantum direct product of compact quantum groups such that $\widehat{\mathbb{F}}$, $\widehat{\mathbb{G}}$ and $\widehat{\mathbb{H}}$ are a torsion-free discrete quantum groups.
		\begin{enumerate}[i)]
			\item If $\widehat{\mathbb{G}}$ and $\widehat{\mathbb{H}}$ satisfy the strong quantum Baum-Connes property, then $\widehat{\mathbb{F}}$ satisfies the quantum Baum-Connes property with coefficients in $A\otimes B$, for every $A\in Obj(\mathscr{K}\mathscr{K}^{\widehat{\mathbb{G}}})$ and $B\in Obj(\mathscr{K}\mathscr{K}^{\widehat{\mathbb{H}}})$.
			\item If $\widehat{\mathbb{F}}$ satisfies the \emph{strong} quantum Baum-Connes property, then $\widehat{\mathbb{G}}$ and $\widehat{\mathbb{H}}$ satisfy the \emph{strong} quantum Baum-Connes property.
			\item If $\widehat{\mathbb{F}}$ satisfies the quantum Baum-Connes property, then $\widehat{\mathbb{G}}$ and $\widehat{\mathbb{H}}$ satisfy the quantum Baum-Connes property with coefficients.
		\end{enumerate}
		
			
			
	\end{theo}
	\begin{proof}
		\begin{enumerate}[i)]
			\item Given $A\in Obj(\mathscr{K}\mathscr{K}^{\widehat{\mathbb{G}}})$ and $B\in Obj(\mathscr{K}\mathscr{K}^{\widehat{\mathbb{H}}})$ consider the commutative diagram $(\ref{eq.CommutativeDiagramDirectProduct})$ of the preceding lemma,
			\begin{equation}\label{eq.CommutativeDiagramDirectProductProof}
			\begin{gathered}
			\xymatrix@C=20mm@!R=15mm{
				\mbox{$\widehat{\mathbb{G}}\underset{r}{\ltimes} L'(A)\otimes \widehat{\mathbb{H}}\underset{r}{\ltimes} L''(B)$}\ar[d]_{\mbox{$\widehat{\mathbb{G}}\underset{r}{\ltimes} u'\otimes \widehat{\mathbb{H}}\underset{r}{\ltimes} u''$}}\ar[r]^-{\mbox{$\Psi$}}&\mbox{$\widehat{\mathbb{F}}\underset{r}{\ltimes} L(A\otimes B)$}\ar[d]^{\mbox{$\widehat{\mathbb{F}}\underset{r}{\ltimes} u$}}\\
				\mbox{$\widehat{\mathbb{G}}\underset{r}{\ltimes} A\otimes \widehat{\mathbb{H}}\underset{r}{\ltimes} B$}\ar[r]^{\mbox{$\cong$}}&\mbox{$\widehat{\mathbb{F}}\underset{r}{\ltimes}(A\otimes B)$}}
			\end{gathered}
			\end{equation}
			where $\Psi=\widehat{\mathbb{F}}\underset{r}{\ltimes}\psi$ with $\psi=(u)^{-1}_{*}(\mathcal{Z}(u', u''))$.
			
			Since $\widehat{\mathbb{G}}$ satisfies the \emph{strong} quantum Baum-Connes property by assumption, then any Dirac homomorphism for $A$ is an isomorphism, that is, $L'(A)\overset{u'}{\cong} A\in \mathscr{L}_{\widehat{\mathbb{G}}}$. In other words, $u'\in KK^{\widehat{\mathbb{G}}}(L'(A), A)$ is an invertible Kasparov triple. Accordingly, $\tau^{L''(B)}_R(u')\in KK^{\widehat{\mathbb{F}}}(L'(A)\otimes L''(B), A\otimes L''(B))$ is also an invertible Kasparov triple.
			
			Recall that by definition we have $\mathcal{Z}(u', u'')=u'\otimes u''=\tau^{L''(B)}_R(u')\underset{A_0\otimes L''(B)}{\otimes}\tau^{A}_L(u'')\in KK^{\widehat{\mathbb{F}}}(L'(A)\otimes L''(B), A\otimes B)$ and ${_{A}}\mathcal{Z}(u'')=\tau^{A}_L(u'')\in KK^{\widehat{\mathbb{F}}}(A\otimes L''(B), A\otimes B)$.
			
			These two elements can be identified via the Kasparov multiplication $\tau^{L''(B)}_R(u')\underset{A\otimes L''(B)}{\otimes}(\ \cdot\ )$. In other words, the element $\psi$ can be identified to the element ${_{A}}\psi$ via this Kasparov multiplication. The latter is invertible by $(ii)$ of Lemma \ref{lem.InvertibleElementDirectProduct}, which yields that $\Psi$ in diagram $(\ref{eq.CommutativeDiagramDirectProductProof})$ is invertible as well.
			
			Next, since $\widehat{\mathbb{H}}$ satisfies the \emph{strong} quantum Baum-Connes property by assumption, then any Dirac homomorphism for $B$ is an isomorphism, that is, $L''(B)\overset{u''}{\cong} B\in \mathscr{L}_{\widehat{\mathbb{H}}}$. Hence, the element $\widehat{\mathbb{G}}\underset{r}{\ltimes} u'\otimes \widehat{\mathbb{H}}\underset{r}{\ltimes} u''$ of diagram $(\ref{eq.CommutativeDiagramDirectProductProof})$ is invertible. The commutativity of $(\ref{eq.CommutativeDiagramDirectProductProof})$ yields that $\widehat{\mathbb{F}}\underset{r}{\ltimes} u$ is an isomorphism in $\mathscr{K}\mathscr{K}^{\widehat{\mathbb{F}}}$, which implies that the assembly map $\eta^{\widehat{\mathbb{F}}}_{A\otimes B}$ is invertible, that is, $\widehat{\mathbb{F}}$ satisfies the quantum Baum-Connes property with coefficients in $A\otimes B$.
			
			\item We have just to recall that $\widehat{\mathbb{G}}$ and $\widehat{\mathbb{H}}$ are divisible torsion-free discrete quantum subgroups of $\widehat{\mathbb{F}}$ as explained in Remark \ref{rem.QuantumSubgroupsDirectProduct}. Therefore, Remark \ref{rem.BCDivisibleSubgroups} yields the assertion.
			\item In this case we apply Proposition \ref{pro.BCDivisibleQuantumSubgroups}.
		\end{enumerate}
	\end{proof}
	\begin{rem}
		It is worth mentioning the following. The element $\psi$ constructed in $(i)$ of Lemma \ref{lem.InvertibleElementDirectProduct} is such that
		\begin{equation*}
		\begin{gathered}
			\xymatrix@C=20mm@!R=15mm{
				\mbox{$L'(A)\otimes L''(B)$}\ar[d]_{\mbox{$u'\otimes u''$}}\ar[r]^-{\mbox{$\psi$}}&\mbox{$ L(A\otimes B)$}\ar[d]^{\mbox{$ u$}}\\
				\mbox{$A\otimes B$}\ar@{=}[r]&\mbox{$A\otimes B$}}
		\end{gathered}
		\end{equation*}
		
		The argument followed in $(i)$ of the preceding theorem yields actually that both $\psi$ and $u'\otimes u''$ are isomorphisms, which implies that $u$ is also an isomorphism by the commutativity of the above diagram. In other words, we have proved that the $\widehat{\mathbb{F}}$-$C^*$-algebras of the form $A\otimes B$, where $A$ is a $\widehat{\mathbb{G}}$-$C^*$-algebra and $B$ is a $\widehat{\mathbb{H}}$-$C^*$-algebra, are actually in the subcategory $\mathscr{L}_{\widehat{\mathbb{F}}}$, which yields of course the conclusion given in $(i)$ of the preceding theorem.
		
		Taking crossed products in the preceding arguments has been done just for convenience of the presentation in order to make appear more clearly the corresponding assembly maps.
	\end{rem}
	
	\bigskip
	The above theorem yields immediately the connexion of the \emph{usual} quantum Baum-Connes property for $\widehat{\mathbb{F}}=\widehat{\mathbb{G}\times\mathbb{H}}$ with the \emph{K\"{u}nneth formula} as announced in the introduction. Let $A$ be a $C^*$-algebra, we say that $A$ satisfies the K\"{u}nneth formula if for every $C^*$-algebra $B$ with free abelian $K$-group $K_*(B)$, the canonical homomorphism $K_*(A)\otimes K_*(B)\longrightarrow K_*(A\otimes B)$ is an isomorphism. Observe that this homomorphism is natural in $A$ and $B$ and it can be described in terms of the Kasparov product. We refer to Section $23$ of \cite{Blackadar} for more details.
	\begin{cor}\label{theo.BCDirectProduct}
		Let $\mathbb{F}=\mathbb{G}\times \mathbb{H}$ be a quantum direct product of compact quantum groups such that $\widehat{\mathbb{F}}$, $\widehat{\mathbb{G}}$ and $\widehat{\mathbb{H}}$ are torsion-free discrete quantum groups.
		
		For all $\widehat{\mathbb{G}}$-$C^*$-algebra $(A,\alpha)$ and all $\widehat{\mathbb{H}}$-$C^*$-algebra $(B,\beta)$ the following diagram is commutative
			\begin{equation*}
				\xymatrix@!C=70mm@R=20mm{
					\mbox{$K_*\big(\widehat{\mathbb{G}}\underset{r}{\ltimes} L'(A)\otimes \widehat{\mathbb{H}}\underset{r}{\ltimes} L''(B)\big)$}\ar[d]_{\mbox{$K_*\big(\widehat{\mathbb{G}}\underset{r}{\ltimes} u'\otimes \widehat{\mathbb{H}}\underset{r}{\ltimes} u''\big)$}}\ar[r]^{\mbox{$K_*(\Psi)$}}&\mbox{$\mathbb{L}F(A\otimes B)$}\ar[d]^{\mbox{$\eta^{\widehat{\mathbb{F}}}_{A\otimes B}$}}\\
					\mbox{$K_*\big(\widehat{\mathbb{G}}\underset{\alpha,r}{\ltimes} A\otimes \widehat{\mathbb{H}}\underset{\beta,r}{\ltimes} B\big)$}\ar[r]^{\mbox{$\cong$}}&\mbox{$F(A\otimes B)$}}
			\end{equation*}
		
		Denote by $\mathcal{N}$ the class of $C^*$-algebras satisfying the K\"{u}nneth formula.
		\begin{enumerate}[i)]
			\item If either $\widehat{\mathbb{G}}$ satisfies the strong Baum-Connes property, $\widehat{\mathbb{H}}$ satisfies the Baum-Connes property with coefficients in $B$ or $\widehat{\mathbb{H}}$ satisfies the strong Baum-Connes property, $\widehat{\mathbb{G}}$ satisfies the Baum-Connes property with coefficients in $A$; either $\widehat{\mathbb{G}}\underset{\alpha,r}{\ltimes} A$ or $\widehat{\mathbb{H}}\underset{\beta,r}{\ltimes} B$ belong to the class $\mathcal{N}$ and either $\widehat{\mathbb{G}}\underset{\alpha,r}{\ltimes} L'(A)$ (and $K_*(\widehat{\mathbb{H}}\underset{\beta,r}{\ltimes} L''(B))$ is free abelian) or $\widehat{\mathbb{H}}\underset{\beta,r}{\ltimes} L''(B)$ (and $K_*(\widehat{\mathbb{G}}\underset{\alpha,r}{\ltimes} L'(A))$ is free abelian) belong to the class $\mathcal{N}$, then $\widehat{\mathbb{F}}$ satisfies the Baum-Connes property with coefficients in $A\otimes B$.
			\item If $\widehat{\mathbb{G}}$ satisfies the strong Baum-Connes property, $\widehat{\mathbb{H}}$ satisfies the Baum-Connes property with coefficients in $\mathbb{C}$, either $C_r(\mathbb{G})$ or $C_r(\mathbb{H})$ belong to the class $\mathcal{N}$ and either $\widehat{\mathbb{G}}\underset{r}{\ltimes} L'(\mathbb{C})$ (and $K_*(\widehat{\mathbb{H}}\underset{\beta,r}{\ltimes} L''(\mathbb{C}))$ is free abelian) or $\widehat{\mathbb{H}}\underset{r}{\ltimes} L''(\mathbb{C})$ (and $K_*(\widehat{\mathbb{G}}\underset{\beta,r}{\ltimes} L'(\mathbb{C}))$ is free abelian) belong to the class $\mathcal{N}$, then $\widehat{\mathbb{F}}$ satisfies the Baum-Connes property with coefficients in $\mathbb{C}$.
		\end{enumerate}
	\end{cor}
	\begin{proof}
		The commutative diagram of the statement is obtained by simply applying the functor $K_*(\cdot)$ to diagram (\ref{eq.CommutativeDiagramDirectProduct}) from Lemma \ref{lem.InvertibleElementDirectProduct}.
		
		\begin{enumerate}[i)]
			\item Let $A$ be a $\widehat{\mathbb{G}}$-$C^*$-algebra and $B$ a $\widehat{H}$-$C^*$-algebra. Assume that $\widehat{\mathbb{G}}$ satisfies the \emph{strong} Baum-Connes property, $\widehat{\mathbb{H}}$ satisfies the Baum-Connes property with coefficients in $B$, $\widehat{\mathbb{G}}\underset{\alpha,r}{\ltimes} A\in\mathcal{N}$, $\widehat{\mathbb{G}}\underset{\alpha,r}{\ltimes} L'(A)\in\mathcal{N}$ and $K_*(\widehat{\mathbb{H}}\underset{\beta,r}{\ltimes} L''(B))$ is free abelian.
				
				The last condition guarantees that $K_*(\widehat{\mathbb{H}}\underset{\beta,r}{\ltimes} B)$ is free abelian too because the Dirac homomorphism for $B$, $L''(B)\overset{u''}{\longrightarrow} B$, induces a group homomorphism $K_*(\widehat{\mathbb{H}}\underset{\beta,r}{\ltimes} L''(B))\longrightarrow K_*(\widehat{\mathbb{H}}\underset{\beta,r}{\ltimes} B)$ by functoriality. Hence, by K\"{u}nneth formula we have natural isomorphisms $K_*\big(\widehat{\mathbb{G}}\underset{r}{\ltimes} L'(A)\otimes \widehat{\mathbb{H}}\underset{r}{\ltimes} L''(B)\big)\cong K_*\big(\widehat{\mathbb{G}}\underset{r}{\ltimes} L'(A))\otimes K_*(\widehat{\mathbb{H}}\underset{r}{\ltimes} L''(B)\big)$ and $K_*\big(\widehat{\mathbb{G}}\underset{r}{\ltimes} A\otimes \widehat{\mathbb{H}}\underset{r}{\ltimes} B\big)\cong K_*\big(\widehat{\mathbb{G}}\underset{r}{\ltimes} A)\otimes K_*(\widehat{\mathbb{H}}\underset{r}{\ltimes} B\big)$, which allows to write the commutative diagram of the statement as follows
				\begin{equation*}
				\xymatrix@!C=70mm@R=20mm{
					\mbox{$K_*\big(\widehat{\mathbb{G}}\underset{r}{\ltimes} L'(A))\otimes K_*(\widehat{\mathbb{H}}\underset{r}{\ltimes} L''(B)\big)$}\ar[d]_{\mbox{$\eta^{\widehat{\mathbb{G}}}_{A}\otimes \eta^{\widehat{\mathbb{H}}}_{B}$}}\ar[r]^{\mbox{$K_*(\Psi)$}}&\mbox{$\mathbb{L}F(A\otimes B)$}\ar[d]^{\mbox{$\eta^{\widehat{\mathbb{F}}}_{A\otimes B}$}}\\
					\mbox{$K_*\big(\widehat{\mathbb{G}}\underset{\alpha,r}{\ltimes} A)\otimes K_*( \widehat{\mathbb{H}}\underset{\beta,r}{\ltimes} B\big)$}\ar[r]^{\mbox{$\cong$}}&\mbox{$F(A\otimes B)$}}
			\end{equation*}
			
			Since $\widehat{\mathbb{G}}$ satisfies the \emph{strong} Baum-Connes property, it satisfies the Baum-Connes property with coefficients in $A$. $\widehat{\mathbb{H}}$ satisfies the Baum-Connes property with coefficients in $B$ by assumption. Hence $\eta^{\widehat{\mathbb{G}}}_{A}$ and $\eta^{\widehat{\mathbb{H}}}_{B}$ are isomorphisms. Since $\widehat{\mathbb{G}}$ satisfies the \emph{strong} Baum-Connes property, the same argument as in Theorem \ref{theo.SBCDirectProduct} shows that $\Psi$ is invertible, so $K_*(\Psi)$ of the above diagram is an isomorphism. We conclude that $\eta^{\widehat{\mathbb{F}}}_{A\otimes B}$ is an isomorphism by commutativity of the above diagram, which yields the claim. 
			\item This is a particular case of $(i)$.
		\end{enumerate}
	\end{proof}
	\begin{note}
		In order to obtain a more optimal result concerning the Baum-Connes property for a quantum direct product, we would have to carry out a detailed study about the K\"{u}nneth formula in the \emph{equivariant quantum setting} (following and inspired by \cite{ChabertEchterhoffOyono}). In particular, we would like to find sufficient conditions to a crossed product to belong to the class $\mathcal{N}$.
	\end{note}

\subsection{$K$-amenablity property}
	As in the quantum semi-direct product case, we study the $K$-amenability of $\widehat{\mathbb{F}}$ in terms of the $K$-amenability of $\widehat{\mathbb{G}}$ and $\widehat{\mathbb{H}}$. Namely, we get the following
	
	\begin{theo}\label{theo.KAmenabilityQDirectProduct}
			Let $\mathbb{F}=\mathbb{G}\times \mathbb{H}$ be a quantum direct product of compact quantum groups. Then $\mathbb{F}$ is co-$K$-amenable if and only if $\mathbb{G}$ and $\mathbb{H}$ are co-$K$-amenable.
		\end{theo}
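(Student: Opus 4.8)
The plan is to follow the proof of Theorem \ref{theo.KAmenability} closely, replacing the transition group $\Gamma$ by one of the discrete quantum factors acting \emph{trivially}. Throughout I use that $C_m(\mathbb{F})=C_m(\mathbb{G})\otimes_{max.}C_m(\mathbb{H})$ by definition, while $C_r(\mathbb{F})\cong C_r(\mathbb{G})\otimes C_r(\mathbb{H})$ (minimal tensor product), which follows from Corollary \ref{cor.DirectProductTensorProduct} taking $A=B=\mathbb{C}$. Recall that $\mathbb{F}$ is co-$K$-amenable precisely when the canonical surjection $\tau_{\mathbb{F}}:C_m(\mathbb{F})\twoheadrightarrow C_r(\mathbb{F})$ induces a $KK$-equivalence, equivalently when there is $\alpha_{\mathbb{F}}\in KK(C_r(\mathbb{F}),\mathbb{C})$ with $[\tau_{\mathbb{F}}]\underset{C_r(\mathbb{F})}{\otimes}\alpha_{\mathbb{F}}=[\varepsilon_{\mathbb{F}}]$.

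For the direct implication I would argue exactly as in Theorem \ref{theo.KAmenability}. By Remarks \ref{rem.QuantumSubgroupsDirectProduct}, $\widehat{\mathbb{G}}$ and $\widehat{\mathbb{H}}$ are (divisible) discrete quantum subgroups of $\widehat{\mathbb{F}}$, with canonical inclusions $\iota^r_{\mathbb{G}}:C_r(\mathbb{G})\hookrightarrow C_r(\mathbb{F})$ and $\iota^m_{\mathbb{G}}:C_m(\mathbb{G})\hookrightarrow C_m(\mathbb{F})$ (namely $a\mapsto a\otimes 1$), and their $\mathbb{H}$-analogues; these satisfy the intertwining relations $\tau_{\mathbb{F}}\circ\iota^m_{\mathbb{G}}=\iota^r_{\mathbb{G}}\circ\tau_{\mathbb{G}}$ and $\varepsilon_{\mathbb{F}}\circ\iota^m_{\mathbb{G}}=\varepsilon_{\mathbb{G}}$ (and symmetrically for $\mathbb{H}$), just as in Remarks \ref{rem.QuantumSubgroups}. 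Setting $\alpha_{\mathbb{G}}:=[\iota^r_{\mathbb{G}}]\underset{C_r(\mathbb{F})}{\otimes}\alpha_{\mathbb{F}}$ and using associativity of the Kasparov product together with these relations yields $[\tau_{\mathbb{G}}]\underset{C_r(\mathbb{G})}{\otimes}\alpha_{\mathbb{G}}=[\varepsilon_{\mathbb{G}}]$, so $\mathbb{G}$ is co-$K$-amenable; the same computation with $\iota_{\mathbb{H}}$ gives co-$K$-amenability of $\mathbb{H}$.

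For the converse, suppose $\mathbb{G}$ and $\mathbb{H}$ are co-$K$-amenable, i.e. $\widehat{\mathbb{G}}$ and $\widehat{\mathbb{H}}$ are $K$-amenable. The key observation is that, for the trivial action, the full (resp. reduced) crossed product is the corresponding tensor product: $\widehat{\mathbb{G}}\underset{m}{\ltimes}C_m(\mathbb{H})\cong C_m(\mathbb{G})\otimes_{max.}C_m(\mathbb{H})=C_m(\mathbb{F})$, and by Corollary \ref{pro.CrossedProductTensorProduct} $\widehat{\mathbb{G}}\underset{r}{\ltimes}C_r(\mathbb{H})\cong C_r(\mathbb{G})\otimes C_r(\mathbb{H})=C_r(\mathbb{F})$. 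Then $\tau_{\mathbb{F}}$ factors as
$$\widehat{\mathbb{G}}\underset{m}{\ltimes}C_m(\mathbb{H})\overset{p}{\longrightarrow}\widehat{\mathbb{G}}\underset{r}{\ltimes}C_m(\mathbb{H})\overset{id\ltimes\tau_{\mathbb{H}}}{\longrightarrow}\widehat{\mathbb{G}}\underset{r}{\ltimes}C_r(\mathbb{H})\mbox{,}$$
where $p$ is the full-to-reduced crossed product surjection and $id\ltimes\tau_{\mathbb{H}}$ is the reduced descent of $\tau_{\mathbb{H}}$, which is $\widehat{\mathbb{G}}$-equivariant for the trivial action. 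Since $\widehat{\mathbb{G}}$ is $K$-amenable, the quantum analogue of Cuntz's theorem (the quantum counterpart of Theorem $2.1$ in \cite{CuntzKmoyenabilite}) shows that $p$ induces a $KK$-equivalence; since $\mathbb{H}$ is co-$K$-amenable, $[\tau_{\mathbb{H}}]$ is an equivariant $KK$-equivalence, and its descent $[id\ltimes\tau_{\mathbb{H}}]=j_{\widehat{\mathbb{G}}}([\tau_{\mathbb{H}}])$ is invertible because the descent functor is compatible with the Kasparov product (see \cite{VergniouxThesis} or \cite{SkandalisUnitaries}). Hence $[\tau_{\mathbb{F}}]$, being the Kasparov product of two invertible elements, is invertible, so $\mathbb{F}$ is co-$K$-amenable.

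The hard part is precisely the appearance of the \emph{maximal} tensor product in $C_m(\mathbb{F})$: one cannot simply form the exterior Kasparov product of $[\tau_{\mathbb{G}}]$ and $[\tau_{\mathbb{H}}]$, because $\otimes_{max.}$ does not preserve $KK$-equivalences in general (the factors need not be amenable, only co-$K$-amenable). Realizing $C_m(\mathbb{F})$ as a full crossed product by a trivial action of one factor and invoking the quantum Cuntz characterization there, while handling the other factor by equivariant descent, is what circumvents this difficulty; the delicate point is therefore the availability of the crossed-product characterization of $K$-amenability in the discrete quantum setting. Note finally that, as in the $K$-amenability statements for the semi-direct and bicrossed products, no torsion-freeness hypothesis on $\widehat{\mathbb{F}}$ is needed anywhere in this argument.
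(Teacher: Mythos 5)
Your forward direction is exactly the paper's argument (restriction of $\alpha_{\mathbb{F}}$ along the canonical inclusions $\iota^r_{\mathbb{G}}$, $\iota^r_{\mathbb{H}}$), but your converse takes a genuinely different route. The paper never touches crossed products here: it factors $\tau_{\mathbb{F}}$ through the canonical surjection $\pi: C_m(\mathbb{G})\otimes_{max.}C_m(\mathbb{H})\twoheadrightarrow C_m(\mathbb{G})\otimes C_m(\mathbb{H})$ onto the \emph{minimal} tensor product, so that $[\tau_{\mathbb{F}}]=\pi^*\big([\tau_{\mathbb{G}}\otimes\tau_{\mathbb{H}}]\big)$ and $[\varepsilon_{\mathbb{F}}]=\pi^*\big([\varepsilon_{\mathbb{G}}\otimes\varepsilon_{\mathbb{H}}]\big)$, then defines $\alpha_{\mathbb{F}}:=\tau^{C_r(\mathbb{H})}_R(\alpha_{\mathbb{G}})\underset{C_r(\mathbb{H})}{\otimes}\tau^{\mathbb{C}}_L(\alpha_{\mathbb{H}})$ as an exterior Kasparov product (all exterior products being formed over minimal tensor products, since $C_r(\mathbb{F})\cong C_r(\mathbb{G})\otimes C_r(\mathbb{H})$) and verifies $[\tau_{\mathbb{F}}]\underset{C_r(\mathbb{F})}{\otimes}\alpha_{\mathbb{F}}=[\varepsilon_{\mathbb{F}}]$ by elementary functoriality of exterior and Kasparov products; the $\otimes_{max.}$ obstruction you worry about is absorbed into the single $*$-homomorphism $\pi$ and never needs to be inverted. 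Your route instead mimics Theorem \ref{theo.KAmenability}: realize $C_m(\mathbb{F})$ as the full crossed product of the trivial $\widehat{\mathbb{G}}$-action on $C_m(\mathbb{H})$, factor $\tau_{\mathbb{F}}=(id\ltimes\tau_{\mathbb{H}})\circ p$, and handle $p$ by the Cuntz characterization and $id\ltimes\tau_{\mathbb{H}}$ by descent. This is correct, and it has the merit of treating the direct product uniformly with the semi-direct and bicrossed cases; the trivial-action setting even makes the equivariant invertibility of $[\tau_{\mathbb{H}}]$ cleaner than the corresponding step in Theorem \ref{theo.KAmenability}, where the action is nontrivial. But it is costlier in external inputs, as you yourself flag: you need the crossed-product characterization of $K$-amenability for the \emph{discrete quantum group} $\widehat{\mathbb{G}}$ --- Theorem 2.1 in \cite{CuntzKmoyenabilite} covers only classical discrete groups, so you must invoke Vergnioux's quantum analogue (available in \cite{VergniouxThesis}) --- and you also need full crossed products by discrete quantum groups together with the identification $\widehat{\mathbb{G}}\underset{m}{\ltimes}B\cong C_m(\mathbb{G})\otimes_{max.}B$ for trivial actions, none of which the paper develops (Theorem \ref{theo.QuantumReducedCrossedProduct} is purely reduced). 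In short, the paper's proof buys self-containedness through elementary KK-computations, while yours buys structural uniformity across the three product constructions at the price of these heavier black boxes.
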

		\begin{proof}
			Assume that $\mathbb{F}$ is co-$K$-amenable. This means that there exists an element $\alpha_{\mathbb{F}}\in KK(C_r(\mathbb{F}), \mathbb{C})$ such that $[\tau_{\mathbb{F}}]\underset{C_r(\mathbb{F})}{\otimes} \alpha_{\mathbb{F}}=[\varepsilon_{\mathbb{F}}]\in KK(C_m(\mathbb{F}),\mathbb{C})$, where $\tau_{\mathbb{F}}: C_m(\mathbb{F})\twoheadrightarrow C_r(\mathbb{F})$ is the canonical surjection and $\varepsilon_{\mathbb{F}}:Pol(\mathbb{F})\longrightarrow \mathbb{C}$ is the co-unit of $\mathbb{F}$ whose extension to $C_m(\mathbb{F})$ is still denoted by $\varepsilon_{\mathbb{F}}$.
			
			By virtue of Remark \ref{rem.QuantumSubgroupsDirectProduct} we know that $\widehat{\mathbb{G}}$ and $\widehat{\mathbb{H}}$ are discrete quantum subgroups of $\widehat{\mathbb{F}}$ via the canonical injections $\iota^r_{\mathbb{G}}: C_r(\mathbb{G})\hookrightarrow C_r(\mathbb{F})$ and $\iota^r_{\mathbb{H}}: C_r(\mathbb{H})\hookrightarrow C_r(\mathbb{F})$. In this situation it is well known that $\mathbb{G}$ and $\mathbb{H}$ are co-$K$-amenable with elements $\alpha_{\mathbb{G}}:=[\iota^r_{\mathbb{G}}]\underset{C_r(\mathbb{F})}{\otimes}\alpha_{\mathbb{F}}\in KK(C_r(\mathbb{G}), \mathbb{C})\mbox{ and } \alpha_{\mathbb{H}}:=[\iota^r_{\mathbb{H}}]\underset{C_r(\mathbb{F})}{\otimes}\alpha_{\mathbb{F}}\in KK(C_r(\mathbb{H}), \mathbb{C})$.
			
			Conversely, assume that both $\mathbb{G}$ and $\mathbb{H}$ are co-$K$-amenable. This means that there exist elements $\alpha_{\mathbb{G}}\in KK(C_r(\mathbb{G}), \mathbb{C})$ and $\alpha_{\mathbb{H}}\in KK(C_r(\mathbb{H}), \mathbb{C})$ such that $[\tau_{\mathbb{G}}]\underset{C_r(\mathbb{G})}{\otimes}\alpha_{\mathbb{G}}=[\varepsilon_{\mathbb{G}}]\mbox{ and } [\tau_{\mathbb{H}}]\underset{C_r(\mathbb{H})}{\otimes}\alpha_{\mathbb{H}}=[\varepsilon_{\mathbb{H}}]$, where $\tau_{\mathbb{G}}: C_m(\mathbb{G})\twoheadrightarrow C_r(\mathbb{G})$, $\tau_{\mathbb{H}}: C_m(\mathbb{H})\twoheadrightarrow C_r(\mathbb{H})$ are the canonical surjections and $\varepsilon_{\mathbb{G}}:Pol(\mathbb{G})\longrightarrow \mathbb{C}$, $\varepsilon_{\mathbb{H}}:Pol(\mathbb{H})\longrightarrow \mathbb{C}$ are the co-units of $\mathbb{G}$ and $\mathbb{H}$, respectively whose extensions to $C_m(\mathbb{G})$ and $C_m(\mathbb{H})$ are still denoted by $\varepsilon_{\mathbb{G}}$ and $\varepsilon_{\mathbb{H}}$, respectively.
			
			By using the canonical injections $\iota^r_{\mathbb{G}}: C_r(\mathbb{G})\hookrightarrow C_r(\mathbb{F})$ and $\iota^r_{\mathbb{H}}: C_r(\mathbb{H})\hookrightarrow C_r(\mathbb{F})$, we observe that $\tau_{\mathbb{F}}=\tau_{\mathbb{G}}\times\tau_{\mathbb{H}}$ by universal property of the maximal tensor product. We have as well that $\varepsilon_{\mathbb{F}}=\varepsilon_{\mathbb{G}}\times\varepsilon_{\mathbb{H}}$. If $\pi:C_m(\mathbb{G})\underset{\max}{\otimes} C_m(\mathbb{H})\twoheadrightarrow C_m(\mathbb{G})\otimes C_m(\mathbb{H})$ denotes the canonical surjection, then by universal property of the maximal tensor product we have the following commutative diagrams
			\begin{equation*}
			\begin{gathered}
				\xymatrix@!C=22mm@R=12mm{
					\mbox{$C_m(\mathbb{F})$}\ar@{->>}[r]^{\mbox{$\tau_{\mathbb{F}}$}}\ar@{->>}[d]_{\mbox{$\pi$}}&\mbox{$C_r(\mathbb{F})$}&\mbox{$C_m(\mathbb{F})$}\ar@{->>}[d]_{\mbox{$\pi$}}\ar[r]^{\mbox{$\varepsilon_{\mathbb{F}}$}}&\mbox{$\mathbb{C}$}\\
				\mbox{$C_m(\mathbb{G})\otimes C_m(\mathbb{H})$}\ar[ur]_{\mbox{$\tau_{\mathbb{G}}\otimes \tau_{\mathbb{H}}$}}&&\mbox{$C_m(\mathbb{G})\otimes C_m(\mathbb{H})$}\ar[ur]_{\mbox{$\varepsilon_{\mathbb{G}}\otimes\varepsilon_{\mathbb{H}}$}}}
			\end{gathered}
			\end{equation*}
			where $\tau_{\mathbb{G}}\otimes\tau_{\mathbb{H}}:C_m(\mathbb{G})\otimes C_m(\mathbb{H})\twoheadrightarrow C_r(\mathbb{G})\otimes C_r(\mathbb{H})$ is the tensor product of the canonical surjections $\tau_{\mathbb{G}}$ and $\tau_{\mathbb{H}}$ and $\varepsilon_{\mathbb{G}}\otimes\varepsilon_{\mathbb{H}}: C_m(\mathbb{G})\otimes C_m(\mathbb{H})\longrightarrow \mathbb{C}$ is the tensor product of the co-units $\varepsilon_{\mathbb{G}}$ and $\varepsilon_{\mathbb{H}}$.
			
			In this way, the canonical surjection $\tau_{\mathbb{F}}: C_m(\mathbb{F})\twoheadrightarrow C_r(\mathbb{F})$ and the co-unit $\varepsilon_{\mathbb{F}}:C_m(\mathbb{F})\longrightarrow \mathbb{C}$ can be written, as Kasparov bimodules, under the following form $[\tau_{\mathbb{F}}]=[\pi]\underset{C_m(\mathbb{G})\otimes C_m(\mathbb{H})}{\otimes}[\tau_{\mathbb{G}}\otimes\tau_{\mathbb{H}}]=\pi^*\big([\tau_{\mathbb{G}}\otimes\tau_{\mathbb{H}}]\big)$, $[\varepsilon_{\mathbb{F}}]=[\pi]\underset{C_m(\mathbb{G})\otimes C_m(\mathbb{H})}{\otimes}[\varepsilon_{\mathbb{G}}\otimes\varepsilon_{\mathbb{H}}]=\pi^*\big([\varepsilon_{\mathbb{G}}\otimes\varepsilon_{\mathbb{H}}]\big)$. Define the element $\alpha_{\mathbb{F}}:=\alpha_{\mathbb{G}}\otimes\alpha_{\mathbb{H}}:=\tau^{C_r(\mathbb{H})}_R(\alpha_{\mathbb{G}})\underset{C_r(\mathbb{H})}{\otimes}\tau^{\mathbb{C}}_L(\alpha_{\mathbb{H}})\in KK(C_r(\mathbb{F}), \mathbb{C})$. Using elementary properties of the Kasparov product and the exterior tensor product of Kasparov triples (see \cite{Blackadar} for more details), a straightforward computation yields that $[\tau_{\mathbb{F}}]\underset{C_r(\mathbb{F})}{\otimes} \alpha_{\mathbb{F}}=[\varepsilon_{\mathbb{F}}]\in KK(C_m(\mathbb{F}),\mathbb{C})$.
		\end{proof}

\nocite{Sergey, Lance, Woronowicz, Jorgensen, Amnon, BrownOzawa, Williams}
\bibliographystyle{acm}
\bibliography{BCquantumsemidirectproduct}

\begin{thebibliography}{10}

\bibitem{YukiKenny}
{\sc Arano, Y., and De~Commer, K.}
\newblock Torsion-freeness for fusion rings and tensor ${C}^*$-categories.
\newblock {\em arXiv\/} (2015).

\bibitem{SkandalisUnitaries}
{\sc Baaj, S., and Skandalis, G.}
\newblock Unitaires multiplicatifs et dualité pour les produits croisés de
  ${C}^*$-algèbres.
\newblock {\em Annales Scientifiques de l'Ecole Normale Supérieure 26}, 4
  (1993), 425--488.

\bibitem{Blackadar}
{\sc Blackadar, B.}
\newblock {\em K-theory for operator algebras}, vol.~5.
\newblock Cambridge University Press, 1998.

\bibitem{BrownOzawa}
{\sc Brown, N.~P., and Ozawa, N.}
\newblock {\em $C^*$-algebras and finite-dimensional approximations}, vol.~88.
\newblock American Mathematical Soc., 2008.

\bibitem{Chabert}
{\sc Chabert, J.}
\newblock {B}aum--{C}onnes conjecture for some semi-direct products.
\newblock {\em J. reine angew. 521\/} (2000), 161--184.

\bibitem{ChabertPermanence}
{\sc Chabert, J., and Echterhoff, S.}
\newblock {P}ermanence properties of the {B}aum-{C}onnes conjecture.
\newblock {\em Documenta Mathematica 6\/} (2001), 127--183.

\bibitem{ChabertEchterhoffOyono}
{\sc Chabert, J., Oyono-Oyono, H., and Echterhoff, S.}
\newblock {G}oing-down functors, the {K}\"{u}nneth formula, and the
  {B}aum-{C}onnes conjecture.
\newblock {\em {G}eometric and functional analysis 14}, 3 (2004), 491--528.

\bibitem{CuntzKmoyenabilite}
{\sc Cuntz, J.}
\newblock {K}-theoretic amenability for discrete groups.
\newblock {\em {J}. {R}eine {A}ngew. {M}ath}, 344 (1983), 180--195.

\bibitem{KennyActions}
{\sc De~Commer, K.}
\newblock {A}ctions of compact quantum groups.
\newblock {\em arXiv\/} (2015).

\bibitem{FimaBiproduit}
{\sc Fima, P., Mukherjee, K., and Patri, I.}
\newblock On compact bicrossed products.
\newblock {\em arXiv\/} (2015).

\bibitem{HigsonKasparovHaagerup}
{\sc Higson, N., and Kasparov, G.}
\newblock E-theory and {K}{K}-theory for groups which act properly and
  isometrically on hilbert space.
\newblock {\em Inventiones mathematicae 144}, 1 (2001), 23--74.

\bibitem{Jorgensen}
{\sc Holm, T., Jorgensen, P., and Rouquier, R.}
\newblock {\em Triangulated {C}ategories}.
\newblock Cambridge University Press, 2010.

\bibitem{Lafforgue}
{\sc Lafforgue, V.}
\newblock La conjecture de {B}aum-{C}onnes à coefficients pour les groupes
  hyperboliques.
\newblock {\em J. Noncommutative Geometry\/} (2012), 1--197.

\bibitem{Lance}
{\sc Lance, E.~C.}
\newblock {\em Hilbert ${C}^*$-modules: a toolkit for operator algebraists},
  vol.~210.
\newblock Cambridge University Press, 1995.

\bibitem{TarragoWreath}
{\sc Lemeux, F., and Tarrago, P.}
\newblock Free wreath product quantum groups: the monoidal category,
  approximation properties and free probability.
\newblock {\em arXiv\/} (2014).

\bibitem{RubenAmauryTorsion}
{\sc Martos, R., and Freslon, A.}
\newblock Torsion and ${K}$-theory for some free wreath products.
\newblock {\em arXiv\/} (2017).

\bibitem{MeyerNestHomological2}
{\sc Meyer, R.}
\newblock Homological algebra in bivariant {K}-theory and other triangulated
  categories {I}{I}.
\newblock {\em Tbilisi Mathematical Journal 1\/} (2008), 165--210.

\bibitem{MeyerNest}
{\sc Meyer, R., and Nest, R.}
\newblock The {B}aum--{C}onnes conjecture via localisation of categories.
\newblock {\em Topology 45}, 2 (2006), 209--259.

\bibitem{MeyerNestTorsion}
{\sc Meyer, R., and Nest, R.}
\newblock An analogue of the {B}aum-{C}onnes isomorphism for coactions of
  compact groups.
\newblock {\em Mathematica Scandinavica 100}, 2 (2007), 301--316.

\bibitem{Amnon}
{\sc Neeman, A.}
\newblock {\em Triangulated {C}ategories}.
\newblock Princeton University Press, 2014.

\bibitem{Sergey}
{\sc Neshveyev, S., and Tuset, L.}
\newblock {\em Compact quantum groups and their representation categories}.
\newblock Soci{\'e}t{\'e} math{\'e}matique de France, 2013.

\bibitem{VoigtPoincareDuality}
{\sc Nest, R., and Voigt, C.}
\newblock Equivariant {P}oincar{\'e} duality for quantum group actions.
\newblock {\em Journal of Functional Analysis 258}, 5 (2010), 1466--1503.

\bibitem{HerveExtensions}
{\sc Oyono-Oyono, H.}
\newblock {B}aum-{C}onnes conjecture and extensions.
\newblock {\em J. reine angew. Math 532\/} (2001), 133--149.

\bibitem{Williams}
{\sc P.~Williams, D.}
\newblock {\em Crossed products of ${C}^*$-algebras}, vol.~134.
\newblock American Mathematical Soc., 2007.

\bibitem{RijdtMonoidalProbabilistic}
{\sc Rijdt, A.~D., and Vennet~Vander, N.}
\newblock Actions of monoidally equivalent compact quantum groups and
  applications to probabilistic boundaries.
\newblock {\em Ann. {I}nst. {F}ourier ({G}renoble) 60}, 1 (2010), 169--216.

\bibitem{VaesInduction}
{\sc Vaes, S.}
\newblock A new approach to induction and imprimitivity results.
\newblock {\em Journal of {F}unctional {A}nalysis 229\/} (2005), 317--374.

\bibitem{VergniouxThesis}
{\sc Vergnioux, R.}
\newblock {\em {K}{K}-théorie équivariante et opérateur de {J}ulg-{V}alette
  pour les groupes quantiques}.
\newblock PhD thesis, Université Paris VII, 2002.

\bibitem{VoigtBaumConnesUnitaryFree}
{\sc Vergnioux, R., and Voigt, C.}
\newblock The ${K}$-theory of free quantum groups.
\newblock {\em Mathematische {A}nnalen 357}, 1 (2013), 355--400.

\bibitem{VoigtBaumConnesFree}
{\sc Voigt, C.}
\newblock The {B}aum--{C}onnes conjecture for free orthogonal quantum groups.
\newblock {\em Advances in Mathematics 227}, 5 (2011), 1873--1913.

\bibitem{WangSemidirect}
{\sc Wang, S.}
\newblock Tensor products and crossed products of compact quantum groups.
\newblock {\em Proc. {L}ondon {M}ath. {S}oc. 71\/} (1995), 695--720.

\bibitem{Woronowicz}
{\sc Woronowicz, S.~L.}
\newblock Compact quantum groups.
\newblock {\em Symétries quantiques\/} (1998), 845--884.

\end{thebibliography}

\bigskip
\bigskip
\bigskip
\textsc{R. Martos, Institut de Mathématiques de Jussieu-Paris Rive Gauche, UMR 7586, Univ. Paris Diderot - Paris 7, France.} 

\textit{E-mail address:} \textbf{\texttt{ruben.martos@imj-prg.fr}}

 \end{document}